\documentclass[11pt]{article}
\usepackage[T1]{fontenc}
\usepackage[utf8]{inputenc}
\usepackage[french]{babel}
\usepackage{lmodern}
\usepackage{array}
\usepackage{amsmath}
\usepackage{amsthm}
\usepackage{amssymb}
\usepackage{amsfonts}
\usepackage{fancyhdr}
\usepackage{enumitem}
\usepackage{stmaryrd}
\usepackage{dsfont}
\usepackage{xcolor}
\usepackage{csquotes}
\usepackage{float}
\usepackage{tikz}
\usepackage{tikz-cd}
\usepackage[margin=2.3cm,nohead]{geometry}
\usepackage{hyperref}
\usepackage[affil-it]{authblk}

\newcommand{\Q}{\mathbb{Q}}
\newcommand{\R}{\mathbb{R}}
\newcommand{\Z}{\mathbb{Z}}
\newcommand{\N}{\mathbb{N}}
\newcommand{\C}{\mathbb{C}}
\newcommand{\h}{\mathcal{H}}
\newcommand{\p}{\mathbb{P}}

\newcommand{\ana}{\mathbf{A}}
\newcommand{\Rat}{\mathrm{Rat}}
\newcommand{\Res}{\mathrm{Res}}
\newcommand{\SL}{\mathrm{SL}}
\newcommand{\res}{\mathrm{res}}
\newcommand{\M}{\mathrm{M}}
\newcommand{\Pre}{\mathrm{Pre}}

\newtheorem{thm}{Théorème}[section]
\newtheorem{prop}[thm]{Proposition}
\newtheorem{nota}[thm]{Notation}
\newtheorem{lem}[thm]{Lemme}
\newtheorem{defi}[thm]{Définition}
\newtheorem{rem}[thm]{Remarque}
\newtheorem{cor}[thm]{Corollaire}
\newtheorem{ex}[thm]{Exemple}

\title{Action de groupe algébrique sur la compactification hybride}
\author{Alexandre Roy
  \thanks{Electronic address: \texttt{alexandre.roy@unicaen.fr}\\ 2020 Mathematics Subject Classification : 14D06, 14G22, 14L24, 37P50\\ Keywords and phrases : Berkovich spaces, Hybrid spaces, Geometric Invariant Theory, Degenerations, Space of rational maps.}}
\affil{Université Caen Normandie, CNRS, Normandie Univ, LMNO UMR6139, F-14000 CAEN, FRANCE }

\begin{document}

\maketitle 
\begin{abstract}
Let $X$ be an algebraic variety over $\mathbb{C}$ and $G$ be an algebraic group acting on $X$ whose action is closed. J. Poineau defined a compactification $X^\urcorner$ of $X(\mathbb{C})$ by using hybrid Berkovich spaces. We will focus on the extension of the action of $G$ on this compactification by characterising the set $\mathcal{U} \subset X^\urcorner$ where the action is well defined. We will also show that the quotient of $\mathcal{U}$ by the action of $G$ is homeomorphic to $(X/G)^\urcorner$, the compactification of $(X/G)(\mathbb{C})$. We then apply those results to $X = \mathrm{Rat}_d$, the space of rational maps and $G = \mathrm{SL}_2$. It gives the results of C. Favre-C. Gong in a more general setting. Furthermore, we get a compactification of $\mathrm{M}_d = \mathrm{Rat}_d/\mathrm{SL}_2$ where the boundary is made of orbits of non-archimedean rational maps. The results still holds if $\mathbb{C}$ is replaced by $k$ a non-trivially valued field and complex analytic spaces by Berkovich spaces over $k$ or if $X$ is the set of stable points of a $k$-variety defined in the sense of GIT.
\end{abstract}

\tableofcontents

\section{Introduction}
Soit $X$ une variété sur $\C$ et $G$ un groupe algébrique réductif agissant sur $X$ via $\Phi : X \times G \rightarrow X$. D. Mumford a étudié cette action et le quotient schématique $X/\! /G$ sous réserve d'existence dans \cite{GIT}. Ainsi, si $X$ est une variété affine et $G$ un groupe réductif, alors le schéma $X/\! /G$ existe et est une variété affine. De plus, le morphisme $\pi : X \rightarrow X/\! /G$ est surjectif et $G-$invariant. Si $X$ est une variété non-nécessairement affine, le quotient schématique n'est pas défini en toute généralité. Il l'est néanmoins sur un ouvert de $X$ : le lieu stable défini à partir d'un faisceau inversible sur $X$.

L'objectif de cet article est d'étudier l'action de $G$ sur une compactification de $X(\C)$. Dans cet article, on s'intéresse au quotient de ce lieu stable, alors $X/\! /G$ est un quotient géométrique et sera noté $X/G$ (voir \cite{GIT}). Dans le cas où $X$ est affine et l'action est fermée, alors le quotient géométrique existe pour $X$ entier.  

J. Poineau a construit une compactification $X^\urcorner$ de $X(\C)$ dans \cite{PoineauCompactificationHybride} où $X(\C)$ se plonge en tant qu'ouvert dense. Le bord de cette compactification, noté $\delta X$, étant un quotient (par la relation d'équivalence de normes) d'un sous-ensemble d'un espace de Berkovich, il faut tout d'abord regarder l'action sur l'analytifié de $X$ au sens de Berkovich (\cite{BerkovichLivre}). L'action de $G$ se prolonge naturellement et M. Maculan a étudié le prolongement de cette action (\cite{MaculanGIT}). Néanmoins, le bord n'est défini que par un sous-ensemble de l'analytifié et il n'est pas assuré que l'action de $G$ préserve ce sous-ensemble. Ainsi, il peut arriver que l'action de $G$ ne soit pas bien définie sur tout le bord de la compactification. 

Le premier objectif est d'étudier l'action de $G$ sur cette compactification $X^\urcorner$ en caractérisant le lieu où l'action de $G$ est bien définie. Caractériser le lieu où l'action de $G$ est bien définie signifie que l'on souhaite déterminer le lieu des $x\in X^\urcorner$ où pour tout élément $g\in G^{an}$ \footnote{Bien que la notation ne le laisse pas apparaître, $G^{an}$ dépends du corps résiduel de $x$.}, $g\cdot x$ définisse un point de $X^\urcorner$.

Une fois le lieu où l'action n'est pas bien définie retiré, on souhaite regarder le quotient de $X^\urcorner$ par l'action de $G$ et le comparer à la compactification $(X/G)^\urcorner$ du schéma quotient comme défini dans \cite{GIT}. On obtient alors deux compactifications homéomorphes de $(X/G)(\C)$. Cela permet d'interpréter le bord de $(X/G)^\urcorner$ comme étant un espace d'orbites non-archimédiennes.

Finalement, on applique ces résultats au cas des fractions rationnelles. On observe alors que cette compactification préserve l'application itération qui est une exigence dynamique que se doit de posséder une compactification des applications rationnelles.

\bigskip 

La construction de la compactification de J. Poineau \cite{PoineauCompactificationHybride}, qui est le cadre de cet article repose sur les espaces hybrides. L'une des premières introductions de ces espaces peut être celle de J. Morgan - P. Shalen (\cite{MorganShalenIntroEspacesHybrides}) qui s'intéressaient déjà à des phénomènes de compactification. Ensuite, V. Berkovich a formalisé les espaces hybrides, en donnant un formalisme d'espaces analytiques sur un anneau de Banach (\cite{BerkovichLivre}). Cette compactification existe dans un cadre plus général que le cas de variétés sur $\C$ : elle existe pour toute variété sur un corps non-trivialement valué $k$. Dans ce cas, la compactification est une compactification de $X^{an}$, l'analytifié de $X$ au sens de Berkovich. L'action de $G$ se prolonge naturellement via le morphisme $\Phi^{an} : (X\times G)^{an} \rightarrow X^{an}$. De plus, sur chaque point de $x \in X^{an}$, il y a une action de $G^{an}_{\h(x)} := pr_2((\Phi^{an})^{-1}(x))$.

L'idée de cette compactification $X^\urcorner$ pour $X$ une variété sur $k$ un corps valué est d'analytifier $X^{hyb}$ selon la norme hybride sur $k$, une norme faisant intervenir  la valeur absolue triviale et la valeur absolue de $k$. La partie provenant de l'analytification sur $k$ muni de la valeur absolue triviale correspond au bord de la compactification. Dans le cas de $\C$, on retrouve donc un bord de nature non-archimédienne (sur $\C$ trivialement valué) et $X(\C)$ est un ouvert dense de $X^\urcorner$.

\bigskip

Nous pouvons maintenant présenter formellement les résultats de ce texte.

Dans le cas où $X$ est affine ou $X$ est le lieu stable d'une $k$-variété, les quotients géométriques de schémas existent par les techniques de GIT \cite{GIT}. On peut donc comparer la compactification de $X/G$ notée $(X/G)\urcorner$ et le quotient de la compactification $X^\urcorner$. Notons que l'on doit nécessairement retirer une partie, l'action n'étant pas bien définie sur tout le bord. Cela donne les deux résultats principaux de ce texte :

Dans un premier temps, on s'intéresse au lieu où l'action est bien définie. 

\begin{thm}(infra théorème \ref{equivalence dégénérecence quotient et action bien déf})

Soit $k$ un corps non-trivialement valué. Supposons que l'on est dans l'un des deux cas suivants :
\begin{itemize}
\item $X$ est un $k$-schéma affine de type fini, $G$ un groupe algébrique réductif tel que l'action est fermée,
\item $X$ est le lieu stable au sens de GIT d'une $k$-variété $\mathcal{X}$.
\end{itemize}

Soit $x_n \in (X^{an})^\N$ et notons $\pi^{an} : X^{an} \rightarrow (X/G)^{an}$ la projection où l'analytification est selon la valeur absolue de $k$. Supposons que $x_n \rightarrow x \in X^\urcorner$ avec $x\in \delta X$, alors
\begin{center}
l'action de $G_{\mathcal{H}(x)}^{an}$ est bien définie en $x \iff \pi^{an}(x_n) \rightarrow \infty$
\end{center} 
où $\pi^{an}(x_n) \rightarrow \infty$ signifie que cette suite n'a pas de valeur d'adhérence dans $(X/G)^{an}$.
\end{thm}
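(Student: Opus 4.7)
Dans le cas affine, un point $x \in \delta X$ s'identifie à une classe d'équivalence (par mise à l'échelle) de semi-normes non-triviales $|\cdot|_x$ sur $\mathcal{O}(X)$, et la convergence $x_n \to x$ dans $X^\urcorner$ fournit des échelles $\epsilon_n \to 0$ telles que $|f(x_n)|^{\epsilon_n} \to |f|_x$ pour toute $f \in \mathcal{O}(X)$. Par fonctorialité de la construction de \cite{PoineauCompactificationHybride}, la projection $\pi^{an}: X^{an} \to (X/G)^{an}$ se prolonge en une application continue $\bar\pi: X^\urcorner \to (X/G)^\urcorner$; l'unicité des limites dans le compact Hausdorff $(X/G)^\urcorner$ donne $\pi^{an}(x_n) \to \bar\pi(x)$. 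La condition $\pi^{an}(x_n) \to \infty$ équivaut donc à $\bar\pi(x) \in \delta(X/G)$, autrement dit à la non-trivialité de la restriction de $|\cdot|_x$ à la sous-algèbre invariante $\mathcal{O}(X)^G$.

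Le sens ``$\pi^{an}(x_n) \to \infty \Longrightarrow$ action bien définie'' résulte alors d'un argument direct de $G$-invariance. Soit $f \in \mathcal{O}(X)^G$ avec $|f|_x \neq 1$. Pour tout $g \in G_{\mathcal{H}(x)}^{an}$, l'égalité schématique $f \circ \Phi_g = f$ passe à l'analytifié et entraîne $|f|_{g\cdot x} = |f|_x \neq 1$. En particulier, la semi-norme $|\cdot|_{g\cdot x}$ sur $\mathcal{O}(X)$ est non-triviale : $g\cdot x$ n'est pas le point générique de l'analytifié trivialement valué et définit donc un point de $\delta X \subset X^\urcorner$.

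Pour la réciproque, on procède par contraposée. Si $\pi^{an}(x_n)$ admet une valeur d'adhérence dans $(X/G)^{an}$, alors $\bar\pi(x)$ est dans l'intérieur $(X/G)^{an}$, c'est-à-dire que $|\cdot|_x$ est triviale sur $\mathcal{O}(X)^G$ (sans l'être sur $\mathcal{O}(X)$ puisque $x \in \delta X$). On souhaite construire $g \in G_{\mathcal{H}(x)}^{an}$ tel que $g\cdot x$ soit la semi-norme triviale sur tout $\mathcal{O}(X)$, autrement dit le point générique $\eta_X$ qui n'appartient pas à $X^\urcorner$. Sous les hypothèses GIT du théorème (action fermée sur un affine, ou lieu stable), la fibre schématique de $\pi$ au-dessus de l'image $\bar\pi(x)$ est une unique $G$-orbite contenant $\eta_X$; appliquant la description des fibres de $\pi^{an}$ donnée par \cite{MaculanGIT} (transitivité de l'action de $G^{an}$ sur une telle fibre dans le cadre stable), on exhibe un élément $g$ envoyant $x$ sur $\eta_X$, et, quitte à étendre le corps résiduel, on a bien $g \in G^{an}_{\mathcal{H}(x)} = pr_2((\Phi^{an})^{-1}(x))$. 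L'action n'est donc pas bien définie en $x$, ce qui conclut.

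Le point névralgique est cette dernière étape de transitivité analytique. Elle correspond moralement à un énoncé classique de GIT (unicité de l'orbite dans la fibre générique du quotient), mais il faut la transposer à l'analytifié au sens de Berkovich sur $k$ muni de la valeur absolue triviale, tout en contrôlant les extensions de corps résiduels nécessaires pour que $g$ se réalise effectivement dans $G^{an}_{\mathcal{H}(x)}$. On prévoit de s'appuyer sur le formalisme de \cite{MaculanGIT}, éventuellement complété par une analyse directe de la fibre analytique au-dessus de $\bar\pi(x)$ et de la trajectoire analytique de $x$ sous $G^{an}_{\mathcal{H}(x)}$. Par ailleurs, la continuité du prolongement $\bar\pi$, utilisée implicitement à plusieurs reprises, est à justifier par fonctorialité de la construction hybride.
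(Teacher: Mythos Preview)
Your approach differs substantially from the paper's and has a genuine gap at the decisive step.

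Two preliminary issues. First, the map $\bar\pi : X^\urcorner \to (X/G)^\urcorner$ does \emph{not} exist by functoriality: Poineau's construction extends morphisms to $(\cdot)^\urcorner$ only when they are \emph{proper}, and $\pi : X \to X/G$ is not. The paper is explicit about this (see the definition of $\Pi$ in Définition~\ref{defi morphisme entre compactification du quotient et quotient compactification}): $\pi^{hyb}$ only induces a continuous map on $X^\urcorner \setminus F$, where $F = q\bigl(\pi^{hyb,-1}((X/G)^\beth)\cap X^+\bigr)$, and showing $F = \mathcal{B}$ is one of the main results. Second, in your $\Leftarrow$ argument you conflate ``$|\cdot|_{g\cdot x}$ is a non-trivial semi-norm'' with ``$g\cdot x \in \delta X$''. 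The boundary $\delta X$ is the image of $X_\infty = X^{an}_0 \setminus X^\beth$, and $X^\beth$ contains many points beyond the generic one; what you actually need is some $f \in \mathcal{O}(X)$ with $|f|_{g\cdot x} > 1$.

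Granting these fixes, your $\Leftarrow$ argument becomes: if some invariant $f \in \mathcal{O}(X)^G$ has $|f|_x > 1$, then $|f|_{g\cdot x} = |f|_x > 1$ for every $g$, hence $g\cdot x \in X_\infty$. That step is fine and elegant. But you still need the implication $\pi^{an}(x_n) \to \infty \Longrightarrow \exists f \in \mathcal{O}(X)^G,\ |f|_x > 1$, and this is precisely where your reformulation breaks down: the escape of $\pi^{an}(x_n)$ can occur at a scale \emph{slower} than the rescaling $\epsilon_n \to 0$ governing $x_n \to x$, so that $|f(x_n)|^{\epsilon_n} \to 1$ for every invariant $f$ even though $|f(x_n)| \to \infty$. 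Concretely, take $G = \mathbb{G}_m$ acting on $X = \mathbb{G}_m^2$ by $t\cdot(a,b) = (ta, t^{-1}b)$ and $x_n = (n, (\log n)/n)$: then $\pi(x_n) = \log n \to \infty$, yet $(x_n, 1/\log n)$ converges in $X^{hyb}$ to a point $\eta_x \in X_\infty$ with $|a|_x = e$, $|b|_x = e^{-1}$ and $|(ab)^{\pm 1}|_x = 1$, so the restriction of $|\cdot|_x$ to $\mathcal{O}(X)^G = k[(ab)^{\pm 1}]$ is bounded by $1$. Your static reformulation in terms of $\bar\pi(x)$ therefore does not capture the hypothesis $\pi^{an}(x_n) \to \infty$, and the continuity argument you invoke is unavailable exactly at the points where it is needed.

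The paper bypasses any such reformulation. Its proof is ``dynamic'': the entire Section~\ref{SectionConstructionSuite} is devoted to constructing, for each rigid $g \in G^{an}_{\mathcal{H}(x)}(\overline{\mathcal{H}(x)})$, an explicit approximating sequence $g_n \in G^{an}$ with $(g_n, x_n) \to (g,x)$ in $(G\times X)^\urcorner$ (a non-trivial task, since hybrid spaces are not known to be angelic). One then transports the hypothesis through the sequence $g_n\cdot x_n$ (Propositions~\ref{si pas degenerence dans X/G alors action pas bien def}, \ref{bonne déf action}, \ref{lien dégénéresnce quotient et valeur d adherence des fibres}) and passes from rigid $g$ to all of $G^{an}_{\mathcal{H}(x)}$ by a separate density argument (Proposition~\ref{Action bien definie suffit sur les points rigides}). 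Your Maculan-based idea for the contrapositive direction is closer in spirit to Proposition~\ref{on peut caractériser le fermé qu on doit retirer dans la compactification}, which the paper proves later and for a different purpose (the homeomorphism of Théorème~\ref{Si le morphisme est equidimensionnel, compactifications sont homeo}), not for the present statement.
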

Ceci permet de caractériser le lieu où l'action de $G^{an}_{\h(x)}$ n'est pas bien définie. On notera $\mathcal{B}$ l'ensemble de ces points. On peut alors définir une relation d'équivalence $\mathcal{G}$ sur $X^\urcorner \backslash \mathcal{B}$. Soient $x, y\in X^\urcorner \backslash \mathcal{B}$ alors $x \mathcal{G} y$ si et seulement si $\exists g\in G^{an}_{\h(x)}, y = g\cdot x$. Cette relation d'équivalence correspond donc à la relation classique sur $X^{an}$ et la prolonge.

On souhaite maintenant comparer le quotient de la compactification par cette relation d'équivalence $\mathcal{G}$ et la compactification du quotient $(X/G)^\urcorner$.

%
%

\begin{thm}(infra théorème \ref{Si le morphisme est equidimensionnel, compactifications sont homeo})

Soit $k$ un corps non-trivialement valué. Supposons que l'on est dans l'un des deux cas suivants :
\begin{itemize}
\item $X$ est un $k$-schéma affine, intègre de type fini, $G$ un groupe algébrique réductif tel que l'action est fermée,
\item $X$ est le lieu stable au sens de GIT d'une $k$-variété intègre $\mathcal{X}$.
\end{itemize} 
Alors, $\mathcal{B}$ est fermé. 

De plus, l'application induite :
\begin{center}
$(X^\urcorner \backslash \lbrace x\in X^\urcorner,$ l'action de $G^{an}_{\mathcal{H}(x)}$ n'est pas bien définie$\rbrace)/\mathcal{G} \rightarrow (X/G)^\urcorner$
\end{center}
est un homéomorphisme qui se restreint en l'identité sur $(X/G)^{an}$.
\end{thm}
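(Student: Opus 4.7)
L'ingrédient central est le théorème précédent, qui caractérise $\mathcal{B}$ via le comportement à l'infini dans $(X/G)^{an}$ des suites projetées. Pour la fermeture de $\mathcal{B}$, la topologie de $X^\urcorner$ étant à base dénombrable, il suffit d'établir la fermeture séquentielle. Soit $(y_m) \subset \mathcal{B}$ convergeant vers $y \in X^\urcorner$ : pour chaque $m$, le théorème précédent fournit une suite $(x_{m,n})_n \subset X^{an}$ tendant vers $y_m$ telle que $\pi^{an}(x_{m,n})$ admette une valeur d'adhérence $z_m \in (X/G)^{an}$. Un procédé diagonal produit une suite $(x_m') \subset X^{an}$ convergeant vers $y$ ; si l'on avait $y \notin \mathcal{B}$, le théorème précédent appliqué à $y$ forcerait $\pi^{an}(x_m') \to \infty$, en contradiction (quitte à extraire) avec la proximité aux $z_m$. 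Donc $\mathcal{B}$ est fermé.

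Pour construire l'application prétendue, la fonctorialité de la compactification hybride appliquée au morphisme $\pi : X \to X/G$ fournit une application continue $\pi^\urcorner : X^\urcorner \to (X/G)^\urcorner$ prolongeant $\pi^{an}$. La $G$-invariance de $\pi$ entraîne sa constance sur les $\mathcal{G}$-orbites de $X^\urcorner \setminus \mathcal{B}$, d'où une factorisation $\overline{\pi^\urcorner} : (X^\urcorner \setminus \mathcal{B})/\mathcal{G} \to (X/G)^\urcorner$ dont la restriction à $(X/G)^{an}$ est l'identité par construction.

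Pour la surjectivité, si $z \in (X/G)^{an}$, la surjectivité de $\pi^{an}$ (conséquence classique des techniques GIT sous les hypothèses faites, le morphisme $\pi$ étant en particulier surjectif à l'échelle des schémas) fournit un antécédent hors de $\mathcal{B}$ ; si $z \in \delta(X/G)$, on l'approche par $z_n \in (X/G)^{an}$, on relève en $x_n \in X^{an}$, on extrait $x_{n_k} \to x$ par compacité de $X^\urcorner$, et le théorème précédent assure $x \in X^\urcorner \setminus \mathcal{B}$ puisque $\pi^{an}(x_{n_k}) \to z \notin (X/G)^{an}$, donc n'admet pas de valeur d'adhérence dans $(X/G)^{an}$. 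L'obstacle principal est l'injectivité : étant donnés $x, y \in X^\urcorner \setminus \mathcal{B}$ avec $\pi^\urcorner(x) = \pi^\urcorner(y)$, il faut produire $g \in G^{an}_{\mathcal{H}(x)}$ tel que $g \cdot x = y$. Sur $X^{an}$ cela découle de l'identification entre orbites et fibres de $\pi^{an}$ (action fermée, théorie GIT, travaux de Maculan) ; pour le cas du bord, la stratégie consiste à relever $x$ et $y$ en des suites $(x_n), (y_n) \subset X^{an}$ convergeant respectivement vers $x$ et $y$, de même projection dans $(X/G)^{an}$ à la limite, à extraire des éléments $g_n \in G^{an}$ réalisant $g_n \cdot x_n = y_n$, puis à obtenir une valeur d'adhérence $g \in G^{an}_{\mathcal{H}(x)}$ dans un espace ambiant convenable. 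La compacité nécessaire à cette dernière extraction est précisément ce qu'encode la condition $x \notin \mathcal{B}$ : la non-dégénérescence de la projection empêche l'évasion de la trajectoire.

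Enfin, $(X^\urcorner \setminus \mathcal{B})/\mathcal{G}$ est compact, quotient du fermé $X^\urcorner \setminus \mathcal{B}$ dans le compact $X^\urcorner$, et $(X/G)^\urcorner$ est séparé comme compactification hybride d'un schéma séparé ; la bijection continue $\overline{\pi^\urcorner}$ est donc un homéomorphisme.
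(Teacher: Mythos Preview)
Your plan contains several genuine gaps.

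\textbf{Fermeture de $\mathcal{B}$.} Vous invoquez une base dénombrable de $X^\urcorner$ pour vous ramener aux suites, mais c'est faux dès que $k$ est non dénombrable (par exemple $k=\C$) : le texte souligne explicitement que $X^\urcorner$ n'est alors pas métrisable, et l'on ignore même si les espaces hybrides sont angéliques en général. La stratégie séquentielle (et le procédé diagonal qui s'ensuit) n'est donc pas licite. Le papier procède tout autrement : il introduit le fermé $F := q\bigl((\pi^{hyb})^{-1}((X/G)^\beth)\cap X^+\bigr)$, manifestement fermé par continuité de $\pi^{hyb}$, et montre $\mathcal{B}=F$. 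L'inclusion $\mathcal{B}\subset F$ est immédiate ; l'inclusion réciproque utilise la surjectivité de $\pi^\beth : X^\beth \to (X/G)^\beth$, établie via la densité des valuations divisorielles, puis le théorème de Maculan pour produire l'élément $g$ voulu. C'est cette identification $\mathcal{B}=F$ qui est le point-clé, et elle vous manque.

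\textbf{Construction de l'application.} Vous affirmez que la fonctorialité de $(\cdot)^\urcorner$ appliquée à $\pi$ fournit $\pi^\urcorner : X^\urcorner \to (X/G)^\urcorner$. Ce n'est pas exact : cette fonctorialité n'est connue que pour les morphismes \emph{propres}, et $\pi$ ne l'est pas en général. Concrètement, $\pi^{hyb}$ peut envoyer des points de $X^+$ dans $(X/G)^\beth$, ce qui empêche de descendre au quotient par le flot. C'est précisément pour cela que le papier retranche $F$ (et non seulement $\mathcal{B}$ a priori) avant de définir $\Pi$.

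\textbf{Compacité.} Vous écrivez que $(X^\urcorner\setminus\mathcal{B})/\mathcal{G}$ est compact comme \og quotient du fermé $X^\urcorner\setminus\mathcal{B}$\fg. Mais si $\mathcal{B}$ est fermé, $X^\urcorner\setminus\mathcal{B}$ est \emph{ouvert}, et un ouvert d'un compact n'a aucune raison d'être compact. Le papier établit la compacité autrement : il montre d'abord que $(\delta X\setminus F)/\mathcal{G}$ est compact via l'homéomorphisme avec $\delta(X/G)$, puis traite les suites de $(X/G)^{an}$ en relevant et en utilisant le théorème de caractérisation de $\mathcal{B}$ pour exclure que la limite y tombe.

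\textbf{Injectivité au bord.} Votre esquisse par relèvement de suites et extraction d'une valeur d'adhérence de $(g_n)$ bute à nouveau sur l'absence de compacité séquentielle. Le papier applique directement le théorème de Maculan à l'espace $k_0$-analytique $X^{an}_0$, ce qui évite toute extraction.
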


Ainsi, si $\mathcal{X}$ est une $k$-variété intègre sur lequel un groupe $G$ algébrique, réductif agit, on peut compactifier le quotient schématique d'un lieu stable de $\mathcal{X}$, noté $\mathcal{X}^s$. Une des façons usuelles de compactifier ce schéma est de regarder le quotient catégorique du lieu semi-stable $\mathcal{X}^s \subset \mathcal{X}^{ss}$. Dans le cas où $\mathcal{X}$ est propre et le lieu stable est défini à partir d'un faisceau inversible ample, alors $\mathcal{X}^{ss}/\! /G$ est une compactification de $\mathcal{X}^s/G$. En utilisant cette compactification $\mathcal{X}^{ss}/\! /G$, le bord que l'on ajoute à $\mathcal{X}^s/G$ peut-être vu comme l'espace topologique $\mathcal{X}^{ss}\backslash \mathcal{X}^s$ quotienté par la relation d'équivalence liant 2 points $x, y$ si et seulement si $\overline{G\cdot x}\cap \overline{G\cdot y} \neq \emptyset$ où $\overline{G\cdot x}$ désigne l'adhérence de l'orbite de $x$. Ici, le bord de la compactification de $(\mathcal{X}^s/G)^\urcorner$ est simplement constitué des orbites des points de $\delta(\mathcal{X}^s) \backslash \mathcal{B}$ ce qui donne une expression plus concrète du bord de la compactification du quotient.

\bigskip

Il est notable que d'utiliser les espaces hybrides pour compactifier $X^{an}$ entraine des complications. Une complication est que dans le cas où le corps n'est pas dénombrable, les espaces hybrides ne sont pas métrisables et donc $X^\urcorner$ ne l'est pas. De plus, on ne sait pas si les espaces hybrides sont ou non angéliques i.e. les compacts sont exactement les ensembles séquentiellement compacts. Poineau \cite{PoineauAngelique} a montré que les espaces de Berkovich sur un corps sont angéliques, T. Lemanissier \cite{LemanissierComPerso} a montré que $\ana^{1,hyb}_\C$ est angélique et C. Gong \cite{GongPersonalCommunication} a montré que les espaces hybrides sur $\C$ étaient bien angéliques en dimension 1 et 2. 

Néanmoins, ils sont particulièrement adaptés pour l'étude de situations mélangeant des aspects archimédiens et non-archimédiens. L'un des cas particulièrement intéressant pour les espaces hybrides est le cas de dégénérescence de phénomènes de nature archimédienne vers des phénomènes de nature non-archimédienne. 

C'est par exemple le cas des fractions rationnelles. La première utilisation des espaces hybrides dans le cadre de la dynamique holomorphe est celle de C. Favre \cite{FavreDegenerationInHybridSpace} qui étudie la convergence de mesures sur $\C$ vers une mesure de nature non-archimédienne. Plus récemment, C. Favre-C. Gong \cite{FavreGong} ont étudié des dégénérescences de fractions rationnelles et ont construit des fractions rationnelles limites définies sur un corps non-archimédien dont ils étudient la dynamique. 

Formellement, on pose $\Rat_d$ l'ensemble des fractions rationnelles de degré $d$, c'est à dire :

\begin{center}
$\mathrm{Rat}_d(\C) = \lbrace f= \frac{P}{Q}, P,Q\in \C[T]$ tel que $P,Q$ soient sans zéros communs et $\mathrm{max}(\deg~P, \deg~Q) = d\rbrace$.
\end{center} 
On dispose sur $\Rat_d$ d'une action de $\SL_2$ où $\SL_2$ agit par conjugaison et on note $\M_d =  \Rat_d/SL_2$ l'espace quotient. On peut définir le résultant d'une fraction rationnelle : tout d'abord, on prend $f = \frac{P}{Q}$ avec $P = \sum_i a_i z^i, Q = \sum_i b_i z^i$. On peut ensuite définir un résultant indépendant du choix de $P,Q$ avec $\mathrm{Res}_f = \vert \frac{\Res(P,Q)}{max (|a_i|, |b_i|)^{2d}}\vert$. De même, on peut définir le résultant de $\mathfrak{f} \in \M_d$ par $\res_\mathfrak{f} = \max_{f\in \mathrm{Rat}_d, [f] = \mathfrak{f}} \mathrm{Res}_f$.

On peut faire toutes ces constructions pour n'importe quel corps valué $k$ et si $k$ est non-archimédien, on dit que $f$ a bonne réduction si $\mathrm{Res}_f = 1$ et que $f$ a potentielle bonne réduction si $\res_f = 1$. Cela revient à dire que si l'on écrit $f = \frac{a_0z^d + \cdots + a_d}{b_0z^d + \cdots + b_d}$ avec $\mathrm{max}(|a_i|, |b_i|) =1$, alors $f$ a bonne réduction ssi $f$ induit une fraction rationnelle de degré exactement $d$ sur $\tilde{k}$ le corps résiduel de $k$. De même, $f$ a potentielle bonne réduction ssi il existe $M \in \SL_2(\overline{k})$ tel que $f^M$ ait bonne réduction où $\overline{k}$ désigne la clôture algébrique de $k$.

On dit qu'une suite $f_n \in \Rat_d(\C)$ dégénère si la suite ne reste contenue dans aucun compact de $\Rat_d(\C)$ et de même pour $\mathfrak{f_n} \in \M_d(\C)$. L'un des résultats de Favre-Gong s'énonce ainsi. Ils fixent une suite $\mathfrak{f_n}$ qui dégénère et prennent $f_n$ des relevés de $\mathfrak{f_n}$ tel que $\Res_{f_n} = \res_\mathfrak{f_n}$. Alors, en utilisant les espaces de Berkovich, ils construisent pour chaque $\omega \in \beta \N$ où $\beta \N$ est la compactification de Stone-Čech de $\N$, une fraction rationnelle $f_\omega$. Si $\omega$ est l'ultra filtre principal engendré par $n$, alors $f_\omega = f_n$ sinon, c'est une fraction rationnelle définie sur un corps non-archimédien qui s'interprète comme une limite d'une sous-suite des $f_n$. Alors, ils démontrent que si $\omega$ n'est pas un ultra-filtre principal, $f_\omega$ n'a pas potentielle bonne réduction. 

Ces phénomènes de dégénérescence de fractions rationnelles ont déjà été étudiés : tout d'abord par J. Kiwi (\cite{KiwiPolynomialDynamics}) et L. DeMarco-C. McMullen \cite{DeMarcoMcMullenTreesDynamicsPolynomials} dans le cas des polynômes, puis L. DeMarco-X. Faber \cite{DeMarcoFaber} puis plus récemment, par Y. Luo (\cite{LuoLimitRationalMap}, \cite{LuoTreesBarycentricExtension}) qui construit une fraction rationnelle limite à l'aide de techniques hyperboliques. 

La construction de Poineau et les résultats présentés dans ce papier permettent de retrouver des résultats semblables mais dans un contexte différent : Luo, Favre-Gong fixent une suite de fractions rationnelles qui dégénère et construisent des fractions rationnelles limites puis étudient leur dynamique. Dans ce texte, on se rapprochera des techniques de Favre-Gong en étudiant ces aspects via les espaces de Berkovich et non des techniques hyperboliques. De plus, nous considérons une approche plus globale en prenant une compactification de $\Rat_d$ tout entier. Le bord peut-être interprété comme étant des fractions rationnelles définies sur des corps non-archimédiens et nous pouvons regarder la dynamique du bord.  

Ces idées de compactifier l'espace des fractions rationnelles ont déjà été regardées. On peut tout d'abord compactifier en utilisant les outils de D. Mumford - J. Fogarty - F. Kirwan dans \cite{GIT} de la Théorie Géométrique des invariants (GIT). J. Silverman a notamment montré (\cite{SilvermanSpaceRationalMaps}) que compactifier $\M_2$ selon GIT redonnait simplement $\p^2_\C$ mais DeMarco (\cite{DeMarcoSpaceQuadraticRationalMaps}) a montré que cette compactification ne répondait pas aux nécessités dynamiques : par exemple, l'application itération n'y est pas bien définie. Elle réussit a construire deux compactifications homéomorphes de $\M_2$ où l'application itération est bien définie. Mais les deux compactifications ne sont plus homéomorphes pour $d\geq 5$ et suivant la compactification choisie, on peut perdre soit la définition de l'itération soit ne plus avoir de mesures d'équilibre pour les fractions rationnelles du bord.

La compactification hybride permet de surmonter ces difficultés-ci en tout degré. 

En application des résultats de ce texte aux fractions rationnelles, on retrouve tout d'abord un résultat de Favre-Gong \cite{FavreGong} dans un contexte plus général. Dans ce contexte, le fait que l'action de $\SL_2^{an}$ soit bien définie en une fraction rationnelle du bord est équivalent au fait d'avoir potentielle bonne réduction. On obtient ainsi le résultat suivant :

\begin{prop}(infra proposition \ref{pot bonne reduc ssi degenere pas quotient})

Soient $f_n \in \mathrm{Rat}_d^{an}$ où l'analytification est prise au sens de la valeur absolue usuelle sur $k$ telles que $f_n \rightarrow f \in \mathrm{Rat}_d^\urcorner$. Notons $\pi^{an} : \Rat_d^{an} \rightarrow \M_d^{an}$ la projection, alors
\begin{center}
L'action de $\SL^{an}_{2,\mathcal{H}(f)}$ est bien définie $\iff f$ n'a pas potentielle bonne réduction$ \iff \pi^{an}(f_n) \rightarrow \infty$.
\end{center} 
\end{prop}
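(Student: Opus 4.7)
\noindent\emph{Plan de preuve.} La stratégie est d'appliquer le théorème \ref{equivalence dégénérecence quotient et action bien déf} à $X = \Rat_d$ et $G = \SL_2$ pour obtenir directement la première équivalence, puis de relier la divergence de $[f_n]$ dans $\M_d^{an}$ à l'absence de potentielle bonne réduction au moyen de la fonction résultant.

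Je commencerais par vérifier les hypothèses du théorème principal : l'espace $\Rat_d$ est l'ouvert affine de $\p^{2d+1}$ complémentaire de l'hypersurface $\{\Res = 0\}$, donc un $k$-schéma affine, intègre, de type fini ; le groupe $\SL_2$ est réductif ; et l'action par conjugaison sur $\Rat_d$ est fermée pour $d \geq 2$ (Silverman a montré qu'elle est même propre, à stabilisateurs finis). Le théorème fournit alors immédiatement l'équivalence : l'action de $\SL_{2,\h(f)}^{an}$ est bien définie en $f$ si et seulement si $\pi^{an}(f_n) \to \infty$.

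Pour la seconde équivalence, je m'appuierais sur la propreté de la fonction $\res : \M_d \to (0,1]$ : pour tout $c > 0$, l'ensemble $\{\res \geq c\}$ est compact dans $\M_d^{an}$, de sorte que $[f_n] = \pi^{an}(f_n)$ diverge si et seulement si $\res([f_n]) \to 0$. Par continuité de $\res$ étendue à la compactification hybride $\M_d^\urcorner$, et en paramétrant la convergence hybride par une échelle $\epsilon_n \to 0$, on aurait $\res_{[f]} = \lim_n \res([f_n])^{\epsilon_n}$. Il en résulterait que $\res_{[f]} = 1$ si et seulement si $[f_n]$ ne diverge pas dans $\M_d^{an}$. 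Comme $f$ a potentielle bonne réduction si et seulement si $\res_{[f]} = 1$, par définition non-archimédienne du résultant moduli, la chaîne d'équivalences serait établie.

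Le principal obstacle sera la justification rigoureuse de l'égalité $\res_{[f]} = \lim_n \res([f_n])^{\epsilon_n}$, qui exige d'établir le prolongement continu de $\res$ à la compactification hybride et d'identifier correctement le paramètre d'échelle hybride associé à la suite $(f_n)$. Il faudra également vérifier que la définition non-archimédienne de $\res_\mathfrak{f}$ comme maximum sur l'orbite reste compatible avec la spécialisation au point limite $f$ du bord, point pour lequel le stabilisateur et le corps résiduel diffèrent du cas archimédien.
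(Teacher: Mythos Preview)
Your first equivalence is handled exactly as in the paper: apply Theorem~\ref{equivalence dégénérecence quotient et action bien déf} to $X=\Rat_d$, $G=\SL_2$.

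For the second equivalence, however, you take a detour that the paper avoids entirely. You propose to pass through $\M_d$, extend $\res$ continuously to $\M_d^\urcorner$, and analyse the limit $\lim_n \res([f_n])^{\epsilon_n}$. The paper instead uses Proposition~\ref{caractérisation du beth de rat}, which identifies $\Rat_d^\beth$ with the locus of maps having good reduction. The equivalence ``action well-defined $\iff$ $f$ n'a pas potentielle bonne réduction'' is then a one-line tautology: by definition the action fails to be well-defined at $f$ precisely when some $g\in\SL_{2,\h(f)}^{an}$ sends $f$ into $\Rat_d^\beth$, i.e.\ when some conjugate of $f$ has good reduction, i.e.\ when $f$ has potential good reduction. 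This is a purely pointwise statement at $f$; no sequence, no $\epsilon_n$, no extension of $\res$ is needed.

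Your route is not wrong in spirit, but the obstacles you yourself flag are real. The continuity of $\res$ on $\M_d^\urcorner$ and the identification $\res_{[f]}=\lim_n \res([f_n])^{\epsilon_n}$ would require essentially reproving Proposition~\ref{caractérisation du beth de rat} together with part of Proposition~\ref{on peut caractériser le fermé qu on doit retirer dans la compactification} (which shows $F=\mathcal{B}$, hence that $\pi^{hyb}(f)\in(\M_d)^\beth$ iff $f$ has potential good reduction). Moreover the implication ``$\res_{[f]}=1 \Rightarrow [f_n]$ ne diverge pas'' is delicate: one could have $\res([f_n])\to 0$ slowly enough relative to $\epsilon_n$ that the rescaled limit is still $1$, and ruling this out is exactly where the substantive content lies. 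The paper's route sidesteps all of this by comparing the two conditions directly at $f$ via the description of $\Rat_d^\beth$.
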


Ainsi, le comportement dynamique des fractions rationnelles du bord est bien celui attendu : $f \in \delta \Rat_d$ n'a pas potentielle bonne réduction si elle est limite de fractions rationnelles dont les projections sur $\M_d^{an}$ dégénèrent. Une différence entre cette limite et celle obtenue par Favre-Gong \cite{FavreGong} et Luo \cite{LuoLimitRationalMap} est que son corps résiduel est un corps plus petit que dans leurs travaux. Ici, le corps résiduel sur lequel $f$ est défini est un corps de degré de transcendance topologique au plus $2d-1$ sur $\C$. Alors que le corps obtenue par Favre-Gong ou Luo est un corps de degré de transcendance topologique infini. Le fait d'avoir un corps plus petit et en particulier de degré de transcendance fini peut-être très utile comme montré par C. Gong \cite{GongMultiplierScale}.

On peut également exprimer $M_d^\urcorner$ comme un quotient d'un ouvert de $\Rat_d^\urcorner$.

\begin{prop}(infra proposition \ref{bijection entre Rat_d quot et M_d})

L'ensemble $\lbrace f\in \delta \Rat_d,$ f a potentielle bonne réduction$\rbrace$ est un fermé de $\Rat_d^\urcorner$.

On dispose d'un homéomorphisme :
\begin{center}
$(\Rat_d^\urcorner \backslash \lbrace f\in \delta \Rat_d,$ f a potentielle bonne réduction$\rbrace)/\SL_2 \rightarrow \M_d^\urcorner$
\end{center}
qui est l'identité sur $\M_d^{an}$.
\end{prop}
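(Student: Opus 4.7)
Le plan est d'appliquer le théorème \ref{Si le morphisme est equidimensionnel, compactifications sont homeo} à $X = \Rat_d$ et $G = \SL_2$, en identifiant explicitement le lieu $\mathcal{B}$ grâce à la proposition \ref{pot bonne reduc ssi degenere pas quotient} déjà établie.

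Je commencerais par vérifier que le cadre des fractions rationnelles satisfait bien les hypothèses du théorème général. Le schéma $\Rat_d$ est intègre et de type fini; il est affine car on peut le réaliser comme le complémentaire, dans $\p^{2d+1}_k$, de l'hypersurface d'annulation du résultant, et le complémentaire d'une hypersurface dans un espace projectif est affine. Le groupe $\SL_2$ est évidemment algébrique et réductif. Le point le plus délicat est la fermeture de l'action de $\SL_2$ sur $\Rat_d$, c'est-à-dire la fermeture de toutes les orbites. Ceci résulte des travaux de Silverman \cite{SilvermanSpaceRationalMaps}, qui montrent que tout point de $\Rat_d$ est stable au sens de GIT pour l'action de $\SL_2$ sur $\p^{2d+1}_k$ muni du fibré $\mathcal{O}(1)$, ce qui entraîne la fermeture des orbites. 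De manière équivalente, ce même fait permet de se placer sous la seconde hypothèse du théorème en prenant $\mathcal{X} = \p^{2d+1}_k$ et en notant que $\Rat_d = \mathcal{X}^s$.

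Ensuite, la proposition \ref{pot bonne reduc ssi degenere pas quotient} identifie précisément le lieu $\mathcal{B}$ des points $f\in \delta \Rat_d$ où l'action de $\SL^{an}_{2,\h(f)}$ n'est pas bien définie comme étant exactement l'ensemble des fractions rationnelles du bord ayant potentielle bonne réduction. Une fois ces vérifications faites, les deux conclusions recherchées (fermeture de $\mathcal{B}$ dans $\Rat_d^\urcorner$ et homéomorphisme $(\Rat_d^\urcorner\backslash \mathcal{B})/\SL_2 \rightarrow \M_d^\urcorner$ se restreignant en l'identité sur $\M_d^{an}$) sont des conséquences directes du théorème \ref{Si le morphisme est equidimensionnel, compactifications sont homeo} appliqué à $(X,G)=(\Rat_d,\SL_2)$, modulo la traduction $X/G = \M_d$.

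La principale difficulté est donc de nature purement technique et se ramène à vérifier que le couple $(\Rat_d, \SL_2)$ entre effectivement dans le cadre du théorème général, toute la substance dynamique et non-archimédienne ayant déjà été encodée dans la caractérisation de $\mathcal{B}$ par la proposition précédente. Aucun nouvel argument sur la compactification hybride ou sur la dynamique des fractions rationnelles à bord n'est nécessaire à ce stade.
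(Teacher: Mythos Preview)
Your proposal is correct and follows essentially the same route as the paper's proof: apply Theorem~\ref{Si le morphisme est equidimensionnel, compactifications sont homeo} to $(\Rat_d,\SL_2)$ and identify $\mathcal{B}$ with the locus of potential good reduction. The paper invokes Proposition~\ref{caractérisation du beth de rat} for this identification rather than Proposition~\ref{pot bonne reduc ssi degenere pas quotient}, but the relevant equivalence is the same; your additional verification of the GIT hypotheses (affineness, integrality, closed action) is already handled earlier in Section~\ref{ApplicationAuxFractionsRationnelles} via the properness lemma, so it is redundant but not wrong.
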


Finalement, l'application itération est bien définie avec cette compactification :

\begin{prop}(infra corollaire \ref{existence itération compactification})

Soit $l \in \N^*$, alors l'application itération $I_l : \M_d \rightarrow \M_{d^l}$ s'étend à $\M_d^\urcorner \rightarrow \M_{d^l}^\urcorner$. 
\end{prop}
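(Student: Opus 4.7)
L'application d'itération $I_l \colon \Rat_d \to \Rat_{d^l}$, $f \mapsto f^{\circ l}$, est un morphisme de $\C$-schémas : les coordonnées de $f^{\circ l}$ sont polynomiales en celles de $f$. Elle est de plus $\SL_2$-équivariante pour l'action par conjugaison, puisque $(f^M)^{\circ l} = (f^{\circ l})^M$ pour tout $M \in \SL_2$. Elle descend donc en un morphisme de $\C$-schémas $I_l \colon \M_d \to \M_{d^l}$ entre les quotients géométriques, qui existent comme variétés affines par GIT (l'action de $\SL_2$ sur $\Rat_d$ étant fermée).

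Une première voie, essentiellement formelle, consiste à invoquer directement la fonctorialité de l'analytification hybride construite par Poineau \cite{PoineauCompactificationHybride} : tout morphisme de $\C$-schémas s'étend en une application continue entre les analytifiés hybrides, i.e. entre les compactifications $X^\urcorner$. Appliquée au morphisme $\M_d \to \M_{d^l}$ ci-dessus, cette fonctorialité fournit immédiatement l'extension continue $\M_d^\urcorner \to \M_{d^l}^\urcorner$ cherchée.

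Une seconde démonstration, plus dans l'esprit du reste du texte, part de l'application continue $\SL_2$-équivariante $I_l^\urcorner \colon \Rat_d^\urcorner \to \Rat_{d^l}^\urcorner$ fournie par la fonctorialité appliquée à $\Rat_d \to \Rat_{d^l}$. On la fait ensuite descendre via l'homéomorphisme $(\Rat_d^\urcorner \setminus \mathcal{B})/\SL_2 \cong \M_d^\urcorner$ de la proposition précédente. Il suffit pour cela de vérifier que $I_l^\urcorner$ envoie $\Rat_d^\urcorner \setminus \mathcal{B}$ dans $\Rat_{d^l}^\urcorner \setminus \mathcal{B}$ ; en utilisant la caractérisation de $\mathcal{B}$ en termes de potentielle bonne réduction, cela se ramène à l'équivalence suivante, pour $f \in \delta \Rat_d$ : $f$ a potentielle bonne réduction si et seulement si $f^{\circ l}$ en a.

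L'implication directe est claire, puisque la réduction commute avec l'itération lorsqu'il n'y a pas de chute de degré. La réciproque, qui constitue le principal obstacle, relève de la dynamique non-archimédienne : on peut l'établir en utilisant la caractérisation de la potentielle bonne réduction comme le fait que l'ensemble de Julia berkovichien $J(f)$ soit réduit à un unique point de type II, combinée à l'identité classique $J(f) = J(f^{\circ l})$ (voir par exemple Rivera-Letelier ou Benedetto). La voie directe du second paragraphe permet d'éviter entièrement ce point technique.
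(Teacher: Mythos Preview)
Ta \og première voie\fg{} repose sur une affirmation fausse : le foncteur $X \mapsto X^\urcorner$ n'est \emph{pas} fonctoriel pour tous les morphismes de $k$-schémas. Pour qu'un morphisme $f \colon X \to Y$ induise une application $X^\urcorner \to Y^\urcorner$, il faut que $f^{hyb}$ envoie $X^+$ dans $Y^+$, c'est-à-dire que $(f^{hyb})^{-1}(Y^\beth) \subset X^\beth$. C'est exactement la condition qui apparaît dans la proposition~\ref{propreté morphisme sur le quotient}, et c'est ce que garantit la propreté de $f$ (voir la proposition citée sous le numéro 4.2 de \cite{PoineauCompactificationHybride} dans le texte). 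Un contre-exemple immédiat, tiré de l'exemple~\ref{exemple de X infty} : l'immersion ouverte $\mathbb{G}_m \hookrightarrow \ana^1$ n'induit pas d'application $\mathbb{G}_m^\urcorner \to (\ana^1)^\urcorner$, puisque les points $\eta_{0,r}$ avec $r<1$ sont dans $(\mathbb{G}_m)_\infty$ mais s'envoient dans $(\ana^1)^\beth$.

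Ta seconde voie hérite du même défaut : tu invoques une application $I_l^\urcorner \colon \Rat_d^\urcorner \to \Rat_{d^l}^\urcorner$ \og fournie par la fonctorialité\fg{}, mais son existence demande précisément $(g_l^{hyb})^{-1}(\Rat_{d^l}^\beth) \subset \Rat_d^\beth$, autrement dit : si $f^{\circ l}$ a bonne réduction alors $f$ aussi. C'est le résultat de Benedetto utilisé dans le corollaire~\ref{propreté itération}, et c'est équivalent au \og point technique\fg{} que tu affirmes éviter. La démonstration du papier consiste justement à établir ce point (via la proposition~\ref{propreté morphisme sur le quotient}), en déduire la propreté de $I_l \colon \M_d^{an} \to \M_{d^l}^{an}$, puis à invoquer la fonctorialité de $(\cdot)^\urcorner$ pour les morphismes propres. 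Ton argument via les ensembles de Julia est correct et fournit une variante de la référence à Benedetto, mais il est indispensable : aucune des deux voies que tu proposes ne permet de le contourner.
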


De plus, Poineau \cite{PoineauDynamique} a montré que l'on disposait d'une continuité de famille de mesures d'équilibre dans un contexte qui surpasse les espaces hybrides et cela induit une continuité des mesures de probabilités $\mu_f$ pour $f \in \Rat_d$ dans le contexte des espaces hybrides. En particulier, la famille de mesures d'équilibre est une famille continue sur $\Rat_d^\urcorner$. 

\begin{center}
\textbf{Organisation du texte}
\end{center}

Dans la section \ref{sectionEspacesBerko}, on redonne la définition des espaces de Berkovich et particulièrement des espaces hybrides puis on redonne les principales étapes de la construction $X^\urcorner$ de la compactification d'une variété $X$ sur un corps $k$ ainsi que quelques propriétés de cette dernière. On conclut cette partie en redonnant différentes définitions de valuations divisorielles et en réexposant leurs différentes équivalences. Puis, on redonne le résultat connu que les valuations divisorielles forment un ensemble dense des espaces de Berkovich. Finalement, on les utilise pour montrer que si un morphisme de schémas est surjectif, son analytification $f^\beth : X^\beth \rightarrow Y^\beth$ reste surjective. Ensuite, dans la section \ref{SectionConstructionSuite}, on s'intéresse tout d'abord à des questions de continuité puis on cherche à construire des suites d'éléments de $G$ tel que si $x_n \rightarrow x \in X^\urcorner$ et $g\in G^{an}_{\mathcal{H}(x)}$ alors $(x_n, g_n) \rightarrow (x,g)$. Les espaces hybrides n'étant pas angéliques en général, il n'est pas garanti que de telles suites existent et cette section donne une construction explicite de ces suites. Dans la section \ref{SectionResultatActionBord}, on montre les principaux résultats de cet article, en caractérisant le lieu de bonne définition de l'action et en comparant le quotient du compactifié et la compactification du quotient. Finalement, dans la section \ref{ApplicationAuxFractionsRationnelles} on applique ces résultats aux fractions rationnelles.

\begin{center}
\textbf{Convention}
\end{center}

\begin{itemize}
\item Soit $k$ un corps, on notera $\overline{k}$ la clôture algébrique de $k$. Si, de plus, $k$ est un corps valué, on notera $\hat{k}$ sa complétion induite par sa valeur absolue.

\item Soit $X$ un schéma sur un corps valué $k$ localement de type fini. Dans toute la suite, on notera $X^{an}$ pour parler de l'analytification de $X$ selon la valeur absolue de $k$ et on notera $X^{hyb}$ l'analytification de $X$ selon la valeur absolue hybride sur $k$.

\item Une variété sur un corps $k$ est un schéma \textit{séparé}, de type fini sur $k$ (donc quasi-compact). En particulier, tous les schémas affines de type fini sur $k$ seront des variétés.

\end{itemize}

\begin{center}
\textbf{Remerciements}
\end{center}

Je remercie chaleureusement Jérôme Poineau pour nos nombreuses discussions tout au long de la création de cet article, ses conseils et idées et pour sa relecture. Je remercie également Charles Favre pour une discussion intéressante ayant entrainé certaines idées de cet article et ses commentaires ainsi que Chen Gong pour ses commentaires. 

\section{Espaces de Berkovich et compactification hybride}\label{sectionEspacesBerko}

Le but de cette section est de présenter la construction d'une compactification hybride de J. Poineau dans son article \cite{PoineauCompactificationHybride}. Cette construction est celle étudiée durant tout le reste de l'article, nous présentons donc ici quelques résultats nécessaire à la lecture. 
\subsection{Espaces de Berkovich sur des anneaux de Banach}

Soit $(A, \vert\vert \cdot \vert\vert)$ un anneau de Banach. Pour ces premières définitions, on reprend les définitions de V. Berkovich \cite{BerkovichLivre}.

On commence par l'analytification de $\ana^n_A$ que l'on note $\ana^{n,an}_A$ qui est l'espace affine de dimension $n$ sur $A$. On va se concentrer sur l'espace topologique sous-jacent bien qu'il soit muni également d'une structure d'espace localement annelé.

\begin{defi} 
On note $\ana^{n,an}$ l'espace affine de dimension $n$ sur $A$.

L'espace sous-jacent est l'ensemble des semi-normes multiplicatives bornées sur $A[T_1, \cdots, T_n]$. Il s'agit donc de l'ensemble des applications :
\begin{equation*}
|\cdot| : A[T_1, \cdots, T_n] \rightarrow \R_{\geq 0}
\end{equation*}
tel que :
\begin{itemize}
\item $|0| = 0$ et $|1| = 1$,
\item $\forall P,Q \in A[T_1, \cdots, T_n], |PQ| = |P| |Q|$,
\item $\forall P,Q \in A[T_1, \cdots, T_n], |P + Q| \leq |P| + |Q|$,
\item $\forall a\in A, |a| \leq \vert\vert a \vert\vert.$
\end{itemize}

On appelle spectre de Berkovich et on le note $\mathcal{M}(A) := \ana^{0,an}_A$. On dispose d'une projection $pr : \ana^{n,an}_A \rightarrow \mathcal{M}(A)$ induite par l'injection $A \hookrightarrow A[T_1, \cdots, T_n]$.

Si $x \in \ana^{n,an}_A$, on note $|\cdot|_x$ la semi-norme associée. L'anneau $A[T_1, \cdots, T_n]/(ker~|\cdot|_x)$ étant intègre, on peut regarder son corps de fraction. Comme $|\cdot|_x$ y induit une valeur absolue, on peut regarder sa complétion que l'on note $\mathcal{H}(x).$

On munit également $\ana^{n,an}_A$ de la topologie la plus grossière telle que pour tout $P \in A[T_1, \cdots, T_n]$, les applications 
\begin{center}
$\begin{cases}
\ana^{n,an}_A &\rightarrow \R_{\geq 0},\\
|\cdot|_x &\mapsto |P|_x
\end{cases}
$
\end{center}
soient continues. Muni de cette topologie, $\ana^{n,an}_A$ est Hausdorff et localement compact. De plus, $\mathcal{M}(A)$ est compact. La projection $pr : \ana^{n,an}_A \rightarrow \mathcal{M}(A)$ est continue. 
\end{defi}

\begin{ex}
Un anneau de Banach que l'on va beaucoup utiliser est celui des corps hybrides. Soit $k$ un corps muni d'une valeur absolue non-triviale $|\cdot|$, alors on définit sur $k$ une norme hybride $|\cdot|_{hyb}$ tel que  
\begin{center}
$|x|_{hyb} = max(|x|, |x|_{triviale})$.
\end{center}
On obtient ainsi un anneau de Banach. 
\end{ex}

\begin{prop}
Si $X$ est un schéma localement de présentation finie sur $A$ où $A$ est un anneau de base géométrique, ce qui inclut les corps valués et hybrides, alors on peut l'analytifier. C'est un espace $A$-analytique dans le sens de Berkovich que l'on note $X^{an}$.
\end{prop}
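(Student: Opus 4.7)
Le plan est de procéder par recollement à partir du cas affine, en suivant la construction classique de Berkovich. Premièrement, pour $X = \mathrm{Spec}(A[T_1, \ldots, T_n]/I)$ affine de présentation finie, avec $I = (f_1, \ldots, f_m)$ finiment engendré, je définirais $X^{an}$ comme le sous-ensemble fermé de $\ana^{n,an}_A$ formé des semi-normes $|\cdot|_x$ annulant $I$, c'est-à-dire vérifiant $|f_i|_x = 0$ pour tout $i$. La fermeture résulte de la continuité des évaluations $|\cdot|_x \mapsto |f_i|_x$ apparue dans la définition de $\ana^{n,an}_A$, si bien que l'espace hérite d'une topologie séparée et localement compacte. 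On vérifierait ensuite, via la fonctorialité, que cette construction ne dépend pas, à homéomorphisme canonique près, de la présentation choisie.

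Deuxièmement, il s'agit de munir $X^{an}$ d'un faisceau structural d'algèbres analytiques, en définissant localement les fonctions analytiques sur les domaines affinoïdes par complétion, pour la norme uniforme, d'algèbres de fractions rationnelles sans pôles sur le domaine considéré. L'hypothèse que $A$ est un anneau de base géométrique est ici cruciale : elle garantit que les algèbres affinoïdes obtenues par changement de base possèdent les bonnes propriétés de finitude (noethérianité, cohérence) assurant que $X^{an}$ vérifie les axiomes d'un espace $A$-analytique au sens de Berkovich.

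Finalement, pour $X$ localement de présentation finie non nécessairement affine, je choisirais un recouvrement $(U_i)$ de $X$ par des ouverts affines de présentation finie, puis recollerais les $U_i^{an}$. La compatibilité sur les intersections se déduit de la fonctorialité appliquée aux immersions ouvertes $U_i \cap U_j \hookrightarrow U_i$ et $U_i \cap U_j \hookrightarrow U_j$, qui fournissent des homéomorphismes canoniques entre les sous-espaces correspondants. Le principal obstacle sera de s'assurer que le recollement est canonique (indépendant du recouvrement choisi) et que le faisceau structural obtenu hérite effectivement d'une structure d'espace $A$-analytique au sens de Berkovich ; cela repose techniquement sur les propriétés de l'anneau de base géométrique $A$, déjà établies pour $A$ un corps valué dans \cite{BerkovichLivre} et étendues au cadre hybride par Poineau dans \cite{PoineauCompactificationHybride}.
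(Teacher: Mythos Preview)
Your proposal is correct and follows essentially the same approach as the paper: the paper does not give a proof either but simply recalls the construction by citing Lemanissier--Poineau, Th\'eor\`eme~4.1.4, and then outlines the same three steps you describe (affine space, closed affine subschemes via the ideal sheaf, then gluing for the general case). Your sketch is in fact more detailed than the paper's, notably in discussing the structural sheaf and the independence from the chosen presentation and covering.
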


\begin{rem}
Pour une définition précise d'anneau de base géométrique on pourra par exemple se référer au livre de T. Lemanissier - J. Poineau (\cite{LemanissierPoineau}, Définition 3.3.8).
\end{rem}

On rappelle comment construire cette analytification en s'appuyant sur le preuve de Lemanissier-Poineau, Théorème 4.1.4 \cite{LemanissierPoineau}.

\begin{itemize}
\item Première étape : Si $X = \ana^n_A$, alors $X^{an} = \ana^{n,an}$.

\item Deuxième étape : Si $X$ est un sous-schéma fermé de $\ana^n_A$, alors $X$ est défini par un idéal $I$ finiment engendré de $\mathcal{O}(\ana^n_A)$ et $X^{an}$ est le sous espace analytique fermé de $\ana^{n,an}_A$ défini par le faisceau d'idéaux engendré par $I$. 

\item Dernière étape : Si $X$ est localement de présentation finie, alors $X = \bigcup U_i$ où les $U_i$ sont des variétés affines de présentation finie que l'on analytifie comme précédemment, ainsi $X^{an}$ est obtenu en recollant les $U_i^{an}$.
\end{itemize}

\begin{prop}
Si $X$ et $Y$ sont deux $A$-schémas localement de présentation finie et $f : X \rightarrow Y$ un morphisme de schéma, alors on peut analytifier $f$ pour avoir $f^{an} : X^{an} \rightarrow Y^{an}$.
\end{prop}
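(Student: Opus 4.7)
Mon plan est de suivre fidèlement la construction en trois étapes de l'analytification d'un schéma rappelée plus haut, en analytifiant à chaque étape le morphisme $f$ de manière compatible.

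Premièrement, dans le cas où $X = \ana^n_A$ et $Y = \ana^m_A$, le morphisme $f$ est donné par $m$ polynômes $P_1, \ldots, P_m \in A[T_1, \ldots, T_n]$. On définit alors $f^{an}$ comme le tiré en arrière des semi-normes : pour $x \in \ana^{n,an}_A$ et $Q \in A[S_1, \ldots, S_m]$, on pose $|Q|_{f^{an}(x)} := |Q(P_1, \ldots, P_m)|_x$. Les axiomes de semi-norme multiplicative bornée se vérifient directement, et la continuité de $f^{an}$ résulte de la définition même de la topologie de Berkovich : pour tout $Q$, l'application $x \mapsto |Q(P_1, \ldots, P_m)|_x$ est continue par hypothèse sur $\ana^{n,an}_A$.

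Deuxièmement, lorsque $X$ et $Y$ sont affines de présentation finie, on les plonge comme sous-schémas fermés de $\ana^n_A$ et $\ana^m_A$, définis par des idéaux $I_X$ et $I_Y$. On peut relever $f$ en un morphisme $\tilde{f} : \ana^n_A \rightarrow \ana^m_A$ en choisissant des relèvements polynomiaux des images des coordonnées de $Y$. L'analytification $\tilde{f}^{an}$ est définie par l'étape précédente, et je vérifierais que sa restriction à $X^{an}$ atterrit dans $Y^{an}$ : en effet, pour $Q \in I_Y$, on a $Q(P_1, \ldots, P_m) \in I_X$ par construction, donc $|Q|_{\tilde{f}^{an}(x)} = 0$ pour tout $x$ dont la semi-norme s'annule sur $I_X$.

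Enfin, dans le cas général, je choisirais d'abord un recouvrement $Y = \bigcup_j V_j$ par des ouverts affines de présentation finie, puis un recouvrement raffiné $X = \bigcup_i U_i$ par des ouverts affines tels que chaque $U_i$ soit envoyé dans un certain $V_{j(i)}$ (ce qui est possible par continuité de $f$ et quasi-compacité locale). Chaque restriction $f|_{U_i} : U_i \rightarrow V_{j(i)}$ s'analytifie par l'étape précédente, et je recollerais les $f^{an}|_{U_i^{an}}$ en un morphisme global $f^{an} : X^{an} \rightarrow Y^{an}$. Le seul point potentiellement délicat est la vérification de la compatibilité sur les intersections $U_i^{an} \cap U_{i'}^{an}$ et l'indépendance vis-à-vis du choix des plongements affines, mais cela découle directement de la fonctorialité du tiré en arrière de semi-normes déjà mise en place dans les deux premières étapes.
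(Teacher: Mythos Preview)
The paper does not give a proof of this proposition: it is stated as a known fact from the general theory (the construction of $X^{an}$ itself is credited to \cite{LemanissierPoineau}, Théorème 4.1.4, and the functoriality in morphisms is implicit in that reference and in \cite{BerkovichLivre}). Your proposal is therefore not being compared to an actual argument in the paper, but rather supplies one.

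That said, your outline is correct and is exactly the natural approach: it mirrors the three-step construction of $X^{an}$ that the paper has just recalled, and the pullback of seminorms in the affine case is the standard definition of $f^{an}$. The only point worth sharpening is in your second step: when you lift $f$ to $\tilde{f} : \ana^n_A \rightarrow \ana^m_A$, the resulting $f^{an}$ on $X^{an}$ must be shown independent of the choice of lift (two lifts differ by elements of $I_X$, which vanish on $X^{an}$), and independent of the choice of closed embeddings $X \hookrightarrow \ana^n_A$, $Y \hookrightarrow \ana^m_A$. You defer this to the third step, but it is really needed already in the second to make the gluing in the third step well-posed. None of this is difficult, and the argument goes through as you describe.
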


Cette analytification préserve la plupart des propriétés du morphisme de schémas. Dans le cas, où l'on dispose d'un morphisme fini, on a de plus le résultat suivant : 




\begin{lem}\label{morphisme fini sur meme dimension est ouvert} Lemme 3.2.4 de \cite{BerkovichLivre}

Soit $\phi : X \rightarrow Y$ un morphisme fini d'espaces $k-$analytiques tel que $dim(X) = dim(Y)$ et $X$ est localement irréductible. Alors $\phi$ est un morphisme ouvert.
\end{lem}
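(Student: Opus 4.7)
The plan is to reduce the statement to an affinoid problem and then argue via integral dependence. Openness is local on the source, so I fix a point $x \in X$ with image $y = \phi(x)$ and aim to show that $\phi$ sends any small neighborhood of $x$ to a neighborhood of $y$. Since $\phi$ is finite, the fiber $\phi^{-1}(y)$ is finite and $\phi$ is closed (finite morphisms being proper). I can therefore shrink $X$ to an affinoid neighborhood of $x$ on which $x$ is the unique preimage of $y$, and by local irreducibility of $X$ I may further assume this affinoid neighborhood is irreducible. Shrinking $Y$ to a small affinoid neighborhood of $y$, I reduce the problem to a finite morphism $\phi : \mathcal{M}(\mathcal{A}) \to \mathcal{M}(\mathcal{B})$ of $k$-affinoid spaces with $\mathcal{A}$ a finite $\mathcal{B}$-module, $\mathcal{A}_{\mathrm{red}}$ an integral domain, and $\dim \mathcal{A} = \dim \mathcal{B}$.

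Next I would exploit the equal-dimension hypothesis. Since $\phi$ is finite hence proper, $\phi(X)$ is a closed analytic subset of $Y$. Irreducibility of $X$ together with $\dim X = \dim Y$ forces $\phi(X)$ to contain an entire irreducible component of $Y$ passing through $y$; after shrinking $Y$ further to isolate that component, I may assume $\phi$ is surjective and $Y$ is itself irreducible. To upgrade surjectivity to openness, I would use the integral dependence of $\mathcal{A}$ over $\mathcal{B}$: a base of neighborhoods of $y$ in $\mathcal{M}(\mathcal{B})$ is given by sets of the form $\{|b_i| < \varepsilon_i\}$ for finitely many $b_i \in \mathcal{B}$, and the characteristic polynomial of an element $a \in \mathcal{A}$ over $\mathcal{B}$ provides inequalities relating $|a|_x$ to values of its coefficients at $y$. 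Combining these inequalities with the fact that $x$ is the only preimage of $y$ in the shrunk $X$, one shows that $\phi$ sends a basic affinoid neighborhood of $x$ onto a basic open neighborhood of $y$.

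The main obstacle I anticipate is this last step. Because the Berkovich topology of affinoid spaces involves non-classical multiplicative seminorm points, openness cannot be tested on rigid points alone, and the integrality inequalities must be manipulated directly at the level of seminorms. The equidimensionality hypothesis is crucial here: it makes the integral equations genuinely constraining in both directions, so that a small perturbation of values at $x$ corresponds to a small perturbation of values at $y$ (if some dimension were lost under $\phi$, surjectivity would still hold by the previous step but openness could fail). Provided this technical affinoid open-mapping step is carried out carefully, the lemma follows from the reduction above.
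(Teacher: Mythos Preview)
The paper does not supply its own proof here; it simply cites Berkovich's Lemma~3.2.4. So the comparison is with Berkovich's argument, and there are two genuine issues in your sketch.

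First, the statement as printed in the paper contains a typo: Berkovich's hypothesis is that the \emph{target} is locally irreducible, and the paper's own applications (Proposition~\ref{points de type 2 sur type 3 sont denses}, Lemma~\ref{morphisme fini sur espace hybride est ouvert}) always verify this for the target. With the hypothesis placed only on the source the statement is actually false: let $Y$ be two copies of the unit disc glued at the origin and $X$ a single disc mapping isomorphically onto one of them; then $\phi$ is a finite, equidimensional morphism with $X$ irreducible, yet the image is not a neighborhood of the origin in $Y$. This matters for your proof. Your sentence ``after shrinking $Y$ further to isolate that component, I may assume $\phi$ is surjective and $Y$ is itself irreducible'' cannot be carried out when $y$ lies on several components of $Y$, since no neighborhood of $y$ meets only one of them. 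Local irreducibility of $Y$ is exactly what makes this step work: the small affinoid $V$ can then be taken irreducible, and the Zariski-closed equidimensional image $\phi(W)\subset V$ must equal $V$.

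Second, your ``main obstacle'' is a phantom. Once you know $\phi(W)=V$ for \emph{every} sufficiently small affinoid pair $(W,V)$ with $W$ the piece of $\phi^{-1}(V)$ containing $x$, openness at $x$ follows immediately: any open $U\ni x$ contains some such $W$, so $\phi(U)\supset V$ is a neighborhood of $y$. No explicit integral-dependence inequalities between seminorms are needed, and as you suspected they would be delicate to push through directly; fortunately the argument ends one step earlier than you thought.
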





\subsection{Espaces de Berkovich hybrides}

On va maintenant se focaliser au cas où l'anneau de Banach $A$ est un corps muni d'une norme hybride. On présente la section 2 de l'article de Poineau \cite{PoineauCompactificationHybride}, on omet les preuves mais on rappelle les différentes définitions.

\begin{prop}
Soit $(k, |\cdot|)$ un corps valué, et on note $k_{hyb}$ le corps muni de la norme hybride. Alors le spectre de Berkovich est :
\begin{center}
$\mathcal{M}(k_{hyb}) = [0,1]$,
\end{center} 
où l'identification vient de l'association à tout $0\leq \varepsilon \leq 1$ de la valeur absolue $|\cdot|^\varepsilon$ et $|\cdot|^0$ correspond à la valeur absolue triviale.

Ainsi, les corps résiduels $\mathcal{H}(\varepsilon)$ sont les complétés de $k$ muni de la valeur absolue $|\cdot|^\varepsilon$. On les note $\hat{k}_\varepsilon$.   

Donc, si $X$ est un espace $k_{hyb}$-analytique, il est muni d'une projection $pr : X \rightarrow \mathcal{M}(k_{hyb})$ et pour tout $\varepsilon \in [0,1], pr^{-1}(\varepsilon) =: X_\varepsilon$ est un espace $\mathcal{H}(\varepsilon)$-analytique. 
\end{prop}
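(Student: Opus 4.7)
Mon plan consiste à utiliser un argument classique d'Ostrowski pour identifier l'ensemble sous-jacent de $\mathcal{M}(k_{hyb})$, puis à conclure par des arguments topologiques et formels. Je commencerais par observer qu'un point de $\mathcal{M}(k_{hyb})$ est par définition une semi-norme multiplicative $|\cdot|'$ sur $k$ bornée par la norme hybride. Puisque $k$ est un corps et que $|1|' = 1$, la multiplicativité interdit que $|a|' = 0$ pour $a \in k^\times$ (sinon $1 = |a|' \, |a^{-1}|' = 0$). Ainsi, tout élément de $\mathcal{M}(k_{hyb})$ est en réalité une valeur absolue sur $k$ dominée par $\max(|\cdot|, 1)$ sur $k^\times$. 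En appliquant cette majoration simultanément à $a$ et à $a^{-1}$, je déduirais immédiatement que $|a|' = 1$ si $|a| = 1$, et $1 \leq |a|' \leq |a|$ si $|a| > 1$, avec une symétrie pour $|a| < 1$.

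L'étape centrale est l'argument d'Ostrowski. Si $|\cdot|'$ est triviale, on prend $\varepsilon = 0$ ; sinon, les encadrements ci-dessus imposent l'existence de $a_0 \in k^\times$ avec $|a_0|, |a_0|' > 1$, et je poserais $\varepsilon := \log |a_0|' / \log |a_0| \in (0, 1]$. Pour $b \in k^\times$ avec $|b| > 1$ et $n, m \in \N^*$, l'inégalité $|b^n a_0^{-m}| \leq 1$ entraîne que la norme hybride de $b^n a_0^{-m}$ vaut $1$, donc $|b^n a_0^{-m}|' \leq 1$. En faisant varier $n, m$ pour approcher $\log|b| / \log|a_0|$ par les rationnels des deux côtés, j'obtiendrais $|b|' = |b|^\varepsilon$. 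Par multiplicativité, cette égalité s'étend à tout $k^\times$. Réciproquement, on vérifie aisément que $|\cdot|^\varepsilon$ est bien majorée par $\max(|\cdot|, 1)$ pour tout $\varepsilon \in [0,1]$, ce qui fournit la bijection ensembliste $\varepsilon \mapsto |\cdot|^\varepsilon$ de $[0,1]$ vers $\mathcal{M}(k_{hyb})$.

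Pour la topologie, il suffit de remarquer que la topologie cible est engendrée par les évaluations $|\cdot|' \mapsto |a|'$ pour $a \in k$, et que $\varepsilon \mapsto |a|^\varepsilon$ est continue pour tout $a$. Comme $[0,1]$ est compact et $\mathcal{M}(k_{hyb})$ séparé, on obtient un homéomorphisme. L'identification $\mathcal{H}(\varepsilon) = \hat{k}_\varepsilon$ est alors directe puisque le noyau de $|\cdot|^\varepsilon$ sur $k$ est trivial, le corps des fractions associé étant simplement $k$ et sa complétion pour $|\cdot|^\varepsilon$ étant $\hat{k}_\varepsilon$. Enfin, l'existence de la projection $pr : X \to \mathcal{M}(k_{hyb})$ et le fait que ses fibres soient naturellement des espaces $\mathcal{H}(\varepsilon)$-analytiques relèvent de la théorie générale des espaces analytiques sur un anneau de Banach (\cite{LemanissierPoineau}). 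L'obstacle principal est la vérification fine d'Ostrowski, en particulier l'indépendance de $\varepsilon$ vis-à-vis du choix de $a_0$ via un argument de densité de $\Q$ dans $\R$ ; le reste des arguments est essentiellement formel.
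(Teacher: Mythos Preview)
Your Ostrowski-type argument is correct and is the standard way to establish $\mathcal{M}(k_{hyb}) \cong [0,1]$. Note, however, that the paper does not actually prove this proposition: it is presented as a recall from \cite{PoineauCompactificationHybride} (the author writes explicitly at the start of the subsection that he omits the proofs and only restates the definitions and results). So there is no proof in the paper to compare against; your proposal supplies the details that the paper delegates to the reference, and those details are sound.
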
 

\begin{rem}
Si $X$ est un $k_{hyb}$ espace analytique, pour $\varepsilon > 0, X_\varepsilon$ ont des espaces topologiques sous-jacent isomorphes : tous les espaces $\mathcal{H}(\varepsilon)$-analytique étant tous des espaces $k$-analytiques muni d'une normalisation différente. Et pour $\varepsilon = 0, X_0$ est un espace analytique sur un corps trivialement valué.

Dans le cas où $k$ est un corps archimédien alors pour $\epsilon > 0$, on dispose d'espaces analytiques complexes et pour $\varepsilon = 0$ on trouve un espace de Berkovich de nature non-archimédienne. 
Ainsi, les corps hybrides peuvent permettre de lier des phénomènes archimédiens et non-archimédiens.   
\end{rem}

Poineau introduit la notion de flot qui permettra de définir une relation d'équivalence nécessaire à la construction d'une compactification hybride. On présente ici sa définition.

\begin{defi}
Soit $\varepsilon \in [0,1]$, alors on définit :
\begin{center}
$I_\varepsilon :=
		\begin{cases}
        [0, +\infty[ & \mathrm{si}~ \varepsilon = 0\\
        [0, \frac{1}{\varepsilon}] & \mathrm{sinon.}    
        \end{cases}$
\end{center}
On notera $I^*_\varepsilon := I_\varepsilon \backslash \lbrace 0 \rbrace$.

De plus, si $S$ est un espace $k_{hyb}$-analytique, alors avec la projection $pr : S \rightarrow \mathcal{M}(k_{hyb})$, pour tout $x\in S$, on définit $I_x := I_{pr(x)}$.
\end{defi}

\begin{lem}
Soit $x \in \ana^{n,an}_{k_{hyb}}$ et $\alpha \in I_x$, alors l'application :
\begin{center}
$P \in k[T_1, \cdots, T_n] \mapsto |P(x)|^\alpha \in \R_{\geq 0}$ 
\end{center}
définit un point de $\ana^{n,an}_{k_{hyb}}$ que l'on note $x^\alpha$. On a $pr(x^\alpha) = \alpha ~ pr(x)$.

De plus, si $\alpha \in I^*_x$, alors les corps $\mathcal{H}(x)$ et $\mathcal{H}(x^\alpha)$ sont isomorphes.
\end{lem}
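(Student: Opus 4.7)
Le plan est de vérifier tour à tour les propriétés d'une semi-norme multiplicative bornée pour l'application $|\cdot|_{x^\alpha} : P \mapsto |P(x)|^\alpha$, en faisant attention au cas où $pr(x)=0$ par rapport au cas où $pr(x)>0$, puisque c'est précisément la raison pour laquelle l'intervalle $I_\varepsilon$ est défini différemment selon les cas. Plus précisément, j'écrirais $\varepsilon = pr(x)$ et noterais que le rôle de $I_\varepsilon$ est exactement d'assurer que $\varepsilon \alpha$ reste dans $[0,1]$ (ou est arbitraire si $\varepsilon=0$), conditions nécessaires à la fois à la sous-additivité et à la borne par la norme hybride.

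Pour la multiplicativité et les normalisations $|0|=0, |1|=1$, cela découle immédiatement des propriétés analogues pour $|\cdot|_x$ et du fait que $t \mapsto t^\alpha$ est multiplicatif sur $\R_{\geq 0}$. Pour la sous-additivité, je distinguerais deux cas. Si $\varepsilon = 0$, alors la restriction de $|\cdot|_x$ à $k$ est la valeur absolue triviale, donc $|\cdot|_x$ est non-archimédienne (par le critère classique : une semi-norme multiplicative dont la restriction à $\Z$ est bornée est non-archimédienne). Par suite, $|P+Q|_x^\alpha \leq \max(|P|_x, |Q|_x)^\alpha = \max(|P|_x^\alpha, |Q|_x^\alpha) \leq |P|_x^\alpha + |Q|_x^\alpha$ pour tout $\alpha \geq 0$. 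Si $\varepsilon > 0$, alors $\alpha \leq 1/\varepsilon$, d'où $\varepsilon \alpha \leq 1$, et je pourrais utiliser l'inégalité élémentaire $(a+b)^\beta \leq a^\beta + b^\beta$ valable pour $a, b \geq 0$ et $\beta \in [0,1]$, appliquée avec $\beta = \alpha$, puisque $|P+Q|_x \leq |P|_x + |Q|_x$. (Le passage à la puissance $\alpha$ préserve l'inégalité triangulaire tant que $\alpha \leq 1$ ; pour $\alpha > 1$ on rentre dans un régime qui ne fonctionne que grâce à la présence du maximum non-archimédien, ce qui explique précisément la définition de $I_\varepsilon$.)

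Pour la borne par la norme hybride, je calculerais la restriction à $k$ : pour $a \in k$, $|a|_{x^\alpha} = |a|^{\varepsilon \alpha}$. Comme $\varepsilon \alpha \in [0,1]$, on a $|a|^{\varepsilon \alpha} \leq \max(|a|, 1) = |a|_{hyb}$ (en distinguant selon que $|a|\leq 1$ ou $|a| \geq 1$). Ceci établit que $x^\alpha \in \ana^{n,an}_{k_{hyb}}$. Par ailleurs, ce calcul donne exactement $pr(x^\alpha) = \varepsilon \alpha = \alpha \cdot pr(x)$.

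Pour l'isomorphisme $\mathcal{H}(x) \simeq \mathcal{H}(x^\alpha)$ lorsque $\alpha \in I_x^*$, je remarquerais que les noyaux des deux semi-normes sont identiques (puisque $\alpha > 0$ implique $|P|_x = 0 \iff |P|_x^\alpha = 0$), donc les anneaux quotients et leurs corps des fractions coïncident canoniquement. Sur ce corps des fractions, les valeurs absolues $|\cdot|_x$ et $|\cdot|_x^\alpha$ sont équivalentes (l'une étant une puissance strictement positive de l'autre), donc induisent la même topologie et donc la même complétion, à isomorphisme (non isométrique) près. L'étape la plus subtile reste la vérification de l'inégalité triangulaire dans le cas archimédien $\varepsilon > 0$, car il faut justifier que le passage à la puissance $\alpha \leq 1/\varepsilon$ est compatible avec l'axiome de sous-additivité ; le reste est formel.
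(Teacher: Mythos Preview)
Le papier ne donne pas de preuve de ce lemme~: il est rappelé de l'article de Poineau sur la compactification hybride, les démonstrations étant explicitement omises. Il n'y a donc pas de preuve du papier à laquelle comparer.

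Ta démarche est globalement correcte, mais il y a une vraie lacune dans la vérification de la sous-additivité lorsque $\varepsilon = pr(x) \in (0,1)$ et $\alpha \in (1, 1/\varepsilon]$. Tu proposes d'appliquer l'inégalité $(a+b)^\beta \leq a^\beta + b^\beta$ avec $\beta = \alpha$, mais cette inégalité exige $\beta \leq 1$, alors que $\alpha$ parcourt $[0, 1/\varepsilon]$ et peut donc dépasser~$1$. Ta remarque parenthétique selon laquelle \og pour $\alpha > 1$ on rentre dans un régime qui ne fonctionne que grâce à la présence du maximum non-archimédien \fg{} est fausse dans le cas archimédien~: si $k$ est archimédien et $\varepsilon > 0$, la semi-norme $|\cdot|_x$ est authentiquement archimédienne et il n'y a aucune inégalité ultramétrique sur laquelle s'appuyer.

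La correction consiste à établir d'abord que $|\cdot|_x^{1/\varepsilon}$ satisfait elle-même l'inégalité triangulaire. On peut soit invoquer Gelfand--Mazur/Ostrowski pour identifier $\mathcal{H}(x)$ à $\R$ ou $\C$ muni de $|\cdot|_\infty^\varepsilon$, de sorte que $|\cdot|_x^{1/\varepsilon}$ est une semi-norme d'évaluation honnête~; soit utiliser l'astuce binomiale classique montrant que toute semi-norme multiplicative vérifiant $|n| \leq n^\varepsilon$ pour tout $n \in \N$ a sa puissance $1/\varepsilon$ encore sous-additive. Une fois cela acquis, on écrit $|\cdot|_x^\alpha = (|\cdot|_x^{1/\varepsilon})^{\varepsilon\alpha}$ et on applique ton inégalité élémentaire avec $\beta = \varepsilon\alpha \in [0,1]$. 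Le reste de ton argument (borne par la norme hybride, formule $pr(x^\alpha) = \alpha\, pr(x)$, isomorphisme des corps résiduels complétés) est correct.
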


On peut désormais définir le flot :

\begin{defi}
Posons 
\begin{center}
$D(\ana^{n,an}_{k_{hyb}}) := \bigcup_{x \in \ana^{n,an}_{k_{hyb}}} \lbrace x\rbrace \times I_x^\alpha \subset \ana^{n,an}_{k_{hyb}} \times \R_{> 0}$.
\end{center}
Le flot est alors l'application :
\begin{center}
$\Phi : \begin{cases} 
		D(\ana^{n,an}_{k_{hyb}}) &\rightarrow \ana^{n,an}_{k_{hyb}},\\
		(x,\alpha)&\mapsto x^\alpha.
		\end{cases}$
\end{center}
\end{defi}

\begin{prop}
Le flot est une application continue et ouverte.
\end{prop}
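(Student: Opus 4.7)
Je propose de scinder la preuve en deux parties, la continuité puis l'ouverture, cette dernière reposant sur la première via une factorisation.

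Pour la continuité, on exploite que la topologie sur $\ana^{n,an}_{k_{hyb}}$ est la topologie initiale pour les évaluations $x \mapsto |P(x)|$, $P \in k[T_1, \ldots, T_n]$. Il suffit donc de vérifier, pour chaque $P$ fixé, la continuité de $(x, \alpha) \mapsto |P(x^\alpha)| = |P(x)|^\alpha$. Celle-ci s'obtient en composant l'application continue $(x, \alpha) \mapsto (|P(x)|, \alpha) \in \R_{\geq 0} \times \R_{>0}$ avec l'exponentiation $(t, \beta) \mapsto t^\beta$, qui est continue sur $\R_{\geq 0} \times \R_{>0}$ (la seule subtilité étant en $t = 0$, où l'hypothèse $\beta > 0$ assure la continuité).

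Pour l'ouverture, l'idée est d'introduire l'involution
\[
\sigma : D(\ana^{n,an}_{k_{hyb}}) \to D(\ana^{n,an}_{k_{hyb}}), \quad (x, \alpha) \mapsto (x^\alpha, 1/\alpha).
\]
Elle est bien définie : la relation $pr(x^\alpha) \cdot (1/\alpha) = pr(x) \leq 1$ fournit exactement $1/\alpha \in I_{pr(x^\alpha)}$. Sa continuité résulte de l'étape précédente appliquée à la première coordonnée et de la continuité de l'inversion sur la seconde. Un calcul immédiat donne $\sigma \circ \sigma = \mathrm{id}$, donc $\sigma$ est un auto-homéomorphisme de $D(\ana^{n,an}_{k_{hyb}})$. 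On a alors $\Phi = pr_1 \circ \sigma$, où $pr_1$ désigne la première projection, et il suffit de montrer que $pr_1 : D(\ana^{n,an}_{k_{hyb}}) \to \ana^{n,an}_{k_{hyb}}$ est ouverte.

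Pour ce dernier point, on calcule explicitement l'image par $pr_1$ d'un ouvert de base de la forme $(V \times (a, b)) \cap D(\ana^{n,an}_{k_{hyb}})$, avec $V$ ouvert de $\ana^{n,an}_{k_{hyb}}$ et $0 \leq a < b$. Une disjonction selon que $pr(x) = 0$ ou $pr(x) > 0$, exploitant l'expression de $I_\varepsilon$ rappelée dans la définition, montre que cette image vaut $V$ entier lorsque $a = 0$ et $V \cap pr^{-1}([0, 1/a))$ lorsque $a > 0$ : elle est ouverte dans les deux cas par continuité de $pr$. L'obstacle principal me paraît être cette petite analyse de cas, reflet de la forme différente de $I_\varepsilon$ selon que $\varepsilon$ est nul ou non, mais rien n'y est réellement bloquant.
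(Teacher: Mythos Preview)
Your proof is correct. The paper itself does not provide a proof of this statement: it simply refers the reader to Proposition~2.10 of Poineau \cite{PoineauCompactificationHybride}, so there is no in-paper argument to compare against. Your approach---continuity via the initial topology and openness via the involution $\sigma(x,\alpha)=(x^\alpha,1/\alpha)$ reducing the question to openness of the first projection $pr_1$ on $D(\ana^{n,an}_{k_{hyb}})$---is clean and self-contained; the case analysis on $a=0$ versus $a>0$ for the basic open sets is handled correctly.
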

Pour la preuve, on pourra se référer à la proposition 2.10 de Poineau \cite{PoineauCompactificationHybride}.

On peut également définir la notion de trajectoire d'un point et d'un ensemble.

\begin{defi}
Soit $x \in \ana^{n,an}_{k_{hyb}}$, alors la trajectoire du point $x$ est l'ensemble $T(x)$ défini par :
\begin{center}
$T(x) := \Phi(x, I_x^*) = \lbrace x^\alpha, \alpha \in I_x^*\rbrace.$ 
\end{center}
\end{defi}

\begin{rem}
Soit $x\in \ana^{n,an}_{k_{hyb}}$, alors pour $y \in \ana^{n,an}_{k_{hyb}}$ si $y \in T(x)$ alors $T(y) = T(x)$. 
\end{rem}

Ce résultat va permettre de définir une relation d'équivalence en utilisant les trajectoires des points.

On peut de plus définir la trajectoire d'un ensemble. 
\begin{defi}\label{def trajectoire ensemble}
Soit $V$ un sous-ensemble de $\ana^{n,an}_{k_{hyb}}$, alors la trajectoire de $V$ est l'ensemble 
\begin{center}
$T(V) := \bigcup_{x\in V} T(x) \subset \ana^{n,an}_{k_{hyb}}$.
\end{center}

De plus, si $V, V'$ sont deux ensembles de $\ana^{n,an}_{k_{hyb}}$, alors $T(V\cup V') = T(V) \cup T(V')$ et $T(V\cap V') = T(V) \cap T(V')$.
\end{defi}

\subsection{Construction de la compactification hybride}
Dans cette partie, on présente la construction d'une compactification hybride, on se base sur les sections 3 et 4 de l'article de Poineau \cite{PoineauCompactificationHybride}, on omet les preuves mais l'on présente les différents résultats.

Tout d'abord, la construction ne se fait que sur un ouvert de $X^{hyb}$ où $X$ est une variété sur $k$ et $X^{hyb}$ signifie que l'on analytifie $X$ sur $k_{hyb}$. L'objectif est de retirer de $X^{hyb}$ une fibre générique.

M. Raynaud (\cite{RaynaudGeometrieRigide}), P. Berthelot (\cite{BerthelotLivre}) , V. Berkovich (\cite{BerkovichSchemaFormel1}, \cite{BerkovichSchemaFormel2}) puis A. Thuillier (\cite{ThuillierGeometrieToroidale}) ont remarqué que les espaces non-archimédiens peuvent être utilisés pour définir une notion de fibre générique pour des schémas formels. Comme dans la section 3 de Poineau \cite{PoineauCompactificationHybride}, on présente la construction de Thuillier.

On prend $\mathcal{X}$ un schéma formel sur $k_0$, on rappelle que cela signifie que l'on prend $k$ trivialement valué, qui est localement algébrique. A ce schéma formel, on associe une fibre générique $\mathcal{X}^\beth$ qui est un espace $k_0$ analytique et une application $r_{\mathcal{X}} : \mathcal{X}^\beth \rightarrow \mathcal{X}_s$ qui est anti-continue i.e. l'image réciproque d'un ouvert est fermé.

On ne présente la construction que dans le cas affine, mais elle existe dans un cadre plus général.

Soit $\mathcal{X} = X$ une variété affine, $X = Spec(A)$. Alors,
\begin{center}
$X^\beth = \mathcal{M}(A)$
\end{center}
où $A$ est trivialement valué. L'application $r_X : \mathcal{M}(A) \rightarrow Spec(A)$ est l'application de réduction telle que pour $x\in \mathcal{M}(A),$
\begin{center}
$r_X(x) = \lbrace a\in A, |a(x)[ <1 \rbrace$.
\end{center}

Ceci permet de définir la partie du bord de la compactification hybride.

\begin{defi}
Soit $X$ une variété sur $k$, alors on pose 
\begin{center}
$X_\infty := X^{an}_0 \backslash X^\beth$
\end{center}
c'est un ouvert de $X^{an}_0$ et donc c'est un espace $k_0$-analytique.
\end{defi}

On peut regarder quelques exemples.
\begin{ex}\label{exemple de X infty}
Si $X = \ana^1_k$, on note $\eta_{a,r} \in \ana^{1,an}_{k_0}$ la semi-norme $P = \sum a_k(T-a)^k \mapsto \mathrm{max}|a_k|_0 r^k$. Comme $k_0$ est trivialement valué, $\eta_{a,r} \leq 1 \iff \eta_{a,r}(T-a) \leq 1 \iff \eta_{a,r}(T) \leq 1$. Ainsi,

\begin{center}
$(\ana^1_k)_\infty = \lbrace x\in \ana^{1,an}_{k_0}, \exists P\in k[T] |P(x)| >1\rbrace = \lbrace \eta_{0,r}, r\in \R_{>1}\rbrace$.
\end{center}

De même, si $X = \mathbf{G}_{m,k}$ alors

\begin{align*}
(\mathbb{G}_{m,k})_\infty &= \lbrace x\in \mathbb{G}^{an}_{m,k_0}, \exists P\in k[T, T^{-1}] |P(x)| >1\rbrace \\
&= \lbrace x\in \mathbb{G}^{an}_{m,k_0}, \mathrm{max} (|T(x)|, |T^{-1}(x)) >1\rbrace \\
&= \lbrace \eta_{0,r}, r\in \R_{>0}, r\neq 1\rbrace.
\end{align*}
\end{ex}

Maintenant que l'on a défini la partie "bord" de la compactification, on peut définir l'objet à quotienter pour avoir une compactification.

\begin{defi}
Soit $X$ une $k$ variété, alors on pose 
\begin{center}
$X^+ := X^{hyb} \backslash X^\beth$.
\end{center}
C'est un ouvert de $X^{hyb}$ et c'est donc un espace $k_{hyb}$-analytique. On peut remarquer que $X^+_0 = X_\infty$.
\end{defi}

On dispose de quelques résultats sur les morphismes.

\begin{prop}
Soit $X, Y$ deux $k$ variétés et $f : X \rightarrow Y$ un morphisme propre, alors l'analytifé $f^{hyb} : X^{hyb} \rightarrow Y^{hyb}$ est propre et se restreint en un morphisme $f^+ : X^+ \rightarrow Y^+$.
\end{prop}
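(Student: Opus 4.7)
La preuve se décompose en deux assertions indépendantes. Pour la propreté de $f^{hyb}$, j'invoquerai le principe général que l'analytification de Berkovich sur un anneau de Banach préserve la propreté des morphismes de schémas de type fini (\cite{BerkovichLivre}), appliqué à l'anneau $k_{hyb}$. Alternativement, comme $f^{hyb}$ commute avec la projection vers $\mathcal{M}(k_{hyb}) = [0,1]$, il suffit de vérifier la propreté fibre à fibre : chaque fibre $f^{hyb}_\varepsilon$ s'identifie à l'analytifié $f^{an}$ sur $\hat{k}_\varepsilon$ (ou $k_0$ si $\varepsilon=0$), morphisme propre par les résultats classiques d'analytification.

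\emph{Restriction à $X^+$.} La seconde assertion revient à montrer que $(f^{hyb})^{-1}(Y^\beth) \subset X^\beth$. Puisque $f^{hyb}$ commute avec la projection sur $\mathcal{M}(k_{hyb})$ et que $Y^\beth$ est contenu dans la fibre $Y^{hyb}_0 = Y^{an}_0$ au-dessus de $\varepsilon = 0$, tout antécédent d'un point de $Y^\beth$ appartient nécessairement à $X^{hyb}_0 = X^{an}_0$. Le problème se ramène donc à établir $(f^{an}_0)^{-1}(Y^\beth) \subset X^\beth$, où $f^{an}_0 : X^{an}_0 \to Y^{an}_0$ est l'analytification sur $k_0$ trivialement valué. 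J'utiliserai la caractérisation (cf. \cite{ThuillierGeometrieToroidale}) selon laquelle $x \in X^{an}_0$ appartient à $X^\beth$ si et seulement si le morphisme canonique $\mathrm{Spec}(\mathcal{H}(x)) \to X$ s'étend (de manière unique, par séparation de $X$) en un morphisme $\mathrm{Spec}(\mathcal{H}(x)^\circ) \to X$, où $\mathcal{H}(x)^\circ$ désigne l'anneau de valuation associé à la valeur absolue sur $\mathcal{H}(x)$.

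\emph{Argument central et obstacle principal.} Étant donné $x \in X^{an}_0$ avec $f^{an}_0(x) \in Y^\beth$, la caractérisation précédente fournit une extension $\mathrm{Spec}(\mathcal{H}(f^{an}_0(x))^\circ) \to Y$. Par composition avec le morphisme $\mathrm{Spec}(\mathcal{H}(x)^\circ) \to \mathrm{Spec}(\mathcal{H}(f^{an}_0(x))^\circ)$ induit par l'inclusion naturelle des anneaux de valuation, on construit un prolongement $\mathrm{Spec}(\mathcal{H}(x)^\circ) \to Y$ du morphisme $\mathrm{Spec}(\mathcal{H}(x)) \to X \xrightarrow{f} Y$ ; le critère valuatif de propreté appliqué à $f$ livrera alors un unique relèvement $\mathrm{Spec}(\mathcal{H}(x)^\circ) \to X$, ce qui prouvera $x \in X^\beth$. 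L'obstacle principal consistera à rendre rigoureuse la caractérisation de $X^\beth$ dans le cas où $X$ n'est pas affine : il faudra se placer sur un ouvert affine $U \subset X$ contenant la réduction $r_X(x) \in X_s$, ce qui repose sur l'anti-continuité de $r_X$ et sur la compatibilité du foncteur $(\cdot)^\beth$ avec les immersions ouvertes. Le critère valuatif ne présente en revanche pas de difficulté supplémentaire, puisqu'il reste valable pour les anneaux de valuation non nécessairement noethériens dès que $f$ est de type fini.
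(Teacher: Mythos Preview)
Your proposal is essentially correct, and in fact goes further than the paper itself: the paper does not give a proof here but simply refers the reader to proposition~4.2 of Poineau \cite{PoineauCompactificationHybride}. Your argument for the restriction to $X^+$ --- reducing to $(f^{an}_0)^{-1}(Y^\beth)\subset X^\beth$ via the fibrewise structure, then invoking Thuillier's valuative characterisation of $X^\beth$ and the valuative criterion of properness for $f$ --- is the standard one and is presumably what underlies Poineau's proposition.

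One small caveat: your \emph{alternative} argument for the propreté of $f^{hyb}$ (``il suffit de vérifier la propreté fibre à fibre'') is not sufficient as stated. Properness over each point of the base $[0,1]$ does not, by itself, yield properness of the total morphism $f^{hyb}$; one needs an argument uniform over the base, or a direct appeal to the GAGA-type results for analytification over a Banach ring (as developed in \cite{LemanissierPoineau}, which the paper uses elsewhere). Your primary reference to the general principle is the right route; the fibrewise remark should be dropped or presented only as a heuristic.
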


Pour la preuve, on pourra se référer à la proposition 4.2 de Poineau \cite{PoineauCompactificationHybride}.

\begin{lem}Proposition 4.6 de \cite{PoineauCompactificationHybride}. \label{morphisme analytifié garde les propriétés}

Soient $X,Y$ deux $k$-schémas de type fini et soit $f : X \rightarrow Y$ un morphisme plat, fini alors $f^{hyb} : X^{hyb} \rightarrow Y^{hyb}$ l'est aussi. De plus, si $f$ est propre alors $f^+$ est un morphisme plat, fini également.
\end{lem}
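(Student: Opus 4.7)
Le plan est de procéder en deux temps, en traitant séparément $f^{hyb}$ et $f^+$. Pour le premier point, je m'appuierais sur le fait que le foncteur d'analytification à la Berkovich sur un anneau de Banach préserve la platitude et la finitude. Quitte à prendre un recouvrement affine de $Y$ dont on tire le recouvrement de $X$ (en utilisant que tout morphisme fini est affine), on se ramène au cas $X = \mathrm{Spec}(B)$, $Y = \mathrm{Spec}(A)$ avec $B$ une $A$-algèbre finie. Le morphisme $f^{hyb}$ s'obtient localement à partir des algèbres affinoïdes $k_{hyb}$-analytiques correspondantes, et la finitude comme la platitude passent à ces extensions de manière standard.

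Pour la seconde étape, l'observation clé est que $(f^{hyb})^{-1}(Y^\beth) = X^\beth$ lorsque $f$ est finie. Hors de la fibre centrale, il n'y a rien à vérifier puisque pour $\varepsilon > 0$, les fibres $X^{hyb}_\varepsilon$ s'envoient dans $Y^{hyb}_\varepsilon$, qui est disjointe de $Y^\beth \subset Y^{hyb}_0$. Sur la fibre trivialement valuée, dans le cas affine, un point $x$ appartient à $X^\beth$ ssi $|b|_x \leq 1$ pour tout $b \in B$. Si $f^{an}(x) \in Y^\beth$, alors $|a|_x \leq 1$ pour tout $a \in A$. En utilisant une relation de dépendance intégrale monique $b^n + a_1 b^{n-1} + \cdots + a_n = 0$ à coefficients dans $A$, on montre par l'absurde que $|b|_x > 1$ entraîne une contradiction (le terme dominant $|b|^n_x$ ne peut être majoré par la somme des autres), d'où $|b|_x \leq 1$. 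L'inclusion réciproque résulte simplement de la fonctorialité du $\beth$.

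Ayant établi cette identification, le morphisme $f^+ : X^+ \to Y^+$ s'obtient par changement de base de $f^{hyb}$ le long de l'immersion ouverte $Y^+ \hookrightarrow Y^{hyb}$. Platitude et finitude étant préservées par changement de base, $f^+$ en hérite automatiquement. Le principal obstacle réside dans le recollement soigneux entre le cas affine et le cas général pour l'égalité $(f^{hyb})^{-1}(Y^\beth) = X^\beth$, ainsi que dans la vérification précise que la finitude des algèbres passe aux espaces $k_{hyb}$-analytiques malgré la nature mixte archimédien/non-archimédien du corps de base ; ces vérifications demeurent cependant des applications directes du formalisme développé dans \cite{LemanissierPoineau} et dans \cite{PoineauCompactificationHybride}.
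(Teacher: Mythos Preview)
Votre démarche est correcte et constitue essentiellement l'argument attendu. Dans le présent article, ce lemme n'est pas démontré : il est simplement énoncé avec renvoi à la Proposition~4.6 de \cite{PoineauCompactificationHybride}, de sorte qu'il n'y a pas de preuve locale à laquelle comparer la vôtre.

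Votre stratégie --- réduction au cas affine pour la préservation de la platitude et de la finitude par l'analytification hybride, puis identification $(f^{hyb})^{-1}(Y^\beth) = X^\beth$ via l'argument de dépendance intégrale, afin de voir $f^+$ comme le changement de base de $f^{hyb}$ le long de l'immersion ouverte $Y^+ \hookrightarrow Y^{hyb}$ --- est précisément la voie naturelle et recoupe celle de la référence citée (l'inclusion $(f^{hyb})^{-1}(Y^\beth) \subset X^\beth$ est d'ailleurs le contenu de la proposition précédente sur les morphismes propres, que vous redémontrez dans le cas fini). Une remarque mineure : l'hypothèse de propreté dans la seconde assertion est automatique puisqu'un morphisme fini est propre ; vous pouvez le signaler pour lever toute ambiguïté.
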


On a de même des propriétés sur les variétés qui restent vraies dans le cas des espaces hybrides.

\begin{prop} Proposition 4.5 de \cite{PoineauCompactificationHybride} \label{normalité se préserve par espaces hybrides}

Soit $X$ une $k$-variété. Alors si $X$ est normal, $X^{hyb}$ et $X^+$ le sont aussi.
\end{prop}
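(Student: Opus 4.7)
Le plan est d'exploiter le caractère local de la normalité dans la catégorie des espaces analytiques de Berkovich, combiné au fait classique que l'analytification préserve la normalité dans le cas des corps valués ou trivialement valués. Dans un premier temps, je me ramènerais au cas affine : si $X = \bigcup_i U_i$ est un recouvrement par des ouverts affines, alors $X^{hyb} = \bigcup_i U_i^{hyb}$ forme un recouvrement ouvert, et la normalité étant une propriété qui se vérifie sur les anneaux locaux, il suffit de la démontrer pour chacun des $U_i^{hyb}$. On peut donc supposer $X = \mathrm{Spec}(A)$ affine avec $A$ une $k$-algèbre de type fini intégralement close dans son corps des fractions.

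Dans ce cadre, pour tout $x \in X^{hyb}$, la semi-norme $|\cdot|_x$ sur $A$ possède un noyau $\mathfrak{p} = \ker |\cdot|_x$ qui est un idéal premier de $A$, et les éléments hors de $\mathfrak{p}$ sont inversibles dans $\mathcal{O}_{X^{hyb},x}$. On dispose ainsi d'un morphisme d'anneaux locaux $\mathcal{O}_{X,\xi} = A_\mathfrak{p} \rightarrow \mathcal{O}_{X^{hyb},x}$, où $\xi$ est le point schématique de $X$ associé à $\mathfrak{p}$. L'étape clé consiste à établir que ce morphisme est plat et à fibres géométriquement régulières. La platitude s'obtient par adaptation directe des arguments de Berkovich (Chapitre 2 de \cite{BerkovichLivre}) au cadre des espaces analytiques sur un anneau de Banach géométrique, formalisé par Lemanissier-Poineau \cite{LemanissierPoineau}.

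Pour la régularité des fibres, je poserais $\varepsilon = pr(x) \in [0,1]$ et j'observerais que la fibre en $\varepsilon$ de la projection $X^{hyb}\rightarrow \mathcal{M}(k_{hyb})$ n'est autre que $X^{an}_\varepsilon$, l'analytification de $X$ sur le corps valué $\hat{k}_\varepsilon$ (trivialement valué si $\varepsilon = 0$). Dans les deux cas, les résultats classiques de Berkovich garantissent que la normalité de $A$ se transmet à $X^{an}_\varepsilon$ et, plus précisément, que l'analytification au sens de Berkovich sur un corps est un morphisme régulier. En combinant ceci avec la structure de la fibre hybride au-dessus de $\varepsilon$, on obtient la régularité géométrique des fibres du morphisme $A_\mathfrak{p} \rightarrow \mathcal{O}_{X^{hyb},x}$. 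Le critère standard d'algèbre commutative (un anneau local plat à fibres géométriquement régulières au-dessus d'un anneau normal est lui-même normal, via $R_1 + S_2$) conclut à la normalité de $\mathcal{O}_{X^{hyb},x}$.

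Enfin, pour $X^+ = X^{hyb} \backslash X^\beth$, la conclusion est immédiate : il s'agit d'un ouvert de $X^{hyb}$, donc ses anneaux locaux sont identiques aux anneaux locaux correspondants de $X^{hyb}$ et héritent de la normalité. La difficulté principale de cette approche réside dans l'étape intermédiaire : il faut soigneusement identifier la notion de fibre dans le cadre des espaces analytiques sur l'anneau de Banach $k_{hyb}$ dont le spectre est $[0,1]$, et s'assurer que les arguments classiques de Berkovich (valables sur un corps) s'étendent convenablement, en particulier au point $\varepsilon = 0$ où l'on travaille avec la valeur absolue triviale.
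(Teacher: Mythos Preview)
Your approach differs from the paper's in a way that creates unnecessary difficulty. You go \emph{upward}: starting from the normal schematic local ring $A_\mathfrak{p}$, you want the morphism $A_\mathfrak{p} \rightarrow \mathcal{O}_{X^{hyb},x}$ to be flat with geometrically regular fibres, so that normality ascends. The paper goes \emph{downward}: it considers the restriction map $\mathcal{O}_{X^{hyb},x} \rightarrow \mathcal{O}_{X_{\varepsilon},x}$ to the local ring of the fibre $X_\varepsilon$ over $\varepsilon = pr(x)$. This map is flat by a theorem of Berger, hence faithfully flat (local map of local rings), and the target $\mathcal{O}_{X_{\varepsilon},x}$ is normal by the classical results over a single valued field (SGA Exposé XII in the archimedean case, Ducros in the Berkovich case). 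Normality then descends along faithfully flat maps, and one is done.

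The gap in your outline is precisely the step you flag at the end: establishing geometric regularity of the fibres of $A_\mathfrak{p} \rightarrow \mathcal{O}_{X^{hyb},x}$. Your proposed justification conflates two different notions of ``fibre'': the algebraic fibre $\mathcal{O}_{X^{hyb},x} \otimes_{A_\mathfrak{p}} \kappa(\mathfrak{p})$ relevant to the regularity criterion, and the analytic fibre $X_\varepsilon$ of the projection to $[0,1]$. Knowing that $\mathcal{O}_{X,\xi} \rightarrow \mathcal{O}_{X_\varepsilon,x}$ is regular (the classical result over $\hat{k}_\varepsilon$) does not by itself give regularity of $\mathcal{O}_{X,\xi} \rightarrow \mathcal{O}_{X^{hyb},x}$; to bridge the two you would need control on $\mathcal{O}_{X^{hyb},x} \rightarrow \mathcal{O}_{X_\varepsilon,x}$ --- which is exactly Berger's flatness result, the ingredient the paper uses directly. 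In other words, completing your ascent argument seems to require the descent ingredient anyway, so the paper's route is both shorter and avoids the issue you identify as the main difficulty.
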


\begin{proof}On redonne la preuve donnée par Poineau.

Il suffit de le montrer pour $X^{hyb}$, comme $X^+$ est un ouvert de $X^{hyb}$. 

Soit $x\in X^{hyb}$, on note $\epsilon(x) := pr(x)$. Alors, $\mathcal{O}_{X_{\epsilon(x)},x}$ est normal. Dans le cas, où $X_{\epsilon(x)}$ est un espace analytique complexe, on peut se référer à \cite{SGA}, Exposé XII, Proposition 2.1 et dans le cas où $X_{\epsilon(x)}$ est un espace de Berkovich, on peut se référer à \cite{DucrosEspacesBerkoExcellents}, Théorème 3.4. 

Par la section 0.5.1 de \cite{DucrosEspacesBerkoExcellents} et les références dans cette section, la propriété de normalité de l'anneau locale $\mathcal{O}_{X,x}$ se vérifient après des extensions fidèlement plates.

Or, par le Théorème 4.3 de \cite{Berger}, le morphisme
\begin{center}
$\mathcal{O}_{X,x} \rightarrow \mathcal{O}_{X_{\epsilon(x)},x}$
\end{center}
est plat.
\end{proof}
Cela permet d'avoir un équivalent au lemme 3.2.4 de \cite{BerkovichLivre} (voir lemme \ref{morphisme fini sur meme dimension est ouvert}) dans le cadre hybride.

\begin{lem}\label{morphisme fini sur espace hybride est ouvert}
Soient $X, Y$ deux $k$-schémas de type fini de même dimension avec $Y$ normal. Soit $f : X \rightarrow Y$ un morphisme quasi-fini. Alors $f^{hyb} : X^{hyb} \rightarrow Y^{hyb}$ est ouvert.
\end{lem}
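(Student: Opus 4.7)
La stratégie consiste à se ramener progressivement au cas d'un morphisme fini de source normale, afin d'invoquer une version hybride du lemme \ref{morphisme fini sur meme dimension est ouvert}.

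Dans un premier temps, on appliquerait le théorème principal de Zariski pour factoriser $f = g \circ j$, où $j : X \hookrightarrow X'$ est une immersion ouverte dans un $k$-schéma de type fini et $g : X' \to Y$ est un morphisme fini. Comme $j^{hyb}$ reste une immersion ouverte, donc un morphisme ouvert, il suffirait d'établir l'ouverture de $g^{hyb}$. Les inégalités $\dim X \leq \dim X' \leq \dim Y$, combinées à $\dim X = \dim Y$, donnent $\dim X' = \dim Y$ ; quitte à remplacer $X'$ par l'adhérence schématique réduite de $j(X)$, on peut le supposer équidimensionnel. On considérerait ensuite la normalisation $\pi : \widetilde{X}' \to X'$, qui est finie et surjective puisque $X'$ est excellent. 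Son analytification $\pi^{hyb}$ est alors également finie et surjective, et pour tout ouvert $U \subset (X')^{hyb}$ l'identité $g^{hyb}(U) = (g \circ \pi)^{hyb}\bigl((\pi^{hyb})^{-1}(U)\bigr)$ permet de se ramener au cas d'un morphisme fini dont la source est normale. La proposition \ref{normalité se préserve par espaces hybrides} garantit alors que $\widetilde{X}'^{hyb}$ est normal, en particulier localement irréductible.

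La dernière étape consisterait à démontrer l'analogue hybride du lemme \ref{morphisme fini sur meme dimension est ouvert} : un morphisme fini $\phi : \mathcal{X} \to \mathcal{Y}$ d'espaces $k_{hyb}$-analytiques, avec $\mathcal{X}$ localement irréductible et $\dim \mathcal{X}_\varepsilon = \dim \mathcal{Y}_\varepsilon$ pour tout $\varepsilon \in [0,1]$, est ouvert. On appliquerait le lemme \ref{morphisme fini sur meme dimension est ouvert} fibre par fibre à $\phi_\varepsilon : \mathcal{X}_\varepsilon \to \mathcal{Y}_\varepsilon$, puis on combinerait cette information avec la compatibilité de $\phi^{hyb}$ au flot $\Phi$ (par fonctorialité de l'analytification hybride) et l'ouverture de $\Phi$ pour propager l'ouverture dans la direction transverse à $\mathcal{M}(k_{hyb}) = [0,1]$.

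\textbf{Difficulté principale.} Ce passage de l'ouverture fibre-par-fibre à l'ouverture globale est le cœur de la preuve : un voisinage d'un point de $X^{hyb}$ n'est pas simplement un produit d'un voisinage fibré et d'un intervalle de $[0,1]$, et il faut exploiter la structure non-triviale du flot reliant les différentes fibres. Une alternative consisterait à adapter directement l'argument de Berkovich via la descente fidèlement plate $\mathcal{O}_{X,x} \to \mathcal{O}_{X_\varepsilon,x}$ employée dans la preuve de la proposition \ref{normalité se préserve par espaces hybrides}.
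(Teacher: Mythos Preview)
Your reduction to the finite case via Zariski's Main Theorem is sound, though the paper proceeds more directly (a quasi-finite morphism is finite at each point, and openness is local). Your normalization of the source is however a detour: the hypothesis is that $Y$ is normal, not $X$, and the paper never touches the source. More importantly, your flow argument has a genuine gap that you yourself flag. The flow on $X^{hyb}$ acts by $x \mapsto x^\alpha$ with $pr(x^\alpha) = \alpha \cdot pr(x)$, so points in the fibre $X^{hyb}_0$ stay in that fibre; the flow does not connect the trivially-valued fibre to the positive ones. Knowing that each $\phi_\varepsilon$ is open therefore does not, via the flow alone, produce an open neighbourhood in $Y^{hyb}$ of a point lying over $\varepsilon = 0$, and this is exactly where one needs openness most.

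The paper takes precisely the ``alternative'' you mention in your last line, and it is worth seeing how cleanly it works. After reducing to $f$ finite, one shrinks a neighbourhood $\mathcal{U}$ of $x$ so that $\overline{\mathcal{U}}$ is compact and meets the fibre $(f^{hyb})^{-1}(f^{hyb}(x))$ only in $x$; then on a suitable open $V \ni f^{hyb}(x)$ the induced map $g : \mathcal{U}' \to V$ is finite, so its image is a closed analytic subset cut out by a coherent ideal sheaf $\mathcal{F}$ on $V$. On the single fibre $V_b$ (with $b = pr(x)$), normality of $Y$ gives normality of $V_b$, and since $g_b(\mathcal{U}'_b)$ is a closed analytic subset of full dimension it contains the irreducible component through $f^{hyb}(x)$; hence $(\mathcal{F}|_{V_b})_{f^{hyb}(x)} = 0$. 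The passage to the hybrid space is then a one-line faithfully-flat-descent argument: by Berger's theorem the map $\mathcal{O}_{V,f^{hyb}(x)} \to \mathcal{O}_{V_b,f^{hyb}(x)}$ is flat (in fact faithfully flat, local rings), so $\mathcal{O}_{V_b,f^{hyb}(x)} \otimes \mathcal{F}_{f^{hyb}(x)} = 0$ forces $\mathcal{F}_{f^{hyb}(x)} = 0$, and thus $\mathcal{F}$ vanishes on an open $V' \ni f^{hyb}(x)$, i.e.\ $V' \subset f^{hyb}(\mathcal{U})$. No flow, no fibre-by-fibre gluing: the coherent sheaf $\mathcal{F}$ carries the transverse information automatically.
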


\begin{proof}
La preuve s'appuie sur la démonstration du lemme 3.2 de \cite{BanicaStanasila2} et sur une suggestion de J. Poineau.

Par le théorème 5.2.9 de \cite{LemanissierPoineau}, un morphisme quasi-fini est fini en tout point. Comme être un morphisme ouvert est une propriété locale, on peut donc se ramener au cas où $f$ est un morphisme fini.

Soit $x \in X^{hyb}$ et $\mathcal{U}$ un voisinage de $x$. Notons $b := pr(x) \in [0,1]$.

Il faut montrer que $f^{hyb}(\mathcal{U})$ contient un voisinage de $f^{hyb}(x)$.

On peut supposer que $\overline{\mathcal{U}}$, l'adhérence de $\mathcal{U}$, est compacte et comme les fibres sont finies, on peut également supposer que $\overline{\mathcal{U}} \cap (f^{hyb})^{-1}(f^{hyb}(x)) = \lbrace x\rbrace$. Ainsi, $f^{hyb}(x) \notin f^{hyb}(\delta \mathcal{U})$ où $\delta \mathcal{U}$ désigne la frontière de $\mathcal{U}$.

Soit $V$ un voisinage ouvert de $f^{hyb}(x)$ tel que $V \cap f^{hyb}(\delta \mathcal{U}) = \emptyset$. 

Posons $\mathcal{U}' := \mathcal{U} \cap (f^{hyb})^{-1}(V)$ et $g : \mathcal{U}' \rightarrow V$, le morphisme induit par $f^{hyb}$. Alors, $g$ est fini et $g(\mathcal{U}')$ est un fermé analytique de $V$. Donc, il est défini par un faisceau cohérent d'idéaux $\mathcal{F}$ de $\mathcal{O}_V$.

De même, $g_b : \mathcal{U}'_b \rightarrow V_b$ est fini et $g_b(\mathcal{U}'_b)$ est un fermé analytique de $V_b$ de dimension $n = dim(X_b) = dim(Y_b) = dim(V_b)$. 

Comme $Y$ est normal, alors $Y^{hyb}_b$ est normal par la proposition \ref{normalité se préserve par espaces hybrides}. Donc, $V_b$ est un ouvert normal de $Y^{hyb}_b$.

Ainsi, $g_b(\mathcal{U}'_b)$ contient la composante irréductible de $V_b$ qui contient $f^{hyb}(x)$. Donc il existe un ouvert $f^{hyb}(x) \in W$ de $Y^{hyb}_b$ tel que $W \subset V_b$. Donc, les germes de $\mathcal{F}|_{V_b}$ sont nuls en $f^{hyb}(x)$. Or par le Théorème 4.3 de \cite{Berger}, le morphisme $\mathcal{O}_{V, f^{hyb}(x)} \rightarrow \mathcal{O}_{V_b, f^{hyb}(x)}$ est plat. Ainsi, on a : 
\begin{center}
$0 = (\mathcal{F}|_{V_b})_{f^{hyb}(x)} = \mathcal{O}_{V_b, f^{hyb}(x)} \otimes \mathcal{F}_{f^{hyb}(x)}$
\end{center}
donc $\mathcal{F}_{f^{hyb}(x)} = 0$. Donc il existe $V'$ un ouvert de $V$ contenant $f^{hyb}(x)$ tel que $\mathcal{F}|_{V'}= 0$.

Ainsi, $g(\mathcal{U}') \cap V' = V'$ et $f^{hyb}(x) \in V' \subset f^{hyb}(\mathcal{U})$. 
\end{proof}

Finalement, pour définir la compactification hybride, il reste à définir une relation d'équivalence.

\begin{defi}
Soit $X$ une $k$-variété et soient $x,y \in X^{hyb}$. On dit que $x,y$ sont équivalents par le flot si $T(x) = T(y)$ et on note $x \Phi y$.
\end{defi}

\begin{lem}
Soit $X$ une $k$ variété, alors $\Phi$ est une relation d'équivalence.
\end{lem}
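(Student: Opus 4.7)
The plan is essentially a formal check, but it has one non-trivial ingredient. Since $\Phi$ is defined by $x \Phi y \iff T(x) = T(y)$, and equality of sets is automatically an equivalence relation, the actual substance of the proof is to verify that the trajectory $T(x)$ is well-defined as a subset of $X^{hyb}$ for every $x \in X^{hyb}$. Up to this point the flow has only been defined on $\mathbb{A}^{n,an}_{k_{hyb}}$, so I need to extend it to $X^{hyb}$.

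First I would handle the affine case. If $X = \mathrm{Spec}(A)$ with $A = k[T_1, \ldots, T_n]/I$, a point $x \in X^{hyb}$ corresponds to a bounded multiplicative seminorm on $A$, equivalently a seminorm on $k[T_1, \ldots, T_n]$ that vanishes on $I$. For every $\alpha \in I_x^*$ the seminorm $|\cdot|_x^\alpha$ also vanishes on $I$, so the point $x^\alpha$ of $\mathbb{A}^{n,an}_{k_{hyb}}$ belongs to $X^{hyb}$ and consequently $T(x) \subset X^{hyb}$. The fact that $\alpha \in I_x^*$ still produces a valid seminorm (in particular satisfies the triangle inequality and the boundedness by the hybrid norm) is ensured by the choice of the interval $I_x$.

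For a general $k$-variété $X$, I would cover $X$ by affine opens $(U_i)$ and observe that the operation $x \mapsto x^\alpha$ is intrinsic: it corresponds simply to raising to the power $\alpha$ the valuation on the residue field $\mathcal{H}(x)$, which is insensitive to any particular affine embedding of a neighbourhood of $x$. Thus the trajectory computed in a chart $U_i^{hyb}$ containing $x$ coincides with the one computed in any other chart $U_j^{hyb}$ containing $x$, giving a well-defined trajectory $T(x) \subset X^{hyb}$.

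Once this is in place, reflexivity, symmetry, and transitivity of $\Phi$ are immediate from the corresponding properties of set equality. The only delicate point is therefore the gluing step, namely the intrinsic character of $x \mapsto x^\alpha$ with respect to the residue seminorm at $x$; the remark preceding the statement (if $y \in T(x)$ then $T(y) = T(x)$) may additionally be invoked to confirm that the equivalence classes of $\Phi$ are exactly the flow orbits, which is the intuitive content of the definition.
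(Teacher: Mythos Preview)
Your proposal is correct: since $\Phi$ is defined by equality of trajectories, the equivalence-relation axioms are automatic once $T(x)$ is well-defined on $X^{hyb}$, and your extension of the flow from $\ana^{n,an}_{k_{hyb}}$ to $X^{hyb}$ via closed embeddings in the affine case and the intrinsic description on $\mathcal{H}(x)$ for gluing is exactly the required step. The paper itself gives no proof here but simply refers to Lemme~4.8 of \cite{PoineauCompactificationHybride}, whose content is precisely this well-definedness argument.
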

Pour la preuve, on pourra se référer au lemme 4.8 de \cite{PoineauCompactificationHybride}.

On peut finalement définir la compactification :
\begin{defi}\label{immersion de Xan dans compactification}
Soit $X$ une $k$ variété, l'ensemble 
\begin{center}
$X^\urcorner := X^+/\Phi$
\end{center}
est appelé la compactification valuative de $X$ ou compactification hybride. On munit cet ensemble de la topologie quotient, donc en particulier l'image d'un sous-ensemble $V$ de $X^+$ est ouverte ssi $T(V)$ est ouvert dans $X^+$.

On note $q : X^+ \rightarrow X^\urcorner$ l'application quotient. 

On note $\delta X := q(X_\infty)$ le bord de la compactification.

Finalement, on note $i$ l'immersion suivante :
\begin{center}
$i : (X \otimes_k \hat{k})^{an} = X_1^+ \rightarrow X^+ \rightarrow X^\urcorner.$
\end{center} 
\end{defi}

\begin{prop}\label{prop section continue partie archi} Lemme 4.11 et 4.15 de \cite{PoineauCompactificationHybride}.

Soit $k$ un corps muni d'une valeur absolue non triviale. Soit $X$ une $k$ variété, alors on peut analytifier $X$ selon la valeur absolue hybride sur $k$.

Et $i : X^{an} \rightarrow X^\urcorner$ est un homémorphisme.
\end{prop}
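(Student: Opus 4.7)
Pour la première affirmation, l'existence de l'analytification hybride $X^{hyb}$ découle directement de la proposition d'analytification sur un anneau de base géométrique déjà énoncée dans le texte : comme $k_{hyb}$ est un anneau de Banach et un anneau de base géométrique, cette proposition s'applique et fournit $X^{hyb}$, puis $X^+ = X^{hyb} \setminus X^\beth$ est ouvert donc également $k_{hyb}$-analytique.

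Pour montrer que $i$ est un homéomorphisme (sur son image), le plan est d'identifier $(X \otimes_k \hat{k})^{an}$ avec la fibre $X_1 := pr^{-1}(1)$ de $X^{hyb}$, puis d'analyser la restriction de $q : X^+ \to X^\urcorner$ à cette fibre. Comme $\mathcal{H}(1) = \hat{k}$, la fibre $X_1$ s'identifie bien à $(X \otimes_k \hat{k})^{an}$. De plus, $X^\beth$ est contenu dans la fibre trivialement valuée $X_0$, donc $X_1 \cap X^\beth = \emptyset$ et $X_1 = X_1^+$. L'application $i$ est alors la restriction de $q$ à $X_1$, et elle est continue comme composée d'une inclusion et du quotient.

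Pour l'injectivité et l'image, soit $x \in X_1$. Alors $pr(x) = 1$ donc $I_x = [0,1]$, et la trajectoire $T(x) = \{x^\alpha, \alpha \in (0,1]\}$ vérifie $pr(x^\alpha) = \alpha$. Il en résulte que $T(x) \cap X_1 = \{x\}$, donc deux points distincts de $X_1$ ont des trajectoires distinctes et $q|_{X_1}$ est injective. Pour l'image : une trajectoire est soit entièrement contenue dans $X_0$ (lorsque son représentant a $pr = 0$), soit rencontre $X_1$ en un unique point. Les premières produisent exactement le bord $\delta X = q(X_\infty)$, les secondes fournissent $q(X_1) = X^\urcorner \setminus \delta X$. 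Pour la continuité de la bijection réciproque, on construira une rétraction via le flot : pour $x \in X^+ \setminus X_0$, on pose $\rho(x) := x^{1/pr(x)}$, qui est bien défini car $pr(x) > 0$ et $1/pr(x) \in I_x = [0, 1/pr(x)]$. L'application $\rho$ est continue par continuité du flot, à valeurs dans $X_1$, et constante sur les trajectoires puisque $\rho(x) \in T(x)$ et $T(y) = T(x)$ pour $y \in T(x)$. L'ouvert $X^+ \setminus X_0$ étant saturé pour la relation $\Phi$ (une trajectoire reste dans $X_0$ ou l'évite totalement), son image dans $X^\urcorner$ est l'ouvert $X^\urcorner \setminus \delta X$, et $\rho$ passe au quotient en une application continue $\bar{\rho} : X^\urcorner \setminus \delta X \to X_1$. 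Les relations $\bar{\rho} \circ i = \mathrm{id}_{X_1}$ et $i \circ \bar{\rho} = \mathrm{id}$ se vérifient directement.

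Le point technique principal sera d'établir soigneusement la continuité de $\rho$ à partir de celle du flot : il faut vérifier que l'application $x \mapsto (x, 1/pr(x))$ est continue à valeurs dans le domaine $D(\ana^{n,an}_{k_{hyb}})$ du flot, ce qui repose sur la continuité de $pr$ et la non-annulation de cette projection sur $X^+ \setminus X_0$. Une attention particulière devra aussi être portée à la vérification du caractère saturé de $X^+ \setminus X_0$, pour pouvoir légitimement utiliser la topologie quotient induite sur $X^\urcorner \setminus \delta X$.
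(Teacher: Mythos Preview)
The paper does not give its own proof of this proposition; it simply cites Lemmes 4.11 et 4.15 of \cite{PoineauCompactificationHybride}. Your argument is correct and is the natural one: the identification of the fibre $X_1$ with $(X\otimes_k\hat k)^{an}$, the injectivity via the observation that each trajectory meets $X_1$ in at most one point, and the construction of the continuous inverse by the flow normalisation $\rho(x)=x^{1/pr(x)}$ on the saturated open set $X^+\setminus X_0$ are all sound. The technical points you flag (continuity of $x\mapsto(x,1/pr(x))$ into the domain of the flow, and saturation of $X^+\setminus X_0$ for $\Phi$) are straightforward from the continuity of $pr$ and the relation $pr(x^\alpha)=\alpha\,pr(x)$.
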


\begin{ex}
On a vu à l'exemple \ref{exemple de X infty} que $(A^1_k)_\infty = \lbrace \eta_{0,r}, r>1\rbrace$, donc $\delta (A^1_k)$ n'est qu'un unique point, et de même comme $(\mathbb{G}_{m,k})_\infty = \lbrace \eta_{0,r}, r>0, r\neq 1\rbrace$ alors $\delta (\mathbb{G}_{m,k})$ consiste de deux points. 
\end{ex}

Finalement, $X^\urcorner$ dispose de plusieurs propriétés topologiques.
\begin{prop}
Soit $X$ une $k$-variété. Alors, $X^\urcorner$ est Hausdorff et compact et $X^{an}$ est dense dans $X^\urcorner$. 

De plus, $X^\urcorner$ est localement connexe par arcs et si $X$ est connexe, $X^\urcorner$ est connexe par arcs.

Si $k$ est dénombrable, alors $X^\urcorner$ est métrisable.
\end{prop}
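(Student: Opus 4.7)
Mon plan serait de traiter successivement les quatre assertions, en s'appuyant sur les propriétés établies précédemment du flot $\Phi$ (continu et ouvert) et de la projection $pr : X^{hyb} \rightarrow \mathcal{M}(k_{hyb}) = [0,1]$. Une observation préliminaire utile est que l'application quotient $q : X^+ \rightarrow X^\urcorner$ est ouverte, puisque $q^{-1}(q(U)) = T(U)$ est ouvert dès que $U$ l'est.

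Pour montrer que $X^\urcorner$ est Hausdorff et compact, l'idée est de choisir une compactification projective $\overline{X}$ de $X$ (ou, dans le cas affine, un plongement $X \hookrightarrow \ana^n_k$ puis la compactification $\p^n_k$) et d'utiliser que $\overline{X}^{hyb}$ est compact — la projection $\overline{X}^{hyb} \rightarrow [0,1]$ étant propre par analytification d'un morphisme propre, et $[0,1]$ étant compact. En identifiant $X^\urcorner$ à un quotient convenable d'un sous-espace compact de $\overline{X}^{hyb}$ (à savoir le saturé par le flot d'une section compacte rencontrant toute trajectoire de $X^+$), on obtient la compacité. La propriété de Hausdorff suivrait en vérifiant que le graphe de $\Phi$ est fermé dans $X^+ \times X^+$, grâce à la continuité du flot et à la caractérisation explicite des trajectoires.

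Pour la densité de $X^{an}$, il s'agit, via l'identification $X^{an} \cong X_1^+$ (proposition \ref{prop section continue partie archi}), de montrer que $T(X_1^+) = pr^{-1}((0,1]) \cap X^+$ est dense dans $X^+$, ce qui revient à voir que tout $x \in X_\infty$ est limite de points $x_\epsilon \in X^+_\epsilon$ quand $\epsilon \to 0^+$. C'est une propriété générale de la topologie hybride : une semi-norme construite à partir de $x$ par élévation à la puissance $\epsilon$ de la valeur absolue de $k$ converge vers $x$ dans $X^{hyb}$. Pour la (locale) connexité par arcs, on combine celle des fibres $X^+_\epsilon$ (espaces analytiques sur $\mathcal{H}(\epsilon)$) avec les chemins fournis par le flot $\alpha \mapsto y^\alpha$ traversant les fibres, l'ouverture de $q$ permettant de transporter les chemins dans le quotient.

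Pour la métrisabilité lorsque $k$ est dénombrable : $X^{hyb}$ est alors à base dénombrable (la topologie de $\ana^{n,hyb}_{k_{hyb}}$ est engendrée par des ensembles de la forme $\lbrace |P| < r\rbrace$ et $\lbrace |P| > r\rbrace$ avec $P \in k[T_1, \cdots, T_n]$ et $r \in \mathbb{Q}_{\geq 0}$), donc $X^\urcorner$ également via l'ouverture de $q$, et le théorème d'Urysohn appliqué à un espace compact Hausdorff à base dénombrable conclut. L'obstacle principal me semble être la construction explicite de la section compacte pour le flot et l'identification de $X^\urcorner$ à un quotient d'un compact de $\overline{X}^{hyb}$ : il faut traiter uniformément les trajectoires d'origine archimédienne (rencontrant $X_1^+$, non compact) et celles confinées dans $X_\infty$ (où le flot agit transitivement sur chaque trajectoire par $\alpha \in (0,\infty)$), puis vérifier que la section choisie est bien fermée dans l'espace ambiant compact.
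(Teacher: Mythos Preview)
The paper itself does not prove this proposition: it simply refers to Poineau's original article \cite{PoineauCompactificationHybride} (propositions 4.16, 4.22, 4.23, théorème 4.19, lemme 4.20). So there is no in-paper argument to compare against; what you have written is a plausible reconstruction of the strategy behind those cited results.

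Your outline is broadly sound, and the obstacle you single out --- producing a compact subset of $X^{hyb}$ that surjects onto $X^\urcorner$ --- is indeed the heart of the matter for compacité. Two points deserve tightening. First, your density sketch is imprecise: for $x \in X_\infty$ one has $pr(x)=0$, so the flot keeps $x^\alpha$ in the fibre over $0$; you cannot reach $x$ from $pr^{-1}((0,1])$ via the flot alone. The density of $pr^{-1}((0,1])$ in $X^{hyb}$ is a separate fact about the hybrid topology (approximation of a semi-norme triviale sur $k$ by semi-normes restricting to $|\cdot|^\varepsilon$), and you should state it as such rather than as ``élévation à la puissance $\varepsilon$''. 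Second, for Hausdorff you say the graphe of $\Phi$ is fermé; be aware that the equivalence relation is defined on $X^+$, not on $X^{hyb}$, so closedness must be checked in $X^+ \times X^+$, and the flot parameter ranges over an interval $I_x^*$ that depends on $x$ --- this is where the removal of $X^\beth$ matters. Your arguments for connexité par arcs and métrisabilité (Urysohn via a base dénombrable pushed through the ouverte $q$) are correct as stated.
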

Pour les preuves, on pourra se référer aux propositions 4.16, 4.22 et 4.23, au théorème 4
19 et au lemme 4.20 de \cite{PoineauCompactificationHybride}.

\subsection{Valuations divisorielles}
Dans cette partie, on considère $X$ un schéma de type fini sur un corps $k$ et on va considérer certaines valuations particulières de $X^\beth$ : les valuations divisorielles. Finalement, on utilisera les valuations divisorielles pour montrer que si un morphisme $f: X\rightarrow y$ de $k$-schémas de type finis, intègres est surjectif alors $f^\beth : X^\beth \rightarrow Y^\beth$ est aussi surjectif.

Dans cette partie, on s'appuie sur l'article de M. Vaquié \cite{VaquieValuations}.

On rappelle que dans le cas où $X$ est affine, $X = Spec(A)$ alors $X^\beth = M(A)$ où $A$ est trivialement valué. 

Les valuations divisorielles sont des valuations dites d'Abhyankar. On va donc redéfinir la notion de valuation d'Abhyankar.

\begin{defi}
Soit $k$ un corps valué et $l$ une extension valuée de $k$. Notons $\tilde{l}$ et $\tilde{k}$ les corps résiduels de $l$ et $k$.

Alors, on note 
\begin{center}
$s(l) := tr.deg.(\tilde{l}/\tilde{k})$ et $t(l) := dim_\Q(|l^\times|^\Q/|k^\times|^\Q).$ 
\end{center}
Soit $X$ un espace $k$-analytique et soit $x \in X$. On note $s(x) = s(\mathcal{H}(x))$ et $t(x) = t(\h(x))$.
\end{defi}

Ces deux quantités sont reliés par l'inégalité d'Abhyankar, on pourra se référer à (\cite{BourbakiAlgCom57}, VI, §10.3, Cor 1).

\begin{thm}
Soit $l$ une extension valuée de $k$, alors 
\begin{equation*}
s(l) + t(l) \leq tr.deg.(l/k).
\end{equation*}
En particulier, si $X$ est un schéma de dimension $n$ sur $k$, alors pour tout $x\in X$
\begin{equation*}
s(x) + t(x) \leq n.
\end{equation*}
Les points $x\in X$ vérifiant le cas d'égalité seront appelés point d'Abhyankar.
\end{thm}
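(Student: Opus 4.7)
Mon plan se décompose en deux étapes : prouver l'inégalité pour une extension valuée de corps, puis en déduire la version schématique en utilisant des propriétés de la complétion.

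Pour la première partie, l'idée est d'exhiber une famille algébriquement indépendante de taille $s(l) + t(l)$ dans $l$ sur $k$. Je choisirais donc $r := t(l)$ éléments $t_1, \ldots, t_r \in l^\times$ dont les images dans le $\Q$-espace vectoriel $|l^\times|^\Q/|k^\times|^\Q$ forment une base, et $s := s(l)$ éléments $u_1, \ldots, u_s$ du sous-anneau de valuation $l^\circ$ tels que leurs réductions $\tilde{u}_1, \ldots, \tilde{u}_s \in \tilde{l}$ soient algébriquement indépendantes sur $\tilde{k}$. Le but est alors de montrer que la famille $t_1, \ldots, t_r, u_1, \ldots, u_s$ est algébriquement indépendante sur $k$, ce qui donnera directement $r + s \leq \mathrm{tr.deg.}(l/k)$.

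La clé est l'argument ultramétrique suivant. Supposons par l'absurde qu'il existe $P \in k[T_1, \ldots, T_r, U_1, \ldots, U_s]$ non nul avec $P(t,u) = 0$, et écrivons $P = \sum_\alpha T^\alpha Q_\alpha(U)$ avec $Q_\alpha \in k[U]$. Comme $|u_i| = 1$, on a $|u^\beta|=1$, puis en utilisant l'indépendance algébrique des $\tilde{u}_j$ sur $\tilde{k}$, on montre que $|Q_\alpha(u)| = \max_\beta |c_{\alpha,\beta}| \in |k^\times|$ dès que $Q_\alpha \neq 0$ (en réduisant $Q_\alpha$ renormalisé modulo l'idéal maximal). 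L'identité $\sum_\alpha t^\alpha Q_\alpha(u) = 0$ combinée à l'ultramétrique oblige alors le maximum $|t^\alpha Q_\alpha(u)|$ à être atteint par au moins deux indices distincts $\alpha_0 \neq \alpha_1$, ce qui fournit $|t|^{\alpha_0 - \alpha_1} \in |k^\times|$ ; mais cela contredit l'indépendance $\Q$-linéaire des $\log|t_i|$ modulo $|k^\times|^\Q$. Donc tous les $Q_\alpha(u)$ sont nuls, et par récurrence sur le nombre de variables (ou en raffinant l'argument), on obtient $P \equiv 0$, contradiction.

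Pour la deuxième partie, soit $X$ un $k$-schéma de dimension $n$ et $x \in X$ (point analytique, au sens de $X^{an}$ ou $X^\beth$). Le point $x$ correspond à une semi-norme multiplicative sur une algèbre locale, dont le noyau est un idéal premier $\mathfrak{p}_x$ ; le corps $\mathcal{H}(x)$ est la complétion de $\mathrm{Frac}(A/\mathfrak{p}_x)$ muni de la valuation induite. Comme la complétion préserve le corps résiduel et le groupe des valeurs, on a $s(\mathcal{H}(x)) = s(\mathrm{Frac}(A/\mathfrak{p}_x))$ et $t(\mathcal{H}(x)) = t(\mathrm{Frac}(A/\mathfrak{p}_x))$. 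L'inégalité d'Abhyankar appliquée à cette extension non complétée donne $s(x) + t(x) \leq \mathrm{tr.deg.}(\mathrm{Frac}(A/\mathfrak{p}_x)/k) = \dim V(\mathfrak{p}_x) \leq n$.

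Le principal obstacle est l'argument combinatoire-ultramétrique du second paragraphe, en particulier le calcul $|Q_\alpha(u)| \in |k^\times|$, qui nécessite un soin particulier dans la renormalisation et l'usage simultané des deux propriétés génériques des $t_i$ et des $u_j$. Le passage au cas schématique est plus léger, mais il faut vérifier proprement que la complétion d'un corps valué préserve bien à la fois le groupe des valeurs et le corps résiduel — ce qui est standard mais mérite une référence explicite.
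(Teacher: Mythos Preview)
Your proof is correct and is essentially the classical argument; note however that the paper does not give its own proof of this statement but simply refers to Bourbaki (\emph{Algèbre commutative}, VI, \S10.3, Cor.~1), where precisely this argument appears. A couple of very minor points: you should remark that $|u_j|=1$ (not merely $u_j\in l^\circ$), since $|u_j|<1$ would force $\tilde u_j=0$; and the sentence ``donc tous les $Q_\alpha(u)$ sont nuls, et par r\'ecurrence\ldots'' is superfluous --- the contradiction is already obtained once you produce $|t|^{\alpha_0-\alpha_1}\in|k^\times|$ with $\alpha_0\neq\alpha_1$, so no recursion is needed.
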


Poineau a démontré le résultat suivant dans \cite{PoineauAngelique}, corollaire 4.8.
\begin{prop}
L'ensemble des points d'Abhyankar d'un espace analytique est dense.
\end{prop}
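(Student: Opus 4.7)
Le plan est de se ramener au cas de l'espace affine $\ana^{n,an}_k$ par une normalisation de Noether, puis d'exhiber dans ce dernier des points d'Abhyankar denses via des points de Gauss bien choisis.

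Pour la réduction, la densité étant une propriété locale, je prendrais un ouvert non vide $U$ de l'espace analytique $X$ et chercherais à y trouver un point d'Abhyankar. Quitte à restreindre $U$ et à passer à une composante irréductible de dimension maximale, on peut supposer $X$ irréductible et affinoïde de dimension $n$. Par la normalisation de Noether (ou son analogue pour algèbres affinoïdes), on dispose alors d'un morphisme fini surjectif $\varphi : X \rightarrow \ana^n_k$, dont l'analytification $\varphi^{an} : X^{an} \rightarrow \ana^{n,an}_k$ est également finie et de même dimension $n$. Le Lemme \ref{morphisme fini sur meme dimension est ouvert} garantit alors que $\varphi^{an}$ est ouverte, donc $\varphi^{an}(U)$ est un ouvert non vide de $\ana^{n,an}_k$.

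Pour la densité dans $\ana^{n,an}_k$, je considérerais les points de Gauss multivariables $\eta_{a,r}$ pour $a \in k^n$ et $r = (r_1, \ldots, r_n) \in \R^n_{>0}$, définis par $\eta_{a,r}(\sum c_\alpha (T-a)^\alpha) = \max_\alpha |c_\alpha| \prod_i r_i^{\alpha_i}$. En choisissant les $\log r_i$ de sorte qu'ils soient $\Q$-linéairement indépendants modulo $\log |k^\times|$, le groupe de valeurs de $\mathcal{H}(\eta_{a,r})$ acquiert un rang rationnel égal à $n$, d'où $t(\eta_{a,r}) = n$, faisant de $\eta_{a,r}$ un point d'Abhyankar. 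Comme les ouverts fondamentaux de $\ana^{n,an}_k$ sont définis par des inégalités polynomiales et que la condition de $\Q$-indépendance sur $r$ est dense dans $\R^n_{>0}$, on trouve de tels $\eta_{a,r}$ dans tout ouvert non vide. Il reste à relever : si $y \in \varphi^{an}(U)$ est un point d'Abhyankar et $x \in (\varphi^{an})^{-1}(y) \cap U$, l'extension $\mathcal{H}(x)/\mathcal{H}(y)$ étant finie préserve le rang rationnel du groupe de valeurs et le degré de transcendance résiduel, d'où $s(x) = s(y)$ et $t(x) = t(y)$, ce qui donne $s(x) + t(x) = n$ et $x$ est d'Abhyankar.

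Le principal obstacle sera la vérification soigneuse de la densité des points de Gauss dans $\ana^{n,an}_k$, qui requiert de décrire explicitement la topologie de Berkovich via les inégalités polynomiales, ainsi que l'étape de normalisation de Noether dans le cadre purement analytique, où il faut invoquer la version affinoïde (Bosch-Güntzer-Remmert dans le cas non-archimédien). Il faudra aussi s'assurer, lors de la réduction initiale, que la condition de locale irréductibilité requise par le Lemme \ref{morphisme fini sur meme dimension est ouvert} est satisfaite, quitte à restreindre $U$ à un voisinage d'un point régulier de $X$.
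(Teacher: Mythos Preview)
Le papier ne démontre pas cette proposition : il se contente de renvoyer au corollaire~4.8 de \cite{PoineauAngelique}. Il n'y a donc pas de preuve interne à laquelle comparer ton approche.

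Ta stratégie (réduction locale, normalisation de Noether, points de Gauss dans $\ana^{n,an}_k$, relèvement) est naturelle, mais elle contient une véritable lacune à l'étape centrale. Les points de Gauss $\eta_{a,r}$ avec $a\in k^n$ ne sont \emph{pas} denses dans $\ana^{n,an}_k$ lorsque $k$ n'est pas algébriquement clos. Voici un contre-exemple en dimension~$1$ : prends $k=\Q_p$ avec $p$ impair et l'ouvert $U=\{x:|T^2-p|_x<p^{-1}\}$, qui contient le point rigide associé à $\sqrt{p}$. Pour $a\in\Q_p$ et $r>0$, on a $|T^2-p|_{\eta_{a,r}}=\max(r^2,|2a|r,|a^2-p|)$. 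Or il n'existe aucun $a\in\Q_p$ avec $|a^2-p|<p^{-1}$ : si $|a|\geq 1$ alors $|a^2-p|=|a|^2\geq 1$, et si $|a|\leq p^{-1}$ alors $|a^2-p|=|p|=p^{-1}$; la valeur intermédiaire $|a|=p^{-1/2}$ est exclue car $p^{-1/2}\notin|\Q_p^\times|$. Donc $U$ ne contient aucun $\eta_{a,r}$ avec $a\in\Q_p$. L'ouvert $U$ contient pourtant bien des points d'Abhyankar, mais centrés en $\sqrt{p}\in\overline{\Q_p}\setminus\Q_p$.

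La correction naturelle consiste à autoriser $a\in\widehat{\overline{k}}^{\,n}$, ou de manière équivalente à changer de base vers $\widehat{\overline{k}}$, exploiter la densité sur ce corps algébriquement clos, puis redescendre en observant que l'image d'un point d'Abhyankar par le morphisme de changement de base reste d'Abhyankar. L'argument de Poineau dans \cite{PoineauAngelique} procède d'ailleurs différemment, via le bord de Shilov d'un domaine affinoïde : tout voisinage affinoïde strict possède un bord de Shilov fini dont les points sont d'Abhyankar, ce qui court-circuite à la fois le problème des centres et la normalisation de Noether.

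Deux remarques mineures en sus. D'abord, tu mélanges normalisation de Noether algébrique (vers $\ana^n_k$) et affinoïde (vers un polydisque fermé) : si $X$ est déjà analytique et affinoïde, parler de~$\varphi^{an}$ n'a pas de sens, et la cible est un polydisque, pas $\ana^{n,an}_k$ tout entier. Ensuite, le lemme~\ref{morphisme fini sur meme dimension est ouvert} exige la locale irréductibilité de la \emph{source}, pas de la cible ; ta remarque finale sur les points réguliers va dans la bonne direction mais demande à être précisée (il faut notamment réduire $X$ au préalable, ce qui est inoffensif puisque $\mathcal{H}(x)$ ne voit pas les nilpotents).
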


Dans le cas, où $k$ n'est pas trivialement valué, Poineau a montré des résultats plus fort dans ce même article (proposition 4.5, corollaire 5.7). Certains sous-ensemble des points d'Abhyankhar sont denses. 

Dans le cas où $k$ est trivialement valué et $X$ est un schéma de type fini sur $k$, il existe également des sous-ensembles des points d'Abhyankar qui sont denses.

\begin{prop}\label{points de type 2 sur type 3 sont denses}
Soit $X$ un $k$-schéma de type fini, de dimension $n \geq 1$. Munissons $k$ de la valeur absolue triviale. Alors,
\begin{center}
$\lbrace x \in X^{an}, t(x) = 1, s(x) = n-1\rbrace$ est dense dans $X^{an}$.
\end{center} 
Dans le cas, où $n=1$ cela correspond simplement à la densité des points d'Abhyankar.
\end{prop}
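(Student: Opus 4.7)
The plan is to reduce, via Noether normalization, to the case $X = \ana^n_k$, and then to exhibit divisorial valuations densely in $\ana^{n,an}_k$.

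First, I would perform standard reductions. Since $X^{an}$ is a finite union of the analytifications of the irreducible components of $X$, it suffices to prove density in each component; so assume $X$ integral. The normal locus $X^{\mathrm{nor}}$ is a Zariski open dense subset of $X$, and its analytification is Berkovich-dense in $X^{an}$ (the complement is the analytification of a proper closed subscheme, hence nowhere dense); so further assume $X$ normal. Density being local, we may cover by affines and assume $X$ is an integral normal affine $k$-variety of dimension $n$.

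Next, apply Noether normalization: choose a finite surjective $k$-morphism $\pi : X \to \ana^n_k$. Its analytification $\pi^{an}$ is finite and surjective, and, because $X$ is normal (so $X^{an}$ is locally irreducible), it is open by Lemma~\ref{morphisme fini sur meme dimension est ouvert}. For each $x \in X^{an}$, the extension $\mathcal{H}(x)/\mathcal{H}(\pi^{an}(x))$ is finite, so $s(x) = s(\pi^{an}(x))$ and $t(x) = t(\pi^{an}(x))$. Therefore, if $Y_n := \{y \in \ana^{n,an}_k : t(y)=1,\, s(y)=n-1\}$ is dense in $\ana^{n,an}_k$, then for every nonempty open $U \subset X^{an}$, $\pi^{an}(U)$ is a nonempty open meeting $Y_n$ at some $y$, and any preimage of $y$ in $U$ has $(t,s) = (1, n-1)$. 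It suffices to prove density of $Y_n$ in $\ana^{n,an}_k$.

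For each irreducible polynomial $f \in k[T_1, \ldots, T_n]$ and each $\rho \in (0,1)$, the assignment $P \mapsto \rho^{\mathrm{ord}_f(P)}$ defines a divisorial point $v_{f,\rho} \in Y_n$, with value group $\rho^{\Z}$ and residue field $\mathrm{Frac}(k[T_1,\ldots,T_n]/(f))$; more generally, prime divisors on iterated blow-ups of $\ana^n_k$ yield divisorial valuations in $Y_n$. Given a nonempty open $U \subset \ana^{n,an}_k$, Poineau's density of Abhyankar points (Corollaire 4.8 of \cite{PoineauAngelique}) provides an Abhyankar point $x_0 \in U$. After localization and an étale change of coordinates around the center of $v_{x_0}$, $v_{x_0}$ is a quasi-monomial valuation on a regular system of parameters with positive real weights $(w_1, \ldots, w_t) \in \R^t_{>0}$; perturbing these weights to positive rationals $(w'_1, \ldots, w'_t)$ yields a divisorial valuation (associated to the exceptional divisor of the toric blow-up with those weights) that converges to $v_{x_0}$ in $\ana^{n,an}_k$ as the perturbation vanishes, hence lies in $U$ for sufficiently small perturbation.

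The main obstacle is this last approximation step. The required continuity is that, for any polynomial $P$, $v_{(w_1,\ldots,w_t)}(P)$ is a minimum of linear expressions in $(w_1, \ldots, w_t)$ --- one per monomial in a suitable expansion of $P$ --- hence continuous in the weights; rational perturbations thus give Berkovich-convergent divisorial approximations. The identification of a rational-weight quasi-monomial valuation with the divisorial valuation of an exceptional divisor on a toric blow-up is a standard fact from toric geometry, which closes the argument.
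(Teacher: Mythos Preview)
Your reduction to $\ana^n_k$ via Noether normalization matches the paper's, as does the observation that $s$ and $t$ are preserved under finite maps, so openness of $\pi^{an}$ transports density. The paper does not pass to the normal locus but instead relies on local irreducibility of $\ana^{n,an}_k$; your reduction to $X$ normal is a harmless variant.

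The divergence is in the argument for $\ana^n_k$ itself. The paper does \emph{not} approximate Abhyankar points by rational-weight quasi-monomial ones. Instead, it projects to the first coordinate $\pi_1:\ana^{n,an}_k\to\ana^{1,an}_k$ (an open map), observes that a nonempty connected open image in $\ana^{1,an}_k$ must contain a point $y$ with $t(y)=1$ (the only points with $t\neq1$ are the rigid points and the Gauss point), and then works inside the fiber $\pi_1^{-1}(y)\simeq\ana^{n-1,an}_{\mathcal H(y)}$, which is now over a \emph{non-trivially} valued field. There one invokes Poineau's Proposition~4.5 in \cite{PoineauAngelique} (density of points with $s=n-1$ in strictly affinoid spaces) to produce the desired point. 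This fibration trick is elementary and characteristic-free.

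Your approach, by contrast, hinges on the assertion that an arbitrary Abhyankar point $x_0$ becomes quasi-monomial ``after localization and an \'etale change of coordinates around the center''. This is not correct as stated: already on $\ana^2_k$ there are divisorial valuations centered at the origin that are not monomial in any \'etale coordinates there, but only become monomial on a blow-up. What you actually need is that every Abhyankar valuation is quasi-monomial on \emph{some birational model}; this is the content of Jonsson--Musta\c{t}\u{a} (cited in the paper only as a remark, and stated for characteristic~$0$), or of local uniformization for Abhyankar valuations in positive characteristic. You neither cite nor prove this, and it is far from an étale-local statement. Once granted, your rational-weight perturbation does work (continuity of $w\mapsto\min_\beta\langle w,\beta\rangle$ is fine, and rational weights give $t=1$), but the missing model-theoretic step is a genuine gap, and in any case imports machinery heavier than what the paper's two-line fibration argument requires.
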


\begin{proof}
On suppose $n \geq 2$.

On commence par supposer que $X= \ana^n_k$. Soit $U$ un ouvert non-vide de $X^{an}$. Quitte à restreindre $U$, on peut supposer que $U$ est connexe.

Notons alors $\pi_1$ la projection sur la première coordonnée. C'est un morphisme ouvert. Alors $\pi_1(U)$ est un ensemble non vide connexe de $\ana^{1,an}_k$. Les seuls points $y\in \ana^{1,an}_k$ ne vérifiant pas $t(y) =1$ sont les $\hat{\overline{k}}$-points et le point de Gauss. Donc, si $\pi_1(U)$ ne contient aucun point vérifiant $t(y)=1$, c'est un unique point puisque que c'est un ensemble connexe. Alors $\pi_1(U)$ est fermé, ce qui est absurde par connexité de $\ana^{1,an}_k$. Donc, il existe $y \in \pi_{1}(U)$ avec $t(y) = 1$.

Ainsi $U \cap (\pi_1)^{-1}(y)$ est un ouvert de l'espace $\h(y)$-analytique $\ana^{n-1}_{\h(y)}$ et $\h(y)$ n'est pas trivialement valué puisque $t(y) = 1$, donc quitte à restreindre $U \cap (\pi_1)^{-1}(y)$ on peut supposer que c'est un espace strictement $k$-affinoïde. On peut donc appliquer la proposition 4.5 de \cite{PoineauAngelique} qui donne l'existence d'un $x \in U \cap (\pi^1)^{-1}(y)$ tel que $s(x) = n-1$. Ainsi, $(x,y) \in U$ et $s(x,y)  =n-1, t(x,y) = 1$.

Supposons maintenant que $X$ est un schéma de type fini sur $k$. 

Comme le résultat est un résultat local, on peut supposer que c'est un schéma affine. On dispose alors par la normalisation de Noether d'un morphisme fini $\pi^{noether} : X \rightarrow \ana^n_k$. Son analytifié $\pi^{noether, an}$ est un morphisme d'espace $k$-analytiques tel que $dim(X) = dim(\ana^n_k)$ et $\ana^{n,an}_k$ est localement irréductible. Alors on peut appliquer le lemme 3.2.4 de \cite{BerkovichLivre} (voir lemme \ref{morphisme fini sur meme dimension est ouvert}) qui assure que $\pi^{noether, an}$ est ouverte. Comme le morphisme $\pi^{noether, an}$ est fini, $s$ et $t$ sont invariants par ce morphisme. Donc la densité de $\lbrace x \in \ana^{n,an}_k, t(x) = 1, s(x) = n-1\rbrace$ dans $\ana^{n,an}$ permet de retrouver celle de $\lbrace x \in X^{an}, t(x) = 1, s(x) = n-1\rbrace$ dans $X^{an}$.     
\end{proof}

On peut alors définir les valuations divisorielles. On donne la définition de M.Vaquié.

\begin{defi}
Soit $A$ un anneau intègre, de type fini sur un corps $k$, de dimension $n$ et de corps de fraction $K$. Une valuation sur $K$ positive sur $A$ est dite divisorielle au sens de Vaquié si elle vérifie :
\begin{center}
rang~$v$ = 1 et $deg.tr.(\tilde{K}/\tilde{k}) = n-1$.
\end{center}
Cela correspond à prendre un élément $x$ de $M(A, |\cdot|_0)$ tel que $t(x) =1$ et $s(x) =n-1$.
\end{defi}

\begin{prop}
Soit $X$ un schéma intègre, de type fini sur un corps $k$ alors les valuations divisorielles au sens de Vaquié sont denses dans $X^\beth$.
\end{prop}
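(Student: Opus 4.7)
L'idée sera d'adapter la preuve de la proposition~\ref{points de type 2 sur type 3 sont denses} pour travailler directement dans $X^\beth$. Je me ramènerai d'abord au cas affine : si $X = \bigcup U_i$ est un recouvrement par des ouverts affines, chaque $U_i^\beth$ est un ouvert de $X^\beth$, et il suffira donc de démontrer la densité dans chaque $U_i^\beth$. On supposera alors $X = \mathrm{Spec}(A)$ intègre de dimension $n$.

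Par la normalisation de Noether, je disposerai d'un morphisme fini $\pi : X \to \ana^n_k$. Comme $\pi$ est défini par une inclusion $k[T_1, \ldots, T_n] \hookrightarrow A$, son analytifié envoie $X^\beth$ dans $(\ana^n_k)^\beth$ et, par l'analogue trivialement valué du lemme~\ref{morphisme fini sur meme dimension est ouvert}, $\pi^\beth$ sera ouvert. En composant avec la projection $\mathrm{pr}_1 : \ana^n_k \to \ana^1_k$, je construirai $\phi = \mathrm{pr}_1 \circ \pi : X \to \ana^1_k$ dont le morphisme induit $\phi^\beth : X^\beth \to (\ana^1_k)^\beth$ sera encore ouvert.

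Je prendrai ensuite un ouvert non-vide connexe $U$ de $X^\beth$. L'image $\phi^\beth(U)$ sera un ouvert connexe de $(\ana^1_k)^\beth$. Comme dans la preuve de la proposition~\ref{points de type 2 sur type 3 sont denses}, les seuls points $y \in (\ana^1_k)^\beth$ ne vérifiant pas $t(y) = 1$ sont les points algébriques et le point générique ; cet ensemble étant disconnectant, $\phi^\beth(U)$ contiendra un $y$ avec $t(y) = 1$. La fibre $U \cap \phi^{-1}(y)$ sera alors un ouvert d'un $\h(y)$-espace affinoïde strict, obtenu comme revêtement fini par $\pi^\beth$ du polydisque unité fermé $\{|T_i| \leq 1,\, i \geq 2\}$ dans $\ana^{n-1,an}_{\h(y)}$. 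Le corps $\h(y)$ étant non-trivialement valué (puisque $t(y) = 1$), j'invoquerai la proposition~4.5 de~\cite{PoineauAngelique} pour trouver un point $x \in U \cap \phi^{-1}(y)$ avec $s_{\h(y)}(x) = n-1$. Par additivité des rangs rationnels et des degrés de transcendance résiduels le long de $\phi$, j'obtiendrai $t(x) = 1$ et $s(x) = n-1$ du point de vue trivialement valué sur $k$, donc $x$ sera une valuation divisorielle contenue dans $U$.

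\textbf{Obstacle principal.} Le point technique le plus délicat sera de vérifier soigneusement l'ouverture de $\phi^\beth$, qui repose sur une version trivialement valuée du lemme~\ref{morphisme fini sur meme dimension est ouvert}, ainsi que l'identification précise de la fibre au-dessus de $y$ comme un affinoïde strict sur $\h(y)$ permettant l'application de la densité des points d'Abhyankar de~\cite{PoineauAngelique}. Il faudra en particulier ne pas utiliser la surjectivité de $\pi^\beth$, qui est au contraire une conséquence de la présente proposition.
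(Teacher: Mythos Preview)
Your approach is essentially the same as the paper's, only more explicit. The paper's proof is two lines: reduce to the affine case, invoke Proposition~\ref{points de type 2 sur type 3 sont denses} to get that the points with $t(x)=1$, $s(x)=n-1$ are dense in $X^\beth$, and then observe that these are exactly the Vaquié divisorial valuations by unwinding the definition. You instead reprove the argument of Proposition~\ref{points de type 2 sur type 3 sont denses} directly inside $X^\beth$. This is arguably more honest, since that proposition is literally stated for $X^{an}$ and the paper leaves to the reader the verification that the same argument runs in $X^\beth$; what you have written is precisely that verification.

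One small correction is needed in your reduction step: for an affine open $U_i \subset X$, the subspace $U_i^\beth$ is \emph{not} open in $X^\beth$ in general---it is closed. For instance, with $U = \mathbb{G}_m \subset \ana^1_k$, one has $U^\beth = \{|T| = 1\}$, which is the complement of the open disk $\{|T|<1\}$ inside $(\ana^1_k)^\beth$. The reduction to the affine case nonetheless works: the $U_i^\beth$ form a finite closed cover of $X^\beth$, and density of a subset in each member of any finite cover (open or closed) implies density in the union, since any nonempty open of $X^\beth$ meets some $U_i^\beth$ in a nonempty relatively open set. Your acknowledged technical point about the openness of $\phi^\beth$ is genuine but routine: the Noether normalisation piece is handled by the Berkovich lemma you cite (with the target $(\ana^n_k)^\beth$ locally irreducible), and the coordinate projection on closed polydisks is open over any base field.
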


\begin{proof}
Le fait d'être dense étant une propriété locale, on peut supposer que $X$ est affine, $X = Spec(A)$ avec $A$ de dimension $n$. 

Par la proposition \ref{points de type 2 sur type 3 sont denses}, on sait que les points de $X^\beth$ vérifiant $s(x) = n-1$ et $t(x)=1$ sont denses dans $X^\beth$. Montrons qu'ils correspondent à des valuations divisorielles au ses de Vaquié. Soit $x \in X^\beth$ tel que $s(x) = 1$ et $t(x) = n-1$. On note $v$ la valuation associée sur $A$. Comme $s(x) = 1$, on a $rang~v=1$ et comme $t(x) = n-1$, on a $deg.tr.(\tilde{K}/\tilde{k}) = n-1$ où $K$ est le corps de fractions de $A$. Donc c'est une valuation divisorielle au sens de Vaquié.
\end{proof}

Le nom de valuation divisorielle vient de la situation géométrique suivante :

Soit $X$ un schéma affine intègre de type fini sur un corps $k$. Soit $D$ un sous-schéma intègre tel que l'anneau $\mathcal{O}_{X,D}$ soit régulier. On utilise cette notation pour parler de l'anneau local au point générique de $D$. Si le schéma $D$ n'est pas un diviseur, on peut prendre l'éclatement $\pi : E_D \rightarrow X$ de $X$ le long de $D$ puis normaliser $E_D$ pour obtenir un anneau de valuation discrète $\mathcal{O}_{X', \pi^{-1}(D)}$ où $X'$ est le normalisé de $E_D$. Ainsi, on définit une valuation sur $\mathcal{O}(X)$. 

On peut ainsi définir la notion de valuation géométrique.

\begin{defi}
Soit $X$ un schéma intègre, de type fini sur $k$. Soit $Y$ un schéma normal et $E$ un diviseur premier de $Y$. Soit $\pi : Y \rightarrow X$ un morphisme propre, birationnel. Alors l'anneau de valuation $\mathcal{O}_{Y, E}$ induit une valuation sur $\mathcal{O}(X)$ que l'on appelle valuation géométrique divisorielle. 
\end{defi}

\begin{rem}
Toutes ces définitions peuvent se faire dans le cas où $X$ est un $k$-schéma intègre, excellent.
\end{rem}

On souhaite maintenant lier les 2 notions de valuations divisorielles. Pour cela, on s'appuie sur la proposition 6.4 de Vaquié \cite{VaquieValuations}.
\begin{prop} Proposition 6.4 \cite{VaquieValuations}.

Soit $X$ un $k$-schéma intègre, excellent de corps des fonctions $F(X) = K$. Pour toute valuation $v$ de $K$, triviale sur $k$, centrée sur $X$, la dimension du centre de $v$ sur $X$ est inférieure ou égale à $deg.tr.(\tilde{K}/\tilde{k})$. De plus, il existe $Z$ un éclatement de $X$ le long d'un sous-schéma fermé tel que le centre de $v$ sur $Z$ est de dimension égale à $deg.tr.(\tilde{K}/\tilde{k})$.
\end{prop}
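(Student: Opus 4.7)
Pour la première assertion, le plan est direct. On note $\mathfrak{p}$ le centre de $v$ sur $X$, c'est-à-dire l'idéal premier $\lbrace f \in \mathcal{O}_{X,\mathfrak{p}}, v(f) > 0\rbrace$. L'inclusion $\mathcal{O}_{X,\mathfrak{p}} \hookrightarrow \mathcal{O}_v$ envoie $\mathfrak{p}\mathcal{O}_{X,\mathfrak{p}}$ dans $\mathfrak{m}_v$ ; elle induit, par passage au quotient, un plongement $\kappa(\mathfrak{p}) \hookrightarrow \tilde{K}$ au-dessus de $k$ puisque $v$ est triviale sur $k$. Il en résulte $deg.tr.(\kappa(\mathfrak{p})/k) \leq deg.tr.(\tilde{K}/\tilde{k})$. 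Dans le cadre excellent intègre considéré, la dimension du centre s'identifie à $deg.tr.(\kappa(\mathfrak{p})/k)$, ce qui conclut.

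Pour la seconde assertion, on pose $s := deg.tr.(\tilde{K}/\tilde{k})$ et l'on choisit $b_1, \ldots, b_s \in \mathcal{O}_v$ dont les réductions $\overline{b_i} \in \tilde{K}$ forment une base de transcendance sur $\tilde{k}$. La question étant locale au centre, on se ramène à $X = Spec(A)$ affine avec $K = Frac(A)$, et l'on écrit $b_i = g_i/h_i$ avec $g_i, h_i \in A$, $h_i \neq 0$. L'anneau $A[b_1, \ldots, b_s]$ est un sous-anneau de $\mathcal{O}_v$ contenant $A$, si bien que $v$ se centre sur l'idéal premier $\mathfrak{q} := \mathfrak{m}_v \cap A[b_1, \ldots, b_s]$. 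Par construction, $\kappa(\mathfrak{q})$ s'injecte dans $\tilde{K}$ et contient les $\overline{b_i}$ algébriquement indépendants sur $\tilde{k}$, donc $deg.tr.(\kappa(\mathfrak{q})/\tilde{k}) \geq s$.

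Il reste à identifier $Spec(A[b_1, \ldots, b_s])$ à une carte affine d'un éclatement $Z \rightarrow X$ le long d'un sous-schéma fermé. Le plan est de procéder par éclatements successifs : l'éclatement de $X$ le long de l'idéal $(g_i, h_i)$ possède une carte affine dans laquelle $b_i = g_i/h_i$ devient régulier, et la composition des $s$ éclatements — qui s'exprime comme l'éclatement unique le long du produit d'idéaux $\prod_i (g_i, h_i)$ — produit le $Z$ voulu. Sur la carte affine visée, l'anneau contient $A[b_1, \ldots, b_s]$ et le centre de $v$ y tombe automatiquement puisque $v(b_i) \geq 0$ pour tout $i$. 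La première assertion appliquée à $Z$ (qui hérite de l'intégrité et de l'excellence par le caractère birationnel et propre de l'éclatement) donne alors l'inégalité inverse, d'où $dim \overline{\lbrace \mathfrak{q}\rbrace} = s$.

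L'obstacle principal est le contrôle rigoureux de la géométrie de l'éclatement. D'une part, la carte affine de l'éclatement de $(g_i, h_i)$ s'écrit $Spec(A[T_i]/(h_i T_i - g_i))$, anneau qui peut admettre de la torsion ; celle-ci s'efface dans l'image dans $K$ puisque $A$ est intègre et $h_i$ n'est pas diviseur de zéro, ce qui permet d'identifier le quotient par le noyau à $A[b_i]$. D'autre part, l'éclatement du produit d'idéaux n'est pas tautologiquement isomorphe à la composition des éclatements individuels, mais la coïncidence au-dessus de l'ouvert où tous les $h_i$ sont inversibles suffit à garantir l'existence de la carte affine voulue contenant le centre de $v$, ce qui est ce dont l'argument a besoin.
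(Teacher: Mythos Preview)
L'article ne démontre pas cette proposition : elle y figure uniquement comme citation de la Proposition~6.4 de Vaquié, suivie d'une remarque observant que la preuve originale ne contraint pas la nature du sous-schéma fermé éclaté (ni réduit, ni irréductible en général). Il n'y a donc pas de \og preuve de l'article \fg{} à laquelle comparer directement.

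Votre argument est correct et constitue la preuve standard. Une simplification lève proprement votre dernier obstacle : plutôt que de passer par des éclatements itérés puis d'invoquer une comparaison délicate avec l'éclatement du produit, analysez directement l'éclatement $Z \to X$ le long de l'idéal $J = \prod_{i=1}^s (g_i, h_i)$. Les générateurs de $J$ sont les $2^s$ produits $\prod_i c_i$ avec $c_i \in \{g_i, h_i\}$ ; la carte affine associée au générateur $h_1 \cdots h_s$ s'obtient en adjoignant à $A$ les quotients $\frac{\prod_i c_i}{h_1 \cdots h_s} = \prod_{i : c_i = g_i} b_i$, ce qui donne exactement $A[b_1, \ldots, b_s]$ comme sous-anneau de $K$. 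Le centre de $v$ s'y trouve parce que $v(h_i) \leq v(g_i)$ pour tout $i$ (puisque $b_i \in \mathcal{O}_v$), donc $h_1 \cdots h_s$ réalise le minimum des valuations parmi les générateurs de $J$. Votre justification par \og coïncidence au-dessus de l'ouvert où les $h_i$ sont inversibles \fg{} est en revanche insuffisante, car le centre de $v$ peut très bien vérifier $v(h_i) > 0$.

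Cette construction est cohérente avec la remarque de l'article : l'idéal $\prod_i (g_i, h_i)$ définit typiquement un sous-schéma fermé ni réduit ni irréductible.
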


\begin{rem}
La preuve ne donne pas de conditions sur le sous-schéma fermé que l'on éclate. En particulier, on ne peut à priori pas se restreindre aux sous-schémas réduits, irréductibles.
\end{rem}

\begin{prop}\label{Les valuations divisorielles sont denses}
Soit $X$ un schéma intègre, de type fini sur $k$. Alors les valuations divisorielles géométriques correspondent aux valuations divisorielles au sens de Vaquié. De plus,  il suffit de regarder les valuations divisorielles géométriques provenant de la situation où  $\pi : Y \rightarrow X$ est la composée d'une normalisation et d'un éclatement d'un sous-schéma fermé.

En particulier, les valuations divisorielles géométriques sont denses dans $X^\beth$.
\end{prop}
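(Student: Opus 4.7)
Le plan est de démontrer l'équivalence des deux notions de valuations divisorielles en deux étapes, puis d'en déduire la densité en invoquant la proposition précédente (densité des valuations divisorielles au sens de Vaquié).

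\emph{Sens facile : géométrique $\Rightarrow$ Vaquié.} Soit $\pi : Y \rightarrow X$ propre birationnel avec $Y$ normal, et soit $E$ un diviseur premier de $Y$. Comme $Y$ est normal et $E$ de codimension $1$, l'anneau local $\mathcal{O}_{Y,E}$ est un anneau de valuation discrète, donc la valuation $v$ associée est de rang $1$. Son corps résiduel est le corps des fonctions de $E$, de degré de transcendance $\dim(Y)-1 = n-1$ sur $\tilde{k}=k$, puisque $Y$ est birationnel à $X$, donc de dimension $n$. Cela donne bien $t(v)=1$ et $s(v)=n-1$, ce qui caractérise une valuation divisorielle au sens de Vaquié.

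\emph{Sens délicat : Vaquié $\Rightarrow$ géométrique.} Soit $v$ une valuation divisorielle au sens de Vaquié, centrée sur $X$. La proposition 6.4 de \cite{VaquieValuations} fournit un éclatement $Z \rightarrow X$ le long d'un sous-schéma fermé tel que le centre de $v$ sur $Z$ soit de dimension $n-1$. Quitte à se restreindre à la composante irréductible de $Z$ dominant $X$, on peut supposer $Z$ intègre et birationnel à $X$. Soit alors $Y$ la normalisation de $Z$ : le morphisme $Y \rightarrow Z$ est fini birationnel, et $\pi : Y \rightarrow X$ est propre birationnel avec $Y$ normal, ce qui correspond exactement à la forme voulue (composée d'une normalisation et d'un éclatement). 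La finitude de $Y \rightarrow Z$ garantit que le centre $\eta$ de $v$ sur $Y$ est encore de dimension $n-1$, donc de codimension $1$ dans $Y$. L'anneau local $\mathcal{O}_{Y,\eta}$ est alors un anneau de valuation discrète. Pour conclure, je propose d'identifier $\mathcal{O}_{Y,\eta}$ avec l'anneau de valuation $R_v$ de $v$ : l'inclusion $\mathcal{O}_{Y,\eta} \subseteq R_v$ provient de ce que $v$ est centrée en $\eta$, et les deux anneaux ont $K$ pour corps des fractions ; comme une inclusion stricte entre anneaux de valuation d'un même corps augmente strictement le rang, et que tous deux sont de rang $1$, on a égalité. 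Ainsi $v$ est la valuation divisorielle géométrique associée au diviseur $\overline{\{\eta\}} \subset Y$.

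\emph{Densité.} Comme les valuations divisorielles au sens de Vaquié sont denses dans $X^\beth$ par la proposition précédente, l'identification des deux notions entraîne immédiatement la densité des valuations divisorielles géométriques.

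L'obstacle principal sera le passage de l'éclatement $Z$ à sa normalisation $Y$ en s'assurant que le centre de $v$ reste de codimension $1$, puis l'identification $R_v = \mathcal{O}_{Y,\eta}$ : cette dernière repose sur l'argument classique selon lequel, à corps des fractions fixé, une inclusion stricte d'anneaux de valuation fait strictement croître le rang, ce qui est impossible ici puisque les deux valuations sont de rang $1$.
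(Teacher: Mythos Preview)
Your proof is correct and follows essentially the same route as the paper: invoke Proposition~6.4 of \cite{VaquieValuations} to produce a blow-up on which the center has codimension~1, then normalize. You are in fact more careful than the paper on two points: you treat the easy direction (geometric $\Rightarrow$ Vaquié) explicitly, and you justify the identification $\mathcal{O}_{Y,\eta} = R_v$ via the rank argument, whereas the paper simply picks an irreducible component $D$ of the preimage of the center and asserts that $\mathcal{O}_{Y,D}$ recovers the given valuation. Your restriction to the component of $Z$ dominating $X$ is harmless but unnecessary, since the blow-up of an integral scheme along a proper closed subscheme is already integral.
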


\begin{proof}
Il suffit de vérifier que tout valuation divisorielle au sens de Vaquié correspond à une valuation divisorielle géométrique. On se ramène au cas où $X$ est affine. 

Soit $x \in X^\beth$ une valuation divisorielle au sens de Vaquié. Alors $t(x) =1$ et $s(x) =n-1$ où $dim(A) =n$. Comme par le lemme 3.4 tout valuation de $X^\beth$ est centrée sur $X$, on sait que $x$ est centrée sur $X$. 

Alors par la proposition 6.4 de \cite{VaquieValuations}, il existe $Z$ éclaté de $X$ tel que le centre de $x$ en $Z$ ait dimension $n-1$ et est donc codimension 1. On peut alors considérer $n : Y \rightarrow Z$ le normalisé de $Z$, comme $Z$ est de Nagata, ce morphisme est fini. Donc l'image réciproque par $n$ du centre de $x$ en $Z$ a codimension 1. On note $D$ une de ses composantes irréductibles ayant codimension 1. Comme l'anneau $Y$ est normal, l'anneau $\mathcal{O}_{Y,D}$ est de valuation discrète. Cette valuation prolonge $x$ sur $\mathcal{O}(X)$ et provient de la situation géométrique $\pi : Y \rightarrow X$ où $\pi$ est la composée de la normalisation et un éclatement.
\end{proof}

\begin{rem}
On pouvait retrouver cette démonstration avec la proposition 10.1 de Vaquié \cite{VaquieValuations} qui s'appuie sur les deux articles de M. Spivakovsky \cite{SpivakovskyValuations}, \cite{SpivakovskyValuations2}. La preuve y est indiquée dans le cas où $X = Spec~A$ avec $A$ un anneau local, mais l'hypothèse d'anneau local n'est pas nécessaire dans la preuve.
\end{rem}

\begin{rem}
Dans leur article, M. Jonsson et M. Musta\c t\u a \cite{JonssonMustata} ont également montré que les valuations divisorielles étaient denses dans le cas d'un schéma régulier sur un corps de caractéristique 0. 

Ils ont de plus caractérisé toutes les valuations d'Abhyankar. Pour cela, ils définissent des valuations quasi-monomiales. Ce sont des valuations qui sont localement monomiales sur un modèle birationnel de $X$. Plus précisement, si $\pi : Y \rightarrow X$ est un morphisme propre, birationnel avec $Y$ régulier et connexe et $\underline{y} = (y_1, \cdots, y_r)$ est un système de coordonnées algébriques en un point $\eta \in Y$. On peut alors définir une valuation sur $\mathcal{O}_{Y,\eta}$ qui induit donc une valuation sur $\mathcal{O}_X$. Soit $\alpha \in \Z^r$, alors on définit $val_\alpha$. Soit $f\in \mathcal{O}_{Y,\eta}$ qui s'écrit $f = \sum_{\beta \in \Z^r_{\geq 0}} c_\beta y^\beta$ comme élément de $\hat{\mathcal{O}_{Y,\eta}}$ et les $c_\beta$ sont soit nuls soit des unités. 

Alors $val_\alpha(f) = min\lbrace \sum \alpha_i\beta_i, c_\beta\neq 0\rbrace.$

Ainsi, les points d'Abhyankar correspondent dans ce contexte à toutes ces valuations et les valuations divisorielles sont celles qui ont un rang égal à 1 parmi celles-ci.
\end{rem}

Dans toute la suite, on parlera uniquement de valuations divisorielles. 

On va maintenant utiliser les valuations divisorielles pour montrer que si un morphisme de schémas $k$-schémas de type fini $f : X \rightarrow Y$ est surjectif, alors la restriction de son analytification $f^\beth : X^\beth \rightarrow Y^\beth$ reste surjective.

\begin{rem}
Il n'est à priori pas clair que cette restriction soit surjective. On sait que $f^{an}_0 : X^{an}_0 \rightarrow Y^{an}_0$ est surjective mais rien n'oblige à priori d'avoir $\forall y \in Y^\beth, (f^{an})^{-1}(y) \cap X^\beth \neq \emptyset$. En général, si $f$ n'est pas surjectif et $y \in Y^\beth \cap f^{an}(X^{an}_0)$, on peut avoir $(f^{an})^{-1}(y) \cap X^\beth = \emptyset$. On peut par exemple prendre $f : Spec~ \C[X, Y] \rightarrow Spec~ \C[X, XY]$. Alors la valeur absolue $\eta$ sur $\C[X, XY]$ telle que $\eta (X) = \frac{1}{2}, \eta(XY) = 1$ ne se relève pas en une valeur absolue bornée par 1 sur $\C[X,Y]$ mais a des relevés sur $\C[X,Y]$.
\end{rem}

Dans un premier temps, on supposera que le morphisme $f$ est plat et surjectif. La platitude n'est pas nécessaire mais la preuve présente déjà les arguments nécessaires pour montrer la surjectivité de $f^\beth$.  

\begin{prop}\label{si le morphisme est plat, surjectivité des beth}
Soit $f : X \rightarrow Y$ morphisme plat de type fini surjectif où $X, Y$ sont des schémas intègres, de type fini sur $k$. Alors l'application induite $X^\beth \rightarrow Y^\beth$ est surjective. 
\end{prop}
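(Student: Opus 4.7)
Le plan repose sur la combinaison de la densité des valuations divisorielles dans $Y^\beth$ (proposition \ref{Les valuations divisorielles sont denses}) et de la compacité de $X^\beth$. Il suffira en effet de démontrer que toute valuation divisorielle de $Y^\beth$ admet un antécédent dans $X^\beth$ : alors $f^\beth(X^\beth)$ est un compact de l'espace séparé $Y^\beth$ contenant un sous-ensemble dense, donc coïncide avec $Y^\beth$.

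Soit $y \in Y^\beth$ une valuation divisorielle, $v$ la valuation sur $K(Y)$ associée, $y_c \in Y$ son centre, et $R_v \subset K(Y)$ son anneau de valuation discrète, qui domine $B := \mathcal{O}_{Y, y_c}$. Par surjectivité de $f$, il existe $x \in X$ avec $f(x) = y_c$ ; posons $A := \mathcal{O}_{X,x}$, de sorte que le morphisme local $B \to A$ soit plat et essentiellement de type fini.

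L'étape centrale consiste à étendre $v$ en une valuation réelle $w$ de $K(X)$, centrée sur $A$, dont la restriction à $K(Y)$ soit un multiple positif de $v$. Je considérerais $C := R_v \otimes_B A$ : par platitude de $A$ sur $B$ et par l'injection $R_v \hookrightarrow K(Y)$, $C$ s'injecte dans $A \otimes_B K(Y) \subset K(X)$, et par fidèle platitude $\mathfrak{m}_v C \subsetneq C$. Ceci permet de choisir un idéal maximal $\mathfrak{M}$ de $C$ vérifiant $\mathfrak{M} \cap R_v = \mathfrak{m}_v$ et $\mathfrak{M} \cap A = \mathfrak{m}_A$ (la fibre résiduelle $\kappa(v) \otimes_{\kappa(y_c)} \kappa(x)$ étant non nulle). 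Le localisé $C_\mathfrak{M}$ est alors un anneau local de corps des fractions $K(X)$, contenant à la fois $A$ et $R_v$. On peut dominer $C_\mathfrak{M}$ par un anneau de valuation $R_w \subset K(X)$ ; l'intersection $R_w \cap K(Y)$ est un anneau de valuation de $K(Y)$ contenant $R_v$, et comme $R_v$ est discret avec $\mathfrak{m}_v \subset \mathfrak{m}_w$ (si $\pi$ désigne une uniformisante, $\pi^{-1} \notin R_w$), on a $R_w \cap K(Y) = R_v$. Les valuations $w|_{K(Y)}$ et $v$ partagent donc leur anneau de valuation et sont proportionnelles : $w|_{K(Y)} = c \cdot v$ pour un $c > 0$.

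Finalement, après renormalisation $w \mapsto w/c$, la séminorme $\exp(-w/c)$ définit, via une carte affine autour de $x$, un élément de $X^\beth$ envoyé sur $y$ par $f^\beth$. La principale difficulté technique est de s'assurer que l'anneau $R_w$ peut être choisi de rang $1$ (i.e. $w$ à valeurs dans $\R$), ce qui se garantit en s'appuyant sur les théorèmes classiques d'extension de valuations réelles pour les extensions de corps de type fini ; la platitude de $f$ intervient de façon essentielle en assurant la non-nullité de la fibre $A/\mathfrak{m}_B A$, donc l'existence du choix de $\mathfrak{M}$.
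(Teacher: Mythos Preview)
Your reduction coincides with the paper's: compactness of $X^\beth$ together with density of the divisorial valuations (proposition~\ref{Les valuations divisorielles sont denses}) reduces the question to lifting a single divisorial valuation. From that point the two arguments diverge.

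The paper proceeds geometrically. A divisorial valuation on $Y$ is realised via a closed subscheme $D \subset Y$, its blowup $Y_D$, and (after normalisation) a component of the exceptional divisor. Flatness of $f$ enters through the fact that the blowup of $X$ along $f^{-1}(D)$ fits into a cartesian square over $Y_D$; normalising this blowup produces a divisor whose associated DVR lifts the given valuation and is centred on $X$ by construction. The whole construction stays in rank one automatically.

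Your route is valuation-theoretic: form $C = R_v \otimes_B A$, embed it in $K(X)$ via flatness, and dominate a localisation of $C$ by a valuation ring $R_w$. This is sound and arguably more direct than manipulating blowup diagrams. Two remarks, however. First, your final sentence mislocates the role of flatness: the nonvanishing of $A/\mathfrak{m}_B A$ is simply locality of $B \to A$; flatness is what gives the injection $C \hookrightarrow K(X)$ (and hence that $C$ is a domain). Second, and more substantively, dominating $C_{\mathfrak{M}}$ by an arbitrary valuation ring need not give rank one, and the classical extension theorems you invoke produce real-valued extensions of $v$ to $K(X)$ without controlling the centre on $X$. The clean fix stays inside your own construction: $C$ is a Noetherian domain (essentially of finite type over the DVR $R_v$), a uniformiser $\pi$ of $R_v$ is a nonzero nonunit in $C$, so Krull's principal ideal theorem yields a height-one prime $\mathfrak{p} \supset (\pi)$; localising at $\mathfrak{p}$ and normalising gives a DVR $R_w \supset C \supset A$ with $\pi \in \mathfrak{m}_w$, whence $R_w \cap K(Y) = R_v$ exactly as in your argument. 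With this adjustment the proof is complete and the extra condition $\mathfrak{M} \cap A = \mathfrak{m}_A$ becomes unnecessary.
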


\begin{proof}
Puisque $X^\beth$ est compact et $f^\beth :  X^\beth \rightarrow Y^\beth$ est continue, pour montrer la surjectivité, il suffit d'atteindre un sous-ensemble dense de $Y^\beth$. Par la proposition \ref{Les valuations divisorielles sont denses}, il suffit de montrer que l'on atteint toutes les valuations divisorielles. Ces valuations proviennent de la situation géométrique suivante : on prend un sous-schéma fermé que l'on éclate puis l'on normalise l'éclatement pour obtenir une valuation.

Soit donc $D$ un sous-schéma fermé de $Y$ et notons $Y_D$ l'éclatement de $D$ dans $Y$, soit $D'$ l'image réciproque par $f$ de $D$ comme $f$ est surjectif, $D'$ n'est pas tout $X$ et notons donc $X_{D'}$ l'éclatement de $D'$ dans $X$.

Par le lemme 31.32.3 de \cite{stacks-project}, on a un diagramme cartésien :
\begin{tikzcd}
X_{D'}\arrow[r, "\pi_{X_{D'}}"] \arrow[d, "f'"] & X\arrow[d, "f"]\\
Y_D\arrow[r, "\pi_{Y_D}"]&Y
\end{tikzcd}.

Notons $n : \tilde{X}_{D'} \rightarrow X_{D'}$ la normalisation de $X$, alors $(\pi_{Y_D} \circ f' \circ n)^{-1}(D)$ est un diviseur (de Weyl) de $\tilde{X}_{D'}$ Ainsi, $\mathcal{O}_{\tilde{X}_{D'}, D'}$ est un anneau de valuation qui relève la valuation divisorielle sur $Y$ et qui se factorise par une valuation centrée sur $X$ par le diagramme ci-dessus. Ainsi, toutes les valuations divisorielles sur $Y$ se relèvent en une valuation centrée sur $X$.

Ainsi, $X^\beth \rightarrow Y^\beth$ est surjectif.
\end{proof}

On a le même résultat avec $f$ seulement surjectif et non nécessairement plat. 

\begin{prop}\label{si le morphisme est surjectif, surjectivité des beth}
Soit $f : X \rightarrow Y$ morphisme de type fini surjectif où $X, Y$ sont des schémas intègres, de type fini sur $k$. Alors l'application induite $X^\beth \rightarrow Y^\beth$ est surjective. 
\end{prop}

\begin{proof}

Comme pour la preuve précédente, on se ramène aux valuations divisorielles.

Puisque $X^\beth$ est compact et $f^\beth :  X^\beth \rightarrow Y^\beth$ est continue, pour montrer la surjectivité, il suffit d'atteindre un sous-ensemble dense de $Y^\beth$. Par la proposition \ref{Les valuations divisorielles sont denses}, il suffit de montrer que l'on atteint toutes les valuations divisorielles. Ces valuations proviennent de la situation géométrique suivante : on prend un sous-schéma fermé que l'on éclate puis l'on normalise l'éclatement pour obtenir une valuation. 

Soit donc $D$ un sous-schéma fermé de $Y$ et notons $Y_D$ l'éclatement de $D$ dans $Y$, on va considérer le diagramme cartésien suivant :
\begin{tikzcd}
E\arrow[r, "\pi_{X}"] \arrow[d, "f'"] & X\arrow[d, "f"]\\
Y_D\arrow[r, "\pi_{Y_D}"]&Y
\end{tikzcd}.

Comme $f$ et $\pi_{Y_D}$ sont surjectives, c'est aussi le cas de $f'$. Soit $\eta_D$ le point générique d'une composante irréductible du diviseur exceptionnel de $Y_D$. Alors, prenons $x \in (f')^{-1}(\eta_D)$ et considérons l'adhérence de $\overline{\lbrace x\rbrace}$ de $\lbrace x \rbrace$ dans $E$. Si c'est un sous-schéma fermé de codimension 1, on conclut comme à la proposition précédente, sinon on considère l'éclaté $E_x$ de $\overline{\lbrace x\rbrace}$ dans $E$. On note $\pi_x : E_x \rightarrow E$ le morphisme. Notons $\eta_{\tilde{x}}$ le point générique d'une composante irréductible du diviseur exceptionnel de $E_x$, alors $(f'\circ \pi_x) (\eta_{\tilde{x}}) = \eta_D$. Donc, en normalisant $E_x$, on obtient un anneau de valuation $\mathcal{O}_{E_x, \eta_{\tilde{x}}}$ qui induit une valuation centrée en $X$ qui prolonge bien la valuation centrée en $Y$ de départ. 

Donc, $f^\beth : X^\beth \rightarrow Y^\beth$ est bien surjective. 
\end{proof}

\section{Existence de suite dans des fibres données}\label{SectionConstructionSuite}

Le but de cette section est de montrer que certains résultats de continuité de l'action se prolongent à la compactification hybride et permettent de lier l'action au bord à celle sur $X^{an}$. Dans un deuxième temps, le but est de construire explicitement des suites convergentes vers des points rigides en restant dans certaines fibres même dans le cas où $k$ est non dénombrable et où $X^\urcorner$ n'est donc pas métrisable. 

\subsection{Continuité de l'action au bord}

Dans toute la suite, on prendra $G$ un groupe algébrique et $X$ un schéma de type fini sur $k$ un corps non-trivialement valué et l'on notera $\hat{k}$ sa complétion. Le but de cette partie est d'étudier la continuité de l'action au bord de la compactification. On veut en particulier, étudier le comportement de suites $g_n \cdot x_n \in X^\urcorner$ dans le cas où $(g_n, x_n)$ converge dans $(G\times X)^\urcorner$.

\begin{nota}
On se ramènera souvent au cas où $X$ est affine et $G$ aussi.

Dans ce cas, on notera $R = \mathcal{O}(X)$ et $R_G = \mathcal{O}(G)$. Si $k$ est algébriquement clos, c'est un anneau de la forme $R_G = k[\frac{T_{11}}{det}, \cdots, \frac{T_{mm}}{det}]/(P_1, \cdots P_l)$ où les $P_k$ sont des polynômes en les $\frac{T_{ij}}{det}$. 

De plus, on notera $X^{an}$ si l'on analytifie $X$ suivant la valeur absolue de $\hat{k}$ et $X^{hyb}$ si on l'analytifie suivant la valeur absolue hybride.
\end{nota}

\begin{rem}
On dispose de deux façons de voir les points de l'analytifié d'un schéma. Soit $X$ un schéma affine, on suppose ici que $X = \ana^n_k$.

Soit $x$ un point de $X^{an}$ et notons $\eta_x$ la semi-norme correspondante.

On sait que $\eta_x$ est une semi-norme sur $R = k[T_1, \cdots, T_d]$ et on note $\mathcal{H}(x)$ son corps résiduel, donc le complété de $R/ker~\eta_x$ muni de la norme $\eta_x$. On obtient un point de $\mathcal{H}(x)^d$ en regardant l'image de chaque $T_i$ dans $\mathcal{H}(x)$. Ainsi, on associe à chaque semi-norme un point de $\mathcal{H}(x)^d$.

Réciproquement si $x = (x_1, \cdots, x_d)$ est un point de $K^d$ où $(K, |\cdot|_K)$ est une extension valuée de $k$, on peut définir une semi-norme $\eta_x$ associée à $x$, de la façon suivante : 

Soit $P \in R$, alors $\eta_x(P) = |P(x_1, \cdots, x_d)|_K$. Cela signifie que l'on évalue en les coefficients de $x$. Il faut néanmoins faire attention, si $x$ est à coefficients dans un corps $K$, le corps résiduel de $x$ peut être un corps très différent de $K$, c'est la complétion d'un sous-corps de $K$.

On peut ainsi voir toute semi-norme $\eta_x$ comme étant l'évaluation sur le corps $\mathcal{H}(x)$ en les coefficients de $x$.

Dans toute la suite, on identifiera donc les semi-normes avec les points de $\ana^{n,an}_{\h(x)}(\overline{\h(x)})$. Pour savoir quel point de vue l'on adopte, on notera $\eta_x$ la semi-norme associée au point $x$.
\end{rem}

\begin{rem}
Les résultats sont donnés sur les points $X^{an}$, mais tous les résultats tiennent si les $x_n$ ne sont pas des points de $X^{an}$ mais simplement des points de $X_\infty$. Cela vient du fait que l'on voit toutes les semi-normes comme des évaluations et on se ramène donc au même problème que sur les points de $X^{an}$. Si des différences apparaissent dans les preuves, on le notera en remarque.
\end{rem}

Remarquons que l'on dispose de sections continues, tout d'abord une section continue sur un ouvert contenant tout le bord.

\begin{prop}\label{prop section continue bord}
Notons $X = \ana^n_\C$, alors on peut analytifier $X$ selon la valeur absolue hybride sur $\C$. On note $i$ l'immersion ouverte définie dans \cite{PoineauCompactificationHybride} à la définition 4.5 et redéfinie à la définition \ref{immersion de Xan dans compactification} qui plonge $X(\C)$ dans $X^\urcorner$.

Prenons alors $r >1$ et l'ouvert $\mathcal{U}_r := X^\urcorner \setminus i(\overline{B}(0,r))$ où $\overline{B}(0,r)$ désigne la boule fermée de $\C^n$ munie de la valeur absolue usuelle, de centre 0 et de rayon $r$. C'est un ouvert contenant le bord de $X^\urcorner$. 

Soit $x \in \mathcal{U}_r$, alors définissons $\eta_x \in X^+$ comme étant l'unique point vérifiant $\pi(\eta_x) = x$ et $max ~\eta_x(T_i) = r$ où $\pi : X^+ \rightarrow X^\urcorner$ est la projection.

Alors l'application
\begin{equation*}
\Phi_r : \left\{
    \begin{array}{ll}
        \mathcal{U}_r \rightarrow X^+, \\
        x \mapsto \eta_x
    \end{array}
\right.
\end{equation*} 
est continue. De plus, $\pi \circ \Phi_r = Id_{\mathcal{U}_r}.$ 
\end{prop}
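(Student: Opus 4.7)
Mon approche consiste à construire $\Phi_r$ comme la descente au quotient $X^\urcorner$ d'une rétraction continue explicite, définie via le flot sur $X^+$. L'idée centrale est que sur chaque trajectoire, la quantité $\max_i |T_i|_\eta$ varie strictement avec le paramètre du flot $\eta \mapsto \eta^\alpha$ (car $\alpha \mapsto s^\alpha$ est strictement monotone pour $s > 1$), ce qui permet de sélectionner un unique représentant vérifiant $\max_i |T_i| = r$.

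Je commencerai par identifier explicitement $\pi^{-1}(\mathcal{U}_r)$. Comme $\C$ est archimédien, tout $\eta \in X^+$ avec $pr(\eta) = \epsilon > 0$ s'écrit de manière unique sous la forme $\eta = \eta_a^\epsilon$ pour un certain $a \in \C^n$, et $[\eta] = i(a)$. On en déduit l'équivalence $[\eta] \in \mathcal{U}_r \Leftrightarrow \max_i |a_i| > r \Leftrightarrow \max_i |T_i|_\eta > r^{pr(\eta)}$. Pour les points du bord ($pr(\eta) = 0$), la condition $\eta \in X_\infty$ entraîne déjà $\max_i |T_i|_\eta > 1 = r^0$ (toute semi-norme multiplicative sur $\C[T_1,\ldots,T_n]$ dépassant $1$ sur un polynôme doit dépasser $1$ sur l'une des variables). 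On obtient ainsi $\pi^{-1}(\mathcal{U}_r) = \lbrace \eta \in X^+ : \max_i |T_i|_\eta > r^{pr(\eta)}\rbrace$, qui est bien ouvert.

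Sur cet ouvert, je poserai $\beta(\eta) := \log r / \log(\max_i |T_i|_\eta)$ et $\rho(\eta) := \eta^{\beta(\eta)}$. La caractérisation précédente garantit que $\beta(\eta) \in I_\eta^*$ : la positivité provient de $\max_i |T_i|_\eta > 1$, et la borne $\beta(\eta) \leq 1/pr(\eta)$ (quand $pr(\eta) > 0$) est directement équivalente à $r^{pr(\eta)} \leq \max_i |T_i|_\eta$. La continuité de $\rho$ se déduit de celle du flot (rappelée dans l'excerpt) et de celle de $\eta \mapsto \max_i |T_i|_\eta$ (continue par définition de la topologie de Berkovich). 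Par construction $\max_i |T_i|_{\rho(\eta)} = r$, et un calcul direct donne $\beta(\eta^\gamma) = \beta(\eta)/\gamma$, d'où $\rho(\eta^\gamma) = \eta^{\gamma \cdot \beta(\eta)/\gamma} = \rho(\eta)$ : l'application $\rho$ est donc constante sur chaque trajectoire.

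Par la propriété universelle de la topologie quotient, $\rho$ descend en une application continue $\Phi_r : \mathcal{U}_r \to X^+$. La relation $\pi \circ \Phi_r = \mathrm{Id}_{\mathcal{U}_r}$ est immédiate puisque $\rho(\eta)$ reste dans la trajectoire de $\eta$, et l'unicité de $\eta_x$ découle de la stricte monotonie de $\alpha \mapsto s^\alpha$. Le principal point délicat — plutôt une vérification soigneuse qu'un véritable obstacle — sera la description uniforme de $\pi^{-1}(\mathcal{U}_r)$ englobant simultanément les fibres archimédiennes ($\epsilon > 0$) et la fibre non-archimédienne ($\epsilon = 0$), et le contrôle que $\beta(\eta)$ reste bien dans $I_\eta^*$ dans tous les cas.
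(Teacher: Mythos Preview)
Your proof is correct and takes a genuinely different route from the paper's. The paper proceeds by a direct verification that $\Phi_r^{-1}(V)$ is open for each subbasic open $V = \{s_1 < \eta(P) < s_2\}$: it sets $F_r = \{\max_i \eta(T_i) = r\}$, writes $\Phi_r^{-1}(V) = \pi((V \cap F_r)\setminus A)$, and constructs by hand open neighborhoods of the form $F_{a_1,a_2} \cap V_{b_1,b_2}$ inside the trajectory-saturation $T((V \cap F_r)\setminus A)$, the parameters $a_i, b_i$ being chosen to satisfy a system of logarithmic inequalities. Your argument instead realises the section as the descent of a continuous, trajectory-constant retraction $\rho(\eta) = \eta^{\beta(\eta)}$ on $\pi^{-1}(\mathcal{U}_r)$. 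This is considerably shorter and more conceptual: once $\pi^{-1}(\mathcal{U}_r) = \{\max_i |T_i|_\eta > r^{pr(\eta)}\}$ is identified and $\beta(\eta) \in I_\eta^*$ is checked, the continuity of $\Phi_r$ follows at once from the continuity of the flow (Proposition~2.10 of Poineau, recalled in the paper) and the universal property of the quotient topology. The paper's approach is more self-contained in that it does not invoke the flow-continuity result, but pays for this with an intricate bookkeeping of inequalities. Your uniform description of $\pi^{-1}(\mathcal{U}_r)$, covering both the archimedean fibres and the fibre $\epsilon = 0$ (where non-archimedeanness gives $\eta \in X_\infty \Leftrightarrow \max_i |T_i|_\eta > 1$), and the observation that the bound $\beta(\eta) < 1/pr(\eta)$ is exactly the strict inequality defining $\pi^{-1}(\mathcal{U}_r)$, are precisely the right points to isolate.
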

\begin{rem}\label{section continue bord corps non-archi}
On a pris $\C$, mais on peut faire le même raisonnement avec $k$ un corps non-archimédien ou $k$ un corps archimédien complet ou non tel que $\Q, \R$ par exemple.
\end{rem}
\begin{proof}
Posons $F_r = \lbrace \eta \in X^+ | max~\eta(T_i) = r \rbrace$ de telle sorte que $\Phi_r(\mathcal{U}_r) \subset F_r$ et notons $pr$ la projection de $X^{hyb}$ vers $M(\C, |\cdot|_{hyb}) = [0,1]$.

Soit $V$ un ouvert de $X^+$. Définissons alors $A := \lbrace \eta\in X^+ | pr(\eta)\neq 0, max~\eta^{\frac{1}{pr(\eta)}}(T_i) \leq r\rbrace$.

Alors, $\Phi_r^{-1}(V) = \pi((V \cap F_r) \setminus A)$ donc $\Phi_r^{-1}(V)$ est ouvert ssi $T((V \cap F_r) \setminus A)$ l'est. On rappelle que $T((V \cap F_r) \setminus A)$ est défini à la définition \ref{def trajectoire ensemble}. 

Il suffit de montrer que $\Phi_r^{-1}(V)$ est ouvert pour $V = \lbrace \eta \in X^+ | s_1 < \eta(P) < s_2 \rbrace$, où $s_1 < s_2, P \in k[T_1, \cdots, T_n]$, comme ces ensembles engendrent la topologie. Dans la suite, on considéra donc que $V$ est de cette forme.

Soit $x \in T((V\cap F_r)\setminus A)$. Soient $1 < a_1 < max~x(T_i) < a_2, b_1 < x(P) < b_2$ que l'on choisira plus tard. Posons $F_{a_1, a_2} := \lbrace \eta \in X^+| a_1 < max~\eta(T_i) < a_2 \rbrace \cap pr^{-1}([0, \frac{\ln a_1}{\ln r}[)$ et $V_{b_1, b_2} = \lbrace \eta \in X^+ | b_1 < \eta(P) < b_2 \rbrace$. Montrons que l'on peut choisir $a_1, a_2, b_1, b_2$ pour que $F_{a_1, a_2} \cap V_{b_1, b_2}$ soit un voisinage ouvert de $x$ dans $T((V \cap F_r)\setminus A)$.

Trouvons des conditions pour que $V_{b_1, b_2} \cap F_{a_1, a_2} \subset T(V \cap F_r)$. Soit $y \in V_{b_1, b_2} \cap F_{a_1, a_2}$.

Comme $y \in F_{a_1, a_2}$, on a $a_1 < max~y(T_i) <a_2$. Soit maintenant $\alpha$ tel que $max~y^\alpha(T_i) = r$, alors $\frac{\ln r}{\ln a_2} < \alpha < \frac{\ln r}{\ln a_1}$. Comme $y \in pr^{-1}([0, \frac{\ln a_1}{\ln r}[), I_y \supset [0, \frac{\ln r}{\ln a_1}[$ et donc $y^\alpha$ est bien défini.

Le but est d'avoir $y^\alpha \in V$ et $y^\alpha \notin A$ pour que $y \in T((V\cap F_r)\setminus A)$. 

Comme $pr(y) < \frac{\ln a_1}{\ln r}$, on a $\frac{1}{pr(y)} > \frac{\ln r}{\ln a_1} > \alpha$ et donc $max~y^{\frac{1}{pr(y)}}(T_i) > max~y^\alpha(T_i) = r$ et donc $y \notin A$, ce qui est équivalent au fait que $y^\alpha$ ne soit pas dans $A$ comme $pr(y^\alpha) = \alpha ~ pr(y)$. Il reste à montrer que $y^\alpha \in V$.

Or on a $b_1^\alpha < y^\alpha(P) <b_2^\alpha$.  Donc on veut 
\begin{equation*}
b_1^\alpha > s_1, b_2^\alpha < s_2
\end{equation*}
et comme $b_1^\alpha > min (b_1^\frac{\ln r}{\ln a_2}, b_1^\frac{\ln r}{\ln a_1})$ et $b_2^\alpha < max (b_2^\frac{\ln r}{\ln a_2}, b_2^\frac{\ln r}{\ln a_1})$, on veut :
\begin{align*}
\left\{
    \begin{array}{ll}
        min (b_1^\frac{\ln r}{\ln a_2}, b_1^\frac{\ln r}{\ln a_1}) &> s_1 \\
        max (b_2^\frac{\ln r}{\ln a_2}, b_2^\frac{\ln r}{\ln a_1}) &< s_2 
    \end{array}
\right.\\
\iff
\left\{
    \begin{array}{ll}
        max (s_1^\frac{\ln a_2}{\ln r}, s_1^\frac{\ln a_1}{\ln r}) &< b_1 \\
        min (s_2^\frac{\ln a_2}{\ln r}, s_2^\frac{\ln a_1}{\ln r}) &> b_2 
    \end{array}
\right.
\end{align*} 
Comme, l'on veut $b_1 < b_2$, il suffit d'avoir 
\begin{align*}
s_1^{\ln a_i} &< s_2^{\ln a_j}\\
s_1 &< s_2 ^{\frac{\ln a_j}{\ln a_i}},
\end{align*}
pour $i,j \in \lbrace 1,2 \rbrace$. Et cela est possible si $a_1$ et $a_2$ sont proches de la valeur $max~x(T_i)$. Donc il existe $1 < a_1 < a_2, b_1 < b_2$ tel que $V_{b_1, b_2} \cap F_{a_1, a_2} \subset T(V\cap F_r)$ il faut maintenant montrer que l'on peut aussi les choisir de manière à ce que ce soit un voisinage de $x$. 

Soit $\beta$ tel que $x^\beta \in (V\cap F_r)\setminus A$ donc $x \notin A$ (ce qui est équivalent à $x^\beta \notin A$). Donc $max~x^{\frac{1}{pr(x)}} (T_i)>r$ ce qui implique que $pr(x) < \frac{\ln max~x(T_i)}{\ln r}$ et donc si on prends $a_1$ suffisamment proche de $max~x(T_i)$, on aura $pr(x) < \frac{\ln a_1}{\ln r}$ et donc $x \in F_{a_1,a_2}$.

On sait de plus que $x^\beta \in V$, donc 
\begin{align*}
s_1 &< x^\beta(P) <s_2 \\
\exists \epsilon > 0 , s_1 + \epsilon &\leq x^\beta(P) \leq s_2 -\epsilon\\
 \exists \epsilon > 0 , (s_1 + \epsilon)^{\frac{1}{\beta}} &\leq x(P) \leq (s_2 -\epsilon))^{\frac{1}{\beta}}
\end{align*}
Donc si on choisit $b_1, b_2$ tel que 
\begin{equation*}
\left\{
	\begin{array}{ll}
		b_1 < min ((s_1 + \epsilon)^{\frac{\ln a_1}{\ln r}}, (s_1 + \epsilon)^{\frac{\ln a_2}{\ln r}}) \\
		b_2 > max ((s_2 - \epsilon)^{\frac{\ln a_1}{\ln r}}, (s_2 - \epsilon)^{\frac{\ln a_2}{\ln r}})
	\end{array}
\right.
\end{equation*}
on aura bien $x\in V_{b_1, b_2}$. Mais comme
\begin{equation*}
\left\{
    \begin{array}{ll}
        max (s_1^\frac{\ln a_2}{\ln r}, s_1^\frac{\ln a_1}{\ln r}) &< b_1 \\
        min (s_2^\frac{\ln a_2}{\ln r}, s_2^\frac{\ln a_1}{\ln r}) &> b_2 
    \end{array}
\right.
\end{equation*}
il faut avoir 
\begin{equation*}
\left\{
    \begin{array}{ll}
        s_1^{\ln a_i} &< (s_1 +\epsilon)^{\ln a_j} \\
        s_2^{\ln a_i} &> (s_2 -\epsilon)^{\ln a_j} 
    \end{array}
\right.
\end{equation*}
pour $i, j \in \lbrace 1, 2 \rbrace$, ce qui est possible pour $a_1, a_2$ proche. Donc, en choisissant $a_1, a_2$ suffisamment proche l'un de l'autre et $a_1$ proche de $max~x(T_i)$, on peut choisir $b_1, b_2$ tel que $V_{b_1, b_2} \cap F_{a_1, a_2}$ soit un voisinage de $x$ dans $T((V \cap F_r)\setminus A)$.
\end{proof}

Dès lors on a le corollaire suivant, nous permettant de relever des suites à $X^+$.

\begin{cor}\label{existence de releve pour les suites convergentes}
Notons $X = \ana^n_k$ où $k$ est un corps valué muni d'une valeur absolue non-triviale.

Soient $x_n \in X^\urcorner \rightarrow x \in X^\urcorner$. Si $y \in X^+$ est tel que $\pi(y) = x$, alors il existe $y_n \in X^+$ tel que $\pi(y_n) = x_n$ et tel que $y_n \rightarrow y$. 
\end{cor}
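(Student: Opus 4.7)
The plan is to split the argument according to whether $x \in \delta X$ or $x \in X^\urcorner \setminus \delta X$. In each case I produce a continuous local section of $\pi : X^+ \to X^\urcorner$ defined near $x$, obtaining canonical lifts $z_n \to z$ in $X^+$ with $\pi(z_n) = x_n$ and $\pi(z) = x$. Since $\pi(y) = \pi(z)$, the points $y$ and $z$ lie on the same trajectory, so there is a unique $\alpha \in I_z^*$ with $y = z^\alpha$. The candidate lifts are then $y_n := z_n^\alpha$: they automatically satisfy $\pi(y_n) = \pi(z_n) = x_n$, and the continuity of the flow $\Phi$ will give $y_n \to y$ as soon as each $z_n^\alpha$ is defined for $n$ large.

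If $x \notin \delta X$, Proposition \ref{prop section continue partie archi} provides a homeomorphism from $X^{an}$ onto the open subset $X^\urcorner \setminus \delta X$, factoring through the inclusion $X_1^+ \hookrightarrow X^+$; composing its inverse with this inclusion gives the desired section. Its image lies in $pr^{-1}(1)$, so $I_{z_n}^* = I_z^* = \left]0,1\right]$; moreover, since $x \notin \delta X$, the equality $\pi(y) = x$ forces $pr(y) > 0$, so $\alpha = pr(y) \in \left]0,1\right]$ and every $z_n^\alpha$ is automatically well-defined.

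If $x \in \delta X$, fix any $r > 1$; then $x \in \mathcal{U}_r$ and, for $n$ large, $x_n \in \mathcal{U}_r$, so the continuous section $\Phi_r$ of Proposition \ref{prop section continue bord} gives $z_n := \Phi_r(x_n) \to z := \Phi_r(x)$. The lift $z$ belongs to $X_\infty$, hence $pr(z) = 0$ and $I_z^* = \left]0,+\infty\right[$, so the unique $\alpha > 0$ with $y = z^\alpha$ exists (characterised by $\max_i |T_i(y)| = r^\alpha$). The main point requiring care -- and the only real obstacle in the argument -- is checking that $\alpha \in I_{z_n}^*$ for $n$ large enough, because $\alpha$ may exceed $1$ while $z_n$ need not lie in the fiber over $0$. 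This verification follows from $pr(z_n) \to pr(z) = 0$: as soon as $pr(z_n) \leq 1/\alpha$ (in particular when $pr(z_n) = 0$), the flow $z_n^\alpha$ is defined, and continuity of $\Phi$ then yields $y_n \to y$. For the finitely many remaining indices, any preimage of $x_n$ in $X^+$ may be chosen, since only the tail of the sequence affects the limit.
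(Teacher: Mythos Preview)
Your proof is correct and follows the same two-case split as the paper. The only notable difference is in the case $x \in \delta X$: the paper observes that since any lift $y$ of $x$ lies in $X_\infty$, one may simply choose $r := \max_i |T_i(y)| > 1$, so that $\Phi_r(x) = y$ on the nose and $y_n := \Phi_r(x_n)$ works immediately, without any subsequent flow adjustment. Your route --- fixing an arbitrary $r$, lifting to $z_n \to z$, and then flowing by the unique $\alpha$ with $y = z^\alpha$ --- is equally valid but a touch longer; the verification that $\alpha \in I_{z_n}^*$ for large $n$ (via $pr(z_n) \to 0$) is precisely the step the paper's clever choice of $r$ sidesteps. Conversely, in the case $x \notin \delta X$ you are more explicit than the paper about applying the flow when $y \notin X_1^+$.
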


\begin{proof}
On distingue 2 cas :
\begin{itemize}
\item Tout d'abord le cas où $x \in \delta X$. Alors on est dans le cas de la proposition \ref{prop section continue bord}, et $x$ est dans tout les $\mathcal{U}_r$ pour $r >1$. Soit $r>1$ tel que $\Phi_r(x) = y$.

Comme $x_n \rightarrow x$, à partir d'un certain rang, on a $x_n \in \mathcal{U}_r$. Ainsi, par continuité de $\Phi_r$, les $y_n := \Phi_r(x_n)$ conviennent. 

\item Ensuite le cas où $x\notin \delta X$, alors on est dans le cas de la proposition \ref{prop section continue partie archi}, et comme $x_n \rightarrow x$, alors à partir d'un certain rang, tous les $x_n$ et $x$ sont dans l'image de l'homémorphisme et donc les $i^{-1}(x_n)$ conviennent.
\end{itemize}
\end{proof}

On va maintenant s'intéresser à l'action de $G$ sur $X$. 

\begin{rem}
Redonnons quelques propriétés de l'analytification d'un produit fibré dans le cas où tous les schémas sont affines.

Définissons $A := R_G \otimes_k R$ que l'on utilisera dans toute la suite.

Alors, on a : 
\begin{align*}
&(G \times X)^{hyb} = Spec(A)^{hyb}.
\end{align*}

On prend ici l'analytification avec $k$ muni de la valeur absolue hybride.
 
Soit $x \in X^{hyb}$ alors si on note $pr_2$ la projection sur le deuxième facteur, on a :

\begin{align*}
pr_2^{-1}(x) &= \lbrace \eta : A \rightarrow \R_+, \eta|R = \eta_x\rbrace \\
&= G_{\mathcal{H}(x)}^{an}
\end{align*}

où l'analytification est ici prise avec $\mathcal{H}(x)$ muni de sa valeur absolue induite. On utilisera la notation $G^{an}_{\mathcal{H}(x)}$ dans toute la suite.
\end{rem}








On va maintenant étudier des convergences de suites dans des compactifications hybride. On utilisera toujours le corollaire \ref{existence de releve pour les suites convergentes}, qui nous permettra de choisir un relevé de notre limite ainsi que des relevés des points de la suite qui convergent vers le relevé de la limite. 

\begin{nota}\label{On prend des releves de suite}
Dans toute la suite, lorsque l'on notera $x_n \rightarrow x$, on notera toujours $\eta_x$ pour un relevé de $x \in X^{hyb}$ et $\eta_{x_n} \in X^{hyb}$ un relevé de $x_n$. Dans le cas où $x_n \notin \delta X$, on prendra $\eta_{x_n}$ comme étant l'image réciproque de $x_n$ via l'immersion $i : (X \otimes_k \hat{k})^{an} \rightarrow X^\urcorner$. Alors par le corollaire \ref{existence de releve pour les suites convergentes}, on sait qu'il existe des $y_n \in X^{hyb}$ avec $y_n \rightarrow \eta_x$ tel que $y_n$ et $\eta_{x_n}$ soient reliés par le flot. On notera alors $\epsilon \geq 0$ tel que $y_n = \eta_{x_n}^{\epsilon_n}$. Dans le cas, où $x_n \notin \delta X$, alors $\epsilon_n \in [0,1]$ et $\epsilon_n \rightarrow 0$ si $x \in \delta X$.
Ainsi, dans le cas où $X$ est affine, on a :

Pour tout polynôme $P \in \mathcal{O}(X), |P(coeff~de~x_n)|^{\epsilon_n} \rightarrow \eta_x(P)$.
\end{nota}

Tout d'abord étudions l'effet de l'action de $G$ sur les limites séquentielles.

\begin{prop}\label{g_n x_n tend vers g x}
Soit $X$ une $k$-variété sur un corps non-trivialement valué et soit $G$ un groupe agissant sur $X$.

Soit $(g_n, x_n) \in (G \times X)^{an}$, on les voit comme des éléments de $(G\times X)^\urcorner$. Supposons qu'il existe $(g, x)\in \delta (G \times X)$ où on voit $g \in G_{\mathcal{H}(x)}^{an}$ tel que  $(g_n, x_n) \rightarrow (g, x)$. Notons $\eta_x \in X_\infty$ un relevé de $x$. On note de même $\eta_{(g,x)} \in (G\times X)_\infty$ le relevé de $(g,x)$ tel que $pr^1(\eta_{(g,x)}) = \eta_x$ où $pr^1$ est la projection sur la première coordonnée. On note alors $\eta_{g\cdot x}$ le relevé de $g\cdot x$ tel que $\Phi^{hyb}(\eta_{(g,x)}) = \eta_{g\cdot x}$ où $\Phi^{hyb}$ est l'analytifié de $\Phi : G\times X \rightarrow X$.

Alors il existe $\epsilon_n \rightarrow 0$ tel que pour $(\eta_{g_n \cdot x_n},{\epsilon_n}) \rightarrow \eta_{g \cdot x}$ où $\eta_{g \cdot x} \in X^\infty \cup X^\beth$ et $(\eta_{g_n \cdot x_n},{\epsilon_n})$ signifie que l'on regarde chaque $\eta_{g_n \cdot x_n}$ dans la fibre $pr^{-1}(\epsilon_n)$ où $pr : X \rightarrow M(k_{hyb})$.
\end{prop}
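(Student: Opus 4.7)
L'approche est de relever la convergence $(g_n, x_n) \rightarrow (g, x)$ depuis $(G\times X)^\urcorner$ vers $(G\times X)^{hyb}$ à l'aide du corollaire \ref{existence de releve pour les suites convergentes} en ciblant précisément le relevé $\eta_{(g,x)}$, puis de la pousser par le morphisme d'action $\Phi^{hyb}$ en exploitant le fait que les morphismes de schémas commutent au flot.

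Concrètement, j'appliquerais (une variante du) corollaire \ref{existence de releve pour les suites convergentes} à la suite $(g_n, x_n) \rightarrow (g, x)$ dans $(G\times X)^\urcorner$, avec pour cible le relevé spécifié $\eta_{(g,x)} \in (G\times X)_\infty$. On obtient alors des relevés $y_n \in (G\times X)^+$ de $(g_n, x_n)$ tels que $y_n \rightarrow \eta_{(g,x)}$ dans $(G\times X)^{hyb}$. Comme $(g_n, x_n) \in (G\times X)^{an}$ correspond à la fibre $\epsilon = 1$ tandis que $\eta_{(g,x)} \in (G\times X)_\infty$ se situe au-dessus de $\epsilon = 0$, il existe $\epsilon_n \in [0,1]$ avec $\epsilon_n \rightarrow 0$ tels que $y_n = \eta_{(g_n, x_n)}^{\epsilon_n}$, où $\eta_{(g_n, x_n)}$ désigne le relevé canonique au niveau $\epsilon = 1$ via l'immersion $i$ de la proposition \ref{prop section continue partie archi}.

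Ensuite, je pousserais cette convergence par $\Phi^{hyb}$. Un calcul direct à partir de la définition de l'analytification par tirage en arrière montre que les morphismes de schémas commutent au flot : pour tout $\eta \in (G\times X)^{hyb}$ et tout $\epsilon \in I_\eta$, on a $\Phi^{hyb}(\eta^\epsilon) = \Phi^{hyb}(\eta)^\epsilon$, puisque pour tout $P \in \mathcal{O}(X)$, $|P(\Phi^{hyb}(\eta^\epsilon))| = |\Phi^*(P)(\eta)|^\epsilon = |P(\Phi^{hyb}(\eta))|^\epsilon$. Par conséquent, $\Phi^{hyb}(y_n) = \eta_{g_n \cdot x_n}^{\epsilon_n}$, et la continuité de $\Phi^{hyb}$ fournit $\eta_{g_n\cdot x_n}^{\epsilon_n} \rightarrow \Phi^{hyb}(\eta_{(g,x)}) = \eta_{g\cdot x}$. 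C'est exactement la convergence $(\eta_{g_n\cdot x_n}, \epsilon_n) \rightarrow \eta_{g\cdot x}$ demandée.

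L'étape à surveiller sera la première : le corollaire \ref{existence de releve pour les suites convergentes} n'a été énoncé que pour $X = \ana^n_k$, et il faut l'adapter à la variété $G \times X$ au voisinage du point de bord $(g,x)$. J'envisage deux voies : soit se ramener à un ouvert affine autour de $(g,x)$ plongé fermement dans un $\ana^N_k$ en utilisant la compatibilité de la section continue $\Phi_r$ de la proposition \ref{prop section continue bord} avec les immersions fermées, soit refaire la preuve de cette proposition directement sur une algèbre de type fini, puisqu'elle ne repose que sur la manipulation de semi-normes. Une subtilité mineure est qu'il faut récupérer exactement le relevé $\eta_{(g,x)}$ prescrit (et non un relevé arbitraire dans sa trajectoire), mais tout décalage de flot constant peut être absorbé dans le choix des $\epsilon_n$.
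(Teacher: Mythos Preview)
Your approach is essentially identical to the paper's: lift the convergence to $(G\times X)^{hyb}$ via the corollary on existence of lifts (this is packaged in the paper as Notation~\ref{On prend des releves de suite}), then push forward by the continuous map $\Phi^{hyb}$, using implicitly that analytified morphisms commute with the flow. Your additional remarks on the flow-compatibility of $\Phi^{hyb}$ and on extending the corollary from $\ana^n_k$ to an affine chart of $G\times X$ make explicit what the paper leaves tacit, but the strategy is the same.
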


\begin{rem}
Pour le cas où $(g_n, x_n) \in \delta (G \times X)$, on a le même résultat, mais il n'y a pas la condition sur $\epsilon_n \rightarrow 0$.

Dans le cas où $g\cdot x \in X_\infty$, cela signifie exactement que $g_n\cdot x_n \rightarrow g \cdot x \in X^\urcorner$ mais si $g\cdot x \in X^\beth$, on ne peut pas parler de convergence de $g_n\cdot x_n$ vers $g\cdot x$ sans parler des $\epsilon_n$ et sans parler de $X^{hyb}$.
\end{rem}

\begin{ex}
On peut avoir les deux cas : $\eta_{g\cdot x} \in X^\beth$ ou $\eta_{g\cdot x} \in X^\infty$. 

Prenons par exemple le cas où $X = Rat_1, G = SL_2$.

Soit $t \in \C$, avec $|t| = r < 1$ alors posons $f_n = \frac{z - t^n}{t^{-n}z +1} \in Rat_1(\C)$. Alors en prenant $\epsilon_n = \frac{1}{n}$, on obtient la convergence des $f_n$ vers $f = \frac{z - T}{T^{-1}z +1}\in Rat_1^\infty$ dont le corps résiduel est $\C((T))$. 

Prenons alors $M_n = \begin{pmatrix}
t^{\frac{n}{2}} & 0\\
0 & t^{-\frac{n}{2}}\\
\end{pmatrix}$. Ainsi, $(M_n, f_n) \in Rat_1(\C) \rightarrow (M,f)$ où $M \in SL_2^{an}(\mathcal{H}(f))$ et le corps résiduel de la semi-norme associée à $(M, f)$ est $\C((T^{\frac{1}{2}}))$ et $M$ peut être vue comme un élément de $SL_2(\C((T^{\frac{1}{2}})))$ où $ M = \begin{pmatrix}
T^{\frac{1}{2}} & 0\\
0 & T^{-\frac{1}{2}}\\
\end{pmatrix}$. 

De plus, $f_n^{M_n} = \frac{z -1}{z+1}$ et donc si l'on note $g = \frac{z -1}{z+1}$, on peut voir $g$ comme un élément de $Rat_1^\beth$ dont le corps résiduel est $\C$ muni de la valeur absolue triviale. Ainsi, $f_n^{M_n} \rightarrow g = f^M \in Rat_1^\beth$.

Et si, on avait pris $M_n = Id$ pour tout $n$, alors $f_n^{M_n}$ aurait convergé vers $f$ qui est dans le bord de $Rat_1$.
\end{ex}

\begin{proof}
Notons comme dans la notation \ref{On prend des releves de suite}, $\eta_{(g_n, x_n)}$ les relevés de $(g_n, x_n) \in (G\times X)^\urcorner$. Alors il existe $\epsilon_n \rightarrow 0$ tel que $\eta_{(g_n, x_n)}^{\epsilon_n} \rightarrow \eta_{(g,x)} \in (G\times X)^{hyb}$. 

Alors par continuité de $\Phi^{hyb}$, $(\eta_{g_n \cdot x_n},{\epsilon_n}) = \Phi^{hyb}(\eta_{(g_n, x_n)},{\epsilon_n}) \rightarrow \Phi^{hyb}(\eta_{(g,x)}) = \eta_{g\cdot x}$.
\end{proof}

\begin{prop}\label{va de fg_n x_n est va de g x}
Soit $(g_n, x_n) \in (G \times X)^{an}$ et soit $(g,x) \in \delta(G \times X)$ une valeur d'adhérence de la suite vue dans $(G\times X)^\urcorner$ où $g \in G_{\mathcal{H}(x)}^{an}$. Alors $g\cdot x$ est une valeur d'adhérence de $g_n\cdot x_n$ dans $X^{hyb}$.
\end{prop}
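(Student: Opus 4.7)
Le plan est de se ramener directement à la proposition \ref{g_n x_n tend vers g x} par extraction d'une sous-suite convergente. Par définition d'une valeur d'adhérence d'une suite, il existe une sous-suite $(g_{n_k}, x_{n_k})$ qui converge vers $(g,x)$ dans $(G\times X)^\urcorner$. Comme $(g,x) \in \delta(G\times X)$ par hypothèse, les hypothèses de la proposition \ref{g_n x_n tend vers g x} sont satisfaites pour cette sous-suite.

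J'applique alors la proposition \ref{g_n x_n tend vers g x} à la sous-suite convergente. Elle fournit l'existence d'une suite $\epsilon_{n_k} \to 0$ telle que $(\eta_{g_{n_k}\cdot x_{n_k}}, \epsilon_{n_k}) \to \eta_{g\cdot x}$ dans $X^{hyb}$, où chaque $\eta_{g_{n_k}\cdot x_{n_k}}$ est vu dans la fibre $pr^{-1}(\epsilon_{n_k})$ via le flot. Cela établit précisément que $\eta_{g\cdot x}$ est la limite dans $X^{hyb}$ d'une sous-suite de $(\eta_{g_n \cdot x_n})_n$ convenablement replacée par le flot dans des fibres tendant vers la fibre non-archimédienne, donc une valeur d'adhérence de $(g_n \cdot x_n)_n$ dans $X^{hyb}$.

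Je n'anticipe pas de réelle difficulté, cette proposition étant essentiellement un corollaire direct de la précédente. Le seul point conceptuellement subtil est l'interprétation de la notion de valeur d'adhérence dans $X^{hyb}$ : comme $X^{hyb}$ n'est en général pas métrisable (voir l'introduction), cette notion doit être comprise au sens de limite d'une sous-suite convenablement reparamétrée par le flot, conformément au cadre de la proposition \ref{g_n x_n tend vers g x}. Cette interprétation est naturelle au vu de la construction de la compactification hybride, où le flot relie les fibres $pr^{-1}(\epsilon)$ pour $\epsilon \in [0,1]$ et où la définition même de valeur d'adhérence dans un tel espace s'entend via les trajectoires du flot.
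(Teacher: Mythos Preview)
There is a genuine gap at the very first step. You write: ``Par définition d'une valeur d'adhérence d'une suite, il existe une sous-suite $(g_{n_k}, x_{n_k})$ qui converge vers $(g,x)$ dans $(G\times X)^\urcorner$.'' This implication holds only in first-countable (or, more generally, Fréchet--Urysohn) spaces. As the introduction of the paper stresses, when $k$ is uncountable the space $(G\times X)^\urcorner$ is not metrizable, and it is not even known whether hybrid spaces are angelic. Hence a cluster point of a sequence in $(G\times X)^\urcorner$ need not be the limit of any subsequence, and your extraction is unjustified.

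The paper's proof handles this correctly by passing to a \emph{suite généralisée} (a net): since $(g,x)$ is a cluster point, there is a net with values in $\{(g_n, x_n, \epsilon_n)\}$ converging to $(g,x)$, and Proposition~\ref{g_n x_n tend vers g x} applies verbatim to nets (its proof is just continuity of $\Phi^{hyb}$). This yields $g\cdot x$ as the limit of a net with values in $\{(g_n\cdot x_n, \epsilon_n)\}$, hence a valeur d'adhérence in $X^{hyb}$. Your closing paragraph shows you are aware of the non-metrizability issue, but you located the difficulty in the interpretation of the \emph{conclusion}; the actual obstruction is already in extracting a convergent subsequence from the \emph{hypothesis}. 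Replacing ``sous-suite'' by ``suite généralisée'' throughout fixes the argument and brings it in line with the paper's proof.
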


\begin{proof}
Comme $(g,x)$ est une valeur d'adhérence de $(g_n, x_n)$, il existe une suite généralisée à valeurs dans $\lbrace (g_n, x_n, \epsilon_n) \rbrace$ qui converge vers $(g,x)$. On peut donc appliquer la proposition \ref{g_n x_n tend vers g x} à cette suite généralisée (la démonstration est la même dans ce cas là). Donc $g\cdot x$ comme limite d'une suite généralisée à valeurs dans $\lbrace (g_n\cdot x_n, \epsilon_n)\rbrace$ est une valeur d'adhérence de $(g_n\cdot x_n, \epsilon_n)$ dans $X^{hyb}$.
\end{proof}

\subsection{Existence de suite}

On va maintenant faire le chemin inverse : on va prendre une suite $x_n \rightarrow x' \in X^\urcorner$, on note $x \in X^{hyb}$ un relevé de $x'$. Quitte à supprimer les premiers termes de la suite, on peut supposer que $X$ est affine sans perte de généralité. On suppose même que $X = \ana^d_k$. En général $X$ est seulement un fermé $V(I)$ de l'espace affine. Mais via la projection $k[T_1, \cdots, T_d] \rightarrow k[T_1, \cdots, T_d]/I$, on peut donner une valeur à $|P(x)|$ pour tout $P \in k[T_1, \cdots, T_d]$ et donc on peut se ramener au cas où $X = \ana^d_k$.

Le but est de prendre $g \in G^{an}_{\h(x)}(\overline{\mathcal{H}(x)})$ et construire des éléments $g_n \in G^{an}_{\h(g_n)}$ telles que $(g_n, x_n) \rightarrow (g,x')$. Pour cela, on va montrer que cela est vrai pour tout schéma $Y$sur $k$ de type fini, en commençant par le cas où $Y$ est l'espace affine de dimension $m$. On écrit les propositions dans le cas où $x_n \in X(k) \rightarrow x'\in \delta X$ mais les résultats restent vraies si $x \in X^{an}$ et non nécessairement $\delta X$ ou si les $x_n$ sont des éléments de $X^{an}$ ou des éléments de $\delta X$. On écrira les changements s'il y en a en remarque.

\begin{defi}\label{def beta algebrique}
Soit $x_n \in X(k) \rightarrow x' \in \delta X, ~\beta$ algébrique sur $\mathcal{H}(x)$ où $x \in X^{hyb}$ désigne un relevé de $x'\in \delta X$. Alors notons
 $(\beta, x)$ le point de $(\ana^1 \times X)^{an}_0$ correspondant à la semi-norme sur $k[T, T_1, \cdots, T_d]$, où l'on évalue chaque polynôme dans $\mathcal{H}(x)(\beta)$ en 
  $\beta$ et les coefficients de $x$.
\end{defi}

\begin{prop}\label{existence de beta_n qui tends vers beta}
Soit $x_n \in X(k) \rightarrow x' \in \delta X, ~\beta$ algébrique sur $\mathcal{H}(x)$ dont le polynôme minimal sur $\mathcal{H}(x)$ est à coefficient dans $Frac(R/ker~\eta_x)$ où $x$ est un relevé de $x'$. Alors il existe $\beta_n \in \overline{k}$ tel que $(\beta_n, x_n) \in (\ana^1 \times X) (\overline{k}) \rightarrow (\beta, x') \in \delta (\ana^1 \times X)$.
\end{prop}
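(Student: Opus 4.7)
La stratégie est de construire les $\beta_n$ comme racines dans $\overline{k}$ d'un polynôme obtenu en substituant $x_n$ dans les coefficients du polynôme minimal $P$ de $\beta$ sur $\mathcal{H}(x)$, puis de déduire la convergence demandée en étudiant un morphisme fini auxiliaire sur les espaces hybrides. Écrivons $P(T) = T^d + \sum_{i=0}^{d-1} (a_i/b_i)(x) T^i$ avec $a_i, b_i \in R := k[T_1,\ldots,T_d]$ et $\eta_x(b_i) \neq 0$, ce qui est possible par l'hypothèse sur les coefficients de $P$. Posons $B := \prod_i b_i \in R$ (de sorte que $\eta_x(B) > 0$) et $\tilde{P} := B \cdot P \in R[T]$ (obtenu en chassant les dénominateurs). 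Pour $n$ assez grand, on a $|B(x_n)|^{\epsilon_n} \to \eta_x(B) > 0$, d'où $B(x_n) \neq 0$, et l'on peut définir $P_n(T) := T^d + \sum_i (a_i/b_i)(x_n) T^i \in k[T]$. On choisit alors $\beta_n \in \overline{k}$ une racine quelconque de $P_n$.

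Pour établir la convergence $(\beta_n, x_n) \to (\beta, x')$, on considère l'ouvert $U := \{B \neq 0\} \subset \mathbb{A}^d_k$ et le sous-schéma $Z := V(\tilde{P}/B) \subset U \times \mathbb{A}^1$. Comme $\tilde{P}/B = P$ est unitaire de degré $d$ dans $\mathcal{O}(U)[T] = R[B^{-1}][T]$, le morphisme $\pi : Z \to U$ est fini et plat de degré $d$. L'ouvert $U$ est normal et $\dim Z = \dim U = d$, donc le lemme \ref{morphisme fini sur espace hybride est ouvert} garantit que $\pi^{hyb} : Z^{hyb} \to U^{hyb}$ est ouvert, et la proposition 4.2 de \cite{PoineauCompactificationHybride} appliquée au morphisme propre $\pi$ assure qu'il est également propre. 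Le point clé est alors que la fibre de $\pi^{hyb}$ au-dessus de $\eta_x$ se réduit au seul point $(\beta, \eta_x)$ : notant $\mathfrak{a} := \ker \eta_x \subset R$ et $K := \mathrm{Frac}(R/\mathfrak{a})$, tout antécédent correspond à un prolongement à $K(\beta) = K[T]/(P)$ de la valeur absolue induite par $\eta_x$ sur $K$ ; comme $P$ est irréductible sur la complétion $\mathcal{H}(x)$ de $K$, l'unicité du prolongement de la valeur absolue à une extension algébrique d'un corps complet garantit l'unicité d'un tel antécédent.

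La conclusion découle d'un argument topologique usuel. Fixons un voisinage ouvert $V$ de $(\beta, \eta_x)$ dans $Z^{hyb}$ ; l'ensemble $\pi^{hyb}(Z^{hyb} \setminus V)$ est fermé dans $U^{hyb}$ par propreté de $\pi^{hyb}$ et ne contient pas $\eta_x$ par unicité de l'antécédent, si bien que son complémentaire $W \subset U^{hyb}$ est un voisinage ouvert de $\eta_x$. Puisque $\eta_{x_n}^{\epsilon_n} \to \eta_x$ dans $U^{hyb}$, on a $\eta_{x_n}^{\epsilon_n} \in W$ à partir d'un certain rang, et tout antécédent de $\eta_{x_n}^{\epsilon_n}$ par $\pi^{hyb}$ se trouve alors dans $V$. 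Or le point $z_n := (\beta_n, x_n)^{\epsilon_n} \in Z^{hyb}$ en est un (puisque $\beta_n$ annule $P_n$), d'où $z_n \to (\beta, \eta_x)$ dans $(\mathbb{A}^1 \times X)^{hyb}$, et finalement $(\beta_n, x_n) \to (\beta, x')$ dans $(\mathbb{A}^1 \times X)^\urcorner$ par passage au quotient par le flot. La difficulté principale réside dans la preuve que la fibre de $\pi^{hyb}$ au-dessus de $\eta_x$ se réduit à un unique point ; la propreté de $\pi^{hyb}$ permet alors de conclure par un raisonnement séquentiel sans recourir à une hypothèse de métrisabilité qui n'est pas acquise en général pour les espaces hybrides.
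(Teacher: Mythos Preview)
Your proof is correct and takes a more structural route than the paper's. Both arguments begin identically—substitute $x_n$ into the coefficients of the minimal polynomial of $\beta$ and pick $\beta_n$ a root—but then diverge.

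The paper argues by compactness in $(\ana^1 \times X)^\urcorner$: any cluster point $(a,y)$ of $(\beta_n,x_n)$ must satisfy $y=x'$ (a direct bound on $|\beta_n|^{\alpha_n}$ via the polynomial relation rules out the $\beth$-part), and then $a$ must annihilate $\mu_\beta$ by continuity of evaluation; since all Galois conjugates of $\beta$ over $\mathcal{H}(x)$ define the \emph{same} Berkovich point of $\ana^1_{\mathcal{H}(x)}$, the cluster point is unique and equals $(\beta,x')$.

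You instead package the situation as a finite morphism $\pi:Z\to U$ and use that $\pi^{hyb}$ is proper with singleton fibre over $\eta_x$. The singleton claim is the same algebraic fact in disguise (irreducibility of $P$ over the complete field $\mathcal{H}(x)$ forces a unique extension of the valuation to $\mathcal{H}(x)(\beta)$), but your proper-map argument then delivers convergence without any explicit estimate on $|\beta_n|$. This is cleaner and more conceptual; it also makes transparent why the argument works for nets, which the paper needs later. Two minor remarks: your invocation of lemme~\ref{morphisme fini sur espace hybride est ouvert} (openness) is superfluous, since only closedness of $\pi^{hyb}$ is used in your final paragraph; and you should note explicitly that $(\beta,\eta_x)\in(\ana^1\times X)^+$ (which holds because $\eta_x\in X_\infty$ already witnesses an element of $R$ with value $>1$), so that passing to the quotient by the flow is legitimate.
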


\begin{rem}
Dans le cas où $x_n \in \delta X$ ou $x_n \in X^{an}\backslash X(k)$ alors $\beta_n$ sera un élément de la clôture algébrique de $\mathcal{H}(x_n)$.
\end{rem}
\begin{proof}
Notons $\mu_\beta$ le polynôme minimal de $\beta$, en ne le prenant pas unitaire, on peut supposer que $\mu_\beta$ est à coefficient dans $R/ker~\eta_x$ donc ses coefficients sont des polynômes en les coefficients de $x$, donc il existe $P_k \in R, 0 \leq k \leq l$ tel que $\mu_\beta = \sum P_k(x_1, \cdots, x_d)T^k$. Prenons alors $\mu_n = \sum P_k(x_{1,n}, \cdots, x_{d,n})T^k \in k[T]$ le polynôme obtenu en prenant les coefficients de $x_n$. Et prenons $\beta_n$ une racine de $\mu_n \in \overline{k}$.

Comme $(\beta_n, x_n)$ est à valeurs dans un compact ($(\ana^1 \times X)^\urcorner$), cette suite admet une valeur d'adhérence que l'on note $(a,y)$. $y$ est un élément de $X^{an}_0/\Phi$  où $\Phi$ désigne le flot et $a$ correspond à un point de $\ana^1_{\mathcal{H}(y)}$ et on le voit comme un élément de son corps résiduel.

Comme $(a,y) \in (\ana^1 \times X)^\urcorner$, on peut le relever en un point $(\tilde{a}, \tilde{y}) \in (\ana^1 \times X)^{hyb}$ alors si $\tilde{y} \notin X^\beth$ nécessairement, $y = x \in X^\urcorner$. 

Montrons donc que $\tilde{y}$ ne peut pas être un élément de $X^\beth$. Sinon, on sait qu'il existe $\alpha_n \in [0,1]$ tel que $\eta_{(\beta_n, x_n)}^{\alpha_n}$ soit une valeur d'adhérence de $(\tilde{a}, \tilde{y}) \in (\ana^1 \times X)^{hyb}$. 

Comme $\beta_n$ est une racine de $\mu_n$, on a nécessairement :
\begin{equation*}
|\beta_n| \leq max(1, | \frac{P_0(x_{1,n}, \cdots, x_{d,n})}{P_l(x_{1,n}, \cdots, x_{d,n})}|, \cdots,| \frac{P_{l-1}(x_{1,n}, \cdots, x_{d,n})}{P_l(x_{1,n}, \cdots, x_{d,n})}|).
\end{equation*} 

D'où,
\begin{equation*}
|\beta_n|^{\alpha_n} \leq max(1, | \frac{P_0(x_{1,n}, \cdots, x_{d,n})}{P_l(x_{1,n}, \cdots, x_{d,n})}|^{\alpha_n}, \cdots, | \frac{P_{l-1}(x_{1,n}, \cdots, x_{d,n})}{P_l(x_{1,n}, \cdots, x_{d,n})}|^{\alpha_n}) \rightarrow 1.
\end{equation*} 
Le terme de droite tends vers $1$ car $\tilde{y}$ est dans $X^\beth$ et donc quitte à extraire $\frac{\alpha_n}{\epsilon_n} = 0$.

Or, $a$ est une valeur d'adhérence de $\eta_{\beta_n}^{\alpha_n}$ et donc $a \in \ana^{1, \beth}$ et donc $(a,y) \notin (\ana^1 \times Rat_d)^\urcorner$, ce qui est absurde. Donc $(a,y)$ est bien de la forme $(a,x)$.

On sait qu'il existe $P \in R[T]$ tel que $P$ s'annule en tous les $(\beta_n, x_n)$. Comme $(a,x)$ est une valeur d'adhérence de $(\beta_n, x_n)$, nécessairement $\eta_{(a,x)}(P)$ est une valeur d'adhérence de $\eta_{(\beta_n,x_n)}^{\epsilon_n}(P) = 0$. Donc, $\eta_{(a,x)}(P) = 0$. 

Donc, nécessairement $x$ est une racine de $\mu_\beta$. Donc, $x$ est un conjugué de Galois de $\beta$, mais tous les conjugués de Galois de $\beta$ définissent le même point de $\ana^1_{\mathcal{H}(x)}$ et donc $(a, x) =(\beta, x)$ et donc la suite $(\beta_n, x_n)$ n'admet qu'une valeur d'adhérence et est donc convergente vers $(\beta, x)$.
\end{proof}
\begin{prop}\label{existence suite de points qui converge vers y dans ext alg de Frac}
Soit $x_n \in X(k) \rightarrow x' \in \delta X$ et soit $y \in \ana^{m, an}_{\mathcal{H}(x)}(Frac(R/ker~\eta_x)(\beta))$ tel que $\beta$ soit algébrique dans $\mathcal{H}(x)$ dont le polynôme minimal est à coefficient dans $Frac(R/ker~\eta_x).$ Alors il existe $y_n \in \ana^{m,an}_k(\overline{k})$ tel que $(y_n, x_n) \rightarrow (y,x') \in \delta (\ana^m_k \times X)$.
\end{prop}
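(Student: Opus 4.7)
Le plan est de réduire cette proposition à la proposition \ref{existence de beta_n qui tends vers beta} en écrivant $y$ comme une expression rationnelle en les coordonnées de $x$ et en $\beta$, puis en évaluant cette expression en $(x_n,\beta_n)$ où $\beta_n$ est fourni par ladite proposition.

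Plus précisément, commençons par appliquer la proposition \ref{existence de beta_n qui tends vers beta} pour obtenir $\beta_n \in \overline{k}$ avec $(\beta_n, x_n) \to (\beta, x')$ dans $(\ana^1 \times X)^\urcorner$. Quitte à choisir des relevés convenables, on dispose d'une suite $\epsilon_n \to 0$ telle que $\eta_{(\beta_n,x_n)}^{\epsilon_n} \to \eta_{(\beta,x)}$ dans $(\ana^1 \times X)^{hyb}$, ce qui signifie que $|P(\beta_n,x_n)|^{\epsilon_n} \to \eta_{(\beta,x)}(P)$ pour tout $P \in k[T,X_1,\dots,X_d]$. Puisque $\beta$ est algébrique sur $\mathrm{Frac}(R/\ker\eta_x)$, chaque coordonnée $y_i$ de $y$ s'écrit comme une fraction $y_i = F_i(x,\beta)/G_i(x,\beta)$ avec $F_i, G_i \in k[X_1,\dots,X_d,T]$ et $G_i(x,\beta)\neq 0$ dans $\mathcal{H}(x)(\beta)$, i.e.\ $\eta_{(\beta,x)}(G_i) > 0$. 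On pose alors $y_{i,n} := F_i(x_n,\beta_n)/G_i(x_n,\beta_n) \in \overline{k}$, qui est bien défini à partir d'un certain rang puisque $|G_i(\beta_n,x_n)|^{\epsilon_n} \to \eta_{(\beta,x)}(G_i)\neq 0$ entraîne $G_i(\beta_n,x_n)\neq 0$ pour $n$ grand.

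Il reste à vérifier que $(y_n,x_n) \to (y,x')$ dans $(\ana^m_k \times X)^\urcorner$, ce qui revient à montrer que $\eta_{(y_n,x_n)}^{\epsilon_n} \to \eta_{(y,x)}$ dans $(\ana^m \times X)^{hyb}$. Il suffit pour cela de contrôler $|P(y_n,x_n)|^{\epsilon_n}$ pour tout $P \in k[Y_1,\dots,Y_m,X_1,\dots,X_d]$. Le truc standard est de chasser les dénominateurs : posant $d_i := \deg_{Y_i}P$ et $H(T,X) := \prod_{i=1}^m G_i(T,X)^{d_i}$, l'expression $Q(T,X) := H(T,X)\, P(F_1/G_1, \dots, F_m/G_m, X)$ est un polynôme de $k[T,X]$. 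On en déduit
\begin{equation*}
|P(y_n,x_n)|^{\epsilon_n} = \frac{|Q(\beta_n,x_n)|^{\epsilon_n}}{|H(\beta_n,x_n)|^{\epsilon_n}} \xrightarrow[n\to\infty]{} \frac{\eta_{(\beta,x)}(Q)}{\eta_{(\beta,x)}(H)} = \eta_{(y,x)}(P),
\end{equation*}
l'égalité finale traduisant simplement le fait que l'évaluation en $(y,x)$ commute avec la forme rationnelle $P = Q/H$ dans $\mathcal{H}(x)(\beta)$. Ceci établit la convergence voulue dans le hybride, puis dans le compactifié après passage au quotient par le flot. Le fait que $(y,x') \in \delta(\ana^m \times X)$ découle immédiatement de $x' \in \delta X$ et de la compatibilité des projections.

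La principale difficulté technique n'est pas conceptuelle mais vient de la gestion simultanée du paramètre $\epsilon_n$ et du fait qu'on doit s'assurer que $\eta_{(\beta,x)}(G_i) > 0$ au sens strict — ceci découle du choix de $y \in \ana^{m,an}_{\mathcal{H}(x)}$ qui implique que les coordonnées sont des éléments bien définis de $\mathcal{H}(x)(\beta)$. Par ailleurs, on utilise de façon cruciale que $\beta_n$ converge ``au même rythme'' que $x_n$ vers $(\beta,x)$ (même suite $\epsilon_n$), propriété qui est automatiquement garantie par le choix opéré dans la démonstration de la proposition \ref{existence de beta_n qui tends vers beta} (où $\beta_n$ est racine d'un polynôme dont les coefficients sont évalués en $x_n$).
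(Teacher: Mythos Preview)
Your proof is correct and follows essentially the same route as the paper's own argument: invoke the previous proposition to obtain $\beta_n$, express each coordinate of $y$ as a rational expression in the coordinates of $x$ and $\beta$, evaluate at $(x_n,\beta_n)$, and reduce the convergence $|P(y_n,x_n)|^{\epsilon_n}\to\eta_{(y,x)}(P)$ to the already-known convergence $(\beta_n,x_n)\to(\beta,x')$ by chasing denominators. The only cosmetic difference is that the paper exploits the algebraicity of $\beta$ to write $y_i = P_i(x,\beta)/Q_i(x)$ with $Q_i\in R$ not involving $\beta$, whereas you allow $G_i$ to depend on $\beta$ as well; both choices work for the same reason, since all that matters is that the denominator has nonzero value under $\eta_{(\beta,x)}$.
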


\begin{rem}
Si $x_n \in \delta X$ ou $x_n \in X^{an}\backslash X(k)$, alors $y_n \in \ana^{m,an}_{ \mathcal{H}(x_n)}(Frac(R/ker~\eta_{x_n})(\beta_n))$ où les $\beta_n$ sont ceux de la proposition \ref{existence de beta_n qui tends vers beta}. 
\end{rem}
\begin{proof}
Notons $x = (x_1, \cdots, x_d), x_n = (x_{1,n}, \cdots, x_{d,n})$ et notons $y = (y_1, \cdots, y_m)$ où chaque $y_i$ est un élément de $Frac(R/ker~\eta_x)(\beta)^m$.

Alors, $y$ peut-être vu comme la semi-norme de $\ana^{m,an}_{\mathcal{H}(x)}$ en l'évaluation de ces coefficients dont le corps résiduel est $\mathcal{H}(x)(\beta)$ et notons $\tilde{\eta}_x$ sa norme (qui étend $\eta_x$).

Il existe $(P_{i}, Q_{i})\in R[T]\times R$ tel que 
\begin{equation*}
y_i = \frac{P_i(x_1, \cdots, x_d, \beta)}{Q_i(x_1, \cdots, x_d)}.
\end{equation*}

Définissons donc $y_n \in \ana^{m, an}_k(\overline{k})$ en utilisant les mêmes relations. Tout d'abord par la proposition \ref{existence de beta_n qui tends vers beta}, il existe $\beta_n \in \overline{k}$ tel que $(\beta_n, x_n) \rightarrow (\beta, x)$. Puis, $y_{i,n} = \frac{P_i(x_{1,n}, \cdots, x_{d,n}, \beta)}{Q_i(x_{1,n}, \cdots, x_{d,n})}$.

Soit $P \in k[T_1, \cdots, T_m, T'_1, \cdots, T'_d]$, alors il existe $\tilde{P}, \tilde{Q} \in R[T]\times R$ tel que 
\begin{equation*}
P(\frac{P_1(x_1, \cdots, x_d, T)}{Q_1(x_1, \cdots, x_d)}, \cdots, \frac{P_m(x_1, \cdots, x_d, T)}{Q_m(x_1, \cdots, x_d)} , x_1, \cdots, x_d)= \frac{\tilde{P}(x_1, \cdots, x_d, T)}{\tilde{Q}(x_1, \cdots, x_d)}
\end{equation*} 
par exemple $\tilde{Q}$ est un produit des $Q_j$. Donc en évaluant en les coefficient de $x_n$ et en $\beta_n$, on trouve :
\begin{equation*}
P(y_{1,n}, \cdots, y_{m,n}, x_{1_n}, \cdots, x_{d,n})= \frac{\tilde{P}(x_{1,n}, \cdots, x_{d,n},\beta_n)}{\tilde{Q}(x_{1,n}, \cdots, x_{d,n})}.
\end{equation*} 

Alors, 
\begin{align*}
|P(y_{1,n}, \cdots, y_{m,n}, x_{1_n}, \cdots, x_{d,n})|^{\epsilon_n} &\rightarrow \eta_{(\beta, x)}(\frac{\tilde{P}}{\tilde{Q}})\\
&= \tilde{\eta}_x(\frac{\tilde{P}(x_1, \cdots, x_d, \beta)}{\tilde{Q}(x_1,\cdots, x_d)})\\
&= \tilde{\eta}_x(P(y_1, \cdots, y_m, x_1, \cdots, x_d))\\
&= \eta_{(y,x)}(P).
\end{align*}
Où pour la première ligne on utilise le fait que $(\beta_n, x_n) \rightarrow (\beta, x)$ ensuite on voit la semi-norme $\eta_{(\beta, x)}$ comme une évaluation, puis on utilise le lien entre $P$ et $\tilde{P}, \tilde{Q}$ puis enfin, on voit la semi-norme $\eta_{(y,x)}$ comme une évaluation.
\end{proof}
\begin{rem}
En particulier, si $y \in \ana^{m,an}_{\mathcal{H}(x)}(Frac(R/ker~\eta_x))$, il existe $y_n \in \ana^{m,an}_k(k)$ tel que $(y_n, x_n) \rightarrow (y,x)$ en prenant $\beta = 1$ dans la proposition précédente.
\end{rem}
On peut maintenant utiliser un procédé diagonal pour réussir à atteindre tout $\ana^{m,an}_{\mathcal{H}(x)}(\overline{\mathcal{H}(x)}).$
\begin{prop}\label{existence convergence vers y dans H(x)^m}
Soit $x_n \in X(k) \rightarrow x' \in \delta X$ et soit $y\in \ana^{m,an}_{\mathcal{H}(x)}(\mathcal{H}(x))$. Alors il existe $y_n \in \ana^{m,an}_k(\overline{k})$ tel que $(y_n, x_n) \rightarrow (y,x') \in \delta (\ana^m \times X)$.
\end{prop}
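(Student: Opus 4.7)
The plan is to approximate $y$ coordinatewise by elements of $\mathrm{Frac}(R/\ker \eta_x)$ and then diagonalise, following the compactness-and-uniqueness strategy used in the proof of Proposition \ref{existence suite de points qui converge vers y dans ext alg de Frac}. Since $\mathcal{H}(x)$ is the completion of $\mathrm{Frac}(R/\ker \eta_x)$, for each $j \geq 1$ I pick $y^{(j)} = (y^{(j)}_1, \ldots, y^{(j)}_m) \in \mathrm{Frac}(R/\ker \eta_x)^m$ with $\tilde{\eta}_x(y_i - y^{(j)}_i) \leq 2^{-j}$, writing $y^{(j)}_i = R^{(j)}_i(x)/S^{(j)}_i(x)$ where $R^{(j)}_i, S^{(j)}_i \in R$ and $S^{(j)}_i \notin \ker \eta_x$. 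The remark following Proposition \ref{existence suite de points qui converge vers y dans ext alg de Frac} (case $\beta=1$) then provides, for each $j$, the explicit sequence $y^{(j)}_n := (R^{(j)}_i(x_n)/S^{(j)}_i(x_n))_{i} \in k^m$ (well-defined for $n$ large enough) such that $(y^{(j)}_n, x_n) \to (y^{(j)}, x')$ in $(\ana^m \times X)^\urcorner$.

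Next I would set $y_n := y^{(j(n))}_n$ for some diagonal $j(n) \to \infty$ and verify the convergence $(y_n, x_n) \to (y, x')$ by showing uniqueness of accumulation point. By compactness of $(\ana^m \times X)^\urcorner$, the sequence $(y_n, x_n)$ has at least one accumulation point; by continuity of the projection and $x_n \to x'$, any such point has the form $(a, x')$. To see that $a = y$, I would extract a convergent subnet $(y_{n_\alpha}, x_{n_\alpha}) \to (a, x')$ and, for each fixed $j$, use the identity $\eta_{(a,x)}(S_i S^{(j)}_i - R^{(j)}_i) = \eta_x(S^{(j)}_i) \cdot \tilde{\eta}_x(a_i - y^{(j)}_i)$ coming from the evaluation $P(a, x) = S^{(j)}_i(x) (a_i - y^{(j)}_i)$. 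The left-hand side equals the limit along the subnet of $|y_{i, n_\alpha} S_i^{(j)}(x_{n_\alpha}) - R_i^{(j)}(x_{n_\alpha})|^{\epsilon_{n_\alpha}}$, and the explicit formula $y_{i, n_\alpha} = R_i^{(j(n_\alpha))}(x_{n_\alpha})/S_i^{(j(n_\alpha))}(x_{n_\alpha})$ turns this into a ratio of polynomial evaluations at $x_{n_\alpha}$, which is controlled by the single convergence $\eta_{x_n}^{\epsilon_n} \to \eta_x$ in $X^{hyb}$ and the approximation bound $\tilde{\eta}_x(y_i - y^{(j)}_i) \leq 2^{-j}$. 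Bounding $\tilde{\eta}_x(a_i - y^{(j)}_i)$ uniformly along the subnet and letting $j \to \infty$ yields $\tilde{\eta}_x(a_i - y_i) = 0$ for every $i$, hence $a = y$ as points of $\ana^{m,an}_{\mathcal{H}(x)}(\mathcal{H}(x))$.

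The main obstacle is that $(\ana^m \times X)^\urcorner$ is in general not first-countable, so a direct diagonal argument based on a countable neighbourhood basis is not available when $k$ is uncountable. Two features allow us to bypass this: first, since each $y_n^{(j)}$ is a rational expression in $x_n$, every polynomial test condition on $(y_n, x_n)$ reduces to a condition on $x_n$ alone, controlled by the single convergence $x_n \to x'$; second, in a compact Hausdorff space a sequence with a unique accumulation point is convergent, so once uniqueness is established the conclusion follows without any countability assumption on the topology. The diagonal $j(n) \to \infty$ must be tuned slowly enough that, for each $j$, the $n$-asymptotics of the rational expressions used above are already attained before $j$ is incremented --- effectively the same enumerated-by-layers construction as in the metrisable case, but with the control coming from the algebraic structure of the $y_n^{(j)}$ rather than from a metric on the target space.
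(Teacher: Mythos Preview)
Your overall strategy is the paper's: approximate $y$ coordinatewise by elements of $\mathrm{Frac}(R/\ker\eta_x)$, diagonalise, and pin down the unique accumulation point by testing against the polynomials $T_i S_i^{(j)} - R_i^{(j)}$ (these are exactly the paper's $T_i B_i^k - A_i^k$; your ``$S_i S_i^{(j)}$'' is presumably a typo for $T_i S_i^{(j)}$). The paper makes the diagonal explicit via thresholds $N_i^k$ and carries out the telescoping estimate you allude to; what you sketch is the same computation.

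There is, however, one genuine gap. You assert that ``by continuity of the projection and $x_n \to x'$, any such [accumulation] point has the form $(a,x')$''. But the second projection $pr_2 : \ana^m \times X \to X$ is not proper and therefore does \emph{not} induce a continuous map $(\ana^m \times X)^\urcorner \to X^\urcorner$: a point of $(\ana^m \times X)_\infty$ can perfectly well project into $X^\beth$ (take any $x\in X^\beth$ and a fibre coordinate of absolute value $>1$). So nothing you have written rules out a lifted accumulation point $(\tilde a,\tilde x)\in(\ana^m\times X)_\infty$ with $\tilde x\in X^\beth$. The paper closes this hole with a short preliminary lemma: one first shows directly that $\bigl|y_{i,n}\bigr|^{\epsilon_n}\to \eta_x(y_i)$ for each $i$, hence these quantities are bounded; consequently, if the $X$-component of a lifted accumulation point lay in $X^\beth$, the $\ana^m$-component would lie in $\ana^{m,\beth}$ as well, contradicting membership in $(\ana^m\times X)_\infty$. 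Once you insert this boundedness step (which your diagonal, properly tuned, does provide), your argument goes through and coincides with the paper's.
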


\begin{rem}
Dans le cas où $x_n \in \delta X$ ou $x_n \in X^{an}\backslash X(k)$, alors les $y_n$ sont des éléments de $\ana^{m,an}_{\mathcal{H}(x_n)}(\mathcal{H}(x_n))$.
\end{rem}
\begin{proof}
Notons $x = (x_1, \cdots, x_d), x_n = (x_{1,n}, \cdots, x_{d,n}), y = (y_1, \cdots, y_m) \in \ana^{m,an}_{\mathcal{H}(x)}(\mathcal{H}(x))$.

Soit $(y_{i}^k)_{k \in \N} \in Frac(R/ker~\eta_x),$ une approximation de $y_{i}$ tel que $\eta_x(y_{i}^k - y_{i}) \leq \frac{1}{2^{k+1}}, 1\leq i\leq m.$ 

Alors, il existe $\tilde{A}_{i}^k, \tilde{B}_{i}^k \in R/ker~\eta_x, y_{i}^k = \frac{\tilde{A}_{i}^k}{\tilde{B}_{i}^k}$, on choisit des relèvements $A_{i}^k, B_{i}^k$ dans $R$.

Comme $\eta_x(B_{i}^k) \neq 0$ et $x_n \rightarrow x, |B^k_{i}(x_{1,n}, \cdots, x_{d,n})|^{\epsilon_n} \rightarrow \eta_x(B_{i}^k) \neq 0 $ donc $|B^k_{j}(x_{1,n}, \cdots, x_{d,n})|^{\epsilon_n} \neq 0$ pour n assez grand, on peut donc diviser par cette quantité. 

De plus comme $x_n \rightarrow x$,

\begin{align*}
\forall 1\leq i \leq m, \forall k, \exists N_{i}^k, \forall n \geq N_{i}^k, \Biggr\lvert \eta_x(y_{i}^k) - |\frac{A^k_{i}(x_{1,n}, \cdots, x_{d,n})}{B^k_{i}(x_{1,n}, \cdots, x_{d,n})}|^{\epsilon_n}\Biggr\rvert \leq \frac{1}{2^{k+1}}.
\end{align*}
on peut prendre $N_{i}^k$ minimaux parmi cette condition et tel qu'ils soient strictement croissants en $k$.

Cela nous permet également de contrôler la différence entre $|\frac{A^k_{i}(x_{1,n}, \cdots, x_{d,n})}{B^k_{i}(x_{1,n}, \cdots, x_{d,n})}|^{\epsilon_n}$ et $|\frac{A^{k+1}_{i}(x_{1,n}, \cdots, x_{d,n})}{B^{k+1}_{i}(x_{1,n}, \cdots, x_{d,n})}|^{\epsilon_n}$, en effet :
\begin{align}\label{controle diff entre A^k/B^k et A^k+1/B^k+1}
&\forall n \geq N_{i}^{k+1},\nonumber\\
&\Biggr\lvert|\frac{A^k_{i}(x_{1,n}, \cdots, x_{d,n})}{B^k_{i}(x_{1,n}, \cdots, x_{d,n})}|^{\epsilon_n} - |\frac{A^{k+1}_{i}(x_{1,n}, \cdots, x_{d,n})}{B^{k+1}_{i}(x_{1,n}, \cdots, x_{d,n})}|^{\epsilon_n} \Biggr\rvert \nonumber\\
&\leq \Biggr\lvert |\frac{A^k_{i}(x_{1,n}, \cdots, x_{d,n})}{B^k_{i}(x_{1,n}, \cdots, x_{d,n})}|^{\epsilon_n} - \eta_x(y_{i}^k)\Biggr\rvert + \Biggr\lvert \eta_x(y_{i}^k) - \eta_x(y_{i}^{k+1}) \Biggr\rvert +  \Biggr\lvert\eta_x(y_{i}^{k+1}) - |\frac{A^{k+1}_{i}(x_{1,n}, \cdots, x_{d,n})}{B^{k+1}_{i}(x_{1,n}, \cdots, x_{d,n})}|^{\epsilon_n}\Biggr\rvert \nonumber\\
&\leq \frac{1}{2^{k+1}} + \frac{1}{2^{k+1}} + \frac{1}{2^{k+2}} = \frac{5}{4} \frac{1}{2^k} \leq \frac{1}{2^{k-1}}.
\end{align}

Définissons alors $k_{i}(n)$ pour tout $n \in \N$, si $n\leq N^{1}_{i}, k_{i}(n) = 1$ et si $ N_{i}^l \leq n < N_{i}^{l+1},~ k_{i}(n) = l$.

On peut alors définir 

$$y_n = (
\frac{A^{k_{1}(n)}_{1}(x_{1,n}, \cdots, x_{d,n})}{B^{k_{1}(n)}_{1}(x_{1,n}, \cdots, x_{d,n})}, \cdots,\frac{A^{k_{m}(n)}_{m}(x_{1,n}, \cdots, x_{d,n})}{B^{k_{m}(n)}_{m}(x_{1,n}, \cdots, x_{d,n})} ) \\
$$

On doit maintenant montrer la convergence de $(y_n x_n)$ vers $(y,x)$.

\begin{lem}
Soit $1\leq i \leq m$, alors $|\frac{A^{k_{i}(n)}_{i}(x_{1,n}, \cdots, x_{d,n})}{B^{k_{i}(n)}_{i}(x_{1,n}, \cdots, x_{d,n})}|^{\epsilon_n}$ converge vers $\eta_x(y_{i})$. 

En particulier, toutes les valeurs d'adhérence de $(y_n, x_n)$ sont de la forme $(a,x)$ 

où $a \in \ana^{m,an}_{\mathcal{H}(x)}$.
\end{lem}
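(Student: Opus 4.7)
L'idée est d'exploiter la majoration télescopique intégrée dans la construction des $y_i^k$ et des $N_i^k$. Je commencerais par observer que la suite $(N_i^k)_k$ est par hypothèse strictement croissante en $k$, donc $k_i(n) \to +\infty$ lorsque $n \to +\infty$. Ensuite, pour tout $n$, en posant $k := k_i(n)$, la définition même de $k_i(n)$ impose $n \geq N_i^{k_i(n)}$, et l'inégalité définissant ce seuil donne
\begin{equation*}
\left\lvert \eta_x(y_i^{k_i(n)}) - \left\lvert \frac{A_i^{k_i(n)}(x_{1,n}, \ldots, x_{d,n})}{B_i^{k_i(n)}(x_{1,n}, \ldots, x_{d,n})} \right\rvert^{\epsilon_n} \right\rvert \leq \frac{1}{2^{k_i(n)+1}}.
\end{equation*}

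En combinant cette estimation avec $|\eta_x(y_i) - \eta_x(y_i^{k_i(n)})| \leq 1/2^{k_i(n)+1}$, conséquence directe du choix $\eta_x(y_i - y_i^{k_i(n)}) \leq 1/2^{k_i(n)+1}$, l'inégalité triangulaire produit la majoration $1/2^{k_i(n)}$. Comme $k_i(n) \to +\infty$ par la première observation, cette quantité tend vers $0$, ce qui établit la première assertion.

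Pour la partie \og en particulier\fg{}, toute valeur d'adhérence de $(y_n, x_n)$ dans la compactification $(\ana^m \times X)^\urcorner$ se projette continûment sur une valeur d'adhérence de $(x_n)$ dans $X^\urcorner$ ; par hypothèse cette dernière vaut $x'$, de sorte que la projection est nécessairement $x'$. Ainsi, toute valeur d'adhérence est bien de la forme $(a, x)$ avec $a \in \ana^{m,an}_{\mathcal{H}(x)}$, ce qui conclut le lemme.

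Aucune étape ne présente de difficulté réelle : l'essentiel est une vérification que $n \geq N_i^{k_i(n)}$ est automatique (ce qui est précisément la raison pour laquelle la partition par les seuils $N_i^l$ a été choisie ainsi), suivie du contrôle combiné de deux termes d'erreur chacun de taille $1/2^{k_i(n)+1}$. Les définitions des $y_i^k$ et des $N_i^k$ ont été conçues précisément pour que ce télescopage aboutisse sans obstacle technique.
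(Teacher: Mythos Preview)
La première assertion est démontrée correctement et exactement comme dans l'article : contrôle de $|\eta_x(y_i) - \eta_x(y_i^{k_i(n)})|$ et de $|\eta_x(y_i^{k_i(n)}) - |A_i^{k_i(n)}/B_i^{k_i(n)}|^{\epsilon_n}|$ par $1/2^{k_i(n)+1}$ chacun, puis somme.

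En revanche, ton argument pour la partie \og en particulier\fg{} présente une lacune réelle. Tu affirmes qu'une valeur d'adhérence de $(y_n,x_n)$ dans $(\ana^m\times X)^\urcorner$ \emph{se projette continûment} sur une valeur d'adhérence de $(x_n)$ dans $X^\urcorner$. Or la seconde projection $pr_2 : \ana^m\times X \to X$ n'est pas propre, et la construction de Poineau ne fournit de morphisme entre compactifications que pour les morphismes propres (proposition~4.2 de \cite{PoineauCompactificationHybride}). Concrètement, un point de $(\ana^m\times X)_\infty$ peut très bien se projeter dans $X^\beth$ et non dans $X_\infty$ : il suffit que la composante en $\ana^m$ soit \og très grande\fg{} devant la composante en $X$. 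Il n'y a donc pas d'application continue $(\ana^m\times X)^\urcorner \to X^\urcorner$ induite par $pr_2$, et ton raisonnement ne permet pas d'exclure qu'une valeur d'adhérence $(\tilde{y},\tilde{x})$ vérifie $\tilde{x}\in X^\beth$.

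C'est précisément ici que la première partie du lemme intervient, et l'article l'utilise : la convergence de $|y_{i,n}|^{\epsilon_n}$ vers $\eta_x(y_i)$ assure que ces quantités sont bornées. Si l'on relève une valeur d'adhérence $(\tilde{y},\tilde{x})$ dans $(\ana^m\times X)^+$ via des exposants $\alpha_n$, l'hypothèse $\tilde{x}\in X^\beth$ force (puisque $x_n^{\epsilon_n}\to \eta_x\in X_\infty$) le rapport $\alpha_n/\epsilon_n$ à tendre vers $0$ le long de la sous-suite considérée ; la bornitude de $|y_{i,n}|^{\epsilon_n}$ entraîne alors $|y_{i,n}|^{\alpha_n}\to 1$, donc $\tilde{y}\in \ana^{m,\beth}$ et $(\tilde{y},\tilde{x})\in(\ana^m\times X)^\beth$, ce qui contredit $(\tilde{y},\tilde{x})\in(\ana^m\times X)^+$. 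Ton argument ne fait aucun usage de la première assertion, alors qu'elle est indispensable pour ce point.
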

\begin{proof}[Démonstration du lemme]
Pour $n \geq N^1_{i}$, 
\begin{align*}
\Biggr\lvert \eta_x(m_{i}) - |\frac{A^{k_{i}(n)}_{i}(x_{1,n}, \cdots, x_{d,n})}{B^{k_{i}(n)}_{i}(x_{1,n}, \cdots, x_{d,n})}|^{\epsilon_n} \Biggr\rvert &\leq \Biggr\lvert \eta_x(y_{i}) - \eta_x(y_{i}^{k_{i}(n)})\Biggr\rvert +  \Biggr\lvert \eta_x(y^{k_{i}(n)}_{i}) - |\frac{A^{k_{i}(n)}_{i}(x_{1,n}, \cdots, x_{d,n})}{B^{k_{i}(n)}_{i}(x_{1,n}, \cdots, x_{d,n})}|^{\epsilon_n}\Biggr\rvert\\
&\leq \frac{1}{2^{k_{i}(n)+1}} + \frac{1}{2^{k_{i}(n)+1}} = \frac{1}{2^{k_{i}(n)}} 
\end{align*} 
Ce qui montre la convergence voulue. 

Pour la deuxième partie du lemme, supposons par l'absurde que $(\tilde{y}, \tilde{x})$ est une valeur d'adhérence de $(y_n, x_n)$ où $\tilde{x} \in X^{\beth}$. Comme les coefficients de $y_n$ pris à la puissance $\epsilon_n$ sont bornés, par le résultat précédent, alors comme $\tilde{x} \in X^\beth$ nécessairement $\tilde{y}$ sera aussi dans $\ana^{m,\beth}_k$ ce qui est impossible. 
\end{proof}

On peut maintenant montrer que la suite $(y_n, x_n)$ converge vers $(y,x)$, on montre pour cela que la suite n'a qu'une valeur d'adhérence. 

On utilise le fait que $(y,x)$ est le seul point de $pr_2^{-1}(x) = \lbrace (a,x) \in \delta (\ana^m \times X)\rbrace$ tel que 
$$\lim\limits_{k \to \infty} \eta_{(a,x)} (T_{i} - \frac{A^k_{i}}{B_{i}^k}) = \lim\limits_{k \to \infty} \frac{\eta_{(a,x)} (T_{i}B_{i}^k - A^k_{i})}{\eta_{(a,x)}(B_{i}^k)} = 0,$$
pour tout $1 \leq i \leq m$. 

On va donc montrer que $\frac{\eta_{(y_n,x_n)} (T_{i}B_{i}^k - A^k_{i})}{\eta_{(y_n,x_n)}(B_{i}^k)} \leq v_k$ pour $n$ assez grand et tel que $v_k \rightarrow 0$. 

Soient $k \in \N, 1 \leq i \leq m, n \geq N^k_{i}$,

\begin{align}\label{Controle A^k(n)/B^k(n) - A^k/B^k}
\frac{\eta_{(y_n,x_n)} (T_{i}B_{i}^k - A^k_{i})}{\eta_{(y_n,x_n)}(B_{i}^k)} &= \Biggr\vert \frac{A_{i}^{k_{i}(n)}(x_{1,n}, \cdots, x_{d,n})}{B_{i}^{k_{i}(n)}(x_{1,n}, \cdots, x_{d,n})} - \frac{ A_{i}^k(x_{1,n}, \cdots, x_{d,n})}{B_{i}^k(x_{1,n}, \cdots, x_{d,n})}\Biggr\rvert^{\epsilon_n}\nonumber\\
&= \Biggr\lvert \sum\limits_{l=k}^{k_{i}(n)-1}\frac{A_{i}^{l+1}(x_{1,n}, \cdots, x_{d,n})}{B_{i}^{l+1}(x_{1,n}, \cdots, x_{d,n})} - \frac{A_{i}^l(x_{1,n}, \cdots, x_{d,n})}{B_{i}^l(x_{1,n}, \cdots, x_{d,n})}\Biggr\rvert^{\epsilon_n}\nonumber\\
&\leq \sum\limits_{l=k}^{k_{i}(n)-1} \frac{1}{2^{l-1}} \nonumber\\
&\leq \sum\limits_{l=k-1}^{\infty} \frac{1}{2^{l}} \nonumber\\ 
&= \frac{1}{2^{k-2}}
\end{align}
Pour la troisième ligne, on utilise l'inégalité \ref{controle diff entre A^k/B^k et A^k+1/B^k+1}.

Donc on en déduit que 
\begin{equation*}
\lim\limits_{k\to \infty} (\limsup\limits_{n\to \infty} \eta_{(y_n, x_n)}(T_{i} -\frac{A^k_{i}}{B_{i}^k} )) = 0.
\end{equation*}
Donc, nécessairement $(y,x)$ est la seule valeur d'adhérence, donc la suite converge vers celle-ci.
\end{proof}
\begin{prop}\label{existence de y_n qui tends vers y pour tout point de type 1}
Soit $x_n \in X(k) \rightarrow x' \in \delta X$ et soit $y \in \ana^{m,an}_{\mathcal{H}(x)}(\overline{\mathcal{H}(x)})$. Alors il existe $y_n \in \ana^{m,an}_k(\overline{k})$ tel que $(y_n, x_n) \rightarrow (y,x')$.
\end{prop}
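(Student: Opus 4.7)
L'approche consiste à généraliser le procédé diagonal de la proposition \ref{existence convergence vers y dans H(x)^m} au cas où les coordonnées de $y$ ne sont plus dans $\mathcal{H}(x)$ mais seulement dans sa clôture algébrique $\overline{\mathcal{H}(x)}$, en s'appuyant sur la proposition \ref{existence suite de points qui converge vers y dans ext alg de Frac} pour les approximations intermédiaires. Par le théorème de l'élément primitif, je fixe $\beta \in \overline{\mathcal{H}(x)}$ tel que $\mathcal{H}(x)(\beta) = \mathcal{H}(x)(y_1, \ldots, y_m)$, et j'écris $y_i = \phi_i(\beta)$ avec $\phi_i \in \mathcal{H}(x)[T]$ de degré strictement inférieur à $d = [\mathcal{H}(x)(\beta) : \mathcal{H}(x)]$. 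Le polynôme minimal $\mu_\beta$ de $\beta$ est dans $\mathcal{H}(x)[T]$.

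La première étape est d'approximer la situation algébrique par une situation où les hypothèses de la proposition \ref{existence suite de points qui converge vers y dans ext alg de Frac} sont satisfaites. On approxime les coefficients de $\mu_\beta$ ainsi que ceux des $\phi_i$ par des éléments de $Frac(R/ker~\eta_x)$ (qui est dense dans $\mathcal{H}(x)$), obtenant $\mu^{(k)}, \phi_i^{(k)} \in Frac(R/ker~\eta_x)[T]$ dont les coefficients convergent vers ceux de $\mu_\beta$ et des $\phi_i$ respectivement. Par continuité des racines des polynômes à coefficients dans le corps valué algébriquement clos $\widehat{\overline{\mathcal{H}(x)}}$, on choisit pour chaque $k$ une racine $\beta^{(k)}$ de $\mu^{(k)}$ telle que $\beta^{(k)} \to \beta$. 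On pose alors $y^{(k)}_i := \phi_i^{(k)}(\beta^{(k)})$, de sorte que $y^{(k)}_i \in Frac(R/ker~\eta_x)(\beta^{(k)})$ et $y^{(k)} \to y$ dans $\overline{\mathcal{H}(x)}^m$ (pour la topologie induite par $\eta_x$).

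Pour chaque $k$, la proposition \ref{existence suite de points qui converge vers y dans ext alg de Frac} fournit une suite $y^{(k)}_n \in \ana^{m,an}_k(\overline{k})$ telle que $(y^{(k)}_n, x_n) \to (y^{(k)}, x')$. On conclut par un procédé diagonal analogue à celui de la preuve de la proposition \ref{existence convergence vers y dans H(x)^m} : on choisit $n \mapsto k(n) \to \infty$ croissant suffisamment lentement, et on pose $y_n := y^{k(n)}_n$. Les estimées contrôlant $|P(y_n, x_n)|^{\epsilon_n}$ pour un polynôme $P \in k[T_1, \ldots, T_m, T'_1, \ldots, T'_d]$ se ramènent, via l'inégalité triangulaire, à la combinaison de la convergence $y^{(k)}_n \to y^{(k)}$ (fournie par l'étape précédente) et de la convergence $y^{(k)} \to y$ (fournie par l'approximation des coefficients).

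Le principal obstacle est l'applicabilité de la proposition \ref{existence suite de points qui converge vers y dans ext alg de Frac} à chaque $y^{(k)}$, qui requiert que le polynôme minimal de $\beta^{(k)}$ sur $\mathcal{H}(x)$ soit à coefficients dans $Frac(R/ker~\eta_x)$. Génériquement $\mu^{(k)}$ reste irréductible sur $\mathcal{H}(x)$ et joue ce rôle, mais en toute généralité cela peut échouer. Dans ce cas, un léger raffinement de la proposition \ref{existence de beta_n qui tends vers beta} permet d'utiliser n'importe quel polynôme de $Frac(R/ker~\eta_x)[T]$ dont $\beta^{(k)}$ est une racine simple, à condition de sélectionner à chaque $n$ la racine de $\mu^{(k)}$ évaluée en $x_n$ la plus proche de $\beta^{(k)}$ : l'argument d'unicité de la valeur d'adhérence dans la preuve de \ref{existence de beta_n qui tends vers beta} s'adapte alors en imposant cette proximité.
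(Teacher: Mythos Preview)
Your overall strategy --- reduce to a single generator $\beta$ via the primitive element theorem, approximate, then diagonalize --- matches the paper's. The decisive difference is how the two handle the fact that $\mu_\beta$ has coefficients in $\mathcal{H}(x)$ rather than in $Frac(R/ker~\eta_x)$.

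You approximate $\mu_\beta$ by a sequence $\mu^{(k)}$ and work with a moving family $\beta^{(k)}$. This forces you into the obstacle you flag: Proposition~\ref{existence suite de points qui converge vers y dans ext alg de Frac} needs the \emph{minimal} polynomial of $\beta^{(k)}$ over $\mathcal{H}(x)$ to lie in $Frac(R/ker~\eta_x)[T]$, and you are not sure $\mu^{(k)}$ is irreducible over $\mathcal{H}(x)$. Your proposed patch (simple roots, pick the nearest one) is plausible but not worked out, and the subsequent diagonalization is genuinely harder than you suggest: the $\beta^{(k)}$, and hence the auxiliary sequences $\beta_n^{(k)}$ hidden inside each application of Proposition~\ref{existence suite de points qui converge vers y dans ext alg de Frac}, now vary with $k$, so the uniform estimates of the type used in Proposition~\ref{existence convergence vers y dans H(x)^m} do not carry over directly. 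An abstract diagonal argument is delicate here because the ambient space need not be metrizable when $k$ is uncountable --- this is precisely why the paper insists on explicit estimates throughout Section~\ref{SectionConstructionSuite}.

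The paper sidesteps all of this with one application of Krasner's lemma. Choose a single approximation $P\in Frac(R/ker~\eta_x)[T]$ close enough to $\mu_\beta$ that some root $\beta'$ of $P$ satisfies $\tilde\eta_x(\beta'-\alpha_i)<\min_{j\neq i}\tilde\eta_x(\beta'-\alpha_j)$ for the roots $\alpha_j$ of $\mu_\beta$; Krasner then gives $\mathcal{H}(x)(\alpha_i)\subset\mathcal{H}(x)(\beta')$, and degree counting forces $P$ to be the minimal polynomial of $\beta'$. Now $y\in(\mathcal{H}(x)(\beta'))^m$ with $\beta'$ already satisfying the hypothesis of Proposition~\ref{existence de beta_n qui tends vers beta}, so one produces a \emph{single} sequence $\beta_n\to\beta'$ and only the coefficients $a_{i,l}\in\mathcal{H}(x)$ of the expressing polynomials $\phi_i$ remain to be approximated. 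The diagonalization is then exactly that of Proposition~\ref{existence convergence vers y dans H(x)^m}, with the fixed $|\beta_n^l|^{\epsilon_n}$ contributing only a bounded factor to the estimates. Incidentally, the same Krasner argument would also close your gap directly: for $k$ large it shows that your $\mu^{(k)}$ \emph{is} irreducible over $\mathcal{H}(x)$.
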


\begin{rem}
Dans le cas où $x_n \in \delta X$ ou $x_n \in X^{an}\backslash X(k)$, alors les $y_n$ sont des éléments de $\ana^{m,an}_{\mathcal{H}(x_n)}(\overline{\mathcal{H}(x_n)})$.
\end{rem}
\begin{proof}
Notons $x= (x_1, \cdots, x_d), x_n = (x_{1,n}, \cdots, x_{d,n}), y = (y_1, \cdots, y_m)\in K^m$ où $K$ est la clôture algébrique de $\mathcal{H}(x)$. 
\begin{lem}
Il existe $\beta$ algébrique sur $\mathcal{H}(x)$ dont le polynôme minimal est à coefficients dans $Frac(R/ker~\eta_x)$ avec $y \in (\mathcal{H}(x)(\beta))^m$.
\end{lem}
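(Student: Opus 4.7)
Le plan est de se ramener à un corps engendré par un unique élément via le théorème de l'élément primitif, puis d'utiliser la densité de $F := \mathrm{Frac}(R/ker~\eta_x)$ dans $\mathcal{H}(x)$ combinée au lemme de Krasner pour remplacer un générateur $\gamma$ algébrique sur $\mathcal{H}(x)$ par un élément $\beta$ dont le polynôme minimal a ses coefficients dans $F$.

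Plus précisément, on considère l'extension finie $L := \mathcal{H}(x)(y_1, \dots, y_m) \subset \overline{\mathcal{H}(x)}$. Quitte à se placer en caractéristique nulle (ce qui correspond au cas d'intérêt de l'article via le corps de base $\C$, et sinon on peut supposer que chaque $y_i$ est séparable sur $\mathcal{H}(x)$), le théorème de l'élément primitif fournit $\gamma \in L$ tel que $L = \mathcal{H}(x)(\gamma)$. Soit $\mu_\gamma(T) = T^d + c_{d-1}T^{d-1} + \dots + c_0 \in \mathcal{H}(x)[T]$ son polynôme minimal.

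Comme $F$ est dense dans $\mathcal{H}(x)$ par définition de ce dernier comme complété, on peut approcher chaque coefficient $c_i$ par un élément $\tilde c_i \in F$ avec $\eta_x(c_i - \tilde c_i)$ arbitrairement petit. On pose $\tilde \mu(T) := T^d + \tilde c_{d-1}T^{d-1} + \dots + \tilde c_0 \in F[T]$. Si l'approximation est suffisamment fine, le lemme de Krasner (appliqué à $\mathcal{H}(x)$ complet) garantit l'existence d'une racine $\beta$ de $\tilde \mu$ dans $\overline{\mathcal{H}(x)}$ telle que $\mathcal{H}(x)(\gamma) \subset \mathcal{H}(x)(\beta)$. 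Comme $[\mathcal{H}(x)(\beta): \mathcal{H}(x)] \leq d = [\mathcal{H}(x)(\gamma):\mathcal{H}(x)]$, on obtient l'égalité $\mathcal{H}(x)(\beta) = \mathcal{H}(x)(\gamma) = L$, ce qui force $\tilde \mu$ à être le polynôme minimal de $\beta$ sur $\mathcal{H}(x)$. On a donc $y \in L^m = \mathcal{H}(x)(\beta)^m$ et le polynôme minimal de $\beta$ est à coefficients dans $F = \mathrm{Frac}(R/ker~\eta_x)$, ce qui est la conclusion voulue.

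La difficulté principale est la mise en œuvre quantitative du lemme de Krasner : il faut choisir l'approximation des $c_i$ assez fine pour que $\tilde \mu$ possède une racine $\beta$ dans la boule ouverte autour de $\gamma$ de rayon strictement inférieur à la distance minimale entre $\gamma$ et ses autres conjugués de Galois. C'est un argument standard (continuité des racines, explicitable via l'inégalité de Liouville non-archimédienne) mais c'est l'unique étape réellement technique. Par ailleurs, si l'on souhaite éviter l'hypothèse de caractéristique nulle, on peut décomposer $L/\mathcal{H}(x)$ en une partie séparable et une partie purement inséparable et appliquer l'argument à chaque étape, ou bien supposer implicitement que l'on travaille avec la clôture séparable dans $\overline{\mathcal{H}(x)}$, ce qui suffit pour toutes les applications de la section.
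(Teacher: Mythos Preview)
Your proof is correct and follows essentially the same route as the paper: primitive element theorem to reduce to a single generator, approximation of the coefficients of its minimal polynomial by elements of $\mathrm{Frac}(R/\ker\eta_x)$, then Krasner's lemma and a degree count to conclude that the approximating polynomial is the minimal polynomial of its root $\beta$. The paper carries out the Krasner step slightly more explicitly (bounding $\prod_i \tilde\eta_x(\alpha_i-\beta)$ via the evaluation of $\mu_\alpha$ at $\beta$), but the structure and the ideas are identical; your remark on separability in positive characteristic is a point the paper leaves implicit.
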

\begin{proof}[Démonstration du lemme]
On sait que $y \in (\mathcal{H}(x)(y_{1}, \cdots, y_{m}))^m$, donc par le théorème de l'élément primitif, il existe $\alpha$ algébrique sur $\mathcal{H}(x)$ tel que $\mathcal{H}(x)(y_{1}, \cdots, y_{m}) = \mathcal{H}(x)(\alpha)$.

Soit $\epsilon > 0$, notons $\mu_\alpha = \sum_{k=0}^l \mu_{\alpha,k}T^k$ le polynôme minimal de $\alpha$ et $\alpha_i$ ses racines. Alors il existe $P = \sum_{k=0}^l P_k T^K \in Frac(R/ker~\eta_x)[T]$ tel que $max(\tilde{\eta_x}(P_k - \mu_{\alpha,k})) \leq \epsilon$ où $\tilde{\eta_x}$ est la norme sur $K$ qui prolonge $\eta_x$. Si l'on prends $\mu_\alpha$ unitaire, on peut aussi prendre $P$ unitaire et ainsi si $\beta$ est une racine de $P$, on a :

\begin{equation*}
\tilde{\eta_x}(\beta) \leq max(\eta_x(P_0), \cdots, \eta_x(P_{l-1}), 1) \leq  max(\tilde{\eta_x}(\mu_{\alpha,0}), \cdots, \tilde{\eta_x}(\mu_{\alpha,l-1}), 1) +\epsilon :=C.
\end{equation*}

D'où
\begin{equation*}
C^l\epsilon \geq \tilde{\eta_x}((P-\mu_{\alpha})(\beta)) = \tilde{\eta_x}(\mu_{\alpha}(\beta)) = \prod_{\alpha_i~racines~de~\mu_\alpha} \tilde{\eta_x}(\alpha_i-\beta). 
\end{equation*}
Donc, l'un des $\alpha_i$ vérifie que $\tilde{\eta_x}(\alpha_i-\beta) \leq C \epsilon^{\frac{1}{l}}$. En prenant $\epsilon$ tel que $\epsilon < (\frac{max (\tilde{\eta_x}(\alpha_i -\alpha_j))}{C})^l$, on obtient un unique $\alpha_i$ vérifiant cette condition et telle que $\tilde{\eta_x}(\alpha_i-\beta) < min_{i\neq j}(\tilde{\eta_x}(\alpha_j-\beta))$.

Donc par le lemme de Krasner, on a $\mathcal{H}(x)(\alpha) \subset \mathcal{H}(x)(\beta)$. Comme le polynôme $P$ est annulateur de $\beta$, on a $deg(\mu_\beta) \leq deg(P_\beta) = deg(\mu_\alpha)$, où $\mu_\beta$ désigne un polynôme minimal de $\beta$. Par l'inclusion ci-dessus, on a  $deg(\mu_\beta) \geq deg(\mu_\alpha)$ et donc $P$ est en fait un polynôme minimal de $\beta$. 
\end{proof}
Donc, $y \in (\mathcal{H}(f)(\beta))^m$ où $\beta$ est un élément algébrique sur $\mathcal{H}(x)$ ayant un polynôme minimal à coefficients dans $Frac(R/ker~\eta_x)$.

Comme $y \in (\mathcal{H}(x)(\beta))^m$ il existe $P_{1}, \cdots, P_{m} \in \mathcal{H}(x)[T]$ tel que $y_{i} = P_{i}(\beta)$. 

Notons $P_{i} (\beta) = \sum\limits_{l=0}^{D_{i}} a_{i,l} \beta^l$ avec $a_{i,l} \in \mathcal{H}(x)$. Comme à la proposition \ref{existence convergence vers y dans H(x)^m} ,on construit $a_{i,l}^k := \frac{A_{i,l}^k}{B_{i,l}^k}$ où $A_{i,l}^k, B_{i,l}^k \in R$ tel que $\eta_x(a_{i,l}^k - a_{i,l}) \leq \frac{1}{2^{k+1}}$. De plus, par la proposition \ref{existence de beta_n qui tends vers beta} on sait qu'il existe $\beta_n \in \overline{k}$ tel que $(\beta_n, x_n) \rightarrow (\beta, x) \in (\ana^1\times X)^\urcorner$. 

Définissons alors 
\begin{equation*}
y^{(k)} = (\sum\limits_{l=0}^{D_{1}} a_{1,l}^k \beta^l, \cdots, \sum\limits_{l=0}^{D_{m}} a_{m,l}^k \beta^l  , y_n^{(k)} = (\sum\limits_{l=0}^{D_{1}} \frac{A_{1,l}^k(x_{1,n}, \cdots, x_{d,n})}{B_{1,l}^k(x_{1,n}, \cdots, x_{d,n})}\beta_n^l, \cdots, \sum\limits_{l=0}^{D_{m}} \frac{A_{m,l}^k(x_{1,n}, \cdots, x_{d,n})}{B_{m,l}^k(x_{1,n}, \cdots, x_{d,n})}\beta_n^l).
\end{equation*}
Par construction de $\beta_n$ et comme tout est polynomial on a $(y_n^{(k)}, x_n) \rightarrow (y^{(k)}, x)$, on peut le faire explicitement avec les arguments de la preuve de la proposition \ref{existence suite de points qui converge vers y dans ext alg de Frac}.

Enfin définissons, 
\begin{equation*}
y_n = (\sum\limits_{l=0}^{D_{1}} \frac{A_{1,l}^{k_{1}(n)}(x_{1,n}, \cdots, x_{d,n})}{B_{1,l}^{k_{1}(n)}(x_{1,n}, \cdots, x_{d,n})}\beta_n^l, \cdots, \sum\limits_{l=0}^{D_{m}} \frac{A_{m,l}^{k_{m}(n)}(x_{1,n}, \cdots, x_{d,n})}{B_{m,l}^{k_{m}(n)}(x_{1,n}, \cdots, x_{d,n})}\beta_n^l)
\end{equation*}
où les $k_{i}(n)$ sont définis comme précédemment dans la preuve de la proposition \ref{existence convergence vers y dans H(x)^m}.

Montrons maintenant $(y_n, x_n) \rightarrow (y,x)$. 

Soit $P \in A =k[T'_1, \cdots, T'_m, T_1, \cdots, T_d]$ et soit $\epsilon >0$. 

Comme $P(y_1, \cdots, y_m, x_1, \cdots, x_d)$ est un élément de $\mathcal{H}(x)(\beta)$ et $y^{(k)} \rightarrow y$ coefficient par coefficient dans $(\mathcal{H}(f)(\beta))^m$, donc il existe $K \in \N$ tel que $\forall k \geq K,$ 
\begin{equation*}
\tilde{\eta_x}(P(y_1, \cdots, y_m, x_1, \cdots, x_d) - P(y^{(k)}_1, \cdots, y^{(k)}_m, x_1, \cdots, x_d))  \leq \epsilon.
\end{equation*}
Donc, 
\begin{equation*}
|\eta_{(y,x)}(P) - \eta_{(y^{(k)}, x)}(P)| \leq \epsilon.
\end{equation*}

\begin{lem}
Soient $1\leq i \leq m, k \geq 2, n \geq N^k_{i}$ alors il existe une constante $C_{i}$ ne dépendant ni de $k$, ni de $n$ tel que 
\begin{equation*}
|y_{i,n} - y^{(k)}_{i,n}|^{\epsilon_n} \leq \frac{C_{i}}{2^{k-2}}.
\end{equation*}
\end{lem}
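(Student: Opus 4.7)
Le plan est de réécrire la différence $y_{i,n} - y^{(k)}_{i,n}$ comme une somme sur $l\in\lbrace 0,\ldots,D_i\rbrace$ et de majorer chaque terme séparément en exploitant la sous-additivité de $t\mapsto t^{\epsilon_n}$ (valable car $0<\epsilon_n\leq 1$), ainsi que la multiplicativité de la valeur absolue. Plus précisément, on écrit
\begin{equation*}
y_{i,n}-y^{(k)}_{i,n} = \sum_{l=0}^{D_i} \left(\frac{A_{i,l}^{k_{i}(n)}(x_{1,n},\ldots,x_{d,n})}{B_{i,l}^{k_{i}(n)}(x_{1,n},\ldots,x_{d,n})} - \frac{A_{i,l}^{k}(x_{1,n},\ldots,x_{d,n})}{B_{i,l}^{k}(x_{1,n},\ldots,x_{d,n})}\right)\beta_n^{l},
\end{equation*}
et on applique la sous-additivité pour ramener l'estimation à majorer chaque facteur.

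Pour le premier facteur, c'est exactement l'inégalité (\ref{Controle A^k(n)/B^k(n) - A^k/B^k}) utilisée dans la preuve de la proposition \ref{existence convergence vers y dans H(x)^m} : quitte à remplacer chaque $N^{k}_i$ utilisé précédemment par le maximum des $N^{k}_{i,l}$ pour $l=0,\ldots,D_i$ (ce qui ne change que l'écriture de $k_i(n)$), on a pour tout $n\geq N^k_i$ et tout $l$,
\begin{equation*}
\left\lvert \frac{A_{i,l}^{k_{i}(n)}(x_{1,n},\ldots,x_{d,n})}{B_{i,l}^{k_{i}(n)}(x_{1,n},\ldots,x_{d,n})} - \frac{A_{i,l}^{k}(x_{1,n},\ldots,x_{d,n})}{B_{i,l}^{k}(x_{1,n},\ldots,x_{d,n})}\right\rvert^{\epsilon_n} \leq \frac{1}{2^{k-2}}.
\end{equation*}

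Pour le second facteur, puisque $(\beta_n,x_n)\to(\beta,x)$ dans $(\ana^1\times X)^\urcorner$ par construction (proposition \ref{existence de beta_n qui tends vers beta}), la suite $|\beta_n|^{\epsilon_n}$ converge vers $\eta_{(\beta,x)}(T)$ dans $\R_{\geq 0}$ et est donc bornée par une constante $M_i$ indépendante de $n$ et de $k$. On a donc $|\beta_n|^{l\epsilon_n}\leq M_i^{D_i}$ pour tout $0\leq l\leq D_i$.

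En combinant les deux majorations via la sous-additivité, on obtient
\begin{equation*}
|y_{i,n}-y^{(k)}_{i,n}|^{\epsilon_n} \leq \sum_{l=0}^{D_i} \frac{M_i^{D_i}}{2^{k-2}} = (D_i+1)M_i^{D_i}\cdot \frac{1}{2^{k-2}},
\end{equation*}
ce qui donne le résultat en posant $C_i:=(D_i+1)M_i^{D_i}$. Le seul point délicat est la coordination des seuils $N^{k}_{i,l}$ lorsque $l$ varie, qui se règle en prenant le maximum sur $l$ dans la définition des $k_i(n)$ ; tout le reste est formel une fois l'inégalité (\ref{Controle A^k(n)/B^k(n) - A^k/B^k}) disponible et la bornitude de $|\beta_n|^{\epsilon_n}$ acquise.
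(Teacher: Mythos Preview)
Your proof is correct and follows essentially the same route as the paper's: expand $y_{i,n}-y^{(k)}_{i,n}$ as a sum over $l$, apply subadditivity of $t\mapsto t^{\epsilon_n}$, invoke inequality~(\ref{Controle A^k(n)/B^k(n) - A^k/B^k}) termwise, and bound $|\beta_n^l|^{\epsilon_n}$ via the convergence $(\beta_n,x_n)\to(\beta,x)$. Your remark about coordinating the thresholds $N^k_{i,l}$ over $l$ (by taking the maximum) makes explicit a point the paper leaves implicit in the phrase ``où les $k_{i}(n)$ sont définis comme précédemment'', and your explicit constant $C_i=(D_i+1)M_i^{D_i}$ is slightly more precise than the paper's, but the argument is the same.
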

\begin{proof}Démonstration du lemme

On a :
\begin{align*}
|y_{i,n} - y^{(k)}_{i,n}|^{\epsilon_n} &= |\sum\limits_{l=0}^{D_{i}} (\frac{A_{i,l}^{k_{i}(n)}}{B_{i,l}^{k_{i}(n)}} - \frac{A_{i,l}^{k}}{B_{i,l}^{k}}) (x_{1,n}, \cdots, x_{d,n})\beta_n^l|^{\epsilon_n}\\
&\leq \frac{1}{2^{k-2}} \sum\limits_{l=0}^{D_{i}} |\beta_{n}^l|^{\epsilon_n}\\
&\leq \frac{C_{i}}{2^{k-2}},
\end{align*}
où la deuxième égalité vient de la démonstration précédente à l'inégalité \ref{Controle A^k(n)/B^k(n) - A^k/B^k} et la dernière vient du fait que $|\beta_{n}^l|^{\epsilon_n} \rightarrow \tilde{\eta_x}(\beta^l)$ et la suite est donc bornée.
\end{proof}
Ainsi,
\begin{equation*}
|P(y_{1,n}, \cdots, y_{m,n}, x_{1,n}, \cdots, x_{d,n})|^{\epsilon_n} = |P(y^{(k)}_{1,n}, \cdots, y^{(k)}_{m,n}, x_{1,n}, \cdots, x_{d,n})|^{\epsilon_n} + \Omega
\end{equation*}
où $\Omega \leq cste \frac{1}{2^{k-2}}$. En prenant $k$ assez grand, $\frac{1}{2^{k-2}} \leq \epsilon$, d'où :
\begin{equation*}
||P(y_{1,n}, \cdots, y_{m,n}, x_{1,n}, \cdots, x_{d,n})|^{\epsilon_n} - \eta_{(y,x)}(P)| \leq (2 + cste) \epsilon.
\end{equation*}
Ce qui montre la convergence de $(y_n, x_n)$ vers $(y,x)$.
\end{proof}

On va maintenant utiliser cela pour montrer que pour tout $k$-schéma $Y$ affine de type fini, si $x_n \in X(k) \rightarrow x \in \delta X$ et $y \in Y^{an}(\overline{\mathcal{H}(x)})$, alors il existe $y_n \in Y(\overline{k})$, tel que $(y_n, x_n) \rightarrow (y,x)$.

\begin{defi}\label{defi normalisation noether}
Soit $Y$ un $k$-schéma affine de type fini. Alors notons $\pi^{noether}$ le morphisme surjectif fini $Y \rightarrow \ana^m_k$ provenant du lemme de normalisation de Noether. Et on notera $\pi^{noether, hyb}$ le morphisme entre $Y^{hyb}$ et $\ana_k^{m,hyb}$.
\end{defi}

\begin{prop}\label{Existence suite convergente pour tout schéma}
Soit $x_n \in X(k) \rightarrow x \in \delta X$ et $Y$ un $k$-schéma de type fini. Soit $y \in Y_{\mathcal{H}(x)}(\overline{\mathcal{H}(x)})$, alors il existe $y_n \in Y^{an}$ tel que $(y_n, x_n) \in (Y\times X)^{an} \rightarrow (y,x) \in \delta (Y\times X)$.
\end{prop}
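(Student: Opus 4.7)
La stratégie consiste à se ramener au cas $Y = \ana^m_k$, déjà traité à la proposition \ref{existence de y_n qui tends vers y pour tout point de type 1}, en utilisant la normalisation de Noether. On peut supposer $Y$ affine, quitte à prendre un ouvert affine de $Y$ contenant l'image du point $y$. La définition \ref{defi normalisation noether} fournit alors un morphisme fini surjectif $\pi : Y \rightarrow \ana^m_k$ avec $m = \dim Y$. Par le lemme \ref{morphisme analytifié garde les propriétés}, $\pi^{hyb}$ est fini, donc propre ; et par le lemme \ref{morphisme fini sur espace hybride est ouvert}, $\pi^{hyb}$ est ouvert, puisque $\ana^m_k$ est normal et de même dimension que $Y$.

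On applique ensuite la proposition \ref{existence de y_n qui tends vers y pour tout point de type 1} au point $\pi(y) \in \ana^{m,an}_{\mathcal{H}(x)}(\overline{\mathcal{H}(x)})$ pour obtenir une suite $z_n \in \ana^m_k(\overline{k})$ telle que $(z_n, x_n) \rightarrow (\pi(y), x)$. Le cœur de la preuve consiste à relever chaque $z_n$ en un $y_n \in Y(\overline{k})$ vérifiant $\pi(y_n) = z_n$, de sorte que $(y_n, x_n) \rightarrow (y, x)$. La fibre $(\pi\times \mathrm{id})^{-1}(\pi(y), x)$ est finie ; notons-la $\lbrace y = y^{(1)}, \ldots, y^{(s)} \rbrace$. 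Par séparation de Hausdorff, on dispose de voisinages disjoints $U_i \ni y^{(i)}$ ; la propreté de $(\pi\times \mathrm{id})^{hyb}$ donne un voisinage $V$ de $(\pi(y), x)$ avec $(\pi\times \mathrm{id})^{-1}(V) \subset \bigcup U_i$, et son caractère ouvert assure que $(\pi\times \mathrm{id})(U_1)$ est un voisinage ouvert de $(\pi(y), x)$. On peut donc, pour $n$ assez grand, choisir un relèvement $y_n \in U_1$ de $z_n$.

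La principale difficulté portera sur la convergence séquentielle $y_n \rightarrow y$ (et non seulement sur l'existence de relèvements dans chaque voisinage fixé de $y$), puisque $(Y\times X)^\urcorner$ n'est pas à base dénombrable de voisinages lorsque $k$ est non-dénombrable. Pour y remédier, on adoptera l'approche algébrique explicite des propositions \ref{existence convergence vers y dans H(x)^m} et \ref{existence de y_n qui tends vers y pour tout point de type 1} : par le théorème de l'élément primitif, le point $y$ est caractérisé par un élément $\beta$ algébrique sur $\mathcal{H}(x)$, dont le polynôme minimal est à coefficients dans $\mathrm{Frac}(\mathcal{O}(Y)/\ker \eta_y)$, et les coordonnées de $y$ s'expriment polynomialement en $\beta$ et en fractions rationnelles en les coordonnées de $x$. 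En approximant $\beta$ par des $\beta_n \in \overline{k}$ grâce à la proposition \ref{existence de beta_n qui tends vers beta}, et en utilisant un procédé diagonal analogue à celui de la proposition \ref{existence convergence vers y dans H(x)^m}, on construit explicitement des $y_n \in Y(\overline{k})$ dont la convergence vers $y$ se vérifie polynôme par polynôme, la finitude de la fibre et l'ouverture de $\pi^{hyb}$ garantissant que ces $y_n$ appartiennent bien à $Y$ et non à un autre ramification du morphisme.
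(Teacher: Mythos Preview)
Your first two paragraphs are exactly the paper's approach: reduce to affine, use Noether normalisation, exploit that $\pi^{hyb}\times\mathrm{id}$ is finite and open, separate the finite fibre over $(\pi(y),x)$ by disjoint open sets, and lift the $z_n$ into the open set $U_1$ containing $(y,x)$.

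Your third paragraph, however, raises a non-issue and then proposes an unnecessary detour. Once you have chosen $(y_n,x_n)\in U_1$ with $(\pi\times\mathrm{id})(y_n,x_n)=(z_n,x_n)\to(\pi(y),x)$, sequential convergence $(y_n,x_n)\to(y,x)$ follows immediately from compactness, without any countable-basis hypothesis. Indeed, $\pi^{hyb}\times\mathrm{id}$ is finite, hence proper, so the preimage of the compact set $\{(z_n,x_n):n\}\cup\{(\pi(y),x)\}$ is compact. Any accumulation point of $(y_n,x_n)$ lies in the finite fibre over $(\pi(y),x)$; but since $(y_n,x_n)$ is eventually in $U_1$, no other fibre point $(y^{(j)},x)$ (which has an open neighbourhood disjoint from $U_1$) can be an accumulation point. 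Thus $(y,x)$ is the unique accumulation point, and in a compact Hausdorff space a sequence with a unique accumulation point converges to it (otherwise a subsequence avoiding a neighbourhood of $(y,x)$ would live in a closed, hence compact, set and produce a second accumulation point). This is precisely how the paper concludes.

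The explicit algebraic construction you sketch in the last paragraph is therefore not needed; moreover, as written it is unclear how the diagonal approximation in coordinates on $\ana^m$ would interact with the choice of sheet over $\pi$, so the argument there is incomplete anyway. Drop the third paragraph and finish with the compactness argument above.
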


\begin{rem}
Dans le cas où $x_n \in \delta X$ ou $x_n \in X^{an}\backslash X(k)$, on a $y_n \in Y^{an}_{\h(x_n)}$.
\end{rem}

\begin{proof}




On peut supposer $Y$ affine comme l'on regarde des propriétés locales.

Soit $Y$ un $k$-schéma affine, de type fini $y \in Y_{\mathcal{H}(x)}(\overline{\mathcal{H}(x)})$. Le morphisme de normalisation de Noether est : $\pi^{noether} : Y \rightarrow  \ana^{m}_k$.

Soit $(a,x) = \pi^{noether,hyb}\times id(y,x) \in \ana^{m,hyb}_k \times X^{hyb}$, alors par la proposition \ref{existence convergence vers y dans H(x)^m}, il existe $(a_n, x_n)\in \ana^{m,hyb}_k \times X^{hyb} \rightarrow (a,x)$. Comme le morphisme $\pi^{noether, hyb}$ est surjectif, $(\pi^{noether,hyb}\times id)^{-1}( a_n, x_n) \neq \emptyset$, le but est donc de trouver des $(y_n, x_n) \in (\pi^{noether,hyb}\times id)^{-1}( a_n, x_n)$ tel que $(y_n, x_n) \rightarrow (y,x)$. 

Comme le morphisme $\pi^{noether,hyb}\times id$ est fini, il existe $y_1, \cdots, y_l$ tous distincts et différent de $y$ tel que $(\pi^{noether,hyb}\times id)^{-1}(\pi^{noether,hyb}\times id)(y,x) = \lbrace (y,x), (y_1,x), \cdots, (y_l,x)\rbrace$. De plus, pour tout $i$, il existe $y \in U_i \subset Y^{hyb}\times X^{hyb}, y_i \in V_i \subset Y^{hyb}\times X^{hyb}$ deux ouverts d'intersection vide. Prenons alors $U =\bigcap U_i$, on a donc $y \in U$ et pour tout $i, V_i \cap U = \emptyset$.

Comme le morphisme $\pi^{noether,hyb}\times id$ est fini et $Y$ et $\ana^m_k$ sont des schémas de même dimension, il est ouvert par la proposition \ref{morphisme fini sur espace hybride est ouvert}. Donc $(a,x) \in \pi^{noether,hyb}\times id(U)$ qui est ouvert donc il existe $N \in \N, \forall n \geq N, (a_n, x_n)\in \pi^{noether,hyb}\times id(U)$. Soient donc $(y_n, x_n) \in U \cap (\pi^{noether,hyb}\times id)^{-1}(a_n, x_n)$ pour tout $n \geq N$ et pour $n<N$, on prends $(y_n, x_n) \in (\pi^{noether,hyb}\times id)^{-1}(a_n, x_n)$ quelconque. 

Alors, comme $\pi^{noether,hyb}\times id (y_n,x_n) \rightarrow (a,x)$, nécessairement les valeurs d'adhérence de cette suite sont dans $ \lbrace (y,x), (y_1,x), \cdots, (y_l,x)\rbrace$. Or pour $n \geq N, (y_n, x_n) \notin V_i$, donc $y_i$ ne peut pas être une valeur d'adhérence. Donc $(y_n, x_n) \rightarrow (y,x)$.
\end{proof}

\begin{rem}
On peut remplacer des suites par des filets i.e. des suites de Moore-Smith et tous les raisonnements fonctionneraient exactement de la même façon, on travaillera donc avec des suites ou des filets dans la suite. 
\end{rem}

\section{L'action sur le bord}\label{SectionResultatActionBord}

Le but de cette partie est de trouver un lieu du bord où l'action de $G$ est bien définie et un lieu où elle n'est pas bien définie. Ensuite, le but est d'étudier les liens entre la compactification de $X/G$ et un quotient de la compactification de $X$ par l'action de $G$, en trouvant certains cas où l'on dispose d'une bijection continue entre les 2.

\subsection{Lieu de bonne définition de l'action sur le bord}

Soit $X$ une $k$-variété et $G$ un $k$-groupe algébrique agissant sur $X$. Supposons que $X$ peut être recouvert par des schémas affines $G$-invariants.
 
\begin{defi} 
L'action de $G$ sur $X$ est défini par un morphisme 
\begin{center}
$ \Phi : X \times G \rightarrow X$ 
\end{center} 
qui s'étend à l'analytification hybride : $\Phi^{hyb} : (X \times G)^{hyb} \rightarrow X^{hyb}$. Ainsi sur chaque point de $x \in X_\infty$, l'on dispose d'une action de $G^{an}_{\mathcal{H}(x)}:= pr_2((\Phi^{hyb})^{-1}(x))$ sur $x$. On dira que l'action d'un sous-ensemble $H \subset G^{an}_{\mathcal{H}(x)} $ est bien définie en le point $x$ vu dans $X^\urcorner$ si et seulement si $\forall g\in H, g\cdot x \in X_\infty$. 
\end{defi}
 
\begin{prop}\label{si pas degenerence dans X/G alors action pas bien def}
Soit $x_n \in X^{an} \rightarrow x \in \delta X$. Supposons qu'il existe $g_n \in G^{an}$ tel que $(g_n\cdot x_n) \in X^{an}$ ait une valeur d'adhérence dans $X^{an}$. Alors il existe $g \in \delta G$ tel que $g\cdot x \in X^\beth$ i.e. l'action de $G^{an}_{\mathcal{H}(x)}$ sur $X^\urcorner$ n'est pas bien définie en le point $x$. 
\end{prop}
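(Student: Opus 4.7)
The plan is to use the compactness of $(G \times X)^\urcorner$ to extract a subnet of $(g_n, x_n)$ converging to a point at the boundary, then invoke Proposition \ref{va de fg_n x_n est va de g x} to produce a corresponding adherence value of $g_n \cdot x_n$ in $X^{hyb}$, and finally use the Hausdorff property of $X^\urcorner$ together with the assumed adherence value of $g_n \cdot x_n$ in $X^{an}$ to force this limit into $X^\beth$ rather than $X_\infty$.

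First I would pass to a subnet along which $g_n \cdot x_n \to y \in X^{an} \subset X^\urcorner$, and use compactness of $(G \times X)^\urcorner$ to extract a further subnet along which $(g_n, x_n) \to P$ for some $P \in (G \times X)^\urcorner$. I would rule out $P \in (G \times X)^{an}$: were this the case, the ordinary continuity of $pr_2 \colon (G \times X)^{an} \to X^{an}$ on this open subset of $(G \times X)^\urcorner$ would yield $x_n \to pr_2(P) \in X^{an}$, contradicting $x_n \to x \in \delta X$ since $X^{an}$ and $\delta X$ are disjoint in $X^\urcorner$. Hence $P \in \delta(G \times X)$, and by choosing a lift $\eta_x \in X_\infty$ of $x$ and using Corollary \ref{existence de releve pour les suites convergentes} together with continuity of $pr_2^{hyb}$, I can align the lift of $P$ so that its $X$-projection coincides (up to a flow-rescaling) with $\eta_x$, thereby writing $P = (g, x)$ with $g \in G^{an}_{\mathcal{H}(x)}$.

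Proposition \ref{va de fg_n x_n est va de g x} then yields that $g \cdot x$ is an adherence value of $g_n \cdot x_n$ in $X^{hyb}$, with $\eta_{g \cdot x} \in X_\infty \cup X^\beth$. Suppose, for contradiction, that $\eta_{g \cdot x} \in X_\infty$. By Proposition \ref{g_n x_n tend vers g x} there is a flow-rescaled convergence $\eta_{g_n \cdot x_n}^{\epsilon_n} \to \eta_{g \cdot x}$ in $X^{hyb}$; since $\eta_{g \cdot x} \in X^+$ and the quotient map $q$ identifies flow-related points, this descends to $g_n \cdot x_n = q(\eta_{g_n \cdot x_n}^{\epsilon_n}) \to q(\eta_{g \cdot x}) \in \delta X$ in $X^\urcorner$ along the same subnet. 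But the subnet was chosen so that $g_n \cdot x_n \to y \in X^{an}$, and $X^\urcorner$ is Hausdorff with $X^{an} \cap \delta X = \emptyset$, a contradiction. Hence $\eta_{g \cdot x} \in X^\beth$, which is precisely the statement that the action of $G^{an}_{\mathcal{H}(x)}$ is not well defined at $x$.

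The main obstacle I foresee is the identification $P = (g, x)$ with second coordinate equal to the given $x \in \delta X$: since $pr_2 \colon G \times X \to X$ is not proper, there is no obvious continuous projection $(G \times X)^\urcorner \to X^\urcorner$. The argument has to be carried out at the $X^{hyb}$-level, where $pr_2^{hyb}$ is continuous, and it relies on the compatible-lifting machinery of Section \ref{SectionConstructionSuite} to pin down the $X$-component of $P$ on the flow-orbit of $\eta_x$; everything else (Hausdorffness, the trajectory identification by $q$, the flow rescaling) is standard once the adherence value is correctly identified.
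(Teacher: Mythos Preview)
Your approach is correct in outline and very close in spirit to the paper's, but the paper makes one key simplification that dissolves exactly the obstacle you flag. Rather than extracting an adherence value of $(g_n, x_n)$ and then trying to identify its $X$-component with $x$, the paper first sets $y_n := g_n \cdot x_n$, passes to a subsequence with $y_n \to y \in X^{an}$, and extracts an adherence value $z$ of $(g_n^{-1}, y_n)$ in $(G \times X)^\urcorner$. Writing $z = (g^{-1}, a)$, the second component $a$ is then forced to be $y$ (seen either in $X^{an}$ or in $X^\beth$, according to whether $z$ lies in $(G\times X)^{an}$ or in $\delta(G\times X)$), simply because $y_n \to y$ in $X^{an}$. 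Proposition~\ref{va de fg_n x_n est va de g x} applied to $(g_n^{-1}, y_n)$ gives that $g^{-1}\cdot y$ is an adherence value of $g_n^{-1}\cdot y_n = x_n$ in $X^{hyb}$; since $x_n \to x$, one gets $g^{-1}\cdot y = x$, hence $g\cdot x = y$. Finally $x \in \delta X$ forces $z \in \delta(G\times X)$, whence $y \in X^\beth$.

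What the inverse trick buys is precisely that the $pr_2$-identification is carried out on the side where the sequence converges \emph{inside} $X^{an}$ (so the second component is pinned down trivially), rather than on the side where it escapes to $\delta X$. In your version the burden falls on showing $pr_2^{hyb}(\eta_P)$ lands on the flow-orbit of $\eta_x$ and not in $X^\beth$, which is the delicate point you isolate; the paper avoids it by swapping which factor carries the convergent data. Your contradiction argument for the final step (ruling out $\eta_{g\cdot x}\in X_\infty$ via Hausdorffness of $X^\urcorner$) is fine and parallels the paper's concluding lines.
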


\begin{proof}
Posons $y_n := g_n \cdot x_n$ alors quitte à extraire on peut supposer que $y_n \rightarrow y \in X^{an}$. 

Soit $z \in (G \times X)\urcorner$ une valeur d'adhérence de $(g_n^{-1}, y_n)$. On note $a = pr_2(z) \in X^{hyb}$ et $g^{-1} \in G^{an}_{\mathcal{H}(y)}$ tel que $(g^{-1}, a) = z$. Comme $y_n \rightarrow y \in X^{an}$, nécessairement $a = y$ soit vu comme un élément de $X^{an}$ si $(g^{-1}, a) \in (G\times X)^{an}$ soit vu comme un élément de $X^\beth$ si $(g^{-1}, a) \in \delta (G\times X)$. 

Puisque $(g^{-1}, y)$ est une valeur d'adhérence de $(g_n^{-1}, y_n)$, par la proposition \ref{va de fg_n x_n est va de g x}, $g^{-1} \cdot y$ est une valeur d'adhérence de $g_n^{-1}\cdot y_n= x_n \rightarrow x$. Donc $x =g^{-1} \cdot y $ et donc $ g\cdot x =y$. Comme $x\in \delta X, (g^{-1},y) \in \delta (G\times X)$ donc $y \in X^\beth$ et donc $g\cdot x$ aussi. 
\end{proof}

\begin{prop}\label{bonne déf action}
Soit $x_n \in X^{an} \rightarrow x\in \delta X$. Supposons que $\forall g_n \in G^{an}, g_n\cdot x_n$ n'a pas de valeur d'adhérence dans $X^{an}$. Alors pour tout $g \in (G\times \mathcal{H}(x))^{an}(\overline{\mathcal{H}(x)})$, $g\cdot x$ définit un point dans le bord de $X$, donc l'action de $G^{an}_{\mathcal{H}(x)}(\overline{\mathcal{H}(x)})$ est bien définie en ce point $x$. 
\end{prop}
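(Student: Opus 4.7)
My plan is to prove the contrapositive: assume there exists $g \in G^{an}_{\mathcal{H}(x)}(\overline{\mathcal{H}(x)})$ with $\eta_{g \cdot x} \in X^\beth$, and construct a sequence $g_n \in G^{an}$ such that $g_n \cdot x_n$ admits an adherent value in $X^{an}$, contradicting the hypothesis.

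First I apply Proposition \ref{Existence suite convergente pour tout schéma} to $Y = G$ and the rigid point $g$: this produces $g_n \in G^{an}$ with $(g_n, x_n) \to (g, x)$ in $(G \times X)^\urcorner$. Next, by Proposition \ref{g_n x_n tend vers g x}, there exist $\epsilon_n \to 0$ such that $\eta_{g_n \cdot x_n}^{\epsilon_n} \to \eta_{g \cdot x}$ in $X^{hyb}$. Concretely, for every $P \in \mathcal{O}(X)$ one gets $|P(g_n \cdot x_n)|^{\epsilon_n} \to \eta_{g \cdot x}(P) \le 1$, which is an asymptotic bound on $g_n \cdot x_n$ at the scale $\epsilon_n$.

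The third step, which is the heart of the argument, is to convert this hybrid convergence into an honest adherent value for $g_n \cdot x_n$ in $X^{an}$. The natural candidate is the reduction $r_X(\eta_{g \cdot x}) \in X$, which exists because $\eta_{g \cdot x} \in X^\beth$. If $\eta_{g \cdot x}$ corresponds to a rigid point $z \in X(\mathbb{C})$, then for every $P \in \mathfrak{m}_z$ one has $\eta_{g \cdot x}(P) = 0$, and the relation $|P(g_n \cdot x_n)|^{\epsilon_n} \to 0$ with $\epsilon_n \to 0$ forces $|P(g_n \cdot x_n)| \to 0$ in the archimedean sense, whence $g_n \cdot x_n \to z$ in $X(\mathbb{C})$. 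To handle a general $\eta_{g \cdot x}$ one should leverage the density of divisorial / rigid points of $X^\beth$ (Proposition \ref{Les valuations divisorielles sont denses}), replacing $g$ by a perturbed $g' \in G^{an}_{\mathcal{H}(x)}(\overline{\mathcal{H}(x)})$ whose image $g' \cdot x$ is a rigid point, and then running the previous rigid-case argument on $g'_n \cdot x_n$.

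The main obstacle is precisely this final perturbation step. Without care, $\eta_{g \cdot x}$ may correspond to the generic point of a positive-dimensional subvariety of $X$, in which case the bound $|P(g_n \cdot x_n)|^{\epsilon_n} \le 1+o(1)$ is too weak to prevent the archimedean norms of $g_n \cdot x_n$ from diverging in some transverse direction. Ruling this out relies crucially on the structural hypotheses implicit in the ambient framework (affine $X$ with closed reductive action, or the stable-locus framework of the main theorems), which guarantee that one can always find $g \in G^{an}_{\mathcal{H}(x)}(\overline{\mathcal{H}(x)})$ whose orbit meets a rigid point of $X^\beth$, thereby reducing to the elementary rigid case.
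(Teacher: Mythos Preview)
Your first two steps are exactly the paper's argument: construct $g_n$ via Proposition~\ref{Existence suite convergente pour tout schéma} so that $(g_n,x_n)\to (g,x)$, then invoke Proposition~\ref{g_n x_n tend vers g x} to get $(g_n\cdot x_n)_{\epsilon_n}\to \eta_{g\cdot x}$ in $X^{hyb}$. So far you match the paper verbatim.

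The divergence is in your third step, and this is where you have a genuine gap. The paper does \emph{not} split into the rigid versus non-rigid case for $\eta_{g\cdot x}$, and it certainly does not perturb $g$. It concludes directly: by hypothesis $g_n\cdot x_n$ has no adherent value in $X^{an}$, so in the compact space $X^\urcorner$ every adherent value of $g_n\cdot x_n$ lies in $\delta X$; since $(g_n\cdot x_n)_{\epsilon_n}\to \eta_{g\cdot x}$, the point $g\cdot x$ is such an adherent value, hence $g\cdot x\in\delta X$. No GIT input, no density of rigid points, no perturbation.

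Your proposed resolution of the ``obstacle'' is not valid. You want to replace $g$ by a nearby rigid $g'\in G(\overline{\mathcal{H}(x)})$ with $g'\cdot x$ a \emph{rigid} point of $X^\beth$. But for any rigid $g'$, the point $g'\cdot x$ is an $\overline{\mathcal{H}(x)}$-point of $X$, and $\mathcal{H}(x)$ is non-trivially valued (because $x\in\delta X$); a rigid point of $X^\beth$, on the other hand, has residue field a finite extension of the trivially valued field $k_0$. So $g'\cdot x$ is essentially never rigid in $X^\beth$: your perturbation moves $g'\cdot x$ inside the wrong class of points. The density statement you cite (Proposition~\ref{Les valuations divisorielles sont denses}) is about points of $X^\beth$, not about the image of the orbit map $g'\mapsto g'\cdot x$, and does not give what you need. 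Finally, the ``structural hypotheses'' you invoke (closed reductive action, stable locus) are \emph{not} assumed in Proposition~\ref{bonne déf action}; they enter only later, in Theorem~\ref{equivalence dégénérecence quotient et action bien déf} and beyond. Using them here would be out of order, and in any case the statement that the orbit of $x$ meets a rigid point of $X^\beth$ is essentially the content of the much later Proposition~\ref{on peut caractériser le fermé qu on doit retirer dans la compactification}, proved with entirely different tools.

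In short: keep your first two steps, drop the contrapositive framing and the perturbation paragraph, and conclude as the paper does by reading off the adherent value directly.
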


\begin{proof}
Soit $g \in G^{an}_{\mathcal{H}(x)}(\overline{\mathcal{H}(x)})$.

On sait par la proposition \ref{existence de y_n qui tends vers y pour tout point de type 1} qu'il existe $g_n \in G^{an}$ tel que $(g_n, x_n) \rightarrow (g,x)$ et donc $(g_n\cdot x_n)_{\epsilon_n} \rightarrow g\cdot x \in X^{hyb}$, par la proposition \ref{g_n x_n tend vers g x} où $(g_n\cdot x_n)_{\epsilon_n}$ signifie que l'on regarde le point correspondant à $(g_n\cdot x_n)$ dans la fibre $pr^{-1}(\epsilon_n) \in X^{hyb}$. 

Or, par hypothèse $g_n\cdot x_n$ n'a pas de valeur d'adhérence dans $X^{an}$ donc nécessairement, en voyant $g_n\cdot x_n \in X^\urcorner$, toutes ses valeurs d'adhérence sont dans $\delta X$ et donc $g\cdot x \in \delta X$.
\end{proof}

Supposons maintenant de plus que le groupe $G$ est réductif. Dans \cite{GIT}, Mumford définit plusieurs deux sous-schémas $X^s(\Pre)$ et $X^s(L)$ de $X$ sur lesquels on peut définir le quotient géométrique de ces sous-schémas par l'action de $G$ que l'on note $X^s(\Pre)/G, X^s(L)/G$. 

\begin{rem}
On peut aussi définir $X^{ss}(L)$ le lieu semi-stable et définir le quotient catégorique $X^{ss}(L)/\! /G$.
\end{rem}

\begin{defi}\label{Définition lieu stable}
Soit $X$ une $k$-variété et $G$ un groupe algébrique réductif agissant sur $X$. Soit $x$ un point géométrique de $X$. On dit que 
\begin{itemize}
\item $x$ est pré-stable s'il existe un ouvert affine $U$ invariant par $G$ tel que $x\in U$ et l'action de $G$ sur $U$ est fermée.

Soit $L$ un faisceau inversible sur $X$ et $\phi$ une $G$-linéarisation de $L$. On pourra se référer au §3 du chapitre 1 de \cite{GIT} pour une définition de $G$-linéarisation. Alors,
\item $x$ est stable (vis à vis de $L, \phi$) s'il existe une section $s \in H^0(X,L^n)$ pour un certain $n$ tel que $s(x) \neq 0$, $X_s$ est affine, $s$ est invariant et l'action de $G$ sur $X_s$ est fermé. 
\end{itemize}

Alors, l'ensemble des points géométriques vérifiant l'une de ces propriétés est l'ensemble des points d'un ouvert de $X$ que l'on notera respectivement : 
\begin{align*}
&X^{s}(\Pre)\\
&X^s(L).
\end{align*}
\end{defi}

\begin{rem}\label{Cas affine, toute la variete est lieu stable}
Si $X$ est une $k$-variété affine tel que l'action est fermée, alors il existe un faisceau inversible $L$ sur $X$ tel que $X = X^s(L)$, on pourra se référer au converse 1.12 du chapitre 1 de \cite{GIT}.
\end{rem}

On peut alors énoncer une version du théorème de Mumford (GIT) dans \cite{GIT}.
\begin{thm}\label{GIT alébrique}
Soit $\mathcal{X}$ une $k$-variété et notons $X = \mathcal{X}^s(\Pre)$ ou $X = \mathcal{X}^s(L)$ pour $L$ un faisceau inversible.

Alors, le quotient géométrique $X/G =:Y$ existe en tant que schéma sur $k$.  

Cela signifie que l'on dispose d'un morphisme $\pi : X \rightarrow Y$ tel que si l'on note $\sigma : G\times_k X \rightarrow X$ l'action de $G$ alors :
\begin{itemize}
\item On a : $\pi \circ \sigma : G\times X \rightarrow Y = \pi \circ pr_2$,
\item $\pi$ est surjective et l'image de $\Phi = (\sigma, pr_2): G\times_Y X \rightarrow X\times_Y X$ est $X\times_Y X$, ce qui est équivalent au fait que les fibres géométriques de $\pi$ sont les orbites des points géométriques de $x$,
\item $\pi$ est une submersion i.e. $U \subset Y$ est ouvert ssi $\pi^{-1}(U) \subset X$ l'est.
\end{itemize}

De plus, si $\mathcal{X}$ est affine et l'action est fermée, alors $Y$ est un schéma affine de type fini sur $k$. De plus, en notant $R = \Gamma(X, \mathcal{O}(X))$, alors $Y = Spec~R^G$ où $R^G$ désigne les éléments invariants par $G$.

Si $X = \mathcal{X}^s(L)$ pour $L$ un faisceau inversible, alors $Y$ est quasi-projectif sur $k$ et donc en particulier une $k$-variété. De plus, $\pi$ est affine.
\end{thm}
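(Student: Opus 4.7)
Le plan consiste à reprendre la démarche classique de Mumford \cite{GIT} en se ramenant au cas affine où l'action est fermée, car l'existence du quotient géométrique est une question locale au but pour les submersions universelles. On traite d'abord le cas affine, puis on recolle pour obtenir le cas du lieu stable à partir d'un faisceau inversible.

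Je commencerais par le cas où $\mathcal{X}$ est affine et l'action est fermée. Posant $R = \Gamma(\mathcal{X}, \mathcal{O}_\mathcal{X})$, on considère l'anneau des invariants $R^G$. Puisque $G$ est réductif, le théorème de finitude de Nagata--Haboush (résolvant le 14e problème de Hilbert pour les groupes réductifs) assure que $R^G$ est une $k$-algèbre de type fini. On définit alors $Y := Spec~R^G$ et $\pi : \mathcal{X} \rightarrow Y$ le morphisme induit par l'inclusion $R^G \hookrightarrow R$. La $G$-invariance de $\pi$ est immédiate par construction. La surjectivité de $\pi$ ainsi que la caractérisation des fibres géométriques comme orbites (i.e. le fait que l'image de $\Phi : G \times_Y \mathcal{X} \rightarrow \mathcal{X}\times_Y\mathcal{X}$ soit $\mathcal{X}\times_Y\mathcal{X}$) découlent du résultat classique suivant : pour un groupe réductif agissant sur un schéma affine, deux fermés $G$-invariants disjoints sont séparés par un invariant ; la fermeture de l'action permet alors d'appliquer cela aux orbites. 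Enfin, $\pi$ est une submersion car elle est universelle pour les morphismes $G$-invariants.

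Pour le cas $X = \mathcal{X}^s(L)$, je prendrais pour chaque point $x \in X$ une section $G$-invariante $s \in H^0(\mathcal{X}, L^n)$ telle que $\mathcal{X}_s$ soit affine, $G$-invariant, et l'action de $G$ sur $\mathcal{X}_s$ soit fermée, comme fourni par la définition \ref{Définition lieu stable}. Le cas affine ci-dessus fournit alors un quotient $Y_s := \mathcal{X}_s/G = Spec~\Gamma(\mathcal{X}_s,\mathcal{O})^G$. Il reste à vérifier la compatibilité de ces quotients locaux : pour deux telles sections $s, t$, l'intersection $\mathcal{X}_s \cap \mathcal{X}_t = \mathcal{X}_{st}$ est encore un ouvert affine $G$-invariant avec action fermée, et les morphismes de restriction induisent des immersions ouvertes $Y_{st} \hookrightarrow Y_s, Y_t$. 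Ceci permet de recoller en un schéma $Y$ muni d'un morphisme affine $\pi : X \rightarrow Y$ (par construction locale $\pi^{-1}(Y_s) = \mathcal{X}_s$). Pour la quasi-projectivité de $Y$, on observe que l'algèbre graduée $\bigoplus_n H^0(\mathcal{X}, L^n)^G$ est de type fini (nouvelle application de la réductivité), ce qui fournit un plongement de $Y$ dans $Proj\!\left(\bigoplus_n H^0(\mathcal{X}, L^n)^G\right)$.

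La principale difficulté technique est le théorème de finitude de Nagata--Haboush, résultat profond dont la démonstration occupe une part substantielle de \cite{GIT} (incluant l'appendice de Haboush sur la réductivité géométrique). Une seconde difficulté, plus combinatoire, est la vérification soigneuse du recollement des quotients locaux dans le cas du lieu stable : il faut garantir la compatibilité des structures de schéma sur les intersections $Y_{st}$, et s'assurer que la famille des $Y_s$ couvre effectivement l'ensemble des classes d'orbites stables. La submersion de $\pi$ dans le cas général demande également de contrôler précisément la topologie de $Y$ à partir de celle de $X$, en utilisant le caractère universel du quotient catégorique.
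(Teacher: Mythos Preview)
The paper does not actually prove this theorem: it is stated as a classical result of Mumford and attributed directly to \cite{GIT}, with no argument given in the text. Your sketch is a faithful outline of the standard proof one finds in Mumford's book (Nagata--Haboush finiteness for the affine case, then gluing over the affine invariant opens $\mathcal{X}_s$ for the stable locus), so it is not wrong, but it goes well beyond what the paper itself does. In the context of this paper, the expected ``proof'' is simply a reference to \cite{GIT}.
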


Dans le cas affine, on a un résultat dû à M. Maculan, \cite{MaculanGIT} qui montre ce théorème dans le cas des espaces analytiques. 
\begin{thm}\label{GIT analytique} Proposition 3.1 et 3.8 de \cite{MaculanGIT}

Soit $X$ un $k$-schéma affine de type fini. Si l'on analytifie $X$ et $X/G$ selon la valeur absolue de $k$ alors le morphisme analytifié $\pi^{an}$ vérifie :
\begin{itemize}
\item $\pi^{an} : X^{an} \rightarrow (X/G)^{an}$ est surjectif et $G$-invariant.
\item Pour tout $x, x' \in X^{an},$
\begin{center}
$\pi^{an}(x) = \pi^{an}(x') \iff \overline{G^{an}_{\mathcal{H}(x)} \cdot x} \cap \overline{G^{an}_{\mathcal{H}(x')}\cdot x'} \neq 0.$
\end{center}
\item Pour tout $x \in X^{an}$, il existe une unique orbite fermée contenue dans $\overline{G^{an}_{\mathcal{H}(x)} \cdot x}$.
\item $\pi^{an}$ est une submersion.
\end{itemize}

En particulier, si l'action est fermée, alors pour tout $x, x' \in X^{an}$,
\begin{center}
$\pi^{an}(x) = \pi^{an}(x') \iff G^{an}_{\mathcal{H}(x)}\cdot x = G^{an}_{\mathcal{H}(x')}\cdot x'.$
\end{center} 
\end{thm}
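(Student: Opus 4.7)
The plan is to work in the affine setting $X = \operatorname{Spec}(R)$, where $X/G = \operatorname{Spec}(R^G)$ with $R^G$ of finite type over $k$ by Hilbert--Nagata (reductivity of $G$). The morphism $\pi^{an} : X^{an} \to (X/G)^{an}$ is then simply restriction of multiplicative seminorms along the inclusion $R^G \hookrightarrow R$. The $G$-invariance is immediate: for every $P \in R^G$ and every $g \in G^{an}_{\mathcal{H}(x)}$ one has $|P|_{g\cdot x} = |g^{-1}\cdot P|_x = |P|_x$, since $P$ is $G$-invariant.

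For surjectivity, given $y \in (X/G)^{an}$ with residue field $\mathcal{H}(y)$, I would form the scheme-theoretic fibre $Z := X \times_{X/G} \operatorname{Spec}(\mathcal{H}(y))$. By the algebraic GIT theorem (Theorem~\ref{GIT alébrique}) the map $\pi$ is surjective, so $Z$ is a non-empty $\mathcal{H}(y)$-scheme of finite type, and its Berkovich analytification $Z^{an}$ is therefore non-empty. Any point of $Z^{an}$ yields a multiplicative seminorm on $\mathcal{O}(Z) = R \otimes_{R^G} \mathcal{H}(y)$, which pulls back to a seminorm on $R$ extending $|\cdot|_y$ on $R^G$, hence a preimage of $y$ in $X^{an}$.

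The characterisation of fibres combines continuity with the Berkovich analogue of Mumford's separation lemma. The implication $\overline{G\cdot x} \cap \overline{G\cdot x'} \neq \emptyset \Rightarrow \pi^{an}(x) = \pi^{an}(x')$ follows from continuity together with the $G$-invariance just established. For the converse, one works inside the fibre $(\pi^{an})^{-1}(y) = Z^{an}$: reductivity of $G$ supplies a Reynolds operator, so any two disjoint closed $G$-invariant subschemes of $Z$ can be separated by an invariant function; but $\mathcal{O}(Z)^G = \mathcal{H}(y)$ consists only of constants, ruling out such a separation and forcing any two orbit closures in $Z^{an}$ to intersect. The same dichotomy yields uniqueness of a closed orbit in $\overline{G^{an}_{\mathcal{H}(x)} \cdot x}$, since two such would form disjoint minimal closed $G$-invariant subsets of the fibre. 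When the action is closed, each orbit equals its closure, and the intersection condition degenerates to equality of orbits, recovering the final statement.

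The submersion property of $\pi^{an}$ then propagates from that of $\pi$: by the surjectivity proved above, $U \subset (X/G)^{an}$ is open iff $(\pi^{an})^{-1}(U)$ is open, using that $\pi^{an}$ sends $G$-invariant open sets to open sets (an analytification of the corresponding algebraic openness). The hard part will be surjectivity: as warned in the remark preceding Proposition~\ref{si le morphisme est plat, surjectivité des beth}, analytification of a surjective morphism of finite type need not stay surjective, and one must genuinely exploit the GIT-specific fact that every algebraic fibre of $\pi$ is non-empty and of finite type to ensure the existence of analytic preimages.
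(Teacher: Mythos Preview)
The paper does not give its own proof of this statement: it is quoted verbatim as Propositions 3.1 and 3.8 of Maculan and used as a black box throughout. So there is no in-paper argument to compare your proposal against.

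On the content of your sketch itself, two points are worth flagging.

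First, your surjectivity argument via base change to $\mathcal{H}(y)$ is correct and is in fact the standard one; your closing caveat is misplaced. The remark preceding Proposition~\ref{si le morphisme est plat, surjectivité des beth} concerns the restriction $f^\beth : X^\beth \to Y^\beth$ over the trivially valued field, where bounding all seminorms by $1$ is an extra constraint that can kill preimages. For the full analytification $f^{an}$ there is no such obstruction: a non-empty scheme of finite type over the complete field $\mathcal{H}(y)$ always has non-empty Berkovich space, exactly as you wrote.

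Second, and more seriously, the fibre characterisation and the submersion step both have genuine gaps. For the fibres, Mumford's separation lemma separates disjoint closed $G$-invariant \emph{subschemes} of $Z$ by invariant regular functions, but $\overline{G^{an}_{\mathcal{H}(x)}\cdot x}$ is an analytic closed subset of $Z^{an}$, not a priori the analytification of a closed subscheme of $Z$. Bridging this --- typically by base-changing to $\mathcal{H}(x)$, taking the algebraic orbit closure there, and comparing its analytification with the analytic orbit closure --- is exactly the kind of work Maculan carries out, and it is not a formality. For the submersion, your argument is circular: ``$\pi^{an}$ sends $G$-invariant open sets to open sets'' is, given surjectivity, equivalent to the submersion property you are trying to establish, and the algebraic submersion of $\pi$ is a statement about the Zariski topology that does not transfer automatically to the Berkovich topology. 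Both points require genuine analytic input, which is why the paper defers to Maculan rather than reproving them.
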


\begin{nota}
On notera souvent $G^{an}$ pour parler de $G^{an}_{\mathcal{H}(x)}$. 
\end{nota}

\begin{nota}\label{Notation du lieu stable}
Dans la suite, on écrira $\mathcal{X}$ pour désigner une $k$-variété. On prend un couple $(\mathcal{X}, X)$ pour désigner l'un des 3 cas suivants :
\begin{itemize}
\item $X$ est le lieu stable $\mathcal{X}^s(L)$ où $L$ est un faisceau inversible sur $\mathcal{X}$,
\item $\mathcal{X}$ est affine et l'action de $G$ est fermée. Alors, dans ce cas on prend $X = \mathcal{X}$. Par la remarque \ref{Cas affine, toute la variete est lieu stable}, c'est un cas particulier du premier cas,
\item $X$ est le lieu pré-stable $X =\mathcal{X}^s(\Pre)$ et $\mathcal{X}^s(\Pre)/G$ est une $k$-variété.  
\end{itemize}
\end{nota}
\begin{rem}
En général, $\mathcal{X}^s(\Pre)/G$ n'est pas forcément une $k$-variété et c'est donc une hypothèse du 3ème cas.
\end{rem}

\begin{prop}\label{existence convergence fibre si convergence dans le quotient}
Soit $\mathcal{X}$ une $k$-variété, $G$ un groupe algébrique réductif agissant sur $\mathcal{X}$ et notons $X$ comme dans la notation \ref{Notation du lieu stable}.

Notons $\pi^{an} : X^{an} \rightarrow (X/G)^{an}$ la projection. Soient $x_n \in (X^{an})^\N$ et supposons que $\pi^{an}(x_n) \rightarrow y \in (X/G)^{an}$. Soit $x \in X^{an}$ tel que $\pi^{an}(x) = y$. 

Alors quitte à extraire il existe $g_n \in G^{an}$, tel que $g_n\cdot x_n \rightarrow x.$ 
\end{prop}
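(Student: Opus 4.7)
The approach is to combine the openness of $\pi^{an}$, the angelicity of Berkovich spaces over a field, and an approximation argument in the spirit of Section~\ref{SectionConstructionSuite} to lift the convergent sequence $\pi^{an}(x_n) \to y$. The first step is to show that $\pi^{an}$ is an open map: given an open $V \subset X^{an}$, its $G$-saturation $(\pi^{an})^{-1}(\pi^{an}(V))$ coincides with $pr_2^{an}((\sigma^{an})^{-1}(V))$, where $\sigma : G \times X \to X$ is the action morphism (using that fibres of $\pi^{an}$ are $G^{an}$-orbits, by Theorem~\ref{GIT analytique} and closedness of the action). Since $G$ is smooth over $k$, the second projection $(G \times X)^{an} \to X^{an}$ is open, hence the saturation is open, and the submersion property from Theorem~\ref{GIT analytique} then forces $\pi^{an}(V)$ itself to be open.

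Pick a compact neighbourhood $K$ of $x$ in $X^{an}$, which exists by local compactness. Then $\pi^{an}(\mathrm{int}(K))$ is an open neighbourhood of $y$ in $(X/G)^{an}$, so eventually $\pi^{an}(x_n) \in \pi^{an}(\mathrm{int}(K))$, and one may select $g_n \in G^{an}$ with $g_n \cdot x_n \in K$. By Poineau's angelicity theorem \cite{PoineauAngelique}, the compact $K$ is sequentially compact; extract a subsequence $(n_k)$ with $g_{n_k} \cdot x_{n_k} \to x' \in K$. Continuity of $\pi^{an}$ yields $\pi^{an}(x') = \lim_k \pi^{an}(x_{n_k}) = y = \pi^{an}(x)$, so Theorem~\ref{GIT analytique} provides $h \in G^{an}_{\mathcal{H}(x')}$ with $h \cdot x' = x$.

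Finally, one adjusts the subsequence so that the limit becomes $x$ rather than $x'$. An analogue of Proposition~\ref{Existence suite convergente pour tout schéma} (which is simpler here, since the limit $x'$ lies in $X^{an}$ rather than in $\delta X$, so only classical Berkovich-analytic approximation is needed) furnishes $h_k \in G^{an}$ with $(h_k, g_{n_k} \cdot x_{n_k}) \to (h, x')$ in $(G \times X)^{an}$. Continuity of $\sigma^{an}$ then gives $h_k \cdot (g_{n_k} \cdot x_{n_k}) \to h \cdot x' = x$, so setting $g'_{n_k} := h_k \cdot g_{n_k}$ (product inside the group $G^{an}_{\mathcal{H}(x_{n_k})}$) produces the required sequence. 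The main obstacle is the openness of $\pi^{an}$, which is not stated explicitly in Theorem~\ref{GIT analytique} and ultimately rests on the classical fact that analytifications of flat morphisms of finite type between Berkovich spaces are open; once that is in hand the rest of the argument is a routine interplay of local compactness, angelicity, and the approximation machinery of Section~\ref{SectionConstructionSuite}.
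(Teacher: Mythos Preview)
Your proof is correct and follows a genuinely different route from the paper's. The paper does not establish that $\pi^{an}$ is open. Instead, after reducing to the affine closed-action case and assuming $\pi^{an}(x_n)\neq y$ for all $n$, it considers the saturated set $\mathcal{A}=\bigcup_n (\pi^{an})^{-1}(\pi^{an}(x_n))$ and shows it cannot be closed: if it were, the submersion property from Theorem~\ref{GIT analytique} would force $\pi^{an}(\mathcal{A})$ to be closed, hence to contain $y$, a contradiction. An adherent point $\alpha\notin\mathcal{A}$ is then reached by a sequence $\alpha_k\in\mathcal{A}$ (using angelicity), and since each fibre is closed only finitely many $\alpha_k$ lie in a given orbit, so $\alpha_k=g_{n(k)}\cdot x_{n(k)}$ with $n(k)\to\infty$ and $\pi^{an}(\alpha)=y$. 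Your openness-plus-local-compactness argument reaches the same intermediate conclusion more transparently, at the price of one extra input (openness of $pr_2^{an}$, coming from flatness of $G\to\mathrm{Spec}\,k$) that the paper's argument avoids by using only the ``closed saturated $\mapsto$ closed'' half of the submersion property. The final translation step from $x'$ (resp.\ $\alpha$) to $x$ is essentially the same in both proofs: the paper also first approximates the translating element $g^{-1}$ by rigid points of $G^{an}_{\mathcal{H}(x')}$, then applies Proposition~\ref{Existence suite convergente pour tout schéma}, and finally uses angelicity a second time to diagonalize. Your phrase ``an analogue of Proposition~\ref{Existence suite convergente pour tout schéma}'' should therefore be read as incorporating that two-stage approximation, since the proposition as stated only lifts rigid points $h\in G^{an}_{\mathcal{H}(x')}(\overline{\mathcal{H}(x')})$, not arbitrary $h\in G^{an}_{\mathcal{H}(x')}$.
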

\begin{proof}
La preuve s'appuie sur les idées de la démonstration de la proposition 2.2 de Favre-Gong \cite{FavreGong}.

Par la définition de $\mathcal{X}^s(L), \mathcal{X}^s(\Pre)$, on peut se ramener au cas où $X$ est affine et l'action de $G$ sur $X$ est fermée.

Soit $x \in X^{an}$ tel que $\pi^{an}(x) = y$. Le but est de montrer qu'il existe $g_n \in G^{an}$ tel que, quitte à extraire, $g_n \cdot x_n \rightarrow x$. L'existence des $g_n$ est immédiate si $\pi^{an}(x_n) = y$ une infinité de fois, on peut donc supposer que $\forall n, \pi^{an}(x_n) \neq y$. Posons alors $\mathcal{A} := \bigcup_{n\in \N} (\pi^{an})^{-1}(\pi^{an}(x_n))$.

Alors le but est de montrer que $\mathcal{A}$ n'est pas fermé. En effet, si $\mathcal{A}$ n'est pas fermé, il existe $\alpha \notin \mathcal{A}$ adhérent à $\mathcal{A}$. Comme $\alpha\notin \mathcal{A}$ et que les fibres sont fermées, si on prend une suite $\alpha_k \in \mathcal{A}$ qui tend vers $\alpha$, alors il n'y a qu'un nombre fini de $x_k$ dans chaque $\pi^{-1,an}(\pi^{an}(x_n))$. Ainsi $\forall k, \exists n(k), \exists g_{n(k)} \in G^{an}, \alpha_k = g_{n(k)}\cdot x_{n(k)}$ et quitte à extraire les $\alpha_k$, on peut supposer que $n(k)$ est strictement croissant. De plus, comme $\pi(\alpha_k) = \pi(x_{n(k)}) \rightarrow y$, on en déduit que $\pi(\alpha) = y$. Donc toutes les valeurs d'adhérence de $\alpha_k$ sont dans $\pi^{-1,an}(y)$. 

Donc, il existe $g\in G^{an}$ tel que $\alpha = g\cdot x$. Comme $\mathcal{H}(x)$ n'est pas trivialement valué, $G^{an}_{\mathcal{H}(x)}(\overline{\mathcal{H}(x)})$ est dense dans $G^{an}_{\mathcal{H}(x)}$ et comme les espaces de Berkovich sur un corps sont angéliques, \cite{PoineauAngelique}, il existe $h_n \in G^{an}_{\mathcal{H}(x)}(\overline{\mathcal{H}(x)})$ tel que $h_n \rightarrow g^{-1}$. Par la proposition \ref{Existence suite convergente pour tout schéma}, il existe $h_{n,k} \in G^{an}_{\mathcal{H}(\alpha_k)}$ tel que $(\alpha_k, h_{n,k}) \rightarrow (g \cdot x, h_n) \in (X\times G)^{an}$. 

Alors, $(g\cdot x, g^{-1})$ est adhérent à $\lbrace (\alpha_k, h_{n,k}), (n,k) \in \N^2\rbrace$. En réutilisant l'angélicité des espaces de Berkovich, quitte à extraire les $\alpha_k$, il existe $h_k \in G^{an}$ tel que $(\alpha_k, h_k) \rightarrow (g\cdot x, g^{-1})$ et donc $h_k\cdot \alpha_k \rightarrow x$.

Il reste donc à montrer que $\mathcal{A}$ n'est pas fermé. Cela vient du fait que par le théorème \ref{GIT analytique}, si $\mathcal{A}$ est fermé, alors $\pi^{an}(\mathcal{A})$ est aussi fermé et donc $y \in \pi^{an}(\mathcal{A})$. Or par définition de $\mathcal{A}$, on a $\mathcal{A} = \pi^{an}((\pi^{an})^{-1}(\mathcal{A}))$ et donc $\pi^{-1,an}(y) \subset \mathcal{A}$ ce qui contredit le fait que $\forall n, \pi^{an}(x_n) \neq y$.
\end{proof}

\begin{prop}\label{lien dégénéresnce quotient et valeur d adherence des fibres}
Soit $\mathcal{X}$ une $k$-variété, $G$ un groupe algébrique réductif agissant sur $\mathcal{X}$ et notons $X$ comme dans la notation \ref{Notation du lieu stable}.

Soit $x_n \in (X^{an})^\N$ et notons $\pi^{an} : X^{an} \rightarrow (X/G)^{an}$ la projection, alors
\begin{center}
$\forall g_n \in G^{an}, g_n\cdot x_n$ n'a pas de valeurs d'adhérence dans $X^{an} \iff \pi^{an}(x_n) \rightarrow \infty$
\end{center} 
où $\pi^{an}(x_n) \rightarrow \infty$ signifie que cette suite n'a pas de valeur d'adhérence dans $(X/G)^{an}$.
\end{prop}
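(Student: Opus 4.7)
The plan is to prove each direction separately, both by contraposition, and the key tool will be the $G$-invariance and properness-like behavior of $\pi^{an}$ together with the previous proposition.

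For the implication $\Longleftarrow$, I would argue by contraposition. Suppose that $\pi^{an}(x_n)$ does \emph{not} tend to infinity in $(X/G)^{an}$: then it has an adherence value $y\in (X/G)^{an}$ and, up to extracting a subnet, $\pi^{an}(x_n)\to y$. Pick any $x\in X^{an}$ with $\pi^{an}(x)=y$ (this exists by surjectivity of $\pi^{an}$, cf.\ théorème \ref{GIT analytique}). Then by proposition \ref{existence convergence fibre si convergence dans le quotient}, up to a further extraction there are elements $g_n\in G^{an}$ with $g_n\cdot x_n\to x\in X^{an}$. In particular $g_n\cdot x_n$ admits an adherence value in $X^{an}$, contradicting the left-hand side.

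For the implication $\Longrightarrow$, I would also reason by contraposition. Suppose there exist $g_n\in G^{an}$ such that the sequence $g_n\cdot x_n$ has an adherence value $x\in X^{an}$. Extract so that $g_n\cdot x_n\to x$. Since $\pi^{an}$ is $G$-invariant (théorème \ref{GIT analytique}), $\pi^{an}(g_n\cdot x_n)=\pi^{an}(x_n)$, and by continuity of $\pi^{an}$ we obtain $\pi^{an}(x_n)\to \pi^{an}(x)\in (X/G)^{an}$ along the extracted subsequence. Hence $\pi^{an}(x_n)$ admits an adherence value in $(X/G)^{an}$, i.e.\ does not tend to infinity.

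The argument is essentially a pair of short reductions: one direction uses the lifting of convergent sequences from the quotient to the fibres (proposition \ref{existence convergence fibre si convergence dans le quotient}), the other uses nothing more than the $G$-invariance and continuity of $\pi^{an}$. There is no real obstacle beyond being careful that the lifting proposition is applied in the correct setting (affine case after reducing via the notation \ref{Notation du lieu stable}) and that one is allowed to pass to subnets / subsequences, which is harmless since the statement concerns the existence of an adherence value.
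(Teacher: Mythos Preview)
Your argument is correct and matches the paper's proof almost exactly: one direction is just continuity and $G$-invariance of $\pi^{an}$, the other invokes proposition \ref{existence convergence fibre si convergence dans le quotient}. Two small points: you have the labels $\Longleftarrow$ and $\Longrightarrow$ swapped (your ``$\Longleftarrow$'' paragraph actually proves the contrapositive of $\Longrightarrow$, and vice versa), and where you extract a subnet the paper instead invokes the angelicity of Berkovich spaces over a field \cite{PoineauAngelique} to stay with genuine subsequences, which is needed since proposition \ref{existence convergence fibre si convergence dans le quotient} is stated for sequences.
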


\begin{proof}
$\Leftarrow$ Si l'on suppose que $g_n \cdot x_n$ a une valeur d'adhérence $x$ dans $X^{an}$, alors $\pi(x)$ est une valeur d'adhérence de $\pi(x_n)$.

$\Rightarrow$ Si l'on suppose que $\pi(x_n)$ a une valeur d'adhérence $y$, alors on peut supposer que $\pi(x_n) \rightarrow y$, car les espaces de Berkovich sur un corps sont angéliques, voir \cite{PoineauAngelique}. Puis par la proposition \ref{existence convergence fibre si convergence dans le quotient}, il existe $g_n \in G^{an}$ tel que $g_n\cdot x_n$ ait une valeur d'adhérence dans $X^{an}$.  
\end{proof}

\begin{prop}\label{Action bien definie suffit sur les points rigides}
Soit $x\in \delta X$. Alors, on a l'équivalence suivante :
\begin{center}
L'action de $G_{\mathcal{H}(x)}^{an}$ est bien définie en $x\iff$ l'action de $G^{an}_{\mathcal{H}(x)}(\overline{\mathcal{H}(x)})$ est bien définie en $x$.
\end{center}
\end{prop}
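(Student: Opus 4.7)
The implication $\Rightarrow$ is immediate, so my plan is to prove the converse by contrapositive: I will show that if some $g \in G^{an}_{\mathcal{H}(x)}$ satisfies $g \cdot x \in X^\beth$, then some rigid element of $G^{an}_{\mathcal{H}(x)}(\overline{\mathcal{H}(x)})$ does so as well.

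Fix a lift $\eta_x \in X_\infty$ of $x$ and, using the standing hypothesis of this subsection that $X$ is covered by $G$-invariant affine open subschemes, work inside an invariant affine chart containing $\eta_x$. Then we may assume $X$ is affine with coordinate ring generated by $T_1, \ldots, T_d$, so that $y \in X^{an}_0$ lies in $X^\beth$ precisely when $|T_i(y)| \leq 1$ for every $i$. Pulling each $T_i$ back through the action morphism $\Phi$ and specializing at $\eta_x$ yields regular functions $Q_i \in \mathcal{O}(G_{\mathcal{H}(\eta_x)})$, and the ``bad locus''
\[
A := \lbrace g \in G^{an}_{\mathcal{H}(\eta_x)} : g \cdot \eta_x \in X^\beth \rbrace = \bigcap_{i=1}^d \lbrace g : |Q_i(g)| \leq 1 \rbrace
\]
is then a closed, strictly $\mathcal{H}(\eta_x)$-analytic subset of $G^{an}_{\mathcal{H}(\eta_x)}$, cut out by finitely many Weierstrass inequalities involving globally defined algebraic functions on the affine scheme $G_{\mathcal{H}(\eta_x)}$.

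The key observation is that $\eta_x \in X_\infty$ forces $\mathcal{H}(\eta_x)$ to be non-trivially valued: otherwise $\eta_x$ would itself lie in $X^\beth$. By Berkovich's density theorem, rigid points are dense in every non-empty strictly analytic space over a non-trivially valued field (\cite{BerkovichLivre}; this is already invoked in the proof of Proposition \ref{existence convergence fibre si convergence dans le quotient}). Since by hypothesis $A$ is non-empty, it must contain a rigid point $g_0 \in G^{an}_{\mathcal{H}(\eta_x)}(\overline{\mathcal{H}(\eta_x)})$, contradicting the assumption that the action of rigid elements is well-defined at $x$. The most delicate point in the argument is to verify that $A$ is genuinely strictly analytic (and not merely closed): this relies on the $Q_i$ being globally defined regular functions, which in turn rests on $G$ being affine so that $G_{\mathcal{H}(\eta_x)}$ has a well-behaved coordinate ring in which the Weierstrass conditions make sense globally.
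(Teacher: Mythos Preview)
Your proof is correct and follows essentially the same approach as the paper's. Both reduce to an affine $G$-invariant chart, express membership in $X^\beth$ via finitely many inequalities $|f_i| \leq 1$, pull these back through the action to obtain Weierstrass conditions on $G^{an}_{\mathcal{H}(x)}$, and then invoke Berkovich's result (Proposition~2.1.15 of \cite{BerkovichLivre}) that a non-empty strictly affinoid space over a non-trivially valued field has a rigid point. The only cosmetic difference is that the paper first passes to a strictly affinoid neighbourhood $V$ of $g$ before intersecting with the Weierstrass locus, whereas you consider the global locus $A$ directly; your explicit remark that $\eta_x \in X_\infty$ forces $\mathcal{H}(\eta_x)$ to be non-trivially valued is a point the paper uses without comment. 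One terminological quibble: $A$ is a strictly analytic \emph{domain} (a Weierstrass domain locally), not an ``analytic subset'' in the usual sense of a zero locus; and the reduction to $G$ affine, which you flag at the end, deserves a word of justification (the paper simply asserts it as a local matter).
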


\begin{proof}
$\Rightarrow$ C'est clair;

$\Leftarrow$ Par définition de $X \subset \mathcal{X}$, $x\in \delta U$ où $U$ est un ouvert affine $G$-invariant et on peut donc se ramener au cas où $X$ est affine.

Comme $X$ est un $k$-schéma affine de type fini, il existe $f_1, \cdots, f_d$ tel que les $f_i$ engendrent $\mathcal{O}(X)$, alors un point $y\in X_0^{an}$ est un point de $X^\beth$ si et seulement si $\forall i, |f_i(y)| \leq 1$. 

On peut se ramener au cas où $G$ est affine, comme les propriétés sont locales. Alors l'action $\Phi :G \times X \rightarrow X$ induit un morphisme $\phi : \mathcal{O}(X) \rightarrow \mathcal{O}(G)\otimes \mathcal{O}(X)$. 

Alors pour tout $i$, pour tout $g \in G_{ \mathcal{H}(x)}^{an}$, on a $|f_i(g\cdot x)| = |\phi(f_i)(g,x)|.$

Soit  $g\in G_{ \mathcal{H}(x)}^{an}$, on a $g\cdot x \in X^\beth$ si et seulement si, $\forall i, |\phi(f_i)(g,x)|\leq 1$.

Supposons par l'absurde qu'il existe un tel $g$. 

Alors comme $G^{an}_{\mathcal{H}(x)}$ est un espace $\mathcal{H}(x)$-analytique, il existe un  voisinage $V$ de $g$ où $V$ est un domaine $\mathcal{H}(x)$-affinoïde, que l'on peut supposer strictement affinoïde comme $\mathcal{H}(x)$ n'est pas trivialement valué. Alors $g \in U$ où $U$ est le domaine strictement affinoïde contenu dans $V$ défini par les équations $|f_i(y)|\leq 1$. Or comme $\mathcal{H}(x)$ n'est pas trivialement valué, par la proposition 2.1.15 de Berkovich (\cite{BerkovichLivre}), on en déduit que $U$ possède un point rigide et donc il existe un point de $G^{an}_{\mathcal{H}(x)}(\overline{\mathcal{H}(x)})$ dans $U$ ce qui est absurde par hypothèse.
\end{proof}

En combinant, les propositions \ref{si pas degenerence dans X/G alors action pas bien def}, \ref{bonne déf action}, \ref{lien dégénéresnce quotient et valeur d adherence des fibres} et \ref{Action bien definie suffit sur les points rigides}, on obtient le théorème suivant.

\begin{thm}\label{equivalence dégénérecence quotient et action bien déf}
Soit $\mathcal{X}$ une $k$-variété, $G$ un groupe algébrique réductif agissant sur $\mathcal{X}$ et notons $X$ le lieu stable vis à vis d'un faisceau inversible sur $\mathcal{X}$ comme dans la notation \ref{Notation du lieu stable}.

Soit $x_n \in (X^{an})^\N$ et notons $\pi^{an} : X^{an} \rightarrow (X/G)^{an}$ la projection. Supposons que $x_n \rightarrow x \in X^\urcorner$ avec $x\in \delta X$, alors
\begin{center}
L'action de $G_{\mathcal{H}(x)}^{an}$ est bien définie en $x \iff \pi^{an}(x_n) \rightarrow \infty$
\end{center} 
$\pi^{an}(x_n) \rightarrow \infty$ signifie que cette suite n'a pas de valeur d'adhérence dans $(X/G)^{an}$.
\end{thm}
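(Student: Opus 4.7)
Le plan est de déduire ce théorème en combinant les quatre propositions déjà établies \ref{si pas degenerence dans X/G alors action pas bien def}, \ref{bonne déf action}, \ref{lien dégénéresnce quotient et valeur d adherence des fibres} et \ref{Action bien definie suffit sur les points rigides}. Le point-clé est de reformuler l'hypothèse de dégénérescence $\pi^{an}(x_n)\to \infty$ dans le quotient en une condition sur les orbites des $x_n$ dans $X^{an}$, puis de transférer cette dernière en une propriété du point limite $x\in \delta X$ via les suites convergentes construites à la section \ref{SectionConstructionSuite}.

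Pour l'implication $\pi^{an}(x_n)\to \infty \Rightarrow$ l'action est bien définie en $x$~: je partirais de la proposition \ref{lien dégénéresnce quotient et valeur d adherence des fibres}, qui traduit la non-existence de valeur d'adhérence de $\pi^{an}(x_n)$ dans $(X/G)^{an}$ par le fait que pour toute suite $g_n\in G^{an}$ la suite $g_n\cdot x_n$ n'a pas de valeur d'adhérence dans $X^{an}$. La proposition \ref{bonne déf action} permet alors de conclure que pour tout $g\in G^{an}_{\mathcal{H}(x)}(\overline{\mathcal{H}(x)})$, le point $g\cdot x$ reste dans $\delta X$, c'est-à-dire que l'action est bien définie sur les points rigides. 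Finalement la proposition \ref{Action bien definie suffit sur les points rigides} permet de remonter au groupe tout entier $G^{an}_{\mathcal{H}(x)}$.

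Pour la réciproque, je raisonnerais par contraposée~: supposons que $\pi^{an}(x_n)$ admette une valeur d'adhérence dans $(X/G)^{an}$. La proposition \ref{lien dégénéresnce quotient et valeur d adherence des fibres} fournit alors une suite $g_n\in G^{an}$ telle que $g_n\cdot x_n$ admette une valeur d'adhérence dans $X^{an}$. La proposition \ref{si pas degenerence dans X/G alors action pas bien def} produit à partir de là un élément $g\in G^{an}_{\mathcal{H}(x)}$ avec $g\cdot x\in X^\beth$, ce qui contredit exactement la bonne définition de l'action de $G^{an}_{\mathcal{H}(x)}$ en $x$.

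Il n'y a donc pas de véritable obstacle technique à ce stade~: tout le travail non trivial a déjà été effectué dans la section \ref{SectionConstructionSuite} (construction explicite de suites $(g_n,x_n)\to (g,x)$ en l'absence d'angélicité, preuve de la continuité du flot et du relèvement des suites convergentes à $X^+$) et dans les propositions préliminaires qui combinent ces techniques avec le théorème GIT analytique \ref{GIT analytique} de Maculan. La seule subtilité à souligner dans la rédaction est que la conclusion sur les points rigides de $G^{an}_{\mathcal{H}(x)}(\overline{\mathcal{H}(x)})$ suffit bien à traiter tous les $g\in G^{an}_{\mathcal{H}(x)}$, ce qui est précisément ce que garantit la proposition \ref{Action bien definie suffit sur les points rigides} via l'argument de densité sur un domaine strictement affinoïde dans $G^{an}_{\mathcal{H}(x)}$, utilisant que $\mathcal{H}(x)$ n'est pas trivialement valué lorsque $k$ ne l'est pas.
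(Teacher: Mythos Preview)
Your proposal is correct and follows exactly the paper's own proof, which simply states that the theorem is obtained by combining propositions \ref{si pas degenerence dans X/G alors action pas bien def}, \ref{bonne déf action}, \ref{lien dégénéresnce quotient et valeur d adherence des fibres} and \ref{Action bien definie suffit sur les points rigides}. You have spelled out precisely how these four propositions chain together for each implication, with no deviation from the intended strategy.
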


\begin{cor}
Soit $X$ un $k$-schéma affine et $G$ un groupe algébrique réductif dont l'action est fermée.

Soit $x_n \in (X^{an})^\N$ et notons $\pi^{an} : X^{an} \rightarrow (X/G)^{an}$ la projection. Supposons que $x_n \rightarrow x \in X^\urcorner$ avec $x\in \delta X$, alors
\begin{center}
L'action de $G_{\mathcal{H}(x)}^{an}$ est bien définie en $x \iff \pi^{an}(x_n) \rightarrow \infty$
\end{center} 
$\pi^{an}(x_n) \rightarrow \infty$ signifie que cette suite n'a pas de valeur d'adhérence dans $(X/G)^{an}$.
\end{cor}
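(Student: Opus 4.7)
Le plan est d'observer que cet énoncé n'est qu'un cas particulier du théorème \ref{equivalence dégénérecence quotient et action bien déf} que l'on vient d'établir, et qu'il suffit donc d'identifier correctement les hypothèses. Dans la notation \ref{Notation du lieu stable}, le deuxième cas envisagé est précisément celui d'une $k$-variété affine $\mathcal{X}$ sur laquelle $G$ agit de manière fermée, auquel cas on pose $X = \mathcal{X}$. C'est exactement la situation de ce corollaire.

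Pour conclure, je poserais donc $\mathcal{X} = X$ et j'invoquerais la remarque \ref{Cas affine, toute la variete est lieu stable}, qui renvoie au Converse 1.12 du chapitre 1 de \cite{GIT} : elle fournit l'existence d'un faisceau inversible $L$ sur $X$ tel que $X = X^s(L)$, ce qui permet de se placer formellement dans le cadre du théorème. Le théorème \ref{GIT alébrique} assure de plus que $X/G = \mathrm{Spec}~R^G$ existe en tant que $k$-schéma affine de type fini et fournit la projection $\pi : X \to X/G$ dont on prend l'analytifiée $\pi^{an}$. Il suffit alors d'appliquer le théorème \ref{equivalence dégénérecence quotient et action bien déf} à la suite $x_n$ et à sa limite $x \in \delta X$ pour obtenir l'équivalence souhaitée.

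Il n'y a ainsi pas d'obstacle véritable à surmonter dans cette démonstration, tout le travail ayant déjà été effectué dans le théorème principal. La seule vérification à laquelle je prêterais attention est que la projection $\pi^{an}$ obtenue dans le cas affine via $\mathrm{Spec}~R^G$ coïncide bien avec celle utilisée dans la formulation générale du théorème en considérant $X = X^s(L)$, ce qui découle immédiatement de l'unicité du quotient géométrique au sens de Mumford rappelée dans le théorème \ref{GIT alébrique}.
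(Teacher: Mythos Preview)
Your proposal is correct and matches the paper's approach: the corollary is stated without proof in the paper, as it is an immediate specialization of the preceding theorem \ref{equivalence dégénérecence quotient et action bien déf} via the second case of the notation \ref{Notation du lieu stable} (and remark \ref{Cas affine, toute la variete est lieu stable}). Your additional remark on the coincidence of the two projections $\pi^{an}$ is a reasonable sanity check but not strictly needed.
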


\begin{prop}\label{propreté morphisme sur le quotient}
Soient $X, Y$ deux $k$-schémas affines de type fini, $G$ un groupe réductif, agissant sur $X$ et $Y$ dont l'action est fermée.

Soit $f : X \rightarrow Y$ un morphisme $G$-invariant et supposons que $f^{hyb,-1}(Y^\beth) \subset X^\beth$, alors le morphisme induit $\overline{f} : (X/G)^{an}  \rightarrow (Y/G)^{an}$ est propre.  
\end{prop}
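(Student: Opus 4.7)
L'idée est d'établir la propreté topologique de $\overline{f}$ via le critère suivant : tout filet $(\overline{x}_\alpha)$ dans $(X/G)^{an}$ tel que $\overline{f}(\overline{x}_\alpha)$ converge vers $\overline{y}\in (Y/G)^{an}$ admet une valeur d'adhérence dans $(X/G)^{an}$. Fixons un tel filet, relevons-le en $x_\alpha \in X^{an}$ via $\pi^{an}_X$ et choisissons $y\in Y^{an}$ tel que $\pi^{an}_Y(y) = \overline{y}$ ; alors $\pi^{an}_Y(f^{an}(x_\alpha)) \to \overline{y}$.

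On applique la proposition~\ref{existence convergence fibre si convergence dans le quotient} au filet $(f^{an}(x_\alpha))$ (sa démonstration, reposant sur l'angélicité des espaces de Berkovich, s'adapte sans difficulté aux filets) : quitte à extraire, il existe $h_\alpha \in G^{an}$ tel que $h_\alpha \cdot f^{an}(x_\alpha)\to y$. En lisant l'hypothèse de $G$-invariance comme une $G$-équivariance de $f$ (ce qu'il faut pour que $\overline{f}$ descende naturellement aux quotients), ceci se réécrit $f^{an}(h_\alpha\cdot x_\alpha) \to y$ ; en remplaçant $x_\alpha$ par $h_\alpha \cdot x_\alpha$, ce qui ne modifie pas $\overline{x}_\alpha$, on obtient $f^{an}(x_\alpha)\to y$ dans $Y^{an}$. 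Le filet $(x_\alpha)$, vu dans le compact $X^\urcorner$, admet alors un sous-filet convergent $x_{\alpha_\beta}\to z\in X^\urcorner$. Si $z\in X^{an}$, alors $\overline{x}_{\alpha_\beta}=\pi^{an}_X(x_{\alpha_\beta}) \to \pi^{an}_X(z) \in (X/G)^{an}$ donne la valeur d'adhérence cherchée.

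L'étape délicate consiste à exclure le cas $z\in \delta X$. Supposons-le ; fixons un relevé $\tilde{z}\in X_\infty \subset X^+$ de $z$ et, par le corollaire~\ref{existence de releve pour les suites convergentes} (appliqué aux filets), des relevés $x_{\alpha_\beta}^{\epsilon_\beta}\in X^+$, avec $\epsilon_\beta\to 0$, convergeant vers $\tilde{z}$. La continuité de $f^{hyb}$ et sa compatibilité au flot fournissent
\[
f^{an}(x_{\alpha_\beta})^{\epsilon_\beta} = f^{hyb}(x_{\alpha_\beta}^{\epsilon_\beta}) \longrightarrow f^{hyb}(\tilde{z}) \text{ dans } Y^{hyb}.
\]
Comme $\tilde{z}\notin X^\beth$, l'hypothèse $(f^{hyb})^{-1}(Y^\beth)\subset X^\beth$ force $f^{hyb}(\tilde{z})\notin Y^\beth$, donc $f^{hyb}(\tilde{z})\in Y_\infty$ et sa classe dans $Y^\urcorner$ vit dans $\delta Y$. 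Or $f^{an}(x_{\alpha_\beta})$ et $f^{an}(x_{\alpha_\beth})^{\epsilon_\beta}$ appartiennent à la même orbite du flot, donc ont la même image dans $Y^\urcorner = Y^+/\Phi$ ; cette image convergerait donc simultanément vers $i(y)\in Y^{an}$ et vers un point de $\delta Y$, ce qui contredit la séparation de $Y^\urcorner$.

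La difficulté principale se concentre dans cette dernière étape : faire dialoguer la convergence archimédienne $f^{an}(x_\alpha)\to y\in Y^{an}$ avec la limite non-archimédienne forcée au bord de $X$. L'hypothèse $(f^{hyb})^{-1}(Y^\beth)\subset X^\beth$ est précisément taillée pour cela, puisqu'elle interdit au bord de $X$ d'être envoyé dans l'intérieur formel $Y^\beth$, ce qui rend les deux limites dans $Y^\urcorner$ incompatibles et permet de conclure à la propreté.
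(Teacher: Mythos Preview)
Your argument is correct and follows the same strategy as the paper: lift to $X^{an}$, use proposition~\ref{existence convergence fibre si convergence dans le quotient} to arrange $f^{an}(x_\alpha)\to y$, pass to a limit in $X^\urcorner$, and derive a contradiction from the hypothesis $(f^{hyb})^{-1}(Y^\beth)\subset X^\beth$ when that limit lies in $\delta X$. The only notable difference is cosmetic: the paper concludes by observing directly that any accumulation point of $(f^{an}(g_n\cdot x_n))^{\epsilon_n}$ in $Y^{hyb}$ lies in $Y^\beth$ (forcing $\eta_x\in X^\beth$), whereas you phrase the contradiction via the Hausdorffness of $Y^\urcorner$, which is a clean equivalent formulation; your remark that ``$G$-invariant'' should be read as ``$G$-équivariant'' is also well taken.
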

\begin{proof}
Supposons par l'absurde qu'il existe $z_n \in (X/G)^{an}$ tel que $\overline{f}(z_n) \rightarrow z \in (Y/G)^{an}$ et $z_n \rightarrow \infty$.

Prenons $x_n\in X^{an}$ tel que $\pi_X(x_n) = z_n$, posons alors $y_n = f(x_n) \in Y^{an}$, donc $\pi_Y(y_n) = \overline{f}(z_n) \rightarrow z$.

Soit $y\in Y^{an}$ tel que $\pi_Y(y) = z$, alors quitte à extraire il existe $g_n\in G^{an}, g_n\cdot y_n = f(g_n\cdot x_n) \rightarrow y$, par la proposition \ref{existence convergence fibre si convergence dans le quotient}. Or $\pi_X(x_n) \rightarrow \infty$, donc par la proposition \ref{lien dégénéresnce quotient et valeur d adherence des fibres} $g_n\cdot x_n$ n'a pas de valeur d'adhérence dans $X^{an}$. Soit $x$ une de ces valeurs d'adhérence dans $X^\urcorner$. Alors $x\in \delta X$. Il existe donc $\epsilon_n \rightarrow 0$, tel que $\eta_x \in X^{hyb}$ soit une valeur d'adhérence de $\eta_{g_n\cdot x_n}^{\epsilon_n}$, par la proposition \ref{existence de releve pour les suites convergentes}. Alors $f^{hyb}(\eta_x)$ est une valeur d'adhérence de $f^{hyb}(\eta_{g_n \cdot x_n}^{\epsilon_n})$. Or $f(g_n \cdot x_n)\rightarrow y \in Y^{an}$, donc nécessairement il existe $\alpha >0$ avec $f^{hyb}(x) = y^\alpha$ où $y \in Y^\beth$, donc $x\in X^\beth$ ce qui est absurde.
\end{proof}

\subsection{Lien entre le quotient de la compactification et compactification du quotient}

On va maintenant comparer le quotient de la compactification sur le lieu où l'action est bien définie et la compactification du quotient.

\begin{defi}\label{définition lieu ou action est bien def et action définie dessus}
Soit $\mathcal{X}$ une $k$-variété, $G$ un groupe algébrique réductif agissant sur $\mathcal{X}$. Notons $X$ comme dans la notation \ref{Notation du lieu stable}. 

Alors si l'on note $\mathcal{B} := \lbrace x\in X^\urcorner,$ l'action de $G_{\mathcal{H}(x)}^{an}$ n'est pas bien définie $\rbrace$, on peut définir une relation d'équivalence $\mathcal{G}$ sur $X^\urcorner \backslash \mathcal{B}$ où $x\mathcal{R}y \iff y \in G^{an}_{\mathcal{H}(x)}$. On notera $(X^\urcorner \backslash \mathcal{B})/\mathcal{G}$ l'espace quotient muni de la topologie quotient.
\end{defi}

%
%

\begin{defi}\label{defi morphisme entre compactification du quotient et quotient compactification}
Soit $\mathcal{X}$ une $k$-variété, $G$ un groupe algébrique réductif agissant sur $\mathcal{X}$. Notons $X$ comme dans la notation \ref{Notation du lieu stable}. 

Notons $\pi : X \rightarrow X/G$ le morphisme surjectif de type fini défini dans GIT. Alors l'analytification $\pi^{hyb} : X^{hyb} \rightarrow (X/G)^{hyb}$ est aussi surjective. Notons $\mathcal{F} := \pi^{hyb, -1}((X/G)^\beth)$, c'est un fermé de $X^{hyb}$ qui est invariant par le flot et qui contient $X^\beth$. Ainsi, l'on dispose d'un morphisme surjectif $X^{hyb} \backslash \mathcal{F} \rightarrow (X/G)^+$ qui est compatible avec le flot. 

Notons alors $F \subset X^\urcorner$ l'image de $X^+ \cap \mathcal{F}$ via le quotient par le flot.

Alors $\pi$ induit une application continue surjective :
\begin{center}
$\Pi : X^\urcorner \backslash F \rightarrow (X/G)^\urcorner$.
\end{center}
\end{defi}

\begin{prop}\label{homéo entre compactification du quotient et quotient compactification}
Soit $\mathcal{X}$ une $k$-variété, $G$ un groupe algébrique réductif agissant sur $\mathcal{X}$ et notons $X$ comme dans la notation \ref{Notation du lieu stable}. 

On reprends les notations des définitions \ref{définition lieu ou action est bien def et action définie dessus} et \ref{defi morphisme entre compactification du quotient et quotient compactification}.

Alors, le fermé $F$ de $X^\urcorner$ contient $\mathcal{B}$.

De plus, l'application $\Pi$ est invariante par la relation d'équivalence $\mathcal{G}$ et l'application :
\begin{center}
$\varpi : (X^\urcorner \backslash F)/\mathcal{G} \rightarrow (X/G)^\urcorner$
\end{center}
est une bijection continue qui se restreint en l'identité sur $(X/G)^{an}$ et qui est un homéomorphisme de $(\delta X\backslash F)/\mathcal{G}$ vers $\delta (X/G)$. En particulier $(\delta X\backslash F)/\mathcal{G}$ est compact. 
\end{prop}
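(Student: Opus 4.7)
The plan is to verify the five assertions in order, with the boundary homeomorphism as the main obstacle.

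First, $F$ is closed and contains $\mathcal{B}$. Closedness follows because $\mathcal{F} = (\pi^{hyb})^{-1}((X/G)^\beth)$ is closed in $X^{hyb}$ and flow-invariant (since $(X/G)^\beth$ is), so $q^{-1}(F) = \mathcal{F} \cap X^+$ is closed in $X^+$. For $\mathcal{B} \subset F$: if $x \in \mathcal{B}$ and $g \cdot \eta_x \in X^\beth$ for some $g \in G^{an}_{\h(x)}$ and lift $\eta_x$, then $G$-invariance of $\pi^{hyb}$ yields $\pi^{hyb}(\eta_x) = \pi^{hyb}(g \cdot \eta_x) \in (X/G)^\beth$, so $x \in F$. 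The $\mathcal{G}$-invariance of $\Pi$ is then immediate from $G$-invariance of $\pi^{hyb}$, so $\varpi$ is well-defined and continuous by the universal property of the quotient topology.

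Surjectivity of $\varpi$ follows from the surjectivity of $\pi^{hyb}$ on the complement of $\mathcal{F}$. For injectivity, given $\Pi(x) = \Pi(y)$, normalize lifts $\eta_x, \eta_y \in X^+ \setminus \mathcal{F}$ via the flow so that $\pi^{hyb}(\eta_x) = \pi^{hyb}(\eta_y)$; both then lie in the same fiber $X_\varepsilon^{an}$ over some $\varepsilon \in [0,1]$, which is an $\h(\varepsilon)$-analytic space. Maculan's Théorème \ref{GIT analytique} applies (the stable-locus hypothesis ensures a closed action) and furnishes $g \in G^{an}_{\h(\eta_x)}$ with $\eta_y = g \cdot \eta_x$, giving $x\,\mathcal{G}\,y$. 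The identity on $(X/G)^{an}$ then follows from Proposition \ref{prop section continue partie archi}: both compactifications embed the archimedean analytification canonically, and $\pi^{an}$ intertwines these embeddings.

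The hardest step is the boundary homeomorphism. The image identity $\Pi(\delta X \setminus F) = \delta(X/G)$ holds because $\pi^{hyb}$ preserves the projection to $\mathcal{M}(k_{hyb})$ and sends $X_\infty \setminus \mathcal{F}$ onto $(X/G)_\infty$ (using surjectivity of $\pi^{hyb}$ on the boundary). Since $\delta(X/G)$ is compact Hausdorff, it suffices to prove $\varpi^{-1}$ is continuous on the boundary. Given $z_n \to z$ in $\delta(X/G)$, the continuous section $\Phi_r^{X/G}$ of Proposition \ref{prop section continue bord} gives lifts $\eta_{z_n} \to \eta_z$ in $(X/G)_\infty$; surjectivity of $\pi^{hyb}$ provides $\eta_{x_n} \in X_\infty$ with $\pi^{hyb}(\eta_{x_n}) = \eta_{z_n}$. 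Compactness of $X^\urcorner$ gives, along any subnet, a further sub-subnet with $x_{n_\alpha} = q(\eta_{x_{n_\alpha}}) \to \tilde x \in \delta X$; the crucial and delicate step is showing $\tilde x \notin F$. Applying the section $\Phi_{r'}^X$ at $\tilde x$ yields section-lifts $\eta'_{x_{n_\alpha}} \to \eta'_{\tilde x}$ in $X^+$, and comparing the two normalizations through the flow parameters $\gamma_{n_\alpha}$ relating $\eta_{x_{n_\alpha}}$ and $\eta'_{x_{n_\alpha}}$, one shows $\pi^{hyb}(\eta'_{\tilde x})$ equals a strictly positive flow-power of $\eta_z \in (X/G)_\infty$, so it lies in $(X/G)^+$ and not in $(X/G)^\beth$. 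Hence $\tilde x \notin F$, and continuity of $\Pi$ on $X^\urcorner \setminus F$ gives $\Pi(\tilde x) = z$, that is $[\tilde x] = \varpi^{-1}(z)$. By injectivity of $\varpi$, every convergent sub-subnet of $[x_n]$ in $(\delta X \setminus F)/\mathcal{G}$ has the same limit $\varpi^{-1}(z)$, so by the standard net criterion $[x_n] \to \varpi^{-1}(z)$, establishing continuity of $\varpi^{-1}$. Compactness of $(\delta X \setminus F)/\mathcal{G}$ then follows from the homeomorphism with the compact $\delta(X/G)$.
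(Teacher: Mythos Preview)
Your treatment of the first assertions---closedness of $F$, the inclusion $\mathcal{B}\subset F$, $\mathcal{G}$-invariance of $\Pi$, bijectivity and continuity of $\varpi$ via Maculan, and the identity on $(X/G)^{an}$---is correct and essentially identical to the paper's argument.

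The gap is in your boundary-homeomorphism step. You lift $\eta_{z_n}$ to \emph{arbitrary} preimages $\eta_{x_n}\in X_\infty$, extract a subnet $x_{n_\alpha}\to\tilde x$ in $X^\urcorner$, and then claim that ``comparing the two normalizations through the flow parameters $\gamma_{n_\alpha}$'' forces $\pi^{hyb}(\eta'_{\tilde x})$ to be a \emph{strictly positive} flow-power of $\eta_z$, hence in $(X/G)_\infty$. But nothing prevents $\gamma_{n_\alpha}\to 0$, and in that case the limit lands in $(X/G)^\beth$, i.e.\ $\tilde x\in F$. Concretely: take $X=\mathbb{G}_m^2$, $G=\mathbb{G}_m$ acting by $t\cdot(a,b)=(ta,t^{-1}b)$ (closed action, quotient $\pi(a,b)=ab$), and the constant net $z_n=z$ with $|S(\eta_z)|=2$. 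Choosing for $\eta_{x_n}$ the product Gauss point with $|a|=2^n$, $|b|=2^{1-n}$, one gets $\gamma_n=\log_2(r')/n\to 0$; the renormalized $\eta'_{x_n}=\Phi^X_{r'}(x_n)$ converge to the product Gauss point with $|a|=r'$, $|b|=(r')^{-1}$, at which $|ab|=1$, so $\pi^{hyb}(\eta'_{\tilde x})\in(X/G)^\beth$ and $\tilde x\in F$. Your argument then says nothing about this sub-subnet. (The conclusion $[x_n]\to\varpi^{-1}(z)$ happens to be trivially true here, but your method does not establish it; the point is that the choice of representatives in the $\mathcal{G}$-class matters, and bad choices can escape into $F$.)

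The paper sidesteps this entirely. Maculan's result (Théorème~\ref{GIT analytique}) applies equally over the trivially valued field $k_0$, so $\pi^{an}_0:X^{an}_0\to(X/G)^{an}_0$ is a submersion; hence the image of any $G$-invariant open subset of $X_\infty\setminus\mathcal{F}$ is open in $(X/G)_\infty$. Passing to the flow quotient, this shows directly that $\varpi$ is \emph{open} on the boundary, and together with bijectivity and continuity this gives the homeomorphism in one line. Compactness of $(\delta X\setminus F)/\mathcal{G}$ then follows from that of $\delta(X/G)$, using a finite cover by $G$-invariant affines when $X$ is not itself affine.
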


%

\begin{proof}
Par définition des lieux stables et pré-stables et comme le morphisme $\pi : X \rightarrow X/G$ est affine, on peut se restreindre au cas où $X$ est affine et l'action de $G$ est fermée sur $X$.

Comme l'application $\pi^{hyb} : X^{hyb} \rightarrow (X/G)^{hyb}$ est continue et surjective, par construction de $\Pi$, cette dernière reste continue et surjective.

Il faut maintenant vérifier que $\mathcal{B} \subset F$. Soit $x\in \mathcal{B}$. Alors il existe $g\in G^{an}_{\mathcal{H}(x)}$ tel que $g\cdot x \in X^\beth$. Comme $\pi^{hyb}(x) = \pi^{hyb}(g\cdot x) \in \pi^{hyb}(X^\beth) \subset (X/G)^\beth$ on a bien $x \in F$.

Par Maculan (\cite{MaculanGIT}, voir le deuxième point du théorème \ref{GIT analytique}), l'application $\Pi$ est invariante par la relation d'équivalence $\mathcal{G}$ définie à la définition \ref{définition lieu ou action est bien def et action définie dessus} et la factorisation par $(X^\urcorner\backslash F)/\mathcal{G}$ est bijective et continue. 

De plus, par Maculan (\cite{MaculanGIT}), l'application $\pi^{an}_0 : X^{an}_0 \rightarrow (X/G)^{an}_0$ vérifie que si $U \subset X^{an}_0$ est $G$-invariante alors $\pi^{an}_0(U)$ est ouvert. Donc, $\varpi$ se restreint en un homéomorphisme de $(\delta X\backslash F)/\mathcal{G}$ vers $\delta (X/G)$. Pour conclure sur la compacité de $(\delta X\backslash F)/\mathcal{G}$ dans le cas où $X$ n'est pas affine, on utilise le fait que $X$ est une $k$-variété donc quasi-compact et donc il existe un nombre fini d'ouverts affines $U_i$, $G$-invariants, où l'action est fermée sur $U_i$ avec $X = \bigcup U_i$. Comme chacun des $(\delta U_i \backslash (F\cap U_i))\mathcal{G}$ est compact, $(\delta X\backslash F)\mathcal{G}$ est également compact. 
\end{proof}

Dans le cas où $\mathcal{B}$ et $F$ coïncident, on dispose même d'un résultat plus fort : la bijection continue est en fait un homéomorphisme.

\begin{prop}\label{lieu où l'action est bien définie est compact}
Soit $\mathcal{X}$ une $k$-variété, $G$ un groupe algébrique réductif agissant sur $\mathcal{X}$ et notons $X$ comme dans la notation \ref{Notation du lieu stable}. 

Supposons que $\mathcal{B} = F$. Alors, $(X^\urcorner \backslash \mathcal{B})/\mathcal{G}$ est compact et contient $(X/G)^{an}$ comme ouvert dense. En particulier, l'application $\varpi$ définie à la proposition \ref{homéo entre compactification du quotient et quotient compactification} est un homéomorphisme.
\end{prop}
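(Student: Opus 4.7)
The plan is to establish compactness of $(X^\urcorner \setminus \mathcal{B})/\mathcal{G}$; combined with the continuous bijection $\varpi$ onto the compact Hausdorff space $(X/G)^\urcorner$ from Proposition \ref{homéo entre compactification du quotient et quotient compactification}, together with the Hausdorff property of the source (inherited via the continuous injection $\varpi$), this forces $\varpi$ to be a homeomorphism. The density of $(X/G)^{an}$ in the quotient then transfers from the density of $(X/G)^{an}$ in $(X/G)^\urcorner$, since $\varpi$ restricts to the identity there.

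First, I would observe that $F$ is closed in $X^\urcorner$: the set $\mathcal{F} = (\pi^{hyb})^{-1}((X/G)^\beth)$ is closed and flow-invariant in $X^{hyb}$, so its image in the flow-quotient $X^\urcorner$ is closed. Next, decompose $(X^\urcorner \setminus F)/\mathcal{G} = (X/G)^{an} \sqcup (\delta X \setminus F)/\mathcal{G}$. The second piece is compact by Proposition \ref{homéo entre compactification du quotient et quotient compactification}. Thus for a net $([x_\alpha])$ in $(X^\urcorner \setminus F)/\mathcal{G}$: if it admits a subnet in $(\delta X \setminus F)/\mathcal{G}$, that subnet has a convergent further subnet in the compact boundary stratum, giving the desired convergent subnet of the original net. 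Otherwise, after extracting I may assume the whole net lies in $(X/G)^{an} = X^{an}/G$, and lift to representatives $x_\alpha \in X^{an}$.

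Setting $y_\alpha := \pi^{an}(x_\alpha) \in (X/G)^{an}$, compactness of $(X/G)^\urcorner$ lets me extract a subnet with $y_\alpha \to y \in (X/G)^\urcorner$. If $y \in (X/G)^{an}$, I apply the net analog of Proposition \ref{existence convergence fibre si convergence dans le quotient}: pick any $x^* \in X^{an}$ with $\pi^{an}(x^*) = y$; then there exist $g_\alpha \in G^{an}$ such that, after extraction, $g_\alpha \cdot x_\alpha \to x^*$ in $X^{an}$. Since $[g_\alpha \cdot x_\alpha] = [x_\alpha]$ in the quotient, $[x_\alpha] \to [x^*]$. If instead $y \in \delta(X/G)$, the Hausdorff property of $(X/G)^\urcorner$ prevents $y_\alpha$ from accumulating in $(X/G)^{an}$, so $\pi^{an}(x_\alpha) \to \infty$ in the sense of Theorem \ref{equivalence dégénérecence quotient et action bien déf}. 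Extracting a further subnet with $x_\alpha \to x \in X^\urcorner$ by compactness, that theorem yields that the action of $G^{an}_{\mathcal{H}(x)}$ is well-defined at $x$, so $x \notin \mathcal{B} = F$. Hence $x \in X^\urcorner \setminus F$, and continuity of the quotient map $q_\mathcal{G}$ gives $[x_\alpha] \to [x]$ in $(X^\urcorner \setminus F)/\mathcal{G}$.

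The principal obstacle is this closing step, where the hypothesis $\mathcal{B} = F$ enters decisively: it converts the divergence statement $\pi^{an}(x_\alpha) \to \infty$ into the topological assertion $x \notin F$, thereby converting the compactness of $X^\urcorner$ into compactness of the quotient. Without this equality, accumulations of $x_\alpha$ could legitimately lie in $F \setminus \mathcal{B}$, where the action is well-defined yet the quotient topology fails to see the convergence, and the argument would break. The reduction to a single stratum in the second paragraph and the invocation of Theorem \ref{equivalence dégénérecence quotient et action bien déf} in the third are what make the compactness argument work cleanly in the remaining case.
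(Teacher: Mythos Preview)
Your argument is correct and follows essentially the same route as the paper's: decompose into $(X/G)^{an}$ and the compact boundary piece $(\delta X\setminus F)/\mathcal{G}$, then for a net coming from $X^{an}$ use compactness of $X^\urcorner$ together with Theorem \ref{equivalence dégénérecence quotient et action bien déf} to force the cluster point out of $\mathcal{B}=F$. The only cosmetic difference is that you route the case analysis through $(X/G)^\urcorner$ and invoke Proposition \ref{existence convergence fibre si convergence dans le quotient} when $y\in (X/G)^{an}$, whereas the paper simply observes that in that case the original net already converges in $(X/G)^{an}\subset (X^\urcorner\setminus\mathcal{B})/\mathcal{G}$; your extra step is harmless but unnecessary, since $[x_\alpha]=y_\alpha$ under the identification $X^{an}/\mathcal{G}\cong (X/G)^{an}$.
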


\begin{proof}
Par la proposition \ref{homéo entre compactification du quotient et quotient compactification}, on sait déjà que que $(\delta X \backslash \mathcal{B})/\mathcal{G}$ est compact. 

Pour montrer la compacité de $(X^\urcorner \backslash \mathcal{B})/\mathcal{G}$, il suffit donc de prendre une suite $x_n \in (X/G)^{an} \subset(X^\urcorner \backslash \mathcal{B})/\mathcal{G}$ et de vérifier qu'elle possède une valeur d'adhérence. Si la suite $x_n$ possède une valeur d'adhérence dans $(X/G)^{an}$, c'est terminé. Sinon, on peut prendre $y_n \in X^{an}$ des relevés de $x_n$. La suite $y_n \in X^{an} \subset X^\urcorner$ a alors une valeur d'adhérence $y$ dans $X^\urcorner$. Par le théorème \ref{equivalence dégénérecence quotient et action bien déf}, $y \notin \mathcal{B}$ donc son image dans $(X^\urcorner \backslash \mathcal{B})/\mathcal{G}$ est une valeur d'adhérence des $x_n$ dans $(X^\urcorner \backslash \mathcal{B})/\mathcal{G}$.
\end{proof}

On va donc maintenant montrer que $\mathcal{B}$ et $F$ coïncident.

\begin{prop}\label{on peut caractériser le fermé qu on doit retirer dans la compactification}
Soit $\mathcal{X}$ une $k$-variété intègre, $G$ un groupe algébrique réductif agissant sur $\mathcal{X}$ et notons $X$ comme dans la notation \ref{Notation du lieu stable}, alors $F = \mathcal{B}$. En particulier, $\mathcal{B}$ est fermé.

En particulier, si $X$ est un $k$-schéma affine, intègre de type fini sur $k$ et $G$ un groupe algébrique agissant sur $X$ dont l'action est fermée, alors $F = \mathcal{B}$. Donc, $\mathcal{B}$ est fermé.
\end{prop}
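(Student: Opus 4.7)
L'inclusion $\mathcal{B}\subset F$ étant déjà acquise par la proposition \ref{homéo entre compactification du quotient et quotient compactification}, l'enjeu est de démontrer l'inclusion réciproque $F\subset \mathcal{B}$. Je commencerais par me ramener au cas où $X$ est affine, intègre, et où l'action de $G$ est fermée. Par définition des lieux (pré-)stables, $X$ est recouvert par des ouverts affines $G$-invariants sur lesquels l'action est fermée; puisque le morphisme $\pi : X\to X/G$ est affine, $X/G$ est recouvert par les quotients affines correspondants, qui restent intègres car les invariants d'un anneau intègre forment un anneau intègre. La propriété d'être dans $F$ ou $\mathcal{B}$ étant locale, il suffira de travailler sur un tel ouvert.

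Soit alors $x\in F$, et prenons un relevé $\eta_x \in X^+ \cap \mathcal{F}$. Puisque $\eta_x \in X^+$ on a $\eta_x \notin X^\beth$, et puisque $\mathcal{F} = \pi^{hyb,-1}((X/G)^\beth) \subset X^{an}_0$ (car $(X/G)^\beth \subset (X/G)^{an}_0$), on obtient $\eta_x \in X_\infty$. L'étape centrale est alors d'appliquer la surjectivité du morphisme $\pi^\beth : X^\beth \to (X/G)^\beth$, établie à la proposition \ref{si le morphisme est surjectif, surjectivité des beth} — qui s'applique ici car $X$ et $X/G$ sont intègres, de type fini sur $k$. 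Celle-ci fournit un point $y \in X^\beth$ tel que $\pi^\beth(y) = \pi^{an}_0(\eta_x)$.

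Il reste à relier $y$ à $\eta_x$ par l'action de $G$. Pour cela, j'invoquerais le théorème \ref{GIT analytique} de Maculan, appliqué à $k$ muni de la valeur absolue triviale : comme l'action est fermée, l'égalité $\pi^{an}_0(y) = \pi^{an}_0(\eta_x)$ implique $G^{an}_{\mathcal{H}(y)}\cdot y = G^{an}_{\mathcal{H}(\eta_x)}\cdot \eta_x$, et en particulier il existe $g \in G^{an}_{\mathcal{H}(\eta_x)}$ avec $g\cdot \eta_x = y \in X^\beth$. Ceci exhibe un élément de $G^{an}_{\mathcal{H}(x)}$ envoyant $x$ dans $X^\beth$, et donc $x\in \mathcal{B}$. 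La fermeture de $\mathcal{B}$ suit alors immédiatement puisque $F$ est fermé dans $X^\urcorner$ par construction (image inverse continue du fermé $(X/G)^\beth$ passée au quotient).

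La principale subtilité me paraît résider dans la combinaison des deux ingrédients : la surjectivité de $\pi^\beth$ (qui mobilise les valuations divisorielles développées dans la section \ref{sectionEspacesBerko}) et le théorème de Maculan sur les fibres de $\pi^{an}_0$ (qui repose sur la fermeture de l'action). L'argument consiste à transporter un défaut de propreté du morphisme $\pi^{hyb}$ au bord en un déplacement effectif d'un point $\eta_x$ de $X_\infty$ vers un point $y$ de $X^\beth$ le long d'une orbite de $G$.
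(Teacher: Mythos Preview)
Your proposal is correct and follows essentially the same approach as the paper: reduce to the affine case with closed action, use the surjectivity of $\pi^\beth$ (Proposition~\ref{si le morphisme est surjectif, surjectivité des beth}) to find a point $y\in X^\beth$ in the same fibre, and then invoke Maculan's description of the fibres of $\pi^{an}_0$ to conclude that $y$ lies in the $G^{an}_{\mathcal{H}(x)}$-orbit of $x$. Your version is slightly more explicit about lifting to $\eta_x\in X_\infty$ and about why the quotients remain intègres, but the argument is the same.
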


\begin{proof}
Par définition des lieux stables et préstables à la définition \ref{Définition lieu stable}, on peut se ramener au cas où $X$ est un schéma affine, intègre de type fini sur $k$ et $G$ agit sur $X$ avec une action fermée.

On sait déjà que $\mathcal{B} \subset F$. Soit donc $x\in F$ alors $\pi^{hyb}(x) \in (X/G)^\beth$. Par la proposition \ref{si le morphisme est surjectif, surjectivité des beth}, appliquée au morphisme surjectif $\pi : X \rightarrow X/G$, il existe $y\in X^\beth$ tel que $\pi^{hyb}(y) = \pi^{hyb}(x)$. Donc, par Maculan (\cite{MaculanGIT}), $\exists g\in G^{an}_{\mathcal{H}(x)}$ tel que $y = g\cdot x$ et donc l'action n'est pas bien définie sur en $x$.
\end{proof}


On dispose donc de deux compactifications homéomorphes de $(X/G)^{an}$.

\begin{thm}\label{Si le morphisme est equidimensionnel, compactifications sont homeo}
Soit $\mathcal{X}$ une $k$-variété, intègre, $G$ un groupe algébrique réductif agissant sur $\mathcal{X}$ et notons $X$ le lieu stable vis à vis d'un faisceau inversible sur $\mathcal{X}$ comme dans la notation \ref{Notation du lieu stable}. 

Alors $\varpi$ de la proposition \ref{homéo entre compactification du quotient et quotient compactification} est un homéomorphisme :
\begin{center}
$(X^\urcorner \backslash \lbrace x\in X^\urcorner,$ l'action de $G^{an}_{\mathcal{H}(x)}$ n'est pas bien définie$\rbrace)/\mathcal{G} \rightarrow (X/G)^\urcorner$
\end{center}
qui se restreint en l'identité sur $(X/G)^{an}$. Les deux compactifications de $(X/G)^{an}$ sont donc homéomorphes.
\end{thm}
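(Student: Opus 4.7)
Mon plan est de combiner directement les trois résultats préparatoires précédemment démontrés. Tout d'abord, je constate que l'hypothèse d'intégrité de $\mathcal{X}$ est exactement celle requise par la proposition \ref{on peut caractériser le fermé qu on doit retirer dans la compactification}, dont l'application fournit l'égalité $\mathcal{B} = F$. Ainsi, le lieu à retirer dans l'énoncé coïncide précisément avec le fermé $F$ sur lequel l'application $\varpi$ de la proposition \ref{homéo entre compactification du quotient et quotient compactification} est définie, et cette dernière proposition nous donne déjà la bijection continue $\varpi : (X^\urcorner \backslash \mathcal{B})/\mathcal{G} \rightarrow (X/G)^\urcorner$ ainsi que sa restriction à l'identité sur $(X/G)^{an}$.

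Il suffit ensuite d'invoquer le critère topologique classique : une bijection continue d'un espace compact vers un espace séparé est automatiquement un homéomorphisme. Pour appliquer ce critère, je dois vérifier deux choses : la compacité de la source et le caractère séparé du but. La compacité de $(X^\urcorner \backslash \mathcal{B})/\mathcal{G}$ est exactement le contenu de la proposition \ref{lieu où l'action est bien définie est compact}, dont l'hypothèse $\mathcal{B} = F$ est bien remplie grâce au premier point. Le caractère séparé de $(X/G)^\urcorner$ est immédiat : c'est la compactification hybride d'une $k$-variété (car $X/G$ est une variété quasi-projective dans le cas d'un lieu stable par le théorème \ref{GIT alébrique}), qui est donc Hausdorff d'après les propriétés topologiques de la compactification rappelées en section \ref{sectionEspacesBerko}.

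La conclusion du théorème en découle immédiatement. L'essentiel du travail a en réalité été effectué dans les propositions antérieures : la caractérisation du lieu où l'action n'est pas bien définie comme $F$ (qui utilisait la surjectivité de $\pi^\beth : X^\beth \rightarrow (X/G)^\beth$ via les valuations divisorielles, puis la caractérisation des fibres par Maculan) et la compacité après quotient (qui reposait sur le théorème \ref{equivalence dégénérecence quotient et action bien déf} pour relever les suites non convergentes dans $(X/G)^{an}$ en suites ayant des valeurs d'adhérence hors de $\mathcal{B}$ dans $X^\urcorner$). L'assemblage final est donc purement formel, la principale subtilité conceptuelle étant d'avoir correctement identifié $\mathcal{B}$ et $F$ en amont.
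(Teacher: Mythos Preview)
Ta démonstration est correcte et suit essentiellement la même approche que l'article : on combine la proposition \ref{on peut caractériser le fermé qu on doit retirer dans la compactification} (qui donne $\mathcal{B}=F$ grâce à l'intégrité), la proposition \ref{homéo entre compactification du quotient et quotient compactification} (bijection continue), et la proposition \ref{lieu où l'action est bien définie est compact} (compacité de la source), puis on conclut par le critère classique compact vers séparé. Tu explicites simplement un peu plus le caractère Hausdorff du but, que l'article utilise implicitement.
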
 

\begin{proof}
En utilisant la proposition \ref{on peut caractériser le fermé qu on doit retirer dans la compactification}, la proposition \ref{homéo entre compactification du quotient et quotient compactification} dit que $\varpi$ est une bijection continue de $(X^\urcorner \backslash \lbrace x\in X^\urcorner,$ l'action de $G^{an}_{\mathcal{H}(x)}$ n'est pas bien définie$\rbrace)/\mathcal{G}$ vers $(X/G)^\urcorner$. 

Or par la proposition \ref{lieu où l'action est bien définie est compact}, $(X^\urcorner \backslash \lbrace x\in X^\urcorner,$ l'action de $G^{an}_{\mathcal{H}(x)}$ n'est pas bien définie$\rbrace)/\mathcal{G}$ est compact, donc c'est en fait un homéomorphisme.
\end{proof}

\begin{cor}
Soit $X$ un $k$-schéma affine, intègre de type fini, $G$ un groupe algébrique réductif agissant sur $X$ dont l'action est fermée. Alors $\varpi$ de la proposition \ref{homéo entre compactification du quotient et quotient compactification} est un homéomorphisme :
\begin{center}
$(X^\urcorner \backslash \lbrace x\in X^\urcorner,$ l'action de $G^{an}_{\mathcal{H}(x)}$ n'est pas bien définie$\rbrace)/\mathcal{G} \rightarrow (X/G)^\urcorner$
\end{center}
qui se restreint en l'identité sur $(X/G)^{an}$. Les deux compactifications de $(X/G)^{an}$ sont donc homéomorphes.
\end{cor}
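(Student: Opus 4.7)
Le plan est de déduire ce corollaire directement du théorème \ref{Si le morphisme est equidimensionnel, compactifications sont homeo}. Il suffit donc de vérifier que les hypothèses du corollaire permettent d'entrer dans le cadre d'application du théorème, à savoir que $X$ coïncide avec le lieu stable d'une $k$-variété intègre par rapport à un faisceau inversible.

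Pour cela, j'utiliserais la remarque \ref{Cas affine, toute la variete est lieu stable} (issue du converse 1.12 du chapitre 1 de \cite{GIT}) : si $X$ est affine et si l'action de $G$ sur $X$ est fermée, alors il existe un faisceau inversible $L$ sur $X$ tel que $X = X^s(L)$. Ainsi, en posant $\mathcal{X} = X$, les hypothèses du théorème \ref{Si le morphisme est equidimensionnel, compactifications sont homeo} sont satisfaites : $\mathcal{X}$ est une $k$-variété intègre (puisque $X$ est supposé intègre de type fini), $G$ est réductif et $X$ est bien le lieu stable de $\mathcal{X}$ vis-à-vis d'un faisceau inversible.

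Il ne reste qu'à appliquer le théorème pour obtenir l'homéomorphisme annoncé, qui se restreint en l'identité sur $(X/G)^{an}$. Il n'y a pas d'obstacle technique ici : la totalité du travail a été effectuée en amont, notamment dans les propositions \ref{on peut caractériser le fermé qu on doit retirer dans la compactification} (identification de $\mathcal{B}$ avec $F$ via la surjectivité de $\pi^\beth$ provenant de la proposition \ref{si le morphisme est surjectif, surjectivité des beth}) et \ref{lieu où l'action est bien définie est compact} (compacité de l'espace quotient).
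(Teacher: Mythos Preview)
Your proposal is correct and matches the paper's approach: the corollary is an immediate consequence of Theorem~\ref{Si le morphisme est equidimensionnel, compactifications sont homeo}, once one observes via Remark~\ref{Cas affine, toute la variete est lieu stable} that an affine $X$ with closed action is already the stable locus of itself for some invertible sheaf. The paper does not give a separate proof for this corollary, and your reduction is exactly the intended one.
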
 

\section{Application aux fractions rationnelles}\label{ApplicationAuxFractionsRationnelles}

Dans cette section, on va s'intéresser à l'espace des fractions rationnelles de degré $d \geq 1$ que l'on note $Rat_d$. C'est un ouvert de l'espace projectif de dimension $2d-1$. J. Silverman (\cite{SilvermanSpaceRationalMaps}) a montré que l'on pouvait le voir comme un schéma défini sur $\Z$ avec 
\begin{center}
$\Rat_d := Spec~\Z[\frac{a_0^{i_0}\cdots a_d^{i_d}b_0^{j_0}\cdots b_d^{j_d}}{\rho}]_{i_0 + \cdots + i_d + j_0 + j_d = 2d}$
\end{center}  
où $a_l, b_k$ sont les coefficients de $f = \frac{P(z)}{Q(z)} = \frac{a_0 + a_1z + \cdots + a_dz^d}{b_0 + b_1z + \cdots + b_dz^d}$ et $\rho$ est le résultant de $P$ et $Q$. Dans la suite, on prendra $\Rat_d$ comme étant un schéma défini sur $k$ un corps muni d'une valeur absolue non-triviale et pour simplifier les notations, on notera $\Rat_d = Spec~k[\frac{\underline{a}\underline{b}}{\rho}]$.

On dispose d'une action de $GL_2$ sur $\Rat_d$ : l'action par conjugaison. Cette action se lit sur les coefficients des fractions rationnelles de la façon suivante : 

\begin{lem}\label{invariance du resultant si M est dans SL2}
Soient $ \Phi = \frac{a_0 z^d + \cdots + a_d}{b_0 z^d + \cdots + b_d}, M =  
\begin{pmatrix}
\alpha & \beta\\
\gamma & \delta\\
\end{pmatrix}$, alors :

\begin{equation*}
 \Phi^M = \frac{(\delta \sum_0^d a_k \alpha^{d-k}\gamma^k - \beta \sum_0^d b_k \alpha^{d-k}\gamma^k)z^d + \cdots + (\delta \sum_0^d a_k \beta^{d-k}\delta^k - \beta \sum_0^d b_k \beta^{d-k}\delta^k)}{(\alpha \sum_0^d a_k \alpha^{d-k}\gamma^k - \gamma \sum_0^d b_k \alpha^{d-k}\gamma^k)z^d + \cdots + (\alpha \sum_0^d a_k \beta^{d-k}\delta^k - \gamma \sum_0^d b_k \beta^{d-k}\delta^k)}.
\end{equation*} 

Une fois le polynôme mis sous cette forme, on a $res(\Phi^M) = res (\Phi) det(M)^{d^2+d}$ (voir l'exercice 2.7 de \cite{SilvermanLivre}). Donc si on prend $M \in SL_2$, alors on a invariance du résultant.
\end{lem}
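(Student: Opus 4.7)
La preuve se décompose en deux parties : établir la formule explicite pour $\Phi^M$ par calcul direct, puis en déduire la transformation du résultant via des identités classiques. Pour la formule, je poserais $w = M \cdot z = (\alpha z + \beta)/(\gamma z + \delta)$ et calculerais $\Phi(M(z))$ en multipliant numérateur et dénominateur par $(\gamma z + \delta)^d$ afin d'obtenir
\[
\Phi(M(z)) = \frac{\tilde P(z)}{\tilde Q(z)}, \qquad \tilde P(z) := \sum_{k=0}^d a_k (\alpha z + \beta)^{d-k}(\gamma z + \delta)^k,
\]
et de même pour $\tilde Q$. Le coefficient de $z^d$ dans $\tilde P$ vaut alors $\sum a_k \alpha^{d-k} \gamma^k$ et son coefficient constant $\sum a_k \beta^{d-k} \delta^k$ (et de même pour $\tilde Q$ avec les $b_k$). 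En appliquant ensuite $M^{-1}$, dont l'action de Möbius s'écrit $w \mapsto (\delta w - \beta)/(\alpha - \gamma w)$ (le facteur $\det M$ se simplifie entre numérateur et dénominateur), on obtient
\[
\Phi^M(z) = \frac{\delta \tilde P(z) - \beta \tilde Q(z)}{\alpha \tilde Q(z) - \gamma \tilde P(z)},
\]
et la lecture des coefficients extrémaux donne la formule annoncée.

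Pour la transformation du résultant, j'emploierais deux identités classiques pour des polynômes de degré $d$. D'une part, $\res(aP + bQ, cP + dQ) = (ad - bc)^d \res(P, Q)$, appliquée au couple $(\delta \tilde P - \beta \tilde Q, \alpha \tilde Q - \gamma \tilde P)$, produit un facteur $(\det M)^d$. D'autre part, la substitution de Möbius donne $\res(\tilde P, \tilde Q) = (\det M)^{d^2} \res(P, Q)$, ce qui se vérifie en écrivant le résultant comme produit des différences de racines $\res(P, Q) = a_0^d b_0^d \prod_{i,j} (x_i - y_j)$, puis en utilisant l'identité
\[
M^{-1}(x) - M^{-1}(y) = \frac{(\det M)(x - y)}{(\alpha - \gamma x)(\alpha - \gamma y)},
\]
les facteurs parasites au dénominateur se compensant exactement avec les nouveaux coefficients dominants $\tilde a_0 = a_0 \prod(\alpha - \gamma x_i)$ et $\tilde b_0 = b_0 \prod(\alpha - \gamma y_j)$. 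Le produit des deux contributions donne $(\det M)^{d^2 + d}$, conformément à l'exercice 2.7 de \cite{SilvermanLivre}.

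L'étape la plus délicate sera le contrôle des coefficients dominants lors de la substitution de Möbius, car $\tilde P$ ou $\tilde Q$ peuvent chuter en degré pour certaines spécialisations (par exemple si une racine de $P$ vaut $\alpha/\gamma$). On s'en affranchit en traitant toutes les identités comme des relations polynomiales universelles dans $\Z[a_i, b_j, \alpha, \beta, \gamma, \delta]$ puis en spécialisant. Le cas $M \in \SL_2$ découle alors immédiatement : $\det M = 1$ entraîne $\res(\Phi^M) = \res(\Phi)$, d'où l'invariance annoncée du résultant.
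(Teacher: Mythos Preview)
Your proof is correct. The paper itself does not give a proof of this lemma: the explicit formula is simply stated, and the resultant transformation $\res(\Phi^M)=\res(\Phi)\det(M)^{d^2+d}$ is deferred to exercise~2.7 of \cite{SilvermanLivre}. Your argument supplies exactly the details one would expect when working that exercise --- the decomposition into a linear-combination step contributing $(\det M)^d$ and a Möbius-substitution step contributing $(\det M)^{d^2}$, together with the product-of-roots verification and the observation that everything holds as a polynomial identity in $\Z[a_i,b_j,\alpha,\beta,\gamma,\delta]$ to avoid degeneracy issues. There is nothing to add.
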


Dans toute la suite, on s'intéressera donc à l'action de $SL_2$ sur $\Rat_d$, on notera $M_d$ le quotient de $\Rat_d$ par $SL_2$, c'est un schéma de type fini sur $k$. 

\begin{lem}
L'action de $SL_2$ sur $\Rat_d$ est propre. En particulier, l'action est fermée. 
\end{lem}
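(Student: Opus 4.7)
The plan is to deduce properness of the action from a GIT stability theorem of J.~Silverman. Consider the natural open embedding $\Rat_d \hookrightarrow \p^{2d+1}$ as the complement of the resultant hypersurface, and endow $\p^{2d+1}$ with the natural $\SL_2$-linearization induced by conjugation on pairs of degree-$d$ polynomials. By the main theorem of~\cite{SilvermanSpaceRationalMaps}, for every $d \geq 2$ each point of $\Rat_d \subset \p^{2d+1}$ is GIT-stable for this linearization, so that $\Rat_d = (\p^{2d+1})^s$. Silverman's proof checks the Hilbert--Mumford numerical criterion against every one-parameter subgroup of $\SL_2$, the essential mechanism being the nontrivial transformation of the resultant (see Lemma~\ref{invariance du resultant si M est dans SL2}) under the diagonal torus.

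One then applies Mumford's general principle (\cite{GIT}, Chapter~1) that the action of a reductive group on its stable locus is proper. Specialised to $G = \SL_2$, $X = \p^{2d+1}$, $X^s = \Rat_d$, this yields that the action morphism $\Psi : \SL_2 \times \Rat_d \to \Rat_d \times \Rat_d$, $(M, f) \mapsto (f, f^M)$, is proper. Properness of the action immediately implies that every orbit is closed and that every stabiliser is a proper, hence finite, subgroup of $\SL_2$; in particular the action is closed, as claimed.

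As a more direct alternative one could verify the valuative criterion of properness for $\Psi$. Given a discrete valuation ring $R$ with fraction field $K$, uniformiser $\pi$ and residue field $\kappa$, and a triple $(f, g, M) \in \Rat_d(R)^2 \times \SL_2(K)$ with $f^M = g$, rescale $M$ to $\tilde M = \pi^n M \in M_2(R)$ with $n \geq 0$ minimal and at least one unit entry, so that $\det \tilde M = \pi^{2n}$ and, for $n > 0$, $\bar{\tilde M} \in M_2(\kappa)$ has rank one. Choosing $R$-integral representatives $f = [P:Q]$ and $g = [P_g : Q_g]$ with unit resultants, the formula of Lemma~\ref{invariance du resultant si M est dans SL2} gives $(P^{\tilde M}, Q^{\tilde M}) \in R[z,w]^2$ with $(P^{\tilde M}, Q^{\tilde M}) = \lambda(P_g, Q_g)$ for a scalar $\lambda \in K^*$ of valuation $n(d+1) > 0$; in particular $(P^{\tilde M}, Q^{\tilde M}) \equiv 0 \pmod{\pi}$. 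A direct substitution then shows that for a rank-one $N = uv^T \in M_2(\kappa)$ the pair $(P^N, Q^N)$ is a nonzero scalar multiple of $(v_1 z + v_2 w)^d (v_2, -v_1)$ unless the image direction $u$ is a fixed point of $\bar f$, yielding the contradiction $v(\lambda)=0$ in the nondegenerate case.

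The main obstacle in this direct approach is precisely the degenerate subcase in which the rank-one reduction $\bar{\tilde M}$ lands on one of the at most $d+1$ fixed points of $\bar f$: there the leading-order vanishing of $(P^{\tilde M}, Q^{\tilde M})$ modulo $\pi$ is no longer an obstruction, and one must push the Taylor expansion in $\pi$ to higher order, bringing in the multiplier of $\bar f$ at the fixed point in order to reach a numerical contradiction with the hypothesis that $\bar g$ has degree exactly $d$. This is the technical point which Silverman's global stability theorem absorbs cleanly; for the excluded degree $d = 1$, $\SL_2$ acts on $\Rat_1 \simeq \mathrm{PGL}_2$ by conjugation with positive-dimensional stabilisers, so the action is not proper and the lemma must be read under the standing convention $d \geq 2$ relevant to the dynamical applications.
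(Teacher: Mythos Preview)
Your main argument is correct and is exactly the route the paper takes by citation: the paper invokes Proposition~0.8 of \cite{GIT} together with Lemme~2.4 of Favre--Gong \cite{FavreGong}, which amounts precisely to Silverman's stability result for $\Rat_d$ combined with Mumford's principle that a reductive group acts properly on its stable locus. Two small remarks: first, Silverman establishes the inclusion $\Rat_d \subset (\p^{2d+1})^s$ rather than equality (the stable locus is in general strictly larger), but since $\Rat_d$ is open and $G$-invariant the inclusion suffices for properness by base change; second, your observation that the lemma fails for $d = 1$ is well taken, as the section opens with $d \geq 1$ but the statement indeed requires $d \geq 2$.
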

\begin{proof}
Ce résultat est dû à la proposition 0.8 de \cite{GIT}. On pourra se référer au lemme 2.4 de Favre-Gong dans \cite{FavreGong} pour une application de cette proposition au cas particulier des fractions rationnelles.
\end{proof}

On va s'intéresser à la partie $\Rat_d^\beth$ qui est la partie que l'on retire de $\Rat_d^{hyb}$.

Rappelons tout d'abord une définition classique dans le cadre où l'on étudie des fractions rationnelles sur un corps non-archimédien.

\begin{defi}\label{def bonne réduc}
Soit $k$ un corps valué non-archimédien et soit $f = \frac{a_0 + a_1z + \cdots + a_dz^d}{b_0 + b_1z + \cdots + b_dz^d} \in \Rat_d(k)$. On peut supposer que $max~(|a_i|, |b_i|) = 1$, on peut alors considérer $\tilde{f} \in \Rat_d(\tilde{k})$ où l'on réduit les coefficients de $f$ dans $\tilde{k}$ le corps résiduel de $k$.

Alors on dit que $f$ a bonne réduction si et seulement si $\tilde{f}$ a degré exactement $d$. 
\end{defi}

\begin{prop}\label{caractérisation du beth de rat}
Soit $(k, |\cdot|)$ un corps non-trivialement valué.

La partie $\Rat_d^\beth$ correspond à l'ensemble des fractions rationnelles $f$ définies sur une extension valuée de $(k, |\cdot|_0)$ ayant bonne réduction.

Ainsi, l'action de $\SL^{an}_{2,\mathcal{H}(f)}$ n'est pas bien définie sur $f$ si et seulement si $f$ a potentielle bonne réduction. 
\end{prop}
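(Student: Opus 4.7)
The plan is to prove the two equivalences in sequence: first identifying $\Rat_d^\beth$ with the locus of rational maps of good reduction via Silverman's explicit presentation of $\Rat_d$, then deducing the action-theoretic statement using the rigid-point criterion of Proposition~\ref{Action bien definie suffit sur les points rigides}.

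For the first identification I will use the presentation $\Rat_d = \mathrm{Spec}~k\bigl[\frac{a_0^{i_0}\cdots b_d^{j_d}}{\rho}\bigr]_{\sum i_k+\sum j_k=2d}$ recalled at the opening of the section. A point $f\in\Rat_d^\beth$ is, by definition, a seminorm on this coordinate ring bounded by the trivial absolute value on $k$; its residue field $\mathcal{H}(f)$ is a complete valued extension of $(k,|\cdot|_0)$, and hence non-archimedean (any complete valued extension of a trivially-valued field inherits the ultrametric inequality, by an Ostrowski-type argument on $\Z\cdot 1$). Choosing a projective representative $(a_i,b_j)\in\mathcal{H}(f)^{2d+2}$ normalized so that $\max(|a_i|,|b_j|)=1$, the $\beth$-conditions $|a_0^{i_0}\cdots b_d^{j_d}/\rho|\le 1$, taken across all monomials of total degree $2d$, read $|\rho|\ge 1$. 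Conversely, since $\rho$ is a polynomial of total degree $2d$ in the $a_i,b_j$ with integer coefficients, the non-archimedean triangle inequality forces $|\rho|\le\max(|a_i|,|b_j|)^{2d}=1$. Hence $|\rho|=1$, i.e.\ $\Res_f=1$, which by Definition~\ref{def bonne réduc} is exactly the good-reduction condition. The converse implication runs the same chain backwards: a rational map of good reduction admits a representative with $\max(|a_i|,|b_j|)=|\rho|=1$, so every generator $a^{\underline{i}}b^{\underline{j}}/\rho$ has absolute value at most $1$, placing $f$ in $\Rat_d^\beth$.

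For the second equivalence, I will invoke Proposition~\ref{Action bien definie suffit sur les points rigides}, which states that the action of $\SL^{an}_{2,\mathcal{H}(f)}$ at $f$ is well-defined if and only if the action of its rigid subgroup $\SL_2(\overline{\mathcal{H}(f)})$ is. Contrapositively, the action fails to be well-defined at $f$ if and only if some $M\in\SL_2(\overline{\mathcal{H}(f)})$ satisfies $M\cdot f\in\Rat_d^\beth$. By the first part this is equivalent to the existence of $M$ with $f^M$ of good reduction, which is precisely the definition of potential good reduction for $f$ (read relative to $\overline{\mathcal{H}(f)}$, the natural base when $f$ is viewed as a rational map over its own residue field).

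The main delicacy lies in the first paragraph: I must justify carefully that $\mathcal{H}(f)$ is non-archimedean (a general property of complete valued extensions of trivially-valued fields) and that the seminorm on the graded ring $k[a^{\underline{i}}b^{\underline{j}}/\rho]$ really does extend to a coherent evaluation on the $a_i,b_j$ themselves, normalized so that $\max(|a_i|,|b_j|)=1$ (by projective scaling, since only the ratios $a^{\underline{i}}b^{\underline{j}}/\rho$ are intrinsic). Once these two routine points are settled, the rest of the proposition is a direct translation between the algebraic $\beth$-condition and the dynamical notion of good reduction, and the action statement follows purely formally from the rigid-point reduction.
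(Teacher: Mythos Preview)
Your argument is correct and follows essentially the same route as the paper: both proofs identify $\Rat_d^\beth$ with the locus $|\rho|=1$ (under the normalization $\max(|a_i|,|b_j|)=1$) by combining the $\beth$-inequalities on the generators $\underline{a}\,\underline{b}/\rho$ with the ultrametric bound on the resultant, and then read off good reduction. Your treatment of the second claim is in fact more explicit than the paper's, which leaves the ``Ainsi'' implication to the reader; your invocation of Proposition~\ref{Action bien definie suffit sur les points rigides} is exactly what is needed to pass from ``some $g\in\SL_{2,\mathcal{H}(f)}^{an}$ sends $f$ into $\Rat_d^\beth$'' to ``some $M\in\SL_2(\overline{\mathcal{H}(f)})$ does'', i.e.\ to potential good reduction.
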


\begin{proof}
L'espace $\Rat_d^\beth$ est un fermé de $\Rat_d^{an, |\cdot|_0}$, où l'analytification est faite selon la valeur absolue triviale. Il correspond aux fractions rationnelles dont la valeur absolue des coefficients est plus petite que 1. Ainsi, 
\begin{equation*}
f \in \Rat_d^\beth \iff \forall \frac{\underline{a} \underline{b}}{\rho}, \vert \frac{\underline{a} \underline{b}}{\rho} \vert \leq 1.
\end{equation*}

Comme $\rho$ est une combinaison linéaire des $\underline{a}\underline{b}$, il existe une combinaison linéaire des $\frac{\underline{a}\underline{b}}{\rho}$ tel que $1 = \sum \lambda_{I,J} \frac{\underline{a}\underline{b}}{\rho}$ et donc nécessairement l'un des $\frac{\underline{a}\underline{b}}{\rho}$ a valeur absolue 1. Donc, $\Rat_d^\beth = \lbrace |\cdot|, \max|\frac{\underline{a}\underline{b}}{\rho}| = 1\rbrace$ et par multiplicativité des valeurs absolues $\Rat_d^\beth = \lbrace |\cdot|, \max|\frac{a_i^{2d}}{\rho}|, |\frac{b_i^{2d}}{\rho}| = 1\rbrace$. Donc, si $f \in \Rat_d^\beth$ et que l'on prend ses coefficients de façon à ce que le maximum des coefficients soit de norme 1, alors $|\rho| = 1$ et donc $f$ reste de degré $d$ dans le corps résiduel. 
\end{proof}

On peut donc maintenant combiner le corollaire \ref{equivalence dégénérecence quotient et action bien déf} et la proposition \ref{caractérisation du beth de rat} pour retrouver les résultats de Favre-Gong dans le contexte de la compactification hybride. 

\begin{prop}\label{pot bonne reduc ssi degenere pas quotient}
Soit $f_n \in \Rat_d^{an}$ où l'analytification est prise au sens de la valeur absolue usuelle sur $k$ telles que $f_n \rightarrow f \in \Rat_d^\urcorner$. Notons $\pi^{an} : \Rat_d^{an} \rightarrow \M_d^{an}$ la projection, alors
\begin{center}
L'action de $\SL^{an}_{2,\mathcal{H}(f)}$ est bien définie $\iff f$ n'a pas potentielle bonne réduction$ \iff \pi^{an}(f_n) \rightarrow \infty$.
\end{center} 
\end{prop}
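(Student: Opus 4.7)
Le plan est une simple combinaison de deux résultats déjà établis : le théorème \ref{equivalence dégénérecence quotient et action bien déf} (équivalence entre bonne définition de l'action et non-dégénérescence du quotient) et la proposition \ref{caractérisation du beth de rat} (caractérisation de $\Rat_d^\beth$ via la potentielle bonne réduction). Ici, toute la substance technique est déjà traitée en amont ; cette proposition synthétise simplement ces résultats dans le cas particulier $X = \Rat_d$, $G = \SL_2$.

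Dans un premier temps, je vérifierais que le couple $(\Rat_d, \SL_2)$ satisfait aux hypothèses du théorème \ref{equivalence dégénérecence quotient et action bien déf}. Par la construction de Silverman rappelée au début de cette section, $\Rat_d$ est un $k$-schéma affine de type fini. Le groupe $\SL_2$ est classiquement un groupe algébrique réductif. Enfin, le lemme énoncé juste après la définition de $\M_d$ assure que l'action de $\SL_2$ sur $\Rat_d$ est propre, donc fermée. Le théorème s'applique alors et fournit directement l'équivalence entre la bonne définition de l'action de $\SL^{an}_{2,\mathcal{H}(f)}$ en $f$ et le fait que $\pi^{an}(f_n) \rightarrow \infty$.

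Dans un deuxième temps, j'appliquerais la proposition \ref{caractérisation du beth de rat}, dont la deuxième partie affirme exactement que l'action de $\SL^{an}_{2,\mathcal{H}(f)}$ n'est pas bien définie en $f$ si et seulement si $f$ a potentielle bonne réduction. La contraposée donne l'équivalence restante : l'action est bien définie si et seulement si $f$ n'a pas potentielle bonne réduction.

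La réunion des deux équivalences précédentes livre la triple équivalence annoncée. Aucun obstacle réel ne se présente ici : l'essentiel du travail, à savoir l'identification de $\Rat_d^\beth$ via le résultant et le théorème général reliant bonne définition de l'action au bord à la non-dégénérescence dans le quotient, a déjà été effectué dans les sections \ref{SectionConstructionSuite} et \ref{SectionResultatActionBord}.
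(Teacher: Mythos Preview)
Your proposal is correct and matches the paper's own approach exactly: the paper introduces this proposition with the sentence ``On peut donc maintenant combiner le corollaire \ref{equivalence dégénérecence quotient et action bien déf} et la proposition \ref{caractérisation du beth de rat}'' and gives no further proof, which is precisely the two-step combination you describe. Your explicit verification that $(\Rat_d,\SL_2)$ satisfies the hypotheses (affine, reductive, closed action via the properness lemma) is a welcome addition that the paper leaves implicit.
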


\begin{rem}
Si $f =\frac{a_0 + a_1z+ \cdots + a_dz^d}{b_0 + b_1z + \cdots + b_dz^d}$, alors on définit $|\rho_f| =  min (|\frac{\rho}{a_i^{2d}}|, |\frac{\rho}{b_i^{2d}}|)$ et on dit que $f$ est de résultant maximal dans sa fibre si $|\rho_f| = \max\lbrace |\rho_g|, \exists M\in \SL_2, g^M = f \rbrace$.
 
L'une des façons d'assurer que $\pi^{an}(f_n) \rightarrow \infty$ est de prendre $f_n$ de résultant maximal dans sa fibre. En effet, comme $f_n \rightarrow \infty \iff |\rho_{f_n}|\rightarrow 0$, alors si $f_n$ est de résultant maximal dans sa fibre, alors pour tout $M_n \in \SL_2, |\rho_{f_n^{M_n}}| \leq |\rho_{f_n}| \rightarrow 0$, donc aucune suite $f_n^{M_n}$ n'a de valeurs d'adhérence dans $\M_d^{an}$.  
\end{rem}

\begin{rem}
Si l'on prends une suite de fractions rationnelles $f_n$ telle que la suite $\pi^{an}(f_n)$ dégénère, alors on peut prendre un ultra-filtre $\omega$ non-principal et regarder la limite $f_\omega \in \Rat_d^\urcorner$ des $f_n$ le long de l'ultra-filtre $\omega$. Une différence entre cette limite et celle obtenue par Favre-Gong \cite{FavreGong} est que son corps résiduel est un corps plus petit que dans leurs travaux. Ici, le corps résiduel sur lequel $f$ est défini est la complétion d'un corps de degré de transcendance au plus $2d-1$ sur $\C$. En particulier, le groupe de valeur de la clôture algébrique de son corps résiduel est l'ensemble des nombres positifs d'un $\Q$-espace vectoriel de dimension au plus $2d-1$. Alors que le corps sur lequel est défini les fractions rationnelles de Favre-Gong est un corps de Robinson et a pour groupe des valeurs tout $\R_+$. Le fait d'avoir un corps plus petit et en particulier de degré de transcendance fini peut-être très utile comme montré par C. Gong \cite{GongMultiplierScale}.
\end{rem}

La proposition \ref{propreté morphisme sur le quotient} permet de retrouver un résultat de L. DeMarco (proposition 4.1 de \cite{DeMarcoSpaceQuadraticRationalMaps}) qui n'avait été prouvé que pour $k = \C$ bien que Favre-Gong (\cite{FavreGong}) aient indiqué que le résultat devrait être vrai pour tout corps $k$.

\begin{cor}\label{propreté itération} Proposition 4.1 de DeMarco \cite{DeMarcoSpaceQuadraticRationalMaps}.

Soit $l \in \N^*, d\geq 2$, l'application itération $I_l : M_d^{an} \rightarrow M_{d^l}^{an}$ est propre.
\end{cor}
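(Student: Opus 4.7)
The plan is to apply Proposition \ref{propreté morphisme sur le quotient} to the iteration morphism $I_l : \Rat_d \to \Rat_{d^l}$ sending a fraction rationnelle $f$ to its $l$-th iterate $f^l$. Since conjugation commutes with composition, one has $(f^M)^l = (f^l)^M$ for every $M \in \SL_2$, so $I_l$ is $\SL_2$-equivariant and induces a morphism $\bar{I}_l : \M_d \to \M_{d^l}$ whose analytification is precisely the iteration map appearing in the statement.

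Both $\Rat_d$ and $\Rat_{d^l}$ are affine $k$-schemas of finite type on which the reductive group $\SL_2$ acts with closed action, as recalled at the beginning of this section. The only non-trivial hypothesis of Proposition \ref{propreté morphisme sur le quotient} that remains to verify is therefore the inclusion of preimages $(I_l^{hyb})^{-1}(\Rat_{d^l}^\beth) \subset \Rat_d^\beth$.

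To check this inclusion, we invoke Proposition \ref{caractérisation du beth de rat}: a point of $\Rat_d^\beth$ corresponds to a fraction rationnelle $f$ defined over a valued extension of $(k, |\cdot|_0)$ having good reduction. The required inclusion therefore translates into the non-archimedean dynamical statement that good reduction of $f^l$ implies good reduction of $f$. This will be established via the classical composition formula for the resultant, which yields an identity of the form $\rho_{f^l} = u \cdot \rho_f^{N(d,l)}$ for some unit $u$ and a strictly positive integer $N(d,l)$ depending only on $d$ and $l$. From this it follows immediately that $|\rho_{f^l}| = 1$ forces $|\rho_f| = 1$, so that good reduction of $f^l$ forces good reduction of $f$.

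The main obstacle is precisely this backward stability: proving that good reduction of $f^l$ implies good reduction of $f$, which is somewhat delicate because cancellations between numerator and denominator of the composition $f^l$ (before reducing modulo the maximal ideal) could a priori restore the full degree even if $f$ itself has bad reduction. This is however a classical fact of non-archimedean dynamics (see for instance \cite{SilvermanLivre}), and once it is in place Proposition \ref{propreté morphisme sur le quotient} applies and gives the properness asserted by the corollary.
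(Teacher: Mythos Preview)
Your overall strategy is exactly the paper's: apply Proposition~\ref{propreté morphisme sur le quotient} to the $\SL_2$-equivariant iteration morphism $\Rat_d \to \Rat_{d^l}$, reduce the hypothesis $(I_l^{hyb})^{-1}(\Rat_{d^l}^\beth) \subset \Rat_d^\beth$ via Proposition~\ref{caractérisation du beth de rat} to the statement that good reduction of $f^l$ forces good reduction of $f$, and then invoke a reference for this classical fact. The paper cites Benedetto, Corollary~8.14; your citation of \cite{SilvermanLivre} serves the same purpose.

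There is however one incorrect step in your sketch. The composition formula for the homogeneous resultant gives $\Res(F^l) = \Res(F)^{N(d,l)}$ exactly, but the \emph{normalized} quantity $\rho_{f^l}$ involves dividing by $\|F^l\|^{2d^l}$, and when $f$ has bad reduction the coefficients of the homogeneous lift $F^l$ can all acquire positive valuation (for instance $F=(X^2+Y^2,\,X^2-Y^2)$ over $\mathbb{Q}_2$ yields $F^2=(2X^4+2Y^4,\,4X^2Y^2)$). Thus the factor $u$ relating $\rho_{f^l}$ to $\rho_f^{N(d,l)}$ is in general \emph{not} a unit; one only obtains $|\rho_{f^l}| \geq |\rho_f|^{N(d,l)}$, which is the wrong inequality for the implication you need. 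So the resultant identity alone does not settle the backward stability, and one really must appeal to the cited reference, as you do in your final paragraph and as the paper does.
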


\begin{proof}
Considérons la fonction itération $g_l : \Rat_d \rightarrow \Rat_{d^l}$, alors cette fonction vérifie que $g_l (f^M) = g_l(f)^M$ pour tout $f\in \Rat_d, M\in \SL_2$ et par la proposition \ref{propreté morphisme sur le quotient} il suffit de montrer que $g_l^{-1}(\Rat_{d^l}^\beth) \subset \Rat_d^\beth$. Par la proposition \ref{caractérisation du beth de rat}, cela revient à dire que si une fraction rationnelle $f$ est telle que $f^l$ a bonne réduction, alors c'est le cas de $f$. C'est un résultat dû à R. Benedetto (Corollary 8.14, \cite{BenedettoLivre}).
\end{proof}

\begin{rem}
Favre-Gong avaient déjà remarqué que cet argument permettait de montrer la propreté de l'application itération.
\end{rem}

La propreté de cette application permet d'en déduire le corollaire suivant.

\begin{cor}\label{existence itération compactification}
Soit $l \in \N^*$, alors l'application $I_l : \M_d \rightarrow \M_{d^l}$ s'étend à $\M_d^\urcorner \rightarrow \M_{d^l}^\urcorner$. 
\end{cor}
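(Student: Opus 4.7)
Le plan est de partir du morphisme de schémas $g_l : \Rat_d \to \Rat_{d^l}$ d'itération (qui vérifie $g_l(f^M) = g_l(f)^M$) et de considérer son analytification hybride $g_l^{hyb} : \Rat_d^{hyb} \to \Rat_{d^l}^{hyb}$. La première étape sera de montrer que $g_l^{hyb}$ se restreint en un morphisme $\Rat_d^+ \to \Rat_{d^l}^+$, où l'on rappelle que $\Rat_d^+ = \Rat_d^{hyb} \setminus \Rat_d^\beth$. Cela revient à vérifier que $(g_l^{hyb})^{-1}(\Rat_{d^l}^\beth) \subset \Rat_d^\beth$, soit, par la proposition \ref{caractérisation du beth de rat}, que si $f^l$ a bonne réduction alors $f$ aussi : c'est exactement le résultat de R. Benedetto (Corollary 8.14 de \cite{BenedettoLivre}) déjà utilisé pour la preuve du corollaire \ref{propreté itération}.

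Puisque $g_l^{hyb}$ est l'analytification d'un morphisme algébrique, il commute avec le flot et passe donc au quotient pour induire une application continue $\widetilde{I_l} : \Rat_d^\urcorner \to \Rat_{d^l}^\urcorner$. Il restera ensuite à vérifier que $\widetilde{I_l}$ envoie $\Rat_d^\urcorner \setminus \mathcal{B}_d$ dans $\Rat_{d^l}^\urcorner \setminus \mathcal{B}_{d^l}$, où $\mathcal{B}_d$ désigne l'ensemble des points où l'action de $\SL_2$ n'est pas bien définie, caractérisé par la proposition \ref{pot bonne reduc ssi degenere pas quotient} comme étant les fractions au bord ayant potentielle bonne réduction. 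Sur la partie $\Rat_d^{an}$, c'est clair puisque l'image est contenue dans $\Rat_{d^l}^{an}$. Pour $f \in \delta \Rat_d$ sans potentielle bonne réduction, il faudra montrer que $f^l$ n'en a pas non plus : sinon, il existerait $M \in \SL_2$ tel que $(f^l)^M = (f^M)^l$ ait bonne réduction, et par Benedetto, $f^M$ en aurait aussi, ce qui contredit l'hypothèse faite sur $f$.

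Enfin, l'équivariance $g_l(f^M) = g_l(f)^M$ fournira la compatibilité de $\widetilde{I_l}$ avec la relation $\mathcal{G}$. Par le théorème \ref{Si le morphisme est equidimensionnel, compactifications sont homeo} appliqué à $\Rat_d$ et $\Rat_{d^l}$ munis de l'action de $\SL_2$ (qui est fermée, donc toute la variété est un lieu stable), on dispose des homéomorphismes $(\Rat_d^\urcorner \setminus \mathcal{B}_d)/\mathcal{G} \simeq \M_d^\urcorner$ et $(\Rat_{d^l}^\urcorner \setminus \mathcal{B}_{d^l})/\mathcal{G} \simeq \M_{d^l}^\urcorner$ qui étendent l'identité sur les analytifiés. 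L'application $\widetilde{I_l}$ descendra alors en l'extension continue cherchée $\M_d^\urcorner \to \M_{d^l}^\urcorner$ prolongeant $I_l$ sur $\M_d^{an}$.

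L'obstacle principal reste le résultat de bonne réduction de Benedetto, déjà ingrédient clé du corollaire \ref{propreté itération}, sans lequel ni la restriction à $\Rat_d^+$ ni la stabilité de $\Rat_d^\urcorner \setminus \mathcal{B}_d$ ne seraient garanties ; une fois admis, tout le reste n'est qu'une application directe de la machinerie développée dans le texte.
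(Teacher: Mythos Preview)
Your argument is correct. You work entirely at the level of $\Rat_d$: you use Benedetto's result to get the restriction $g_l^+ : \Rat_d^+ \to \Rat_{d^l}^+$, descend by the flow to $\Rat_d^\urcorner \to \Rat_{d^l}^\urcorner$, check (again via Benedetto) that the complement of the bad locus $\mathcal{B}$ is preserved, and finally use $\SL_2$-equivariance together with the homeomorphism of Theorem~\ref{Si le morphisme est equidimensionnel, compactifications sont homeo} to pass to $\M_d^\urcorner \to \M_{d^l}^\urcorner$.

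The paper proceeds differently and more directly on the quotient: once Corollary~\ref{propreté itération} establishes that $I_l : \M_d^{an} \to \M_{d^l}^{an}$ is proper, one invokes the functoriality of Poineau's compactification with respect to proper morphisms (the proposition recalled after Definition~\ref{def trajectoire ensemble}, i.e.\ Proposition~4.2 of \cite{PoineauCompactificationHybride}: a proper $f$ yields $f^+ : X^+ \to Y^+$, hence $f^\urcorner : X^\urcorner \to Y^\urcorner$). No detour through $\Rat_d^\urcorner$ or the identification of Theorem~\ref{Si le morphisme est equidimensionnel, compactifications sont homeo} is needed. Your route has the merit of staying entirely within the machinery developed in this paper and of making explicit why Benedetto's theorem controls both the restriction to the $+$-part and the preservation of the good locus; the paper's route is shorter but relies on the external functoriality statement from \cite{PoineauCompactificationHybride}.
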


Donc, la compactification $\M_d^\urcorner$ vérifie l'une des conditions demandées par DeMarco \cite{DeMarcoSpaceQuadraticRationalMaps}. De plus, même si dans le cas $d=2$, l'on ne possède pas de projection vers l'espace des mesures de probabilité barycentrée $\overline{M^2_{bc}(\p^1_\C)}/SO_3$ chacune des fractions rationnelles est bien associée à une unique mesure de probabilité sur un espace de Berkovich. Poineau a montré que ces mesures de probabilités bougeaient continûment sur $\Rat_d^\urcorner$ \cite{PoineauDynamique}.

Comme l'action de $\SL_2$ sur $\Rat_d$ est fermée, on peut caractériser le fermé $F$ à enlever de $\Rat_d^\urcorner$ de manière à avoir une bijection continue de $\Rat_d^\urcorner \backslash F \rightarrow \M_d^\urcorner$. 


En combinant les propositions \ref{on peut caractériser le fermé qu on doit retirer dans la compactification} et \ref{caractérisation du beth de rat}, on obtient le résultat suivant :
 
\begin{prop}\label{bijection entre Rat_d quot et M_d}
L'ensemble $\lbrace f\in \delta \Rat_d,$ f a potentielle bonne réduction$\rbrace$ est un fermé de $\Rat_d^\urcorner$.

De plus, on dispose d'un homéomorphisme :
\begin{center}
$(\Rat_d^\urcorner \backslash \lbrace f\in \delta \Rat_d,$ f a potentielle bonne réduction$\rbrace)/\SL_2 \rightarrow \M_d^\urcorner$
\end{center}
qui est l'identité sur $\M_d^{an}$.
\end{prop}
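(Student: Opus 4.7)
Le plan est d'appliquer directement le théorème \ref{Si le morphisme est equidimensionnel, compactifications sont homeo} (dans sa formulation du corollaire affine qui suit) au couple $X = \Rat_d$ muni de l'action par conjugaison de $G = \SL_2$, puis d'utiliser la proposition \ref{caractérisation du beth de rat} pour identifier le lieu $\mathcal{B}$ où l'action n'est pas bien définie avec l'ensemble des fractions rationnelles du bord ayant potentielle bonne réduction.

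Tout d'abord, je vérifierai que les hypothèses du corollaire sont bien satisfaites. Le schéma $\Rat_d = \Spec~k[\frac{\underline{a}\underline{b}}{\rho}]$ est affine, intègre (comme localisé d'une $k$-algèbre de polynômes par une fraction non nulle) et de type fini sur $k$, le groupe $\SL_2$ est algébrique réductif, et le lemme rappelé en début de section garantit que l'action de $\SL_2$ sur $\Rat_d$ est propre, donc fermée. Le corollaire fournit alors directement un homéomorphisme
\begin{center}
$(\Rat_d^\urcorner \setminus \mathcal{B})/\SL_2 \longrightarrow \M_d^\urcorner$
\end{center}
qui est l'identité sur $\M_d^{an}$, où $\mathcal{B} = \lbrace f\in \Rat_d^\urcorner,$ l'action de $\SL^{an}_{2,\h(f)}$ n'est pas bien définie$\rbrace$. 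De plus, la proposition \ref{on peut caractériser le fermé qu on doit retirer dans la compactification} (appliquée dans ce cadre intègre) assure que $\mathcal{B}$ est fermé dans $\Rat_d^\urcorner$.

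Il ne reste alors qu'à identifier $\mathcal{B}$ avec $\lbrace f\in \delta \Rat_d,$ f a potentielle bonne réduction$\rbrace$. Comme $\Rat_d^{an}$ est $\SL_2$-invariant, l'action de $\SL^{an}_{2,\h(f)}$ est automatiquement bien définie pour tout $f\in \Rat_d^{an}$ vu dans $\Rat_d^\urcorner$, donc $\mathcal{B}\subset \delta \Rat_d$. Ensuite, la proposition \ref{caractérisation du beth de rat} donne l'équivalence, pour $f\in \delta \Rat_d$, entre le fait que l'action de $\SL^{an}_{2,\h(f)}$ ne soit pas bien définie en $f$ et le fait que $f$ ait potentielle bonne réduction, d'où l'égalité souhaitée. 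En combinant ces deux ingrédients, on obtient simultanément la fermeture de l'ensemble et l'homéomorphisme annoncé.

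Il n'y a pas de véritable obstacle dans cette démonstration : toute la machinerie a été développée dans les sections \ref{SectionResultatActionBord} et dans la caractérisation \ref{caractérisation du beth de rat}, et le seul travail restant est la vérification, immédiate, des hypothèses du corollaire pour le couple $(\Rat_d, \SL_2)$.
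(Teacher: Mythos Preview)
Your proposal is correct and follows exactly the same route as the paper's own proof: apply the affine corollary of Theorem \ref{Si le morphisme est equidimensionnel, compactifications sont homeo} to $X=\Rat_d$ with $G=\SL_2$, then invoke the second part of Proposition \ref{caractérisation du beth de rat} to identify $\mathcal{B}$ with the potential good reduction locus in $\delta\Rat_d$. The only extra you add is the explicit check of the hypotheses and the remark that $\mathcal{B}\subset\delta\Rat_d$, which the paper leaves implicit.
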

\begin{proof}
En effet, l'on dispose d'un homéomorphisme
\begin{center}
$(\Rat_d^\urcorner \backslash \lbrace f\in \delta \Rat_d,$ l'action de $SL_{2,\mathcal{H}(f)}^{an}$ n'est pas bien définie$\rbrace)/\SL_2 \rightarrow \M_d^\urcorner$, 
\end{center}
par le théorème \ref{Si le morphisme est equidimensionnel, compactifications sont homeo}. Enfin, par la deuxième partie de la proposition \ref{caractérisation du beth de rat}, on voit que $f$ a potentielle bonne réduction si et seulement si l'action de $\SL_{2,\mathcal{H}(f)}^{an}$ n'est pas bien définie.
\end{proof}

\bibliographystyle{alpha}
\bibliography{biblio_action_groupe}

\begin{thebibliography}{{Sta}25}

\bibitem[Ben19]{BenedettoLivre}
Robert~L. Benedetto.
\newblock {\em Dynamics in one non-archimedean variable}, volume 198 of {\em
  Graduate Studies in Mathematics}.
\newblock American Mathematical Society, Providence, RI, 2019.

\bibitem[Berk90]{BerkovichLivre}
Vladimir~G. Berkovich.
\newblock {\em Spectral theory and analytic geometry over non-{A}rchimedean
  fields}, volume~33 of {\em Mathematical Surveys and Monographs}.
\newblock American Mathematical Society, Providence, RI, 1990.

\bibitem[Berk94]{BerkovichSchemaFormel1}
Vladimir~G. Berkovich.
\newblock Vanishing cycles for formal schemes.
\newblock {\em Invent. Math.}, 115(3):539--571, 1994.

\bibitem[Berk96]{BerkovichSchemaFormel2}
Vladimir~G. Berkovich.
\newblock Vanishing cycles for formal schemes. {II}.
\newblock {\em Invent. Math.}, 125(2):367--390, 1996.

\bibitem[Bert96]{BerthelotLivre}
Pierre Berthelot.
\newblock {\em Cohomologie rigide et cohomologie rigide à supports propres,
  première partie}.
\newblock Prépublication de l/'IRMAR 96-03, 1996.

\bibitem[Berg23]{Berger}
Dorian Berger.
\newblock Espaces de {B}erkovich sur {${\mathbb {Z}}$}: morphismes \'etales.
\newblock {\em Math. Z.}, 304(4):Paper No. 66, 32, 2023.

\bibitem[Bou06]{BourbakiAlgCom57}
N.~Bourbaki.
\newblock {\em \'El\'ements de math\'ematique. {A}lg\`ebre commutative.
  {C}hapitres 5 à 7}.
\newblock Springer, Berlin, 2006.

\bibitem[BS77]{BanicaStanasila2}
C.~B{\u a}nic{\u a} and O.~St{\u a}n{\u a}{\c s}il{\u a}.
\newblock {\em M\'ethodes alg\'ebriques dans la th\'eorie globale des espaces
  complexes. {V}ol. 2}.
\newblock Collection ``Varia Mathematica''. Gauthier-Villars, Paris, 1977.
\newblock Troisi\`eme \'edition, Traduit du roumain.

\bibitem[DeM07]{DeMarcoSpaceQuadraticRationalMaps}
Laura DeMarco.
\newblock The moduli space of quadratic rational maps.
\newblock {\em J. Amer. Math. Soc.}, 20(2):321--355, 2007.

\bibitem[DF16]{DeMarcoFaber}
Laura DeMarco and Xander Faber.
\newblock Degenerations of complex dynamical systems {II}: analytic and
  algebraic stability.
\newblock {\em Math. Ann.}, 365(3-4):1669--1699, 2016.
\newblock With an appendix by Jan Kiwi.

\bibitem[DM08]{DeMarcoMcMullenTreesDynamicsPolynomials}
Laura~G. DeMarco and Curtis~T. McMullen.
\newblock Trees and the dynamics of polynomials.
\newblock {\em Ann. Sci. \'Ec. Norm. Sup\'er. (4)}, 41(3):337--382, 2008.

\bibitem[Duc09]{DucrosEspacesBerkoExcellents}
Antoine Ducros.
\newblock Les espaces de {B}erkovich sont excellents.
\newblock {\em Ann. Inst. Fourier (Grenoble)}, 59(4):1443--1552, 2009.

\bibitem[Fav20]{FavreDegenerationInHybridSpace}
Charles Favre.
\newblock Degeneration of endomorphisms of the complex projective space in the
  hybrid space.
\newblock {\em J. Inst. Math. Jussieu}, 19(4):1141--1183, 2020.

\bibitem[FG24]{FavreGong}
Charles Favre and Chen Gong.
\newblock Non-archimedean techniques and dynamical degenerations.
\newblock {\em arXiv}, 2024.

\bibitem[Gon]{GongPersonalCommunication}
Chen Gong.
\newblock Personal communication.

\bibitem[Gon25]{GongMultiplierScale}
Chen Gong.
\newblock Multiplier scales of a sequence of rational maps.
\newblock {\em arXiv}, 2025.

\bibitem[JM12]{JonssonMustata}
Mattias Jonsson and Mircea Musta{\c t}{\u a}.
\newblock Valuations and asymptotic invariants for sequences of ideals.
\newblock {\em Ann. Inst. Fourier}, 62:2145--2209, 2012.

\bibitem[Kiw06]{KiwiPolynomialDynamics}
Jan Kiwi.
\newblock Puiseux series polynomial dynamics and iteration of complex cubic
  polynomials.
\newblock {\em Ann. Inst. Fourier (Grenoble)}, 56(5):1337--1404, 2006.

\bibitem[Lem]{LemanissierComPerso}
Thibaud Lemanissier.
\newblock Personal communication.

\bibitem[LP24]{LemanissierPoineau}
Thibaud Lemanissier and J\'er\^ome Poineau.
\newblock {\em Espaces de {B}erkovich globaux---cat\'egorie, topologie,
  cohomologie}, volume 353 of {\em Progress in Mathematics}.
\newblock Birkh\"auser/Springer, Cham, [2024] \copyright 2024.

\bibitem[Luo21]{LuoLimitRationalMap}
Yusheng Luo.
\newblock Limits of rational maps, {$\mathbb{R}$}-trees and barycentric
  extension.
\newblock {\em Adv. Math.}, 393:Paper No. 108075, 46, 2021.

\bibitem[Luo22]{LuoTreesBarycentricExtension}
Yusheng Luo.
\newblock Trees, length spectra for rational maps via barycentric extensions,
  and {B}erkovich spaces.
\newblock {\em Duke Math. J.}, 171(14):2943--3001, 2022.

\bibitem[Mac17]{MaculanGIT}
Marco Maculan.
\newblock Diophantine applications of geometric invariant theory.
\newblock {\em M\'em. Soc. Math. Fr. (N.S.)}, (152):149, 2017.

\bibitem[GIT]{GIT}
D.~Mumford, J.~Fogarty, and F.~Kirwan.
\newblock {\em Geometric invariant theory}, volume~34 of {\em Ergebnisse der
  Mathematik und ihrer Grenzgebiete (2) [Results in Mathematics and Related
  Areas (2)]}.
\newblock Springer-Verlag, Berlin, third edition, 1994.

\bibitem[MS85]{MorganShalenIntroEspacesHybrides}
John~W. Morgan and Peter~B. Shalen.
\newblock An introduction to compactifying spaces of hyperbolic structures by
  actions on trees.
\newblock In {\em Geometry and topology ({C}ollege {P}ark, {M}d., 1983/84)},
  volume 1167 of {\em Lecture Notes in Math.}, pages 228--240. Springer,
  Berlin, 1985.

\bibitem[Poi13]{PoineauAngelique}
J\'er\^ome Poineau.
\newblock Les espaces de {B}erkovich sont ang\'eliques.
\newblock {\em Bull. Soc. Math. France}, 141(2):267--297, 2013.

\bibitem[Poi24]{PoineauDynamique}
Jérôme Poineau.
\newblock Dynamique analytique sur $\mathbb{Z}$. i : Mesures d'équilibre sur
  une droite projective relative.
\newblock {\em arXiv.}, 2024.

\bibitem[Poi25]{PoineauCompactificationHybride}
Jérôme Poineau.
\newblock Valuative compactifications of analytic varieties.
\newblock {\em arXiv}, 2025.

\bibitem[Ray74]{RaynaudGeometrieRigide}
Michel Raynaud.
\newblock G\'eom\'etrie analytique rigide d'apr\`es {T}ate, {K}iehl,{$\cdots
  $}.
\newblock In {\em Table {R}onde d'{A}nalyse {N}on {A}rchim\'edienne ({P}aris,
  1972)}, volume Tome 102 of {\em Suppl\'ement au Bull. Soc. Math. France},
  pages 319--327. Soc. Math. France, Paris, 1974.

\bibitem[SGA03]{SGA}
{\em Rev\^etements \'etales et groupe fondamental ({SGA} 1)}, volume~3 of {\em
  Documents Math\'ematiques (Paris) [Mathematical Documents (Paris)]}.
\newblock Soci\'et\'e{} Math\'ematique de France, Paris, 2003.
\newblock S\'eminaire de g\'eom\'etrie alg\'ebrique du Bois Marie 1960--61.
  [Algebraic Geometry Seminar of Bois Marie 1960-61], Directed by A.
  Grothendieck, With two papers by M. Raynaud, Updated and annotated reprint of
  the 1971 original [Lecture Notes in Math., 224, Springer, Berlin; MR0354651
  (50 \#7129)].

\bibitem[Sil98]{SilvermanSpaceRationalMaps}
Joseph~H. Silverman.
\newblock The space of rational maps on {$\mathbb{P}^1$}.
\newblock {\em Duke Math. J.}, 94(1):41--77, 1998.

\bibitem[Sil07]{SilvermanLivre}
Joseph Silverman.
\newblock {\em The arithmetic of dynamical systems}, volume 241 of {\em
  Graduate Texts in Mathematics}.
\newblock Springer, New York, 2007.

\bibitem[Spi90]{SpivakovskyValuations}
Mark Spivakovsky.
\newblock Valuations in function fields of surfaces.
\newblock {\em Amer. J. Math.}, 112(1):107--156, 1990.

\bibitem[Spi93]{SpivakovskyValuations2}
Mark Spivakovsky.
\newblock On the structure of valuations centered in a local domain.
\newblock {\em Prepublication}, 1993.

\bibitem[{Sta}25]{stacks-project}
The {Stacks project authors}.
\newblock The stacks project.
\newblock \url{https://stacks.math.columbia.edu}, 2025.

\bibitem[Thu07]{ThuillierGeometrieToroidale}
Amaury Thuillier.
\newblock G\'eom\'etrie toro\"idale et g\'eom\'etrie analytique non
  archim\'edienne. {A}pplication au type d'homotopie de certains sch\'emas
  formels.
\newblock {\em Manuscripta Math.}, 123(4):381--451, 2007.

\bibitem[Vaq00]{VaquieValuations}
Michel Vaqui\'e.
\newblock Valuations.
\newblock In {\em Resolution of singularities ({O}bergurgl, 1997)}, volume 181
  of {\em Progr. Math.}, pages 539--590. Birkh\"auser, Basel, 2000.

\end{thebibliography}
\end{document}